\newtheorem{theo}{Theorem}[section]
\newtheorem{lemm}[theo]{Lemma}
\newtheorem{prop}[theo]{Proposition}
\newtheorem{coro}[theo]{Corollary}
\theoremstyle{definition}
\newtheorem{defi}[theo]{Definition}
\newtheorem{cons}[theo]{Construction}
\newtheorem{rem}[theo]{Remark}
\newtheorem*{theo*}{Theorem}
\numberwithin{equation}{section}
\newcommand{\op}{^{\mathrm{op}}}
\newcommand{\cat}{\mathbf}
\newcommand{\oper}{\mathscr}
\newcommand{\on}{\operatorname}
\newcommand{\Cat}{\cat{Cat}}
\newcommand{\Alg}{\cat{Alg}}
\newcommand{\id}{\mathrm{id}}
\newcommand{\R}{\mathbb{R}}
\renewcommand{\L}{\mathbb{L}}
\newcommand{\Map}{\on{Map}}
\newcommand{\map}{\on{map}}
\newcommand{\Fun}{\on{Fun}}
\newcommand{\lto}[1]{\stackrel{#1}{\longrightarrow}}
\newcommand{\h}{\widehat}
\newcommand{\Op}{\cat{Op}}
\newcommand{\WOp}{\cat{WOp}}
\newcommand{\G}{\cat{G}}
\newcommand{\pG}{\h{\cat{G}}}
\renewcommand{\S}{\cat{S}}
\newcommand{\pS}{\h{\cat{S}}}
\newcommand{\pGrp}{\h{\cat{Grp}}}
\newcommand{\Set}{\cat{Set}}
\newcommand{\pSet}{\h{\cat{Set}}}
\newcommand{\pab}{\oper{P}a\oper{B}}
\newcommand{\paub}{\oper{P}a\oper{UB}}
\newcommand{\pGT}{\h{\mathrm{GT}}}
\newcommand{\puGT}{\underline{\pGT}}
\newcommand{\sslash}{\mathbin{/\mkern-6mu/}}
\newcommand{\Pro}{\on{Pro}}
\renewcommand{\bf}{\mathbf}
\newcommand{\FF}{\mathbb{F}}
\title[Profinite completion of operads]{Profinite completion of operads and the Grothendieck-Teichm\"uller group}
\author{Geoffroy Horel}
\email{geoffroy.horel@gmail.com}
\address{Mathematisches Institut\\
Einsteinstrasse 62\\
D-48149 Münster\\
Deutschland}
\thanks{The author was supported by Michael Weiss's Humboldt professor grant.}
\keywords{little disk operad, profinite completion, Grothendiek Teichm\"uller group}
\subjclass[2010]{55Pxx, 55P60, 18D50, 20E18}
\begin{document}

\begin{abstract}
In this paper, we prove that the group of homotopy automorphisms of the profinite completion of the operad of little $2$-disks is isomorphic to the profinite Grothendieck-Teichm\"uller group. In particular, the absolute Galois group of $\mathbb{Q}$ acts faithfully on the profinite completion of $\oper{E}_2$ in the homotopy category of profinite weak operads.
\end{abstract}

\maketitle

\tableofcontents

\section*{Introduction}

The main result of this paper can be slightly imprecisely stated as follows:

\begin{theo*}[\ref{theo-main theorem spaces}]
The group of homotopy automorphisms of the profinite completion of the operad $\oper{E}_2$ of little $2$-disks is isomorphic to the Grothendieck-Teichm\"uller group.
\end{theo*}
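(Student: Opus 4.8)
The plan is to reduce the computation to the well-understood automorphisms of the profinite completion of the operad $\pab$ of parenthesized braids, exploiting the fact that $\oper{E}_2$ is aspherical. Concretely, each space $\oper{E}_2(n)$ is homotopy equivalent to the configuration space of $n$ ordered points in the plane, which is a $K(\pi,1)$ for the pure braid group $P_n$. The first step is therefore to produce an equivalence of operads in spaces $\oper{E}_2 \simeq B\pab$, where $B$ denotes the levelwise nerve (classifying space) functor and $\pab$ is the operad in groupoids whose morphism groups are the (parenthesized) braid groups. The parenthesization is exactly the device that rectifies the homotopy-coherent operad structure carried by the aspherical spaces $\oper{E}_2(n)$ into a strict operad structure in groupoids, so I would take this rectification as the bridge between the topological and the algebraic worlds, carried out in the category of weak operads $\WOp$ where these mapping objects are most tractable.

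Next I would show that profinite completion commutes with this description, i.e.\ that $\h{\oper E}_2 \simeq B(\h{\pab})$ in the homotopy category of profinite weak operads, where $\h{\pab}$ is the operad in profinite groupoids $\pG$ obtained by completing each braid group. The content here is that the profinite completion of a $K(\pi,1)$ is again aspherical, equal to $K(\h\pi,1)$, which holds precisely when the groups $\pi$ are \emph{good} in the sense of Serre; the (pure) braid groups are good, and this goodness must be propagated through the operadic structure so that the comparison is compatible with all the composition maps and symmetric-group actions. This is one of the two technical linchpins.

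The second linchpin, and the step I expect to be the main obstacle, is a rigidity statement: because $B(\h{\pab})$ is built from profinite Eilenberg--MacLane objects $K(\h\pi,1)$, the profinite mapping spaces $\Map(B\h{\pab}, B\h{\pab})$ are $1$-truncated, so homotopy automorphisms are detected on the level of the fundamental groupoid operad. Making this precise in $\WOp$ over $\pS$ — controlling the homotopy type of these profinite mapping spaces and checking that no higher-homotopical or phantom automorphisms appear while no genuine ones are lost — is the delicate part, and is where the model-categorical foundations for profinite weak operads do their real work. The conclusion of this step is a natural isomorphism $\on{Aut}^h(\h{\oper E}_2) \cong \on{Aut}(\h{\pab})$, the latter taken in the homotopy category of operads in profinite groupoids.

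Finally, I would identify $\on{Aut}(\h{\pab})$ with the Grothendieck--Teichm\"uller group $\pGT$. By the operadic description of $\pGT$ (Drinfeld, Bar-Natan, Fresse), an automorphism of $\h{\pab}$ fixing the objects is exactly a pair $(\lambda,f)$, with $\lambda$ a profinite unit and $f$ an element of the free profinite group on two generators, subject to the pentagon and hexagon constraints; these relations are precisely the compatibilities forced by the operadic composition and the symmetric-group actions of $\h{\pab}$. Thus $\on{Aut}(\h{\pab}) \cong \pGT$, and combining this with the previous steps yields $\on{Aut}^h(\h{\oper E}_2)\cong\pGT$, which is the asserted theorem.
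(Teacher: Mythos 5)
Your outline reproduces the paper's overall skeleton: the equivalence $\oper{E}_2\simeq B\pab$, goodness of the pure braid groups to identify $\h{N^{\Psi}B\pab}$ with $N^{\Psi}B\h{\pab}$ levelwise, the $1$-truncatedness / derived full faithfulness of $B$ to descend the computation from profinite weak operads in $\pS$ to operads in $\pG$, and finally Drinfel'd's operadic description of $\pGT$. However, there is a genuine gap at the last step. Drinfel'd's theorem identifies $\pGT$ (resp.\ $\puGT$) with the group of \emph{strict} automorphisms of $\h{\pab}$ that induce the \emph{identity on objects}, whereas what your reduction hands you is the automorphism group of $\h{\pab}$ in the homotopy category of operads in profinite groupoids, i.e.\ homotopy classes of self-equivalences with no constraint on objects, where two maps are identified when they differ by an operadic natural transformation. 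You treat these two groups as obviously equal, but the comparison is precisely the content of theorem \ref{theo-main theorem naive homotopy} in the paper, and neither direction is formal.

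Surjectivity (every self-map of $\h{\pab}$ is homotopic to one fixing objects) is the easier half: it uses that $\on{Ob}\h{\pab}$ is the free operad on the single object $(12)$, so a self-map is corrected by transporting along a chosen isomorphism $(21)\to(12)$. The serious missing content is injectivity: one must show that if two pairs $(\lambda,f)$ and $(\mu,g)$ in $\puGT$ become conjugate by an operadic natural transformation, they are already equal. This is not a consequence of the pentagon and hexagon relations you invoke; the paper proves it by restricting to arity $3$, identifying $\h{\pab}(3)$ with $\ast\sslash\h{K_3}$, and running a centralizer computation in $\h{F_2}\subset\h{K_3}$: lemma \ref{lemm-centralizer of x} (the centralizer of $x^{\lambda}$ in $\h{F_2}$ is $\overline{\langle x\rangle}$), proposition \ref{prop-f is a commutator} (the component $f$ dies in $\h{\mathbb{Z}}^2$, extracted by pushing the third defining relation of $\puGT$ into the abelianization of $\h{K_4}$), and propositions \ref{prop-injectivity of GT to End up to homotopy}--\ref{prop-GT acts faithful on K_3}. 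Without this, your argument only shows that $\on{End}_{\on{Ho}\WOp\pS}(\h{E_2})$ is a quotient of $\puGT$ (the image of the $\puGT$-action), not that it equals $\puGT$. A smaller omission of the same flavor: to compute derived mapping spaces of operads in $\pG$ one cannot use $\h{\pab}$ directly since it is not cofibrant; the paper substitutes the cofibrant model $\paub$ and uses lemma \ref{lemm-paub to P} (proposition \ref{prop-derived to underived}), a step your appeal to ``tractable mapping objects in $\WOp$'' does not by itself supply.
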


We now introduce the main characters of this story.

\subsection*{Profinite completion}

Profinite completion of spaces has been introduced by Artin and Mazur in \cite{artinetale}. It is a homotopical analogue of the notion of profinite completion of groups. A space is said to be $\pi$-finite if it has finitely many path components and if, for any choice of base point, its homotopy groups based at that point are finite and almost all zero. For a general space $X$ the category of $\pi$-finite spaces with a map from $X$ fails to have an initial object in general. Nevertheless, there is an object in the pro-category of $\on{Ho}\S$ which plays the role of this missing universal $\pi$-finite space. This pro-object is the definition of the profinite completion of $X$ according to Artin and Mazur.

For our purposes this construction of Artin and Mazur is not sufficient because it gives a pro-object in the homotopy category of spaces and we need a point-set level lift of this object. More precisely, we need a category $\pS$ of profinite spaces ideally equipped with a model structure and a profinite completion functor $\h{(-)}:\S\to\pS$ ideally a left Quillen functor. We would also like a comparison map $\on{Ho}\pS\to\Pro(\on{Ho}\S)$ which maps $\h{X}$ to an object that is isomorphic to the Artin-Mazur profinite completion. A model structure fulfilling all these requirements has been constructed by Gereon Quick in \cite{quickprofinite}. There could however be several distinct profinite completion functors lifting Artin and Mazur's construction. The language of $\infty$-categories gives us a way to formulate precisely what profinite completion should be. In \cite{barneapro}, Barnea, Harpaz and the author prove that Quick's construction is ``correct'' in the sense that its underlying $\infty$-category is the $\infty$-category obtained by freely adjoining cofiltered limits to the $\infty$-category of $\pi$-finite spaces.

\subsection*{The little disk operad}
The little $2$-disk operad is an operad in topological spaces. It was introduced by May and Boardman-Vogt in order to describe the structure existing on the $2$-fold loops on a simply connected based space that allows one to recover that space up to weak equivalence (see \cite{maygeometry} for details about this theorem). The $n$-th space of the operad of little $2$-disks has the homotopy type of the space of configurations of $n$ points in $\mathbb{R}^2$. The latter space is well-known to be equivalent to the classifying space of the pure braid group on $n$ strands. This fact allows for the existence of groupoid models of $\oper{E}_2$. More precisely, there exist operads in groupoids which give a model of $\oper{E}_2$ when applying levelwise the classifying space functor. Two of these models called $\pab$ and $\paub$ play an important role in this work. The operad $\pab$ is the operad of parenthesized braids and is the operad controlling the structure of a braided monoidal category with a strict unit. It is the operad that enters in the definition of the Grothendieck-Teichm\"uller group as explained in the next paragraph. The operad $\paub$ of parenthesized unital braids is an explicit cofibrant replacement of $\pab$ (see \ref{coro-paub cofibrant}). It is also the operad controlling the structure of a braided monoidal category.

\subsection*{The Grothendieck-Teichm\"uller group}

The Grothendieck-Teichmüller group was introduced by Drinfel'd and Ihara following an idea of Grothendieck. Its story originates in Belyi's theorem. One consequence of this theorem is that the  action of $\on{Gal}(\bar{\mathbb{Q}}/\mathbb{Q})$ on the group of $\pi_1^{ét}(\bar{\mathbb{Q}}\times_{\mathbb{Q}}\mathcal{M}_{0,4})\cong \h{F_2}$ is faithful. Grothendieck's idea, explained in \cite{grothendieckesquisse}, was to use the rich structure that the collection of stacks $\mathcal{M}_{g,n}$ possess in order to understand the image of $\on{Gal}(\bar{\mathbb{Q}}/\mathbb{Q})$ in $\on{Aut}(\h{F_2})$. The ultimate hope was to find a finite presentation of $\on{Gal}(\bar{\mathbb{Q}}/\mathbb{Q})$. 

The étale fundamental group of $\mathcal{M}_{0,n}$ is a quotient of the braid group $B_n$ and these braid groups $B_n$ control the structure of braided monoidal category. Using these observations, Drinfel'd was able to construct in \cite{drinfeldquasi} a group $\pGT$ containing the group $\on{Gal}(\bar{\mathbb{Q}}/\mathbb{Q})$ and acting on the profinite completion of the braid groups in a way compatible with the Galois action on $\pi_1^{ét}(\bar{\mathbb{Q}}\times_{\mathbb{Q}}\mathcal{M}_{0,n+1})$. It is still unknown if the inclusion of $\on{Gal}(\bar{\mathbb{Q}}/\mathbb{Q})$ in $\pGT$ is an isomorphism.

Even if he does not use this language, Drinfel'd's construction of $\pGT$ is of operadic nature. As observed by Fresse in \cite{fressehomotopy1}, it relies in an essential way on the operad $\pab$ mentioned in the previous paragraph. Applying profinite completion on each level of the operad $\pab$ yields an operad in profinite groupoids $\h{\pab}$ whose $n$-th level is equivalent as a profinite groupoid to the profinite completion of the pure braid group on $n$ strands. One can rephrase Drinfel'd's definition by saying that the Grothendieck-Teichm\"uller group is the group of automorphisms of $\h{\pab}$ that induce the identity on objects. 

\subsection*{Content of the present paper}

Using Drinfel'd's definition of $\pGT$, our proof relies on the observation that $\h{\pab}$ is a groupoid model for the profinite completion of $\oper{E}_2$ and that the action of $\pGT$ on $\h{\pab}$ induces an isomorphism from $\pGT$ to the group of homotopy automorphisms of $\h{\pab}$. A technicality that we have to deal with is that the profinite completion functor from spaces to profinite spaces does not preserve products. Thus, applying profinite completion to each level of an operad does not yield an operad in general. To solve this problem we use the formalism of algebraic theories and their homotopy algebras initiated by Badzioch in \cite{badziochalgebraic}. This allows us to relax the axiom of operads and work with what we call weak operads. In the second section, we define the notion of weak operads in a reasonable model category and encode their homotopy theory by a model structure. We can then define the profinite completion functor as a functor from weak operads in spaces to weak operads in profinite spaces.

In this paper, we also study the automorphisms of the topological operad $\oper{E}_2$ before completion. There is a well-known action of the orthogonal group $\on{O}(2,\mathbb{R})$ on $\oper{E}_2$ and we prove in theorem \ref{theo-iso of E_2 topological} that the induced map from $\on{O}(2,\mathbb{R})$ to $\Map^h_{\Op\S}(\oper{E}_2,\oper{E}_2)$ is a weak equivalence. In particular, the group of connected components of $\Map^h_{\Op\S}(\oper{E}_2,\oper{E}_2)$ is isomorphic to $\mathbb{Z}/2$. 

\subsection*{Future work}
It has been conjectured that the operad $\oper{E}_2$ should have an algebro-geometric origin (see for instance the appendix of \cite{moravamotivic}). More precisely, there should exist an operad in a category of schemes (or a generalization thereof) over $\mathbb{Q}$ whose complex points form a model for $\oper{E}_2$. Applying the étale homotopy type to this conjectural operad would yield an operad in profinite spaces with an action of $\on{Gal}(\bar{\mathbb{Q}}/\mathbb{Q})$. Our main result seems to be a compelling evidence for this fact and we hope to tackle this problem in future work.

\subsection*{Related work}

We learned about this problem in a talk by Dwyer at the MSRI in 2014 (see \cite[56 min 40]{dwyervideo}). Our result should be compared to an analogous result due to Fresse (see \cite{fressehomotopy1} and \cite{fressehomotopy2}) which proves that the group of homotopy automorphisms of the rational completion of $\oper{E}_2$ is isomorphic to the pro-unipotent Grothendieck-Teichm\"uller group. The work of Sullivan on the Adams conjecture (see \cite{sullivangenetics}) especially the observation that $\on{Gal}(\bar{\mathbb{Q}}/\mathbb{Q})$ acts on the profinite completion of the spectrum $KU$ was a big influence on this work. The idea of using algebraic theories to relax the axiom of operads is an essential ingredient in this paper. This idea was initiated by Badzioch in \cite{badziochalgebraic} and continued by Bergner in \cite{bergnerrigidification} in the multi-sorted case. This work also relies a lot on good point set level models for profinite completion constructed by Morel \cite{morelensembles} and Quick \cite{quickprofinite}.

\section*{Acknowledgements}
I wish to thank Benoit Fresse for generously sharing some of his insights and for noticing a mistake in an earlier version of this paper. I am grateful to Benjamin Collas for a brilliant talk about Fresse's work in the Leray seminar in M\"unster that made me start thinking about this problem and for several helpful email exchanges about the Grothendieck-Teichm\"uller group. I also want to thank Ilan Barnea and Gereon Quick for helpful conversations and email exchanges.

\section*{Notations}
\begin{itemize}

\item For a category $\cat{C}$ we denote by $\cat{C}(x,y)$ the set of morphisms from $x$ to $y$ in $\cat{C}$. If the category is simplicially enriched, we denote by $\Map_{\cat{C}}(x,y)$ the mapping space from $x$ to $y$.

\item We generically denote by $\varnothing$ (resp. $\ast$) the initial (resp. terminal) object of a category $\cat{C}$. The category should be obvious from the context.

\item For $\cat{C}$ a model category, we denote by $\on{Ho}\cat{C}$ its homotopy category. The derived mapping spaces in $\cat{C}$ are denoted $\Map_{\cat{C}}^h(X,Y)$. They are only well-defined up to homotopy. They can be defined using Dwyer-Kan's hammock localization, or if $\cat{C}$ is a simplicial model category, by taking cofibrant-fibrant replacements of $X$ and $Y$. Note that $\on{Ho}\cat{C}(x,y)\cong\pi_0\Map^h_\cat{C}(x,y)$.

\item We denote an isomorphism by $\cong$ and a weak equivalence by $\simeq$.

\item We denote by $\S$ the category of simplicial sets, and $\G$ the category of groupoids. They are equipped respectively with the Kan-Quillen model structure and the canonical model structure.

\item For $k$ a non-negative integer, we denote by $I[k]$ the groupoid completion of the category $[k]$.

\item We denote by $\pS$ the category of simplicial objects in profinite sets equipped with Quick's model structure (see \cite{quickprofinite}) and by $\pG$ the category of profinite groupoids equipped with the model structure of theorem \ref{theo-model structure on profinite groupoids}.

\item For $\cat{C}$ a category with products, we denote by $\Op\cat{C}$ the category of operads in $\cat{C}$.

\item We denote by $\cat{POpC}$ the category of preoperads in $\cat{C}$ (i.e. the category of contravariant functors from $\Psi$ to $\cat{C}$ where $\Psi$ is the algebraic theory controlling operads). If $\cat{C}$ is a suitable model category, we denote by $\WOp\cat{C}$ the model category of weak operads in $\cat{C}$ (see proposition \ref{prop-model structure on weak operads}). The relevant definitions can be found in the second section.

\item We denote the little $2$-disks operad by $\oper{E}_2$. We implicitly see $\oper{E}_2$ as an operad in $\S$ rather than topological spaces. We denote by $E_2$ the weak operad in spaces $N^{\Psi}\oper{E}_2$. We denote by $\pab$ the operad of parenthesized braids (see construction \ref{cons-PaB}). 

\end{itemize}

\section*{Sketch of the proof}

It is actually more convenient to prove a slightly more general result. There is a monoid $\puGT$ defined by Drinfel'd whose group of units is $\pGT$ and we in fact prove that the endomorphisms of the profinite completion of $\oper{E}_2$ in the category of weak operads in profinite spaces is isomorphic to $\puGT$.

The four important categories in this work are $\S$, $\pS$, $\G$ and $\pG$. They are respectively the category of simplicial sets, profinite spaces (i.e. simplicial objects in profinite sets), groupoids and profinite groupoids (i.e. the pro-category of the category of groupoids with finitely many morphisms). Each of them has a model structure. The model structure on $\S$ and $\G$ are respectively the Kan-Quillen and canonical model structure. The model structure on $\pS$ is constructed by Quick in \cite{quickprofinite} and the model structure on $\pG$ is a groupoid analogue of Quick's model structure constructed in section \ref{section-model structure on profinite groupoids} of this paper. There is a classifying space functor $B$ from $\G$ to $\S$ and from $\pG$ to $\pS$. In both cases, $B$ is a right Quillen functor. There are also profinite completion functors $\h{(-)}:\G\to\pG$ and $\h{(-)}:\S\to\pS$ that are both left Quillen functors.

There is an operad $\pab$ in the category of groupoids which is a groupoid model of $\oper{E}_2$ in the sense that $B\pab$ is weakly equivalent to to $\oper{E}_2$. The levelwise profinite completion of $\pab$ is an operad $\h{\pab}$ in profinite groupoids. The monoid $\puGT$ is defined by Drinfel'd to be the monoid of endomorphisms of $\h{\pab}$ which induce the identity on objects.

There is a functorial path object in the category of profinite groupoids given by $C\mapsto C^{I[1]}$ where $I[1]$ denote the groupoid completion of the category $[1]$. This path object gives a notion of homotopies between maps of profinite groupoids. A levelwise application of this path object induces a path object in the category of operads in profinite groupoids. We denote by $\pi\Op\pG$ the category whose objects are operads in profinite groupoids and whose morphisms are homotopy classes of maps between them. The first main step in the proof is the following:

\begin{theo*}[\ref{theo-main theorem naive homotopy}]
The map $\puGT\to\on{End}(\h{\pab})$ induces an isomorphism
\[\puGT\to\on{End}_{\pi\Op\pG}(\h{\pab}).\]
\end{theo*}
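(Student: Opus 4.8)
The plan is to prove that the composite monoid homomorphism
\[\puGT \hookrightarrow \on{End}(\h{\pab}) \to \on{End}_{\pi\Op\pG}(\h{\pab})\]
is a bijection; a bijective monoid homomorphism is an isomorphism, so this suffices. Before splitting into the two directions, I would first unwind the homotopy relation defining $\pi\Op\pG$. Since the path object is the levelwise cotensor $C \mapsto C^{I[1]} = \Fun(I[1],C)$ and $I[1]$ is the contractible groupoid on two objects, a homotopy between two operad maps $\phi,\psi:\h{\pab}\to\h{\pab}$ unwinds to a natural isomorphism $H:\phi\Rightarrow\psi$ whose components are compatible with the operadic composition and the symmetric group actions -- what I will call an \emph{operadic natural isomorphism}, i.e. a family of isomorphisms $H_x:\phi(x)\to\psi(x)$, one for each object $x$ of each $\h{\pab}(n)$, natural in $x$ and respecting the $\circ_i$, the units and the $\Sigma_n$-actions. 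Throughout I would use the standard structural facts about $\pab$: each groupoid $\pab(n)$ is connected with automorphism groups the pure braid groups $P_n$; as an operad $\pab$ is generated by the operadic unit $e$, a binary object $\mu\in\pab(2)$, the braiding $\tau$ and the associator $a$ subject to the braided-monoidal relations; and the operad of objects $\pi_0\pab$ is the free unital symmetric operad on $\mu$. All of these survive levelwise profinite completion.

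For surjectivity, I would start with an arbitrary $\Phi\in\on{End}(\h{\pab})$ and homotope it into $\puGT$. Its effect on objects is an endomorphism of the free operad $\pi_0\h{\pab}$, hence is determined by the image $\Phi(\mu)$, which is an object of the connected groupoid $\h{\pab}(2)$. Choosing any isomorphism $H_\mu:\Phi(\mu)\to\mu$ and setting $H_e=\id$, the freeness of the object operad lets me extend $(H_\mu,H_e)$ uniquely to an operadic natural isomorphism $H$ with components $H_x:\Phi(x)\to x$. The conjugate $\Psi:=H\Phi H^{-1}$ is then an operad endomorphism which is the identity on objects, so $\Psi\in\puGT$, and by construction $H$ is a homotopy $\Phi\simeq\Psi$. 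Hence every class in $\on{End}_{\pi\Op\pG}(\h{\pab})$ is represented by an element of $\puGT$.

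For injectivity, suppose $\phi,\psi\in\puGT$ are homotopic via an operadic natural isomorphism $H$. Since $\phi$ and $\psi$ are the identity on objects, each $H_x$ is an automorphism of $x$ and naturality gives $\psi(m)=H_y\phi(m)H_x^{-1}$ for every morphism $m:x\to y$; moreover $H_e=\id$ forces $H$ to be determined by the single element $c:=H_\mu\in\on{Aut}(\mu)=\h{\mathbb{Z}}$. It therefore suffices to check that conjugation by $H$ fixes the generators $\tau$ and $a$. At level two this is immediate: $\h{B_2}$ is abelian and the relevant components of $H$ lie in $\h{P_2}\subset\h{B_2}$, so $\psi(\tau)=\phi(\tau)$. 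At level three the source and target of $a$ are the objects $\mu\circ_1\mu$ and $\mu\circ_2\mu$, and the corresponding components of $H$ are the operadic composites $c\circ_1 c$ and $c\circ_2 c$. A short computation in $P_3$, using that $A_{12}$ commutes with $A_{13}A_{23}$, identifies both of these with a power of the full twist $Z_3=A_{12}A_{13}A_{23}$, which is central in $\h{P_3}$. Conjugation by a central element is trivial, so $\psi(a)=\phi(a)$; since $\phi$ and $\psi$ agree on all generators they are equal, which is injectivity.

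The main obstacle is the injectivity step, and specifically the fact that the homotopy $H$ need not be trivial: it is parameterized by the element $c\in\h{\mathbb{Z}}$, reflecting that the identity of $\h{\pab}$ admits nontrivial self-homotopies. The argument only works because these homotopies act through central elements -- the abelianness of $\h{B_2}$ at level two and, more delicately, the centrality of the full twist at level three -- so that the associated conjugation is invisible on the generators. Getting this centrality computation right in $P_3$ and checking that it persists after profinite completion is the technical heart of the proof; by comparison, the identification of homotopies with operadic natural isomorphisms and the freeness of $\pi_0\pab$ used in the surjectivity argument are more routine.
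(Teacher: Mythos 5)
Your surjectivity argument is essentially identical to the paper's: the paper likewise observes that the effect on objects is determined by the image of $\mu=(12)$, picks a morphism from that image to $(12)$, extends it uniquely to components $h(x)\colon u(x)\to x$ using the generation of the object operad (with the components at the nullary and unary objects forced to be identities), and defines the homotopic endomorphism by conjugation, verifying compatibility with $\circ_i$ exactly as you assert. (One cosmetic slip: the operad of objects is $\on{Ob}\h{\pab}=\oper{UM}$, not $\pi_0\h{\pab}$, which is trivial since each $\h{\pab}(n)$ is connected.)

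Your injectivity argument, however, takes a genuinely different route. The paper restricts to arity $3$: it reduces injectivity to the statement that $\puGT\to\on{End}_{\pi\pG}(\h{\pab}(3))\cong\on{End}_{u\pGrp}(\h{K_3})$ is injective, which is proposition \ref{prop-GT acts faithful on K_3}; that in turn rests on Nakamura's centralizer lemma in $\h{F_2}$ (lemma \ref{lemm-centralizer of x}) and on proposition \ref{prop-f is a commutator}, whose proof uses the arity-$4$ defining equation of $\puGT$. You instead analyse the homotopy $H$ itself: it is determined by $c=H_\mu\in\h{\mathbb{Z}}$, its components at the source and target of the associator are $c\circ_1 c$ and $c\circ_2 c$, and your full-twist computation (both equal $Z_3^{\lambda}$, which is central in $\h{K_3}$) is correct. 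Your approach is more self-contained — it needs neither the centralizer lemma nor the $\puGT$ relations — and it isolates the real geometric phenomenon, namely that the identity of $\h{\pab}$ has a $\h{\mathbb{Z}}$ worth of self-homotopies acting through central elements; the paper exploits exactly this later when computing $\pi_1$ of the homotopy automorphism space. What the paper's route buys is that it avoids any appeal to the presentation of $\pab$ by the braiding and associator, replacing it by a purely group-theoretic statement about conjugacy classes of endomorphisms of $\h{K_3}$.

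Two steps in your injectivity argument must be made explicit. First, ``$\phi$ and $\psi$ agree on all generators, hence are equal'' is not literally valid for $\h{\pab}$: the braiding and associator generate $\pab$, hence only a dense suboperad of $\h{\pab}$. You need to add that $\phi$ and $\psi$ are levelwise morphisms of profinite groupoids, so by the adjunction $\pG(\h{C},D)\cong\G(C,|D|)$ they are determined by their restrictions along $\pab\to\h{\pab}$, and these restrictions agree by your computation. Second, at arity $2$, abelianness of $\h{B_2}$ alone only gives $\psi(\tau)=\phi(\tau)\,H_{(21)}H_{(12)}^{-1}$; you also need $H_{(21)}=H_{(12)}$ as elements of $\h{B_2}$, which follows from the $\Sigma_2$-equivariance of $H$ together with the fact that the symmetric group action on $\oper{C}o\oper{B}$, hence on $\pab$, does not change the underlying braid. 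Both are one-line fixes within your setup, so the argument stands.
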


One of the main issue with the profinite completion of spaces is that it does not preserve products. This led us to work with weak operads instead. A weak operad in a relative category with products $\cat{C}$ is a homotopy algebra in $\cat{C}$ over the algebraic theory $\Psi\op$ that controls operads.

In good cases, we construct a model category $\WOp\cat{C}$ encoding the homotopy category of weak operads in $\cat{C}$. The profinite completion of spaces or groupoids induces a left Quillen functor 
\[\h{(-)}:\WOp\S\to \WOp\pS\]
which we take as our definition of the profinite completion of an operad. There is a similar profinite completion left Quillen functor for weak operads in groupoids.

There is a operadic nerve functor $N^{\Psi}:\Op\pG\to\WOp\pG$. This operadic nerve is fully faithful and preserves the path object that exists on both sides. Thus we have an isomorphism
\[\on{End}_{\pi\Op\pG}(\h{\pab})\cong \on{End}_{\pi\WOp\pG}(N^{\Psi}\h{\pab}).\]

The endomorphisms of $N^{\Psi}\h{\pab}$ in $\on{Ho}\WOp\pG$ would coincide with the endomorphisms in $\pi\WOp\pG$ if $N^{\Psi}\h{\pab}$ was cofibrant and fibrant. The weak operad $N^{\Psi}\h{\pab}$ is not cofibrant, nevertheless, we prove the following:

\begin{theo*}[\ref{theo-main theorem for groupoids}]
The composite
\[\puGT\to\on{End}_{\pi\Op\pG}(\h{\pab})\to\on{End}_{\on{Ho}\WOp\pG}(N^{\Psi}\h{\pab})\]
is an isomorphism.
\end{theo*}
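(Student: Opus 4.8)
The plan is to reduce the statement, for the single object $X := N^{\Psi}\h{\pab}$, to the assertion that the canonical map from naive homotopy classes of self-maps to morphisms in the homotopy category is a bijection, and then to settle that assertion by producing a homotopy inverse to a cofibrant replacement. First I would assemble the isomorphisms already at hand. Theorem \ref{theo-main theorem naive homotopy} gives $\puGT \cong \on{End}_{\pi\Op\pG}(\h{\pab})$, and since the operadic nerve $N^{\Psi}$ is fully faithful and preserves the path object $C \mapsto C^{I[1]}$ it induces $\on{End}_{\pi\Op\pG}(\h{\pab}) \cong \on{End}_{\pi\WOp\pG}(X)$. Under these identifications the composite in the statement is precisely the canonical map
\[
\on{End}_{\pi\WOp\pG}(X) \longrightarrow \on{End}_{\on{Ho}\WOp\pG}(X)
\]
sending a naive homotopy class to its image in $\on{Ho}\WOp\pG$. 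As this map is evidently a homomorphism of monoids, it suffices to prove it is a bijection.

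I would then check that $X$ is fibrant in $\WOp\pG$: being the nerve of a genuine operad whose levels are fibrant profinite groupoids, $X$ is projectively fibrant and satisfies the Segal conditions strictly, hence is fibrant for the localized model structure of Proposition \ref{prop-model structure on weak operads}. Fibrancy makes the canonical map well defined (naively homotopic maps agree in $\on{Ho}\WOp\pG$) and lets me compute its target via a cofibrant replacement: choosing $q\colon Q \xrightarrow{\simeq} X$ that is a trivial fibration, so that $Q$ is cofibrant and fibrant, one has $\on{End}_{\on{Ho}\WOp\pG}(X) \cong [Q,X]$, the set of naive homotopy classes of maps $Q\to X$, and the canonical map becomes precomposition $q^{*}\colon [X,X] \to [Q,X]$. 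The theorem thus reduces to the bijectivity of $q^{*}$.

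The key mechanism is that $q^{*}$ is a bijection as soon as $q$ is a homotopy equivalence for the naive (right) homotopy relation. Indeed, functoriality of the path object $C^{I[1]}$ makes this relation a congruence for pre- and post-composition, so a naive homotopy inverse $s$ of $q$ yields a two-sided inverse $s^{*}$ of $q^{*}$ on homotopy classes. Moreover it is enough to produce a single map $s\colon X \to Q$ with $qs \sim \id_{X}$: postcomposing such a homotopy with $q$ gives $q(sq)\sim q\,\id_{Q}$, and since $q$ is a weak equivalence between fibrant objects and $Q$ is cofibrant, postcomposition with $q$ induces an injection $[Q,Q]\to[Q,X]$, whence $sq \sim \id_{Q}$ automatically. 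So everything collapses to the construction of one homotopy section $s$ of the cofibrant replacement. I would carry this out by realizing $q$ through the explicit cofibrant replacement $\paub \to \pab$ of operads in groupoids: profinite completion is left Quillen, so $\h{\paub} \to \h{\pab}$ is a cofibrant replacement in $\Op\pG$, and transporting along the rigidification identifying $\Op\pG$ with weak operads turns the problem into constructing an operadic homotopy section of $\h{\paub} \to \h{\pab}$.

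The main obstacle is exactly this construction. Levelwise the map $\h{\paub}(n) \to \h{\pab}(n)$ is an equivalence of profinite groupoids, so levelwise homotopy inverses exist and the witnessing homotopies are the natural isomorphisms encoded by $C^{I[1]}$; the difficulty is to assemble these levelwise inverses into a single map compatible with all operadic composition maps, up to a coherent system of natural isomorphisms. This is a strictification problem that I expect to handle using the rigidity of the parenthesized braid operads and the coherence already present in $\paub$ as a cofibrant object, and I must separately verify that profinite completion preserves the relevant natural isomorphisms, so that the section $s$ and its defining homotopy descend from $\pab$ to $\h{\pab}$. Once $s$ is constructed, the bijectivity of $q^{*}$ and hence the theorem follow formally.
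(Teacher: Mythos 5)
Your reduction steps are sound and agree with the paper's own framing: identifying the composite with the canonical map $\on{End}_{\pi\WOp\pG}(X)\to\on{End}_{\on{Ho}\WOp\pG}(X)$ for $X=N^{\Psi}\h{\pab}$, checking that $X$ is fibrant in $\WOp\pG$, and reducing everything to the bijectivity of $q^{*}\colon[X,X]\to[Q,X]$ for a cofibrant replacement $q\colon Q\to X$, which you correctly further reduce to producing a homotopy section $s$ of $q$. The genuine gap is that this last step --- which is exactly the hard content of the theorem, as the paper itself notes when it says that $N^{\Psi}\h{\pab}$ is not cofibrant --- is left as an expectation, and the route you sketch for it cannot work. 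First, the paper has no model structure on $\Op\pG$ and no rigidification Quillen equivalence between $\Op\pG$ and $\WOp\pG$: rigidification is only established for $\S$ (Theorem \ref{theo-weak operads vs operads in spaces}) and for $\G$ (Proposition \ref{prop-weak operads vs operads in groupoids}), and avoiding the need for such a statement over profinite objects is one of the main technical points of the paper. Second, $\h{\paub}$ is not even a well-defined operad in $\pG$: the levels $\paub(n)$ have infinite object sets (the sets $\oper{BM}(n)$ are infinite), so Proposition \ref{prop-profinite completion commutes with products} does not apply and levelwise completion does not yield an operad. Third, a cofibrant replacement in $\WOp\pG$ is a projectively cofibrant $\Psi\op$-diagram, which is essentially never of the form $N^{\Psi}$ of an operad, so operadic cofibrancy of $\paub$ cannot be transported into $\WOp\pG$ this way. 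Most seriously, the strictification you hope for is genuinely obstructed: there is no strict operad map $\pab\to\paub$ at all, since on objects it would yield an operad map $\oper{UM}\to\oper{BM}$ sending $\mu$ and $\star$ to elements $m\in\oper{BM}(2)$, $e\in\oper{BM}(0)$ with $m\circ_1 e=\id$, and no such elements exist in the free operad $\oper{BM}$; the same obstruction rules out any candidate section descended to the profinite level. So the existence of your section $s$ up to homotopy is, in effect, equivalent to the theorem you set out to prove.

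The paper's actual proof (Proposition \ref{prop-derived to underived}) sidesteps the need for any cofibrant replacement in $\WOp\pG$. It first rewrites $\Map_{\Op\pG}(\h{\pab},\h{\pab})\cong\Map_{\Op\G}(\pab,|\h{\pab}|)$ using the adjunction $\h{(-)}\dashv|-|$, moving the computation to discrete groupoids; it then invokes Lemma \ref{lemm-paub to P} --- exploiting that $|\h{\pab}|^{I[k]}(0)=|\h{\pab}|^{I[k]}(1)=\ast$ --- to replace the source $\pab$ by the cofibrant operad $\paub$ via an isomorphism (not merely an equivalence) of simplicial mapping spaces. This is the finesse that substitutes for the nonexistent map $\pab\to\paub$: one never inverts $\paub\to\pab$, one only uses that mapping spaces out of the two operads into this particular target agree on the nose. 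With a cofibrant source and fibrant target, the strict mapping space computes the derived one in $\Op\G$; the groupoid rigidification theorem and the Quillen adjunction $\h{(-)}\colon\WOp\G\leftrightarrows\WOp\pG\colon|-|$ then carry it to $\on{End}_{\on{Ho}\WOp\pG}(N^{\Psi}\h{\pab})$, and applying $\pi_0$ together with Theorem \ref{theo-main theorem naive homotopy} finishes the proof. To repair your outline you would need to adopt this change of direction, transporting the whole computation to the discrete side where $\paub$ can serve as a cofibrant source, rather than attempting to invert the cofibrant replacement over $\pG$.
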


The last step is to lift this result about groupoid to a statement about spaces. This is not something that can be done in general because, for a groupoid $C$, the natural map $\h{BC}\to B\h{C}$ (where $B$ denotes the classifying space functor) is in general not an equivalence. More precisely, the completion of the classifying space of $C$ could have non-trivial homotopy groups in degree higher than $1$. Fortunately, this kind of pathology does not occur for the groupoids which appear in the operad $\pab$ and we can prove the following:

\begin{theo*}[\ref{theo-main theorem spaces}]
There is an isomorphism of monoids
\[\on{End}_{\on{Ho}\WOp\pG}(N^{\Psi}\h{\pab})\cong\on{End}_{\on{Ho}\WOp\pS}(\h{N^{\Psi}B\pab}).\]
\end{theo*}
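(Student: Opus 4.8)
The plan is to bridge the two homotopy categories with the classifying space functor $B$, reducing the statement to two inputs: that $B$ is homotopically fully faithful on profinite $1$-types, and that the pure braid groups are good so that completion commutes with $B$ on the groupoids occurring in $\pab$.

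First I would promote $B\colon\pG\to\pS$ to a right Quillen functor $B\colon\WOp\pG\to\WOp\pS$ by levelwise postcomposition. Since $B$ preserves products it carries (homotopy) product-preserving functors to product-preserving functors, so it is compatible with the localization defining weak operads and with the operadic nerve, yielding a natural isomorphism $B\circ N^{\Psi}\cong N^{\Psi}\circ B$. As $N^{\Psi}\h{\pab}$ is a strict, hence local and fibrant, weak operad and $B$ is right Quillen, this identifies the derived functor $\mathbb{R}B\,N^{\Psi}\h{\pab}$ with $N^{\Psi}B\h{\pab}$ in $\on{Ho}\WOp\pS$.

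Next I would compare $N^{\Psi}B\h{\pab}$ with $\h{N^{\Psi}B\pab}$. The unit map $BC\to B\h{C}$ together with the universal property of completion gives a natural transformation $\h{BC}\to B\h{C}$, which I apply levelwise to obtain a map $\h{N^{\Psi}B\pab}\to N^{\Psi}B\h{\pab}$ of weak operads. Weak equivalences of weak operads are detected on the generating objects of $\Psi$, where this map is the comparison $\h{B\pab(n)}\to B\h{\pab(n)}$. Each groupoid $\pab(n)$ is equivalent to the pure braid group $P_n$, so it suffices to know that $P_n$ is good in the sense of Serre, i.e. that $\h{BP_n}\to B\h{P_n}$ is an equivalence; this holds because pure braid groups are iterated extensions of finitely generated free groups, free groups are good, and goodness is inherited by such extensions. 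Hence $\h{N^{\Psi}B\pab}\simeq N^{\Psi}B\h{\pab}\simeq\mathbb{R}B\,N^{\Psi}\h{\pab}$.

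Finally I would conclude using the derived adjunction $\mathbb{L}\Pi_1\dashv\mathbb{R}B$, where $\Pi_1$ is the levelwise left adjoint of $B$. For a levelwise profinite $1$-type the derived counit $\mathbb{L}\Pi_1\mathbb{R}B\to\on{id}$ is an equivalence, since $\Pi_1 B\cong\on{id}$ on $\pG$; applying this at $N^{\Psi}\h{\pab}$ produces a composition-preserving equivalence $\Map^h_{\WOp\pS}(\mathbb{R}B\,N^{\Psi}\h{\pab},\mathbb{R}B\,N^{\Psi}\h{\pab})\simeq\Map^h_{\WOp\pG}(N^{\Psi}\h{\pab},N^{\Psi}\h{\pab})$. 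Passing to $\pi_0$ and combining with the equivalence of the previous step yields the asserted isomorphism of monoids. The main obstacle is this second step: showing that the generic failure of $\h{BC}\to B\h{C}$ to be an equivalence does not occur for the groupoids in $\pab$, that is, establishing the goodness of the pure braid groups and verifying that controlling the comparison on the generating objects of $\Psi$ suffices to identify the two completions at the level of weak operads.
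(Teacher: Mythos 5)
Your overall strategy coincides with the paper's proof of this statement (proposition \ref{prop-from groupoids to spaces}): first identify $\h{N^{\Psi}B\pab}$ with $BN^{\Psi}\h{\pab}$ using Serre's notion of goodness, then transport the endomorphism monoid from $\WOp\pS$ to $\WOp\pG$ using the $(\pi,B)$ adjunction and the fact that the counit $\pi B C\to C$ is an equivalence on (fibrant) profinite groupoids (proposition \ref{prop-B is fully faithful}). Your first and third steps are essentially the paper's argument. The problem is your second step, precisely at the point you yourself flag as ``the main obstacle'' but then treat as routine.

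You claim that ``weak equivalences of weak operads are detected on the generating objects of $\Psi$,'' and use this to reduce the comparison $\h{N^{\Psi}B\pab}\to N^{\Psi}B\h{\pab}$ to the maps $\h{B\pab(n)}\to B\h{\pab(n)}$, so that goodness of the pure braid groups $K_n$ alone suffices. That detection principle is only valid when source and target are both already weak operads: between local objects, local equivalences are levelwise equivalences, and the Segal condition then propagates an equivalence at the generators $T_n$ to all of $\Psi$. But whether $\h{N^{\Psi}B\pab}$ \emph{is} a weak operad is exactly what is in question here. Its value at $T_{\bf{a}}$ is $\h{B\bigl(\pab(a_1)\times\cdots\times\pab(a_n)\bigr)}$ --- the completion of a product, not the product of the completions, since profinite completion does not preserve products (this failure is the very reason the weak-operad formalism is used). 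Its Segal maps are weak equivalences exactly when completion commutes, up to weak equivalence, with the products $\prod_i B\pab(a_i)$, which is the statement your reduction to generators was meant to avoid; the argument is circular as written. The paper closes this hole by showing the comparison map is a \emph{levelwise} weak equivalence at every $T_{\bf{a}}$, i.e.\ for the groupoids $C=\prod_i\pab(a_i)$, and this requires $C$ itself to be good. Goodness is not closed under finite products in general: the paper needs corollary \ref{coro-product of good groupoids}, which rests on Serre's extension criterion (proposition \ref{prop-good groups extension}) and hence on the \emph{finite presentation} of the pure braid groups, not merely their goodness (corollary \ref{coro-pure braid groups are good}). Your proof is missing this input; once goodness of the finite products $\prod_i\pab(a_i)$ is established, either the paper's levelwise argument runs directly, or your reduction to generators becomes legitimate because the source is then known to be a weak operad.
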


In particular, since $\oper{E}_2\simeq B\pab$, we have an isomorphism
\[\puGT\cong\on{End}_{\on{Ho}\WOp\pS}(\h{E_2}).\]

There is an ambiguity on what the profinite completion of a space ought to be. For some authors, the profinite completion should be a pro-object in spaces. For other authors like Sullivan in \cite{sullivangenetics}, the profinite completion should be the inverse limit in spaces of that inverse system. More precisely, we have a right Quillen functor $|-|:\pS\to \S$ which takes a profinite space to its inverse limit in spaces. The profinite completion of a space $X$ could be defined as $\h{X}$ or as $|R\h{X}|$ where $R$ denotes a fibrant replacement in $\pS$. In general the right derived functor of $|-|$ is not  fully faithful. However, in the particular case that we are considering, we can prove the following variant of our main result:

\begin{theo*}[\ref{coro-alternative version of the main result}]
Let $R\h{E_2}$ be a fibrant replacement of $\h{E_2}$ in $\WOp\pS$. There is an isomorphism
\[\on{End}_{\on{Ho}\WOp\pS}(\h{E_2})\cong\on{End}_{\on{Ho}\WOp\S}(|R\h{E_2}|).\]
In particular, we also have an isomorphism
\[\puGT\cong\on{End}_{\on{Ho}\WOp\S}(|R\h{E_2}|).\]
\end{theo*}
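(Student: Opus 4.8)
The plan is to obtain the displayed isomorphism as the map induced on endomorphism monoids by the right derived functor $\mathbb{R}|-|\colon\on{Ho}\WOp\pS\to\on{Ho}\WOp\S$, and to prove it is bijective by a formal argument that isolates a single homotopical input. The functors $\h{(-)}$ and $|-|$ are defined one arity at a time on weak operads and assemble, levelwise from the Quillen adjunction on profinite spaces, into a Quillen adjunction $\h{(-)}\colon\WOp\S\rightleftarrows\WOp\pS\colon|-|$. Thus $\mathbb{R}|-|$ sends $\h{E_2}$ (which represents $\mathbb{L}\h{(-)}E_2$) to $|R\h{E_2}|$ and, being a functor of homotopy categories, induces a homomorphism of monoids
\[\on{End}_{\on{Ho}\WOp\pS}(\h{E_2})\longrightarrow\on{End}_{\on{Ho}\WOp\S}(|R\h{E_2}|).\]
Since a functor preserves composition and units, it suffices to show this homomorphism is a bijection.

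For the bijectivity I would use the standard fact that, for an adjunction $L\dashv R$, the composite of $R\colon\on{Ho}(Y,Y)\to\on{Ho}(RY,RY)$ with the adjunction isomorphism $\on{Ho}(RY,RY)\cong\on{Ho}(LRY,Y)$ is precomposition with the derived counit $\epsilon_Y\colon LRY\to Y$. Taking $Y=\h{E_2}$, the monoid map above is therefore a bijection as soon as the derived counit
\[\epsilon_{\h{E_2}}\colon\mathbb{L}\h{(-)}\,\mathbb{R}|\h{E_2}|\longrightarrow\h{E_2}\]
is an isomorphism in $\on{Ho}\WOp\pS$; concretely this asks that completing the materialization $|R\h{E_2}|$ recovers $\h{E_2}$. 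Because $\h{E_2}$ is itself the derived completion $\mathbb{L}\h{(-)}E_2$, the triangle identity exhibits $\mathbb{L}\h{(-)}$ applied to the unit $\eta_{E_2}\colon E_2\to|R\h{E_2}|$ as a section of $\epsilon_{\h{E_2}}$. Hence it is equivalent, and more convenient, to prove that $\eta_{E_2}$ becomes a weak equivalence after profinite completion.

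To check this last equivalence I would argue one arity at a time: a levelwise weak equivalence of preoperads is in particular a weak equivalence in $\WOp\pS$, since the latter is a left Bousfield localization of the levelwise model structure, and both $\h{(-)}$ and $|-|$ are computed in each arity. It therefore suffices to prove that for each arity $n$ the map $E_2(n)\to\mathbb{R}|\h{E_2(n)}|$ induces an equivalence after completion, where $E_2(n)\simeq BP_n$ is the classifying space of the pure braid group $P_n$. Since equivalences of profinite spaces are detected by continuous cohomology with finite coefficients, this reduces to showing $E_2(n)\to\mathbb{R}|\h{E_2(n)}|$ is an isomorphism on cohomology with all finite coefficients. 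At this point I would invoke that $E_2(n)$ is aspherical and that the pure braid groups are \emph{good} in the sense of Serre and of finite cohomological type, together with Quick's comparison between the finite-coefficient cohomology of a materialization and the continuous cohomology of the profinite space, so that neither a $\lim^1$ term nor spurious higher homotopy can intervene.

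The main obstacle is precisely this aritywise step, namely controlling the double completion $\mathbb{L}\h{(-)}\,\mathbb{R}|\h{E_2}|\simeq\h{E_2}$. This is the same family of pathologies flagged before Theorem \ref{theo-main theorem spaces}: the materialization of a profinite space can in principle carry cohomology, and hence homotopy, not seen by $E_2$, and it is exactly here that the good and finite-type behaviour of the braid groups is essential. I expect to dispatch it using the same finiteness properties already exploited in the proof of that theorem, after which the formal adjunction mechanics of the first two paragraphs deliver the isomorphism; combining it with $\puGT\cong\on{End}_{\on{Ho}\WOp\pS}(\h{E_2})$ then yields the final identification $\puGT\cong\on{End}_{\on{Ho}\WOp\S}(|R\h{E_2}|)$ at once.
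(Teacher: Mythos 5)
Your formal skeleton is fine: the comparison map is induced by $\R|-|$, and it is a bijection as soon as the derived counit $\epsilon\colon\L\h{(-)}\,\R|\h{E_2}|\to\h{E_2}$ is an isomorphism in $\on{Ho}\WOp\pS$. The gap is that this sufficient condition is strictly stronger than the theorem, and the inputs you cite cannot establish it. Unwinding aritywise (after identifying $R\h{E_2}$ with $BN^{\Psi}\h{\pab}$, which already uses goodness as in proposition \ref{prop-from groupoids to spaces}), the counit is the map $\h{B|\h{K_n}|}\to B\h{K_n}$, so you need $H^*_{cont}(\h{K_n};M)\cong H^*(|\h{K_n}|;M)$ for all finite (twisted) coefficients $M$ and in \emph{all} degrees, where $|\h{K_n}|$ is the underlying \emph{discrete} group of the profinite completion. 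Goodness of $K_n$ is a comparison between $H^*(K_n;M)$ and $H^*_{cont}(\h{K_n};M)$; it says nothing about the huge discrete group $|\h{K_n}|$. The ``Quick comparison'' you invoke, between finite-coefficient cohomology of a materialization and continuous cohomology of the profinite space, does not exist as a general theorem: already in degree $1$ it is equivalent to strong completeness of the group and fails, e.g., for $Z=B\bigl(\prod_{\mathbb{N}}\FF_p\bigr)$, whose underlying abstract group has many non-continuous homomorphisms to $\FF_p$. In degree $1$ the statement you need is precisely that every finite-index subgroup of $|\h{K_n}|$ is open, i.e.\ that $\h{K_n}$ is \emph{strongly complete}; this is the Nikolov--Segal theorem (topologically finitely generated profinite groups are strongly complete), which is the essential ingredient of the paper's proof and which your proposal never invokes. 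In degrees $\geq 2$ the comparison you would need (in effect, goodness of the discrete group $|\h{K_n}|$) is not known, so the counit being an honest weak equivalence in $\WOp\pS$ is out of reach and quite possibly false.

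The paper's proof is engineered to avoid exactly this. It never profinitely completes the materialization $|R\h{E_2}|$ as a space. Instead all mapping spaces have targets of the form $BY$ with $Y$ a weak operad in profinite groupoids, hence levelwise $1$-truncated: by homotopical full faithfulness of $B$ on the discrete and profinite sides (propositions \ref{prop-fully faithfulness of B} and \ref{prop-B is fully faithful}), one has $\Map^h_{\WOp\S}(|BX|,|BY|)\simeq\Map^h_{\WOp\G}(|X|,|Y|)$, and the completion adjunction is then applied at the \emph{groupoid} level, where the double completion is the map $\h{|X|}\to X$ of profinite groupoids; for $X=N^{\Psi}\h{\pab}$ this is an isomorphism by strong completeness of the $\h{K_n}$. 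In other words, the paper only ever needs degree-$\leq 1$ information about $|\h{K_n}|$, which is exactly what Nikolov--Segal provides, whereas your route needs it in all degrees. The repair is to weaken your sufficient condition: it is enough that \emph{precomposition by the counit} induces an equivalence on derived mapping spaces into $\h{E_2}$ itself, and that weaker, relative statement is what the detour through groupoids proves.
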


\section{A few facts about model categories}

For future references, we recall a few useful facts about model categories.

\subsection*{Cofibrant generation}

\begin{defi}
Let $\cat{X}$ be a cocomplete category and $I$ a set of map in $\cat{X}$. 
\begin{itemize}
\item The \emph{$I$-cell complexes} are the smallest class of maps in $\cat{X}$ containing $I$ and closed under pushout and transfinite composition. 
\item The \emph{$I$-fibrations} are the maps with the right lifting property against $I$. 
\item The \emph{$I$-cofibrations} are the maps with the left lifting property against the $I$-fibrations. 
\end{itemize}
\end{defi}

It is easy to see that the $I$-fibrations are exactly the map with the right lifting property against the $I$-cofibrations. If the source of the maps of $I$ are small, then the $I$-cofibrations are exactly the retracts of the $I$-cell complexes. These facts can be found in appendix A of \cite{lurietopos}.

\subsection*{Mapping spaces and adjunctions}

As any category with weak equivalences, a model category has a simplicial enrichment given by the hammock localization. We denote by $\Map^h_{\cat{X}}(X,Y)$ the space of maps from $X$ to $Y$ in the hammock localization of $\cat{X}$ (see \cite[3.1.]{dwyerfunction} for a definition of the hammock localization). 

We denote by $\Map_{\cat{X}}(X,Y)$ the simplicial set of maps from $X$ to $Y$ whenever $\cat{X}$ has a natural enrichment in simplicial sets. This space is related to the previous space by the following theorem:

\begin{theo}
Let $\cat{X}$ be a simplicial model category, let $X$ be a cofibrant object and $Y$ be a fibrant object, then there is an isomorphism in $\on{Ho}(\S)$
\[\Map_{\cat{X}}(X,Y)\simeq \Map_{\cat{X}}^h(X,Y).\]
\end{theo}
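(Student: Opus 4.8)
The plan is to recognize $\Map_\cat{X}(X,Y)$ as one of the standard models for the homotopy function complex $\Map^h_\cat{X}(X,Y)$ provided by the theory of Dwyer and Kan \cite{dwyerfunction}. Recall that this theory computes $\Map^h_\cat{X}(X,Y)$, up to natural weak equivalence, from (co)simplicial resolutions: if $X$ is cofibrant and $\hat{Y}_\bullet$ is a simplicial resolution of $Y$ (a Reedy fibrant simplicial object of $\cat{X}$ equipped with a weak equivalence from the constant object on $Y$, so that each $\hat{Y}_n$ is weakly equivalent to $Y$), then the simplicial set $[n]\mapsto \cat{X}(X,\hat{Y}_n)$ is a right homotopy function complex, hence weakly equivalent to $\Map^h_\cat{X}(X,Y)$ in $\on{Ho}\S$. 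It therefore suffices to exhibit a simplicial resolution of $Y$ that the simplicial structure turns into $\Map_\cat{X}(X,Y)$.

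The candidate is the cotensor $[n]\mapsto Y^{\Delta^n}$, which is a simplicial object of $\cat{X}$ since the cotensor is contravariant in the simplicial variable. The key step is to verify that, when $Y$ is fibrant, this is a simplicial resolution of $Y$. The vertex inclusions $\Delta^0\to\Delta^n$ are trivial cofibrations in $\S$, so the compatibility axiom (SM7) of the simplicial model structure makes each induced map $Y^{\Delta^n}\to Y^{\Delta^0}=Y$ a trivial fibration; the same axiom applied to the latching cofibrations $\partial\Delta^n\to\Delta^n$ makes the matching maps $Y^{\Delta^n}\to Y^{\partial\Delta^n}$ fibrations, giving Reedy fibrancy. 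This is exactly the assertion that $Y^{\Delta^\bullet}$ is a simplicial resolution of $Y$, and it is the technical heart of the argument: it is here that one uses the full strength of the compatibility between the simplicial enrichment and the model structure.

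With the resolution in hand the conclusion is formal. By the defining adjunction of a simplicial model category there are natural isomorphisms
\[\cat{X}(X,Y^{\Delta^n})\cong \S(\Delta^n,\Map_\cat{X}(X,Y))=\Map_\cat{X}(X,Y)_n,\]
compatible with the simplicial structure, so the right homotopy function complex $[n]\mapsto\cat{X}(X,Y^{\Delta^n})$ is literally the simplicial set $\Map_\cat{X}(X,Y)$. (Since $X$ is cofibrant and $Y$ is fibrant, SM7 also guarantees that this is a Kan complex, as a homotopy function complex must be.) Combining this identification with the first paragraph yields the desired weak equivalence $\Map_\cat{X}(X,Y)\simeq\Map^h_\cat{X}(X,Y)$ in $\on{Ho}\S$.
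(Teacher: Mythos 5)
Your proof is correct. Bear in mind that the paper offers no argument of its own here: its entire proof is the citation of \cite[Corollary 4.7]{dwyerfunction}, which is literally the statement being proved, so what you have written is in substance the standard derivation of that corollary from the general Dwyer--Kan theory of function complexes. Your SM7 verification that $Y^{\Delta^\bullet}$ is a simplicial resolution of the fibrant object $Y$ is right; two points you use implicitly are worth naming: the matching object $M_n(Y^{\Delta^\bullet})$ is identified with $Y^{\partial\Delta^n}$ because the cotensor carries the latching colimit $\partial\Delta^n$ of the cosimplicial object $\Delta^\bullet$ to a limit, and the structure map $Y\to Y^{\Delta^n}$ from the constant simplicial object is a weak equivalence by two-out-of-three, being a section of your trivial fibration $Y^{\Delta^n}\to Y$. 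The enriched adjunction then identifies the right homotopy function complex $\cat{X}(X,Y^{\Delta^\bullet})$ with the simplicial set $\Map_{\cat{X}}(X,Y)$ on the nose, as you say. The one thing to be clear about is that your first paragraph is not free: the assertion that a right homotopy function complex built from a simplicial resolution computes the hammock localization mapping space $\Map^h_{\cat{X}}(X,Y)$ is itself the main comparison theorem of \cite{dwyerfunction} for general model categories. So your argument does not bypass Dwyer--Kan; it trades the citation of their Corollary 4.7 for a citation of their more general theorem plus the genuinely simplicial-model-category-specific step, which is exactly where SM7 enters. What your route buys is transparency about the role the simplicial enrichment plays; what the paper's bare citation buys is brevity.
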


\begin{proof}
See \cite[Corollary 4.7.]{dwyerfunction}.
\end{proof}

A Quillen adjunction is an adjunction up to homotopy in the following sense:

\begin{theo}\label{theo-Quillen adjunction and mapping spaces}
Let $F:\cat{X}\leftrightarrows\cat{Y}:U$ be a Quillen adjunction. Then we have an isomorphism in $\on{Ho}(\S)$
\[\Map^h_{\cat{Y}}(\L FX,Y)\simeq\Map^h_{\cat{X}}(X,\R UY).\]
\end{theo}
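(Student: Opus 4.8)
The plan is to reduce the statement to the standard computation of a derived mapping space by means of cosimplicial resolutions, and then to exploit the elementary fact that a left Quillen functor preserves such resolutions. First I would reduce to the case where $X$ is cofibrant and $Y$ is fibrant. The derived mapping space is invariant, up to isomorphism in $\on{Ho}(\S)$, under replacing either of its arguments by a weakly equivalent object; since $\L F X\simeq FX$ when $X$ is cofibrant and $\R U Y\simeq U Y$ when $Y$ is fibrant, it is enough to construct a natural isomorphism $\Map^h_{\cat{Y}}(FX,Y)\simeq\Map^h_{\cat{X}}(X,UY)$ in $\on{Ho}(\S)$ under these two hypotheses.

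Next I would use the description of the derived mapping space through framings, due to Dwyer and Kan (\cite{dwyerfunction}). Let $\tilde X$ be a cosimplicial resolution of $X$, that is, a Reedy cofibrant object of the category $\cat{X}^{\Delta}$ of cosimplicial objects in $\cat{X}$ together with a Reedy weak equivalence $\tilde X\to cX$ to the constant cosimplicial object. Whenever the target $Z$ is fibrant, the simplicial set $[n]\mapsto\cat{X}(\tilde X^n,Z)$ represents $\Map^h_{\cat{X}}(X,Z)$ in $\on{Ho}(\S)$, and the same holds on the $\cat{Y}$ side. The crucial observation is that the levelwise functor $F\colon\cat{X}^{\Delta}\to\cat{Y}^{\Delta}$ is left Quillen for the Reedy model structures, because the latching maps of $F\tilde X$ are the images under $F$ of those of $\tilde X$; hence $F$ preserves Reedy cofibrant objects and, by Ken Brown's lemma, Reedy weak equivalences between them. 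As $X$ is cofibrant, each $\tilde X^n$ is cofibrant and $\tilde X\to cX$ is a levelwise weak equivalence between cofibrant objects, so $F\tilde X\to c(FX)$ is again a levelwise weak equivalence with $F\tilde X$ Reedy cofibrant. Thus $F\tilde X$ is a cosimplicial resolution of $FX=\L FX$.

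I would then chain the resulting identifications. Using the resolution $F\tilde X$ together with the fibrant object $Y$ on the left, the resolution $\tilde X$ together with the fibrant object $UY$ on the right, and the adjunction $F\dashv U$ applied in each cosimplicial degree, I obtain
\[\Map^h_{\cat{Y}}(FX,Y)\simeq\cat{Y}(F\tilde X,Y)\cong\cat{X}(\tilde X,UY)\simeq\Map^h_{\cat{X}}(X,UY),\]
where the middle isomorphism is natural in $[n]$ and so is an isomorphism of simplicial sets, and where $UY$ is fibrant because $U$ is right Quillen. This is the desired isomorphism in $\on{Ho}(\S)$.

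The main obstacle I expect is not conceptual but bookkeeping: one must check that this framing model of $\Map^h$ agrees with the hammock-localization model used elsewhere in the paper (all homotopy function complexes of a model category agree up to weak equivalence, again by Dwyer--Kan), and that $F$ being left Quillen genuinely makes the levelwise functor left Quillen for the Reedy structures. Both facts are standard, so the proof ultimately amounts to assembling the theory of framings with the levelwise adjunction isomorphism.
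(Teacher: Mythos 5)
Your argument is correct and complete. Note that the paper does not actually prove this theorem: it is stated without proof as a standard fact about Quillen adjunctions, so there is no in-paper argument to compare against. Your route — reduce to $X$ cofibrant and $Y$ fibrant, observe that the levelwise functor $F$ is left Quillen for the Reedy structures on cosimplicial objects (latching objects are colimits, hence commute with $F$), conclude that $F\tilde X$ is a cosimplicial resolution of $FX$, and apply the adjunction isomorphism degreewise — is exactly the standard proof (see \cite[Propositions 16.2.1 and 17.4.16]{hirschhornmodel}), and the model-comparison issue you flag at the end (framings versus hammock localization) is precisely what the cited Dwyer--Kan paper \cite{dwyerfunction} supplies, so all the steps you defer to the literature are genuinely available there.
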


\subsection*{Left Bousfield localization}

\begin{defi}
Let $\cat{X}$ be a model category. A \emph{left Bousfield localization} of $\cat{X}$ is a model category $L\cat{X}$ whose underlying category is $\cat{X}$, whose cofibrations are the cofibrations of $\cat{X}$ and whose weak equivalences contain the weak equivalences of $\cat{X}$.
\end{defi}

Tautologically, if $L\cat{X}$ is a left Bousfield localization, the identity functor induces a Quillen adjunction
\[\id:\cat{X}\leftrightarrows L\cat{X}:\id.\]

\begin{defi}
Let $\cat{X}$ be a model category and $S$ be a class of maps in $\cat{X}$. Then we say that an object $Z$ of $\cat{X}$ is \emph{$S$-local} if for any map $u:A\to B$ in $S$, the induced map
\[\Map^h(B,Z)\to\Map^h(A,Z)\]
is a weak equivalence.

Dually, if $\mathcal{K}$ is a class of objects of $\cat{X}$, we say that a map $u:A\to B$ is a $\mathcal{K}$-weak equivalence if for all $Z$ in $\mathcal{K}$, the induced map
\[\Map^h(B,Z)\to\Map^h(A,Z)\]
is a weak equivalence
\end{defi}

\begin{rem}
Note that our definition of $S$-local objects differs slightly form that of \cite{hirschhornmodel}. An $S$-local object for Hirschhorn is an $S$-local object for us that is also fibrant.
\end{rem}

Let $S$ be a class of maps in $\cat{X}$. If it exists, we denote by $L_S\cat{X}$ the left Bousfield localization of $\cat{X}$ whose weak equivalences are the $\mathcal{K}$-equivalences for $\mathcal{K}$ the class of $S$-local objects.

It is usually hard to determine the fibrations of a Bousfield localization, however, the fibrant objects have a nice characterization:

\begin{prop}\label{prop-fibrant objects in a localization}
If $\cat{X}$ is left proper and $L_S\cat{X}$ exists, its fibrant objects are exactly the objects that are $S$-local and fibrant in $\cat{X}$.
\end{prop}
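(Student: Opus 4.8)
The plan is to prove the two inclusions separately, after recording some formal facts about the Quillen pair $\id:\cat{X}\leftrightarrows L_S\cat{X}:\id$ coming from the localization. First, every $\cat{X}$-weak equivalence is an $L_S\cat{X}$-weak equivalence (derived mapping spaces are homotopy invariant in each variable), so this is indeed a Quillen adjunction, and moreover every map $u$ in $S$ is an $L_S\cat{X}$-weak equivalence: $u$ is inverted by $\Map^h(-,W)$ for every $S$-local $W$, which is exactly the condition defining the $\mathcal{K}$-equivalences for $\mathcal{K}$ the class of $S$-local objects. Second, conversely, an $S$-local object $Z$ has the property that $\Map^h(-,Z)$ inverts every $L_S\cat{X}$-weak equivalence, since being $S$-local means $Z\in\mathcal{K}$ and the $L_S\cat{X}$-weak equivalences are by definition the $\mathcal{K}$-equivalences. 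Third, applying Theorem \ref{theo-Quillen adjunction and mapping spaces} to the Quillen pair above, and using that $\L\id$ is the identity on cofibrant objects while $\R\id$ is the identity on $L_S\cat{X}$-fibrant objects, gives a natural equivalence $\Map^h_{\cat{X}}(X,Z)\simeq\Map^h_{L_S\cat{X}}(X,Z)$ whenever $X$ is cofibrant in $\cat{X}$ and $Z$ is fibrant in $L_S\cat{X}$.

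For the inclusion that an $L_S\cat{X}$-fibrant object $Z$ is $\cat{X}$-fibrant and $S$-local, I would first note that $\id:L_S\cat{X}\to\cat{X}$ is right Quillen and hence preserves fibrant objects, so $Z$ is $\cat{X}$-fibrant. For $S$-locality, take $u:A\to B$ in $S$; since $\Map^h$ is homotopy invariant I may assume $A$ and $B$ cofibrant. The third fact identifies $\Map^h_{\cat{X}}(-,Z)$ with $\Map^h_{L_S\cat{X}}(-,Z)$ on $A$ and $B$, and because $u$ is an $L_S\cat{X}$-weak equivalence the map $\Map^h_{L_S\cat{X}}(B,Z)\to\Map^h_{L_S\cat{X}}(A,Z)$ is an equivalence; transporting back shows $\Map^h_{\cat{X}}(B,Z)\to\Map^h_{\cat{X}}(A,Z)$ is an equivalence, i.e. $Z$ is $S$-local.

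For the reverse inclusion, which I expect to be the crux, suppose $Z$ is $\cat{X}$-fibrant and $S$-local. I would factor $Z\to\ast$ in $L_S\cat{X}$ as $Z\xrightarrow{k}\tilde Z\to\ast$ with $k$ an $L_S\cat{X}$-trivial cofibration and $\tilde Z$ fibrant in $L_S\cat{X}$; by the first inclusion $\tilde Z$ is itself $\cat{X}$-fibrant and $S$-local. The key claim is that $k$ is a $\cat{X}$-weak equivalence. Indeed, by the second fact both $\Map^h(k,Z)$ and $\Map^h(k,\tilde Z)$ are equivalences, since $k$ is an $L_S\cat{X}$-weak equivalence and $Z,\tilde Z$ are $S$-local; chasing the classes of $\id_Z$ and $\id_{\tilde Z}$ through these equivalences produces a two-sided inverse to $k$ in $\on{Ho}\cat{X}$, so $k$ becomes invertible there and is therefore a $\cat{X}$-weak equivalence. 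Thus $k$ is a $\cat{X}$-trivial cofibration out of the $\cat{X}$-fibrant object $Z$, and solving the lifting problem posed by $\id_Z$ along $k$ yields a retraction $r:\tilde Z\to Z$ with $rk=\id_Z$. Since fibrant objects are closed under retracts and $\tilde Z$ is $L_S\cat{X}$-fibrant, $Z$ is $L_S\cat{X}$-fibrant, completing the proof.

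The genuinely non-formal input is the key claim of the previous paragraph, which upgrades an abstract $L_S\cat{X}$-weak equivalence between $S$-local objects to an honest $\cat{X}$-weak equivalence; everything else is bookkeeping with the Quillen adjunction and with derived mapping spaces. This claim, and hence the reverse inclusion, is the point at which left properness enters the usual development of localization theory (as in \cite{hirschhornmodel}), where it guarantees good behaviour of $S$-local equivalences under homotopy pushouts and of cofibrant replacements; the retract argument above is arranged precisely to isolate this step.
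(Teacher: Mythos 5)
Your proof is correct, and it takes a genuinely different route from the paper's, whose proof is simply a citation of \cite[Proposition 3.4.1.]{hirschhornmodel}. Hirschhorn proves the hard direction --- that an $S$-local, $\cat{X}$-fibrant $Z$ is fibrant in $L_S\cat{X}$ --- by directly verifying the right lifting property of $Z\to\ast$ against every cofibration that is an $S$-local equivalence, and left properness is genuinely needed there because his supporting lemmas are established \emph{without} assuming that $L_S\cat{X}$ exists: properness is what makes strict pushouts along cofibrations into homotopy pushouts, so that cobase changes of local equivalences along arbitrary maps remain local equivalences, and it underlies his passage from lifting up to homotopy to strict lifting. Your argument instead exploits the standing hypothesis that $L_S\cat{X}$ exists: its factorization axiom produces $k:Z\to\tilde Z$, the easy direction applies to $\tilde Z$, a local Whitehead argument ($\pi_0$ of derived mapping spaces plus saturation of model categories) upgrades the $L_S\cat{X}$-equivalence $k$ between $S$-local objects to an $\cat{X}$-equivalence, and retract-closure of fibrations finishes. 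This trade has a consequence you did not notice: contrary to your closing paragraph, left properness is never used anywhere in your proof --- the key claim is definition-chasing plus saturation, and every other ingredient (the identity Quillen pair, theorem \ref{theo-Quillen adjunction and mapping spaces}, factorization, lifting, retracts) is available in an arbitrary model category. So you have in fact proved the proposition with the properness hypothesis deleted, for the paper's definition of $L_S\cat{X}$ (same cofibrations, weak equivalences the $\mathcal{K}$-equivalences); the hypothesis appears in the statement because Hirschhorn's formulation, whose lemmas are proved before and independently of existence, requires it, and because properness is what secures existence in practice, as in theorem \ref{theo-Left localization combinatorial}. The one step you should make explicit is the naturality in the first variable of the comparison $\Map^h_{\cat{X}}(-,Z)\simeq\Map^h_{L_S\cat{X}}(-,Z)$ extracted from theorem \ref{theo-Quillen adjunction and mapping spaces}, which you use when transporting the equivalence $\Map^h_{L_S\cat{X}}(u,Z)$ back to $\cat{X}$; this is standard but is not part of that theorem as stated.
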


\begin{proof}
This is proved in \cite[Proposition 3.4.1.]{hirschhornmodel}. 
\end{proof}

\begin{prop}
Let $\cat{X}\to L\cat{X}$ be a left Bousfield localization of a left proper model category $\cat{X}$. Let $T$ be local with with respect to the weak equivalences of $L\cat{X}$ and $Z$ be any object. Then we have
\[\Map^h_{L\cat{X}}(Z,T)\simeq\Map^h_\cat{X}(Z,T).\]
\end{prop}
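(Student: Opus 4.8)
The plan is to reduce the comparison to the tautological Quillen adjunction $\id:\cat{X}\leftrightarrows L\cat{X}:\id$ attached to the localization, and then to show that the difference between the two model structures is invisible to a local target. First I would apply Theorem~\ref{theo-Quillen adjunction and mapping spaces} to this adjunction, with source $Z$ and target $T$, obtaining
\[\Map^h_{L\cat{X}}(\L\id\, Z, T)\simeq\Map^h_{\cat{X}}(Z,\R\id\, T).\]
Since the cofibrations of $\cat{X}$ and of $L\cat{X}$ coincide, a cofibrant replacement $QZ\to Z$ in $\cat{X}$ is also one in $L\cat{X}$, so $\L\id\, Z$ is isomorphic to $Z$ in $\on{Ho}(L\cat{X})$ and the left-hand side is $\Map^h_{L\cat{X}}(Z,T)$. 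On the right-hand side, $\R\id\, T$ is computed by an $L\cat{X}$-fibrant replacement $T\to RT$, regarded as an object of $\cat{X}$, so the right-hand side is $\Map^h_{\cat{X}}(Z,RT)$. It then remains to identify $\Map^h_{\cat{X}}(Z,RT)$ with $\Map^h_{\cat{X}}(Z,T)$, for which it suffices that the $L\cat{X}$-weak equivalence $T\to RT$ be an honest weak equivalence of $\cat{X}$, by homotopy invariance of the derived mapping space in the second variable.

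To establish this I would first note that $RT$ is local: being fibrant in $L\cat{X}$, Proposition~\ref{prop-fibrant objects in a localization} (this is where left properness enters) shows that $RT$ is local with respect to the weak equivalences of $L\cat{X}$ and fibrant in $\cat{X}$. Thus $T\to RT$ is an $L\cat{X}$-weak equivalence between two objects that are both local, $T$ by hypothesis and $RT$ by the previous sentence. The crux of the argument is then the following lemma, which I expect to be the main obstacle: any $L\cat{X}$-weak equivalence $f\colon A\to B$ between local objects is a weak equivalence of $\cat{X}$. The point is to upgrade an equivalence that is a priori only visible after localization to an equivalence in $\cat{X}$ itself, and it is precisely the locality hypothesis that makes this possible.

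I would prove this lemma at the level of the homotopy category $\on{Ho}(\cat{X})$. Applying the locality of $A$ and of $B$ to the $L\cat{X}$-weak equivalence $f$ and passing to $\pi_0$ of the resulting equivalences of derived mapping spaces shows that the maps
\[f^*\colon\on{Ho}(\cat{X})(B,A)\to\on{Ho}(\cat{X})(A,A),\qquad f^*\colon\on{Ho}(\cat{X})(B,B)\to\on{Ho}(\cat{X})(A,B)\]
are bijections. Let $g\colon B\to A$ in $\on{Ho}(\cat{X})$ be the preimage of $\id_A$ under the first bijection, so that $g\circ f=\id_A$; then $f^*(f\circ g)=f\circ g\circ f=f=f^*(\id_B)$, and injectivity of the second bijection forces $f\circ g=\id_B$. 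Hence $f$ is an isomorphism in $\on{Ho}(\cat{X})$, i.e.\ a weak equivalence of $\cat{X}$. Applying this to $f=(T\to RT)$ yields $\Map^h_{\cat{X}}(Z,RT)\simeq\Map^h_{\cat{X}}(Z,T)$, which combined with the first paragraph completes the proof.
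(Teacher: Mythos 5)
Your proof is correct and follows the same route as the paper: the paper's entire proof consists of your first step, namely citing theorem \ref{theo-Quillen adjunction and mapping spaces} for the adjunction $\id:\cat{X}\leftrightarrows L\cat{X}:\id$, and leaves the identification of $\R\id\, T$ with $T$ implicit. The remainder of your argument — showing via the retract trick on $\on{Ho}(\cat{X})$ that an $L\cat{X}$-weak equivalence between local objects is already a weak equivalence of $\cat{X}$, applied to $T\to RT$ — is precisely the omitted detail, and it is the place where the locality hypothesis on $T$ is actually used, so your write-up is a faithful completion of the paper's proof rather than a different one.
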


\begin{proof}
This is just theorem \ref{theo-Quillen adjunction and mapping spaces} applied to the Quillen adjunction
\[\id:\cat{X}\leftrightarrows L\cat{X}:\id.\]
\end{proof}

We have two theorems of existence of left Bousfield localizations. One in the combinatorial case and one in the cocombinatorial case. We recall that a combinatorial model category is a model category that is cofibrantly generated and whose underlying category is presentable. We say that a model category is cocombinatorial if the opposite model category is combinatorial.

\begin{theo}\label{theo-Left localization combinatorial}
Let $\cat{X}$ be a left proper combinatorial model category, let $S$ be a set of maps in $\cat{X}$, then there is a model structure on $\cat{X}$ denoted $L_S\cat{X}$ such that
\begin{itemize}
\item The cofibrations of $L_S\cat{X}$ are the cofibrations of $\cat{X}$.
\item The fibrant objects of $L_S\cat{X}$ are the objects of $\cat{X}$ that are both $S$-local and fibrant in $\cat{X}$
\item The weak equivalences in $L_S\cat{X}$ are the $\mathcal{K}$-equivalences for $\mathcal{K}$ the class of $S$-local objects of $\cat{X}$.
\end{itemize}
Moreover, this model structure is left proper and combinatorial. If $\cat{X}$ is tractable and simplicial, then $L_S\cat{X}$ is simplicial (for the same simplicial structure).
\end{theo}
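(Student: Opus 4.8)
The plan is to deduce the existence of $L_S\cat{X}$ from Jeff Smith's recognition theorem for combinatorial model categories. Take the generating cofibrations to be a set $I$ of generating cofibrations of $\cat{X}$, so that $\on{cof}(I)$ is the class of cofibrations of $\cat{X}$, and take the candidate class $W$ of weak equivalences to be the class of $S$-local equivalences, i.e. the $\mathcal{K}$-equivalences for $\mathcal{K}$ the class of $S$-local objects. Smith's theorem then asserts that, once the underlying category is locally presentable, there is a (necessarily unique) combinatorial model structure with cofibrations $\on{cof}(I)$ and weak equivalences $W$ provided that: $W$ has the two-out-of-three property and is closed under retracts; $\on{inj}(I)\subseteq W$; the class $\on{cof}(I)\cap W$ is closed under pushout and transfinite composition; and $W$ is an accessible, accessibly embedded subcategory of the arrow category $\cat{X}^{[1]}$ (equivalently, satisfies the solution-set condition at $I$). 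Since the underlying category of $\cat{X}$ is presentable and unchanged, combinatoriality of $L_S\cat{X}$ will be automatic once these hypotheses are checked.

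The first three hypotheses are formal or rely on left properness. Membership of a map $u$ in $W$ is detected by requiring the maps $\Map^h(-,Z)$ applied to $u$ to be weak equivalences of simplicial sets for every $S$-local $Z$; since weak equivalences of simplicial sets satisfy two-out-of-three and are closed under retracts, so does $W$. For $\on{inj}(I)\subseteq W$, note that a map with the right lifting property against $I$ is a trivial fibration of $\cat{X}$, hence a weak equivalence of $\cat{X}$, and a weak equivalence of $\cat{X}$ induces a weak equivalence on every derived mapping space $\Map^h(-,Z)$, so it lies in $W$. The one genuinely geometric input is closure of $\on{cof}(I)\cap W$ under pushout and transfinite composition: here I would use left properness of $\cat{X}$ to argue that a pushout of an $S$-local trivial cofibration along a cofibration computes a homotopy pushout, so that the gluing lemma identifies the result as again an $S$-local equivalence; transfinite composites are handled by the analogous filtered-colimit argument.

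The main obstacle is the accessibility hypothesis, and this is where I expect the real work to be. The point is to translate the homotopical orthogonality conditions defining $S$-locality into the language of accessible categories. Concretely one models the derived mapping spaces by the simplicial mapping-space functor of $\cat{X}$ after applying cofibrant and fibrant replacement functors; these replacement functors, and the evaluation of mapping spaces, are accessible functors between locally presentable categories. The class $\mathcal{K}$ of $S$-local objects is then the class of objects satisfying a \emph{set} of such equivalence conditions (one for each map in $S$, and it is essential here that $S$ is a set rather than a proper class), and standard closure properties of accessible categories under intersections and inverse images show $\mathcal{K}$ is accessible and accessibly embedded; a Löwenheim--Skolem type argument upgrades this to accessibility of $W$ itself, yielding the solution-set condition. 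This step uses combinatoriality of $\cat{X}$ in an essential way and is the technical heart of the theorem.

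With Smith's theorem applied, the remaining clauses follow by comparison with $\cat{X}$. The characterization of the fibrant objects as the objects that are $S$-local and fibrant in $\cat{X}$ is then Proposition \ref{prop-fibrant objects in a localization}, now applicable since $L_S\cat{X}$ exists and $\cat{X}$ is left proper. Left properness of $L_S\cat{X}$ follows because its cofibrations coincide with those of $\cat{X}$ and a pushout of an $S$-local equivalence along a cofibration is again an $S$-local equivalence by the same left-properness argument used above. Finally, when $\cat{X}$ is tractable and simplicial I would verify the pushout-product axiom for $L_S\cat{X}$: the simplicial tensoring and the cofibrations are unchanged, so pushout-products of cofibrations remain cofibrations, and it only remains to check that the pushout-product of an $S$-local trivial cofibration with a cofibration of simplicial sets is an $S$-local equivalence. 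Using tractability to reduce to generators with cofibrant domains, this last compatibility follows from the interaction of the derived mapping space with the simplicial tensoring, giving the desired simplicial structure on $L_S\cat{X}$.
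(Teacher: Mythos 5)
The paper never proves this theorem: it sits in the preliminary section as a recollection of a standard result (Smith's theorem on left Bousfield localization of combinatorial model categories), and the only proof given nearby is for the dual cocombinatorial statement, Theorem \ref{theo-Left localization cocombinatorial}, by citation to \cite{barwickleft}. So the honest comparison is with the cited literature, and your proposal is essentially that proof: Jeff Smith's recognition theorem applied to the generating cofibrations of $\cat{X}$ and the class $W$ of $S$-local equivalences, with left properness used to show $S$-local trivial cofibrations are stable under cobase change and transfinite composition, accessibility of $W$ as the technical heart, the fibrant-object characterization deduced from Proposition \ref{prop-fibrant objects in a localization}, and the pushout-product check for the simplicial clause. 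Your outline is sound, but two points need repair. First, Smith's theorem demands that $\on{cof}(I)\cap W$ be closed under pushout along \emph{arbitrary} maps, not merely ``along a cofibration'' as you wrote; fortunately your own argument gives this, since the map being pushed out is itself a cofibration, so in a left proper model category the square is a homotopy pushout whatever the other leg is, and applying $\Map^h(-,Z)$ for $Z$ an $S$-local object turns it into a homotopy pullback of spaces, where weak equivalences are stable under base change. (The same remark applies to transfinite composites: one needs that in a left proper model category a transfinite composite of cofibrations is a homotopy colimit, without cofibrancy hypotheses on the first term.) Second, in the accessibility step you propose to model $\Map^h$ by ``the simplicial mapping-space functor of $\cat{X}$'', but $\cat{X}$ is not assumed simplicial in this theorem -- simplicialness enters only in the final clause. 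The derived mapping spaces must instead be computed via (co)simplicial resolutions or framings, chosen functorially and accessibly; that this can be done in a combinatorial model category is exactly where the cited sources spend their effort, and is also why tractability hypotheses appear in \cite{barwickleft}. With these corrections your argument is the standard one underlying the result the paper is quoting.
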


Before stating the second theorem, let us recall from \cite{barwickleft} that a cocombinatorial model category $\cat{X}$ is said to be \emph{cotractable} if we can choose a set of generating fibration whose targets are fibrant.

\begin{theo}\label{theo-Left localization cocombinatorial}
Let $\cat{X}$ be a cotractable and left proper model category. Let $\mathcal{K}$ be a full subcategory of $\cat{X}$ such that
\begin{itemize}
\item The category $\mathcal{K}$ is a coaccessible and coaccessibly embedded subcategory of $\cat{X}$.
\item The category $\mathcal{K}$ is stable under homotopy limits in $\cat{X}$.
\item The category $\mathcal{K}$ is stable under weak equivalences.
\end{itemize}
Then there exists a model structure on $\cat{X}$ denoted $L_{\mathcal{K}}\cat{X}$ such that
\begin{itemize}
\item The cofibrations of $L_{\mathcal{K}}\cat{X}$ are the cofibrations of $\cat{X}$.
\item The fibrant objects of $L_{\mathcal{K}}\cat{X}$ are the objects of $\mathcal{K}$ that are fibrant in $\cat{X}$.
\item The weak equivalences are the $\mathcal{K}$-local equivalences.
\end{itemize}
\end{theo}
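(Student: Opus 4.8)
The strategy is to obtain this statement by duality from the combinatorial theory of \emph{right} Bousfield localizations. Since $\cat{X}$ is cocombinatorial and cotractable, its opposite $\cat{X}\op$ is combinatorial and tractable, and since $\cat{X}$ is left proper, $\cat{X}\op$ is right proper. Under $(-)\op$ the subcategory $\mathcal{K}$ is carried to a full subcategory $\mathcal{K}\op\subseteq\cat{X}\op$ which is accessible, accessibly embedded, stable under homotopy colimits, and stable under weak equivalences. Moreover a left Bousfield localization of $\cat{X}$ with the three properties in the conclusion is exactly the same datum as a right Bousfield localization of $\cat{X}\op$ in which the fibrations are those of $\cat{X}\op$, the cofibrant objects are the cofibrant objects of $\mathcal{K}\op$, and the weak equivalences are the $\mathcal{K}\op$-colocal equivalences, i.e.\ the maps $f$ for which $\Map^h_{\cat{X}\op}(Z,f)$ is a weak equivalence for every $Z\in\mathcal{K}\op$. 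Indeed, using $\Map^h_{\cat{X}}(B,Z)\simeq\Map^h_{\cat{X}\op}(Z,B)$ one checks that a $\mathcal{K}$-local equivalence in $\cat{X}$ is precisely a $\mathcal{K}\op$-colocal equivalence in $\cat{X}\op$. So it suffices to construct this right Bousfield localization of $\cat{X}\op$ and dualize.

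The plan is then to invoke the existence theorem for right Bousfield localizations of a tractable right proper model category with respect to a colocalizing subcategory satisfying exactly the hypotheses above; this is the dual of the combinatorial result of Barwick \cite{barwickleft}. If one prefers the version phrased for a \emph{set} of objects, I would first reduce to it: because $\mathcal{K}\op$ is accessible and accessibly embedded in the presentable category $\cat{X}\op$ and closed under homotopy colimits and weak equivalences, it is generated under homotopy colimits by a set of objects $K$, and consequently the class of $\mathcal{K}\op$-colocal equivalences coincides with the class of $K$-colocal equivalences. Applying the right Bousfield localization at $K$ produces a combinatorial model structure $R_K\cat{X}\op$ with the same fibrations as $\cat{X}\op$, whose weak equivalences are the $K$-colocal equivalences and whose cofibrant objects are the cofibrant $K$-colocal objects.

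It then remains to identify the cofibrant objects. I would check that, under the stated hypotheses, the cofibrant $K$-colocal objects are exactly the cofibrant objects lying in $\mathcal{K}\op$: the inclusion of $\mathcal{K}\op$ into the $K$-colocal objects holds since $\mathcal{K}\op$ is built from $K$ by homotopy colimits and is closed under weak equivalences, while the reverse inclusion on cofibrant objects follows from stability of $\mathcal{K}\op$ under homotopy colimits together with the cellular presentation of cofibrant objects in a right Bousfield localization. Dualizing $R_K\cat{X}\op$ back yields the model structure $L_{\mathcal{K}}\cat{X}$, and its fibrations, cofibrant objects, and colocal equivalences translate respectively into the assertions about cofibrations, fibrant objects, and $\mathcal{K}$-local weak equivalences in the statement.

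The main obstacle is the construction of $R_K\cat{X}\op$ itself, that is, verifying the model category axioms — especially the existence of functorial factorizations — for the class of $K$-colocal equivalences. This is precisely where presentability and tractability are essential: one must show that the colocal equivalences form an accessible class of morphisms so that Smith's recognition theorem (as packaged by Barwick) applies, and that the colocal cofibrations admit a small-object-argument description. Since this machinery is abstracted into \cite{barwickleft}, the genuine work here is organizing the duality, establishing that the op-dual hypotheses on $\mathcal{K}\op$ are the ones that theorem requires, and performing the reduction from the subcategory $\mathcal{K}\op$ to a generating set, rather than reproving the localization existence.
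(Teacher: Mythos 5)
Your proposal is correct and takes essentially the same route as the paper: the paper's entire proof is the one-line observation that the dual statement is \cite[Theorem 5.22]{barwickleft}, i.e.\ precisely your passage to $\cat{X}\op$ (combinatorial, tractable, right proper) and appeal to Barwick's existence theorem for right Bousfield localizations at the accessible, accessibly embedded subcategory $\mathcal{K}\op$ closed under homotopy colimits and weak equivalences. Your additional steps --- reducing $\mathcal{K}\op$ to a generating set of objects and re-identifying the cofibrant objects --- are extra care for the case where the cited theorem is phrased for a set, not a genuinely different argument.
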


\begin{proof}
The dual statement is proved in \cite[Theorem 5.22]{barwickleft}.
\end{proof}

\section{Weak operads}

Let $\cat{C}$ be a category with finite products. We assume that the reader is familiar with the notion of operad. We denote by $\Op\cat{C}$ the category of operads in $\cat{C}$ with respect to the symmetric monoidal structure given by the cartesian product.

\subsection*{The algebraic theory of operads}

The theory of operads is definable by an algebraic theory as is observed in \cite[Example 3.4.]{bergnerrigidification}. In this subsection, we recall the details of this construction.

\begin{defi}
Let $S$ be a set. An \emph{$S$-sorted algebraic theory} is a category with products $\Phi$ whose objects are $T_{\bf{a}}$ for each finite sequence $\bf{a}=\{a_1,\ldots,a_n\}$ of elements of $S$. Moreover, we require the existence of an isomorphism
\[T_{\bf{a}}\cong T_{a_1}\times T_{a_2}\times\ldots\times T_{a_n}.\]
\end{defi}

\begin{defi}
Let $\Phi$ be an algebraic theory. Let $\cat{C}$ be a category with products. The category of $\Phi$-algebras in $\cat{C}$ is the category of product preserving functors from $\Phi$ to $\cat{C}$.
\end{defi}

Let $\mathbb{N}$ denote the set of nonnegative integers. There is a forgetful functor
\[U:\Op\Set\to\Set^{\mathbb{N}}\]
that sends an operad $\oper{O}$ to the collection $\{\oper{O}(n)\}_{n\geq 1}$. This functor has a left adjoint denoted $\FF$. Now, we construct an $\mathbb{N}$-sorted theory $\Psi\op$. First, we associate to a sequence $\bf{a}=\{a_1,\ldots,a_n\}$ the element $S_{\bf{a}}$ of $\Set^{\mathbb{N}}$ given by
\[S_{\bf{a}}(k)=\sqcup_{i, a_i=k}\ast.\]

Notice that we have an isomorphism
\begin{equation}\label{e-S preserves coprod}
S_{\bf{a}}\cong S_{a_1}\sqcup\ldots S_{a_n}.
\end{equation}

Now, we define $\Psi$ to be the category whose objects are $T_{\bf{a}}$ for $\bf{a}$ a finite sequence of integers and with morphisms
\[\Psi(T_{\bf{a}},T_{\bf{b}}):=\Op\Set(\FF S_{\bf{a}},\FF S_{\bf{b}}).\]
The composition in $\Psi$ is given by composition in $\Op\Set$. The category $\Psi$ is thus a full subcategory of the category $\Op\Set$. Equation \ref{e-S preserves coprod} implies that $\FF S_{\bf{a}}$ is isomorphic to the coproduct in $\Op\Set$ of the $\FF S_{a_i}$. This immediately implies that $\Psi\op$ is an $\mathbb{N}$-sorted algebraic theory.

\begin{prop}
The category of $\Psi\op$-algebras in sets is equivalent to the category $\Op\Set$.
\end{prop}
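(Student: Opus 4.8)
The plan is to exhibit an equivalence of categories between $\Psi\op$-algebras in $\Set$ and $\Op\Set$ by unwinding the definitions and exploiting the universal property of the free-operad functor $\FF$. Recall that a $\Psi\op$-algebra is a product-preserving functor $A\colon\Psi\op\to\Set$; equivalently, by the defining isomorphism $T_{\bf{a}}\cong T_{a_1}\times\cdots\times T_{a_n}$ in $\Psi\op$, such a functor is determined by the values $A(T_n)$ for each integer $n$ together with the action of morphisms. First I would observe that a product-preserving $A$ is forced to satisfy $A(T_{\bf{a}})\cong A(T_{a_1})\times\cdots\times A(T_{a_n})$, so the underlying data of an algebra is precisely a collection $\{A(T_n)\}_{n\geq 1}$ in $\Set^{\mathbb{N}}$, exactly the data underlying an operad via the forgetful functor $U$.

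Next I would identify the morphisms of $\Psi$ with operations. Since $\Psi(T_{\bf{a}},T_{\bf{b}})=\Op\Set(\FF S_{\bf{a}},\FF S_{\bf{b}})$ and $\FF$ is left adjoint to $U$, the adjunction gives
\[
\Op\Set(\FF S_{\bf{a}},\FF S_{\bf{b}})\cong\Set^{\mathbb{N}}(S_{\bf{a}},U\FF S_{\bf{b}}),
\]
so a generating morphism $T_n\to T_{\bf{b}}$ (with $S_n$ concentrated at level $n$) corresponds to an element of $(U\FF S_{\bf{b}})(n)$, i.e.\ an $n$-ary operation in the free operad on the collection $S_{\bf{b}}$. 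The key point is then that a product-preserving functor $A$ assigns to each such generating operation a map $A(T_{\bf{b}})\to A(T_n)$, and functoriality (respecting composition in $\Psi\op$, which is composition in $\Op\Set$) translates exactly into compatibility with operadic composition, units, and symmetric-group actions. This is the heart of the argument: I would check that the structure maps of an operad (the $\gamma$ composition maps, the unit, the $\Sigma_n$-actions) are recovered from specific elements of the free operad $\FF S_{\bf{b}}$, and conversely that every $\Psi\op$-algebra structure is generated by these.

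To organize this cleanly, I would construct the functor in both directions and check they are mutually inverse (or at least an equivalence). In one direction, given an operad $\oper{O}$, the assignment $T_{\bf{a}}\mapsto \prod_i \oper{O}(a_i)$ extends to a product-preserving functor on $\Psi\op$ because a morphism $\FF S_{\bf{a}}\to\FF S_{\bf{b}}$ of free operads induces, by adjunction and the universal property of $\FF$, a well-defined map on the corresponding products of operad levels; naturality in $\oper{O}$ and functoriality follow from the fact that $\FF$ is a functor and the composition in $\Psi$ is inherited from $\Op\Set$. In the other direction, given a $\Psi\op$-algebra $A$, one reads off an operad structure on $\{A(T_n)\}$ as sketched above. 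The main obstacle I anticipate is not conceptual but bookkeeping: verifying that these two constructions are inverse requires carefully tracking how the free-operad generators encode the operad axioms and confirming that no relations are lost or added—in other words, that the full subcategory $\Psi\subset\Op\Set$ on the objects $\FF S_{\bf{a}}$ is \emph{dense enough} that product-preserving functors out of $\Psi\op$ see exactly the operad structure. This essentially amounts to the statement that $\Op\Set$ is monadic over $\Set^{\mathbb{N}}$ with a finitary monad whose associated Lawvere-style theory is $\Psi\op$, which is a standard consequence of the adjunction $\FF\dashv U$ together with the fact that $\FF S_{\bf{a}}$ is the coproduct of the $\FF S_{a_i}$ (equation \eqref{e-S preserves coprod}); I would invoke the general correspondence between finitary monads and algebraic theories to finish, citing the setup in \cite{bergnerrigidification}.
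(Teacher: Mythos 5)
Your proposal is correct, and its core is the same as the paper's: both build the nerve functor $N^{\Psi}\colon\oper{O}\mapsto\bigl(T_{\bf{a}}\mapsto\prod_i\oper{O}(a_i)\bigr)$ and both use the adjunction $\FF\dashv U$ to identify a morphism $T_n\to T_{\bf{b}}$ of $\Psi$ with an element of $(U\FF S_{\bf{b}})(n)$, i.e.\ with an $n$-ary operation. Where you genuinely diverge is in how the equivalence is finished. The paper stays self-contained: faithfulness of $N^{\Psi}$ is immediate; fullness holds because every structure map $\oper{O}(\bf{a})\to\oper{O}(\bf{b})$ of an operad is, by representability of $\oper{O}\mapsto\oper{O}(\bf{a})$ via $\FF S_{\bf{a}}$, induced by a morphism of $\Psi$; and essential surjectivity follows from a Yoneda trick: the operad relations hold in particular for the free operads $\FF S_{\bf{e}}$, so the two morphisms of $\Psi$ representing either side of each relation are already \emph{equal} in $\Psi$, whence any product-preserving functor $\Psi\op\to\Set$ satisfies the operad axioms automatically. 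You instead delegate this verification to the general correspondence between finitary monads on $\Set^{S}$ and $S$-sorted algebraic theories, observing that the theory associated to the free-operad monad is exactly $\Psi\op$ (this is where equation \ref{e-S preserves coprod} enters, just as in the paper). That route is legitimate and arguably more conceptual — it shows nothing here is special to operads — but it silently rests on two hypotheses you should name and cite: that $U\colon\Op\Set\to\Set^{\mathbb{N}}$ is monadic, and that the monad $U\FF$ is finitary (both standard: the free operad is built from finite trees and commutes with filtered colimits). In short, the paper's Yoneda argument buys self-containedness at the cost of some hand-checking; your appeal to the monad--theory correspondence buys brevity and generality at the cost of those two citations, and also quietly subsumes the fullness step that the paper proves explicitly.
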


\begin{proof}
This proposition is well-known and we only sketch the proof. There is a functor $N^{\Psi}:\Op\Set\to\Alg^{\Psi\op}$ which sends $\oper{O}$ to $T_{\bf{a}}\mapsto \prod_{i}\oper{O}(a_i)$. This functor is clearly faithful. 

We introduce a simplifying notation. Given $\bf{a}$ a finite sequence of integers and $X$ an object  in $\Set^{\mathbb{N}}$, we denote by $X(\bf{a})$ the set $\Set^{\mathbb{N}}(S_{\bf{a}},X)$. 

Looking at the definition, we see that an operad is an object $\oper{O}$ of $\Set^{\mathbb{N}}$ equipped with a collection of operations of the form
\begin{equation}\label{e-operation}
\oper{O}(\bf{a})\to\oper{O}(\bf{b})
\end{equation}
that satisfy several relations which can all be expressed by saying that two maps
\begin{equation}\label{e-relation}
\oper{O}(\bf{c})\to\oper{O}(\bf{d})
\end{equation}
constructed from the operations are equal.

Since $\FF S_\bf{a}$ is the object representing $\oper{O}\mapsto \oper{O}(\bf{a})$ each of the operation \ref{e-operation} must be represented by a map $\FF S_\bf{b}\to\FF S_\bf{a}$. This implies that $N^\Psi$ is full. Indeed, a map of operads is a map of collections $\{\oper{O}(a)\}_{a\in\mathbb{N}}\to\{\oper{P}(a)\}_{a\in\mathbb{N}}$ commuting with the operations \ref{e-operation}. Since these operations are represented by maps in $\Psi$, any map of presheaves $N^\Psi\oper{O}\to N^{\Psi}\oper{P}$ restricts to a map $\oper{O}\to\oper{P}$.

Moreover, the relations satisfied by an operad are in particular valid for $\oper{O}=\FF S_\bf{e}$ for any finite sequence of integer $\bf{e}$. Thus by Yoneda's lemma in $\Psi$, the two maps $\FF S_\bf{d}\to\FF S_\bf{c}$ representing the relation \ref{e-relation} are equal. Hence, we see that given any functor $X:\Psi\op\to\Set$ preserving products, the collection $\{X(T_a)\}_{a\in\mathbb{N}}$ will satisfy the axioms of an operad. In other words, the functor $N^{\Psi}$ is essentially surjective. 
\end{proof}

\subsection*{Preoperads}

\begin{defi}
Let $\cat{C}$ be a category with finite products. We define the category $\cat{POpC}$ of \emph{preoperads in $\cat{C}$} to be the category of functors from $\Psi\op$ to $\cat{C}$. We define the category $\cat{OpC}$ to be the full subcategory of $\cat{POpC}$ spanned by the product preserving functors from $\Psi\op$ to $\cat{C}$.
\end{defi}

\begin{rem}
There is a slight conflict of notation with the previous subsection since the category $\Op\Set$ is not isomorphic to the category of product preserving functors $\Psi\op\to\cat{Set}$ but merely equivalent to it.
\end{rem}

We will denote the inclusion $\Op\cat{C}\to\cat{POpC}$ by the symbol $N^\Psi$ and call it the operadic nerve.

\begin{prop}\label{prop-preoperads main theorem}
Let $\cat{C}$ be a combinatorial (resp. cocombinatorial) model category. The category $\cat{POpC}$ has a combinatorial (resp. cocombinatorial) model structure in which a map is a weak equivalence (resp. a fibration) if it is objectwise a weak equivalence (resp. fibration). Moreover, this model structure is left proper if $\cat{C}$ is left proper and is simplicial if $\cat{C}$ is simplicial.
\end{prop}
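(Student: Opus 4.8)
The plan is to recognise $\cat{POpC}=\Fun(\Psi\op,\cat{C})$ as a diagram category indexed by the small category $\Psi\op$ and to equip it with the projective model structure, in which both weak equivalences and fibrations are detected objectwise; this single structure simultaneously accounts for the weak equivalences emphasised in the combinatorial case and the fibrations emphasised in the cocombinatorial case. When $\cat{C}$ is combinatorial this is completely standard: the projective model structure on functors from a small category into a combinatorial model category exists and is again combinatorial, with generating (trivial) cofibrations of the form $F_d(i)$, where $i$ runs over a set of generating (trivial) cofibrations of $\cat{C}$, $d$ runs over the objects of $\Psi\op$, and $F_d$ is the left adjoint to evaluation $\mathrm{ev}_d$ at $d$. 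I would simply cite this (see appendix A of \cite{lurietopos}, or \cite{hirschhornmodel}).

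The cocombinatorial case cannot be handled by the same transfer argument, since a cocombinatorial $\cat{C}$ need not be cofibrantly generated and the projective structure is not available by hand. Instead I would pass to opposites: there is a natural identification $(\cat{POpC})\op\cong\Fun(\Psi,\cat{C}\op)$, and $\cat{C}\op$ is combinatorial by hypothesis. One then applies the existence theorem for the \emph{injective} model structure on $\Fun(\Psi,\cat{C}\op)$ (valued in a combinatorial model category this is again combinatorial; see \cite{lurietopos} or \cite{barwickleft}), in which weak equivalences and cofibrations are objectwise. Dualising this structure back to $\cat{POpC}$ produces a cocombinatorial model structure whose weak equivalences are objectwise and whose fibrations are exactly the objectwise fibrations: a fibration of $\cat{POpC}$ is a cofibration of $\Fun(\Psi,\cat{C}\op)$, hence an objectwise cofibration in $\cat{C}\op$, i.e. an objectwise fibration in $\cat{C}$. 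The one point worth isolating is that weak equivalences and fibrations described in this way genuinely assemble into a model structure; here this is automatic, because we obtain it by dualising the injective structure rather than by constructing it directly.

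For left properness and the simplicial enrichment I would argue objectwise on both sides. Projective cofibrations are in particular objectwise cofibrations, and pushouts in a functor category are computed objectwise, so left properness of $\cat{C}$ transfers immediately in the combinatorial case; in the cocombinatorial case I would dualise instead, noting that left properness of $\cat{POpC}$ is right properness of $\Fun(\Psi,\cat{C}\op)$, and that the injective structure is right proper when its target is, using that injective fibrations are objectwise fibrations (they have the right lifting property against the objectwise trivial cofibrations $F_d(j)$) and that pullbacks are objectwise. The simplicial structure is the objectwise tensoring and cotensoring, and the pushout-product axiom is verified one object of $\Psi\op$ at a time; in the cocombinatorial case one first remarks that $\cat{C}\op$ is simplicial whenever $\cat{C}$ is (the tensor and cotensor exchange roles and the pushout-product axiom is self-dual) and then dualises the simplicial injective structure. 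I expect the genuine content of the proof to lie entirely in the cocombinatorial case: the combinatorial half is a citation, whereas the cocombinatorial half requires the opposite-category reformulation together with the bookkeeping that converts the injective existence theorem into the asserted statements about objectwise fibrations and about left properness.
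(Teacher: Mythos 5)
Your proposal is correct and follows essentially the same route as the paper: the combinatorial case is the classical projective model structure on $\Fun(\Psi\op,\cat{C})$, and the cocombinatorial case is obtained by dualizing the injective model structure on $\Fun(\Psi,\cat{C}\op)$, with left properness and the simplicial structure handled objectwise or by the same duality. The only difference is that you spell out the verifications (identification of fibrations, properness, pushout-product) that the paper disposes of by citing the relevant results in the appendix of \cite{lurietopos}.
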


\begin{proof}
This proposition has nothing to do with $\Psi$ and would be true for any functor category with a small source.

The existence of the model structure in the combinatorial case is very classical. If $\cat{C}$ is cocombinatorial, then $\cat{C}\op$ is combinatorial. Therefore, $\on{Fun}(\Psi,\cat{C}\op)$ admits the injective model structure by \cite[Proposition A.2.8.2.]{lurietopos} which dualizes to the projective model structure on $\on{Fun}(\Psi\op,\cat{C})=\cat{POpC}$.

The left properness follows from \cite[Remark A.2.8.4.]{lurietopos}

The simplicialness of $\cat{POpC}$ follows from \cite[Proposition A.3.3.2.]{lurietopos}. In the combinatorial case, we need to dualize but this is not a problem since the opposite of a simplicial model category is a simplicial model category.
\end{proof}

Assume that we have an adjunction
\[A:\cat{C}\leftrightarrows\cat{D}:B.\]
Then, applying $A$ and $B$ levelwise,  we get an adjunction
\[A:\cat{POp}\cat{C}\leftrightarrows\cat{POp}\cat{D}:B.\]

For future references we have the following very easy proposition.

\begin{prop}\label{prop-preoperads and adjunctions}
If $(A,B)$ is a Quillen adjunction, then the adjunction
\[A:\cat{POp}\cat{C}\leftrightarrows\cat{POp}\cat{D}:B\]
is a Quillen adjunction.
\end{prop}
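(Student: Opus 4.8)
The statement to prove is Proposition \ref{prop-preoperads and adjunctions}: if $(A,B)$ is a Quillen adjunction between $\cat{C}$ and $\cat{D}$, then the levelwise-induced adjunction on preoperad categories is again a Quillen adjunction. The plan is to reduce everything to the objectwise description of the model structures established in Proposition \ref{prop-preoperads main theorem}, and to recall that verifying a Quillen adjunction only requires checking that the left adjoint preserves cofibrations and trivial cofibrations (equivalently that the right adjoint preserves fibrations and trivial fibrations). Since we are free to check either half, I would work on whichever side is given by an objectwise condition and is therefore immediate.

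First I would recall that the adjunction $A:\cat{POp}\cat{C}\leftrightarrows\cat{POp}\cat{D}:B$ is genuinely an adjunction: applying $A$ and $B$ objectwise to functors $\Psi\op\to\cat{C}$ and $\Psi\op\to\cat{D}$, the hom-set bijection $\cat{D}(AX,Y)\cong\cat{C}(X,BY)$ holds at each object of $\Psi\op$ and is natural in $\Psi\op$, so it assembles to a natural bijection of morphism sets of functors. In the combinatorial case the model structure on preoperads is the projective one, where the weak equivalences and fibrations are objectwise. Here the natural thing is to check that $B$ preserves fibrations and trivial fibrations: since these are detected objectwise, and $B$ is applied objectwise, this follows at once from the fact that $B$ preserves fibrations and trivial fibrations in $\cat{C}$, which is exactly the hypothesis that $(A,B)$ is Quillen. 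Hence $A$ is a left Quillen functor.

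In the cocombinatorial case the roles are dual: the model structure on $\cat{POp}\cat{C}$ is obtained by dualizing the injective model structure on $\Fun(\Psi,\cat{C}\op)$, so now it is the \emph{fibrations} that are objectwise while the cofibrations are the less explicit class. In this situation I would instead check that $A$ preserves cofibrations and trivial cofibrations by dualizing: passing to opposite categories turns $(A,B)$ into a Quillen adjunction $(B\op,A\op)$ between the combinatorial categories $\cat{D}\op$ and $\cat{C}\op$, whose induced objectwise adjunction on $\Fun(\Psi,-\op)$ is left Quillen by the combinatorial case just treated; dualizing back gives the claim.

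I do not expect any genuine obstacle here — this is the routine observation that objectwise right (resp. left) Quillen functors induce right (resp. left) Quillen functors on diagram categories with the projective (resp. injective) model structure. The only point that requires a little care is bookkeeping the duality in the cocombinatorial case, making sure that the class checked objectwise (fibrations, coming from the \emph{projective} structure on $\cat{POp}\cat{C}=\on{Fun}(\Psi\op,\cat{C})$ which is the dual of the injective structure on $\on{Fun}(\Psi,\cat{C}\op)$) is the one matched to the objectwise behavior of the functor being applied. Once the duality is set up correctly, both cases collapse to a one-line verification.
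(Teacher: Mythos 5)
Your direct verification is exactly the intended argument: the paper states this proposition without proof (it is flagged as ``very easy''), and by Proposition \ref{prop-preoperads main theorem} the model structure on $\cat{POpC}$ has objectwise weak equivalences and objectwise fibrations, so the levelwise $B$ preserves fibrations and trivial fibrations simply because $B:\cat{D}\to\cat{C}$ does. The combinatorial half of your proposal is therefore correct and complete.

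The cocombinatorial half, however, takes a wrong turn, and an unnecessary one. First, no separate argument is needed: the construction in Proposition \ref{prop-preoperads main theorem} (dualizing the injective structure on $\Fun(\Psi,\cat{C}\op)$) produces on $\cat{POpC}$ a model structure whose fibrations \emph{and} weak equivalences are objectwise -- the same projective-type description as in the combinatorial case -- so the identical one-line check on $B$ applies verbatim. Second, the dualization you propose instead does not prove the statement. Applying ``the combinatorial case just treated'' (projective structures, right adjoint checked objectwise) to $(B\op,A\op)$ yields a Quillen adjunction for the \emph{projective} structures on $\Fun(\Psi,\cat{D}\op)$ and $\Fun(\Psi,\cat{C}\op)$; dualizing back therefore yields a Quillen adjunction for the \emph{duals} of those structures on $\cat{POpC}$ and $\cat{POpD}$, whose cofibrations and weak equivalences are objectwise. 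That is a different, injective-type, model structure from the one in Proposition \ref{prop-preoperads main theorem}: the two share weak equivalences, but the cofibrations of the paper's structure form a (generally proper) subclass of the objectwise cofibrations, and knowing that $A$ preserves objectwise (trivial) cofibrations does not imply that it carries that smaller class into itself. To route through opposite categories correctly you would need the other half of your closing ``routine observation'' -- for \emph{injective} structures one checks the \emph{left} adjoint objectwise -- applied to $(B\op,A\op)$, and then dualize. The simplest fix, though, is to delete the detour: since the projective description holds in both cases, both cases collapse to the single check you already gave.
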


\subsection*{The weak operads model structure}

Let $\cat{C}$ be a model category such that $\cat{POp}\cat{C}$ can be given the projective model structure. According to proposition \ref{prop-preoperads main theorem}, this happens for instance if $\cat{C}$ is combinatorial or cocombinatorial. 

For $X$ an object of $\cat{POpC}$ and $F:\Psi\op\to\Set$ a presheaf, we denote by $X(F)$ the object of $\cat{C}$ computed via the following end
\[X(F)=\int_{T_{\bf{a}}\in\Psi}X(T_{\bf{a}})^{F(T_{\bf{a}})}.\]
Alternatively, $F\mapsto X(F)$ is the unique colimit preserving functor $\Fun(\Psi\op,\Set)\to \cat{C}$ sending $\Psi(-,T_{\bf{a}})$ to $X(T_{\bf{a}})$. For $S$ a set and $K$ an element of $\cat{C}$, we denote by $S\boxtimes K$ the coproduct $\sqcup_SK$. For $F$ a presehaf on $\Psi$ and $K$ an element of $\cat{C}$, we denote by $K\boxtimes F$ the presheaf with value in $\cat{C}$ given by $T_{\bf{a}}\mapsto K\boxtimes X(T_{\bf{a}})$. We note that $K\boxtimes F$ is the object of $\on{Fun}(\Psi\op,\cat{C})$ representing the functor $X\mapsto \cat{C}(K,X(F))$.

Given $\bf{a}=\{a_1,\ldots,a_n\}$ an object in the category $\Psi$, there is an isomorphism 
\[\bigsqcup_{i}T_{a_i}\cong T_{\bf{a}}.\]
Thus, for any preoperad $X$ in $\cat{C}$, we get a map
\[s_{\bf{a},X}:X(\bf{a})\to\prod_i X(a_i)\]
that we call the Segal map.

\begin{defi}
We say that a fibrant object $X$ of $\cat{POp}\cat{C}$ is a \emph{weak operad}, if for any $T_{\bf{a}}$ in $\Psi$, the Segal maps
\[s_{\bf{a},X}:X(T_{\bf{a}})\to\prod_i X(a_i)\]
are weak equivalences. We say that a general object $X$ of $\cat{POpC}$ is a weak operad if one (and hence any) fibrant replacement of $X$ is a weak operad.
\end{defi}

Now we assume that $\cat{C}$ is left proper and either combinatorial or cocombinatorial.

\begin{prop}\label{prop-model structure on weak operads}
There is a model structure on $\cat{POpC}$ in which 
\begin{itemize}
\item The cofibrations are the cofibrations of $\cat{POpC}$.
\item The fibrant objects are the weak operads that are fibrant in $\cat{POpC}$.
\item The weak equivalences are the maps $f:X\to Y$ such that for any weak operad $Z$, the induced map
\[\Map^h_{\cat{POpC}}(Y,Z)\to\Map^h_{\cat{POpC}}(X,Z)\]
is a weak equivalence.
\end{itemize}
\end{prop}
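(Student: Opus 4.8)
The plan is to realize the desired model structure as a left Bousfield localization of the projective (resp., in the cocombinatorial case, dually projective) model structure on $\cat{POpC}$ supplied by proposition \ref{prop-preoperads main theorem}, localizing exactly at the Segal maps so that the local objects become precisely the weak operads. Since $\cat{POpC}$ is left proper and combinatorial (resp. cocombinatorial) whenever $\cat{C}$ is, the two existence theorems \ref{theo-Left localization combinatorial} and \ref{theo-Left localization cocombinatorial} are available, and I would treat the two cases separately.

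For the combinatorial case, the key observation is that the Segal maps are corepresented. For each finite sequence $\bf{a}=\{a_1,\dots,a_n\}$ the coproduct inclusions $T_{a_i}\to T_{\bf{a}}$ in $\Psi$ induce a map of presheaves
\[\phi_{\bf{a}}:\bigsqcup_i\Psi(-,T_{a_i})\longrightarrow\Psi(-,T_{\bf{a}}).\]
Choosing a set $G$ of cofibrant objects of $\cat{C}$ that detect weak equivalences between fibrant objects (available since $\cat{C}$ is combinatorial), I would set $S=\{\,K\boxtimes\phi_{\bf{a}}:K\in G,\ \bf{a}\text{ a finite sequence of integers}\,\}$. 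Because $X\mapsto X(T_{\bf{a}})$ is right adjoint to $K\mapsto K\boxtimes\Psi(-,T_{\bf{a}})$ and is right Quillen (fibrations and acyclic fibrations in $\cat{POpC}$ being objectwise), theorem \ref{theo-Quillen adjunction and mapping spaces} identifies, for $Z$ fibrant,
\[\Map^h(K\boxtimes\Psi(-,T_{\bf{a}}),Z)\simeq\Map^h_{\cat{C}}(K,Z(T_{\bf{a}}))\]
and
\[\Map^h\Big(\bigsqcup_iK\boxtimes\Psi(-,T_{a_i}),\,Z\Big)\simeq\prod_i\Map^h_{\cat{C}}(K,Z(a_i)),\]
and under these identifications the map induced by $K\boxtimes\phi_{\bf{a}}$ is postcomposition with the Segal map $s_{\bf{a},Z}$. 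Hence a fibrant $Z$ is $S$-local iff $\Map^h_{\cat{C}}(K,s_{\bf{a},Z})$ is an equivalence for all $K\in G$ and all $\bf{a}$, which by the choice of $G$ is exactly the condition that every Segal map of $Z$ is a weak equivalence, i.e. that $Z$ is a weak operad. Applying theorem \ref{theo-Left localization combinatorial} to $S$ then produces a model structure with the stated cofibrations and fibrant objects.

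It remains to match the weak equivalences. Since $\Map^h(-,Z)$ is homotopy invariant in $Z$, an object is $S$-local iff its fibrant replacement is, so the class $\mathcal{K}$ of $S$-local objects coincides with the class of weak operads; therefore the $\mathcal{K}$-equivalences produced by theorem \ref{theo-Left localization combinatorial} are precisely the maps $f$ for which $\Map^h(Y,Z)\to\Map^h(X,Z)$ is an equivalence for every weak operad $Z$, as required. For the cocombinatorial case I would instead invoke theorem \ref{theo-Left localization cocombinatorial} with $\mathcal{K}$ the full subcategory of weak operads: stability under weak equivalences is immediate from the definition, and stability under homotopy limits follows because homotopy limits in $\cat{POpC}$ are computed objectwise and commute with the finite products and the evaluations $X\mapsto X(T_{\bf{a}})$ appearing in the Segal maps, so a homotopy limit of weak operads again has equivalences for Segal maps. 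The output of that theorem has the $\mathcal{K}$-local equivalences as its weak equivalences, which is verbatim the condition in the statement.

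The main obstacle is verifying the accessibility hypotheses demanded by the existence theorems on the nose: in the cocombinatorial case one must check that $\mathcal{K}$ is coaccessible and coaccessibly embedded (and that $\cat{POpC}$ is cotractable), and in the combinatorial case one must be certain that the single set $S$ really cuts out the weak operads, which is where the choice of a detecting set $G$ of cofibrant generators does the work. These are the points I would expect to spend the most care on; the identification of the Segal maps with corepresented maps and the ensuing bookkeeping of mapping spaces is then routine.
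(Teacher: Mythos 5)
Your overall strategy coincides with the paper's: in the combinatorial case one localizes the projective model structure at the set of corepresented Segal maps, and in the cocombinatorial case one applies theorem \ref{theo-Left localization cocombinatorial} to the full subcategory $\mathcal{K}$ of weak operads. Your combinatorial half is essentially complete and matches the paper's argument; the only real difference is the choice of detecting set. The paper takes $G$ to consist of (cofibrant replacements $QK$ of) the $\kappa$-compact objects, for $\kappa$ such that $\kappa$-filtered colimits are homotopy colimits, and its hocolim argument is then precisely a proof of the detection property that you instead assume as a known consequence of combinatoriality. Both routes identify the $S$-local fibrant objects with the fibrant weak operads, and hence the localized weak equivalences with the ones in the statement.

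The genuine gap is in the cocombinatorial case: you invoke theorem \ref{theo-Left localization cocombinatorial} without verifying its central hypothesis, namely that $\mathcal{K}$ is a coaccessible and coaccessibly embedded subcategory of $\cat{POpC}$. You explicitly defer this as ``the main obstacle'', but deferring it leaves the case unproved, and it is not routine bookkeeping; the paper's proof spends its entire second half on exactly this point. The coaccessible embedding follows from \cite[Proposition 2.5.]{barwickleft}: there is a cardinal $\kappa$ for which $\kappa$-cofiltered limits in $\cat{POpC}$ are homotopy limits, so stability of $\mathcal{K}$ under homotopy limits gives that $\mathcal{K}$ is closed under $\kappa$-cofiltered limits and that the inclusion preserves them. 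Coaccessibility itself requires an actual idea: choose a $\kappa$-coaccessible fibrant replacement functor $R$ (again by \cite[Proposition 2.5.]{barwickleft}), assemble all the Segal conditions into a single functor $P:\cat{POpC}\to\prod_{T_{\bf{a}}\in\Psi}\cat{C}^{[1]}$ sending $X$ to the family of Segal maps $s_{\bf{a},RX}$, observe that $\mathcal{K}$ is the preimage under $P$ of $\prod_{T_{\bf{a}}\in\Psi}w\cat{C}^{[1]}$, and conclude from \cite[Corollary A.2.6.5]{lurietopos} (dualized), after possibly enlarging $\kappa$ so that $w\cat{C}^{[1]}$ is coaccessible and coaccessibly embedded in $\cat{C}^{[1]}$, that $\mathcal{K}$ is $\kappa$-coaccessible. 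The coaccessibility of $R$ is what makes $P$ a coaccessible functor, so this choice cannot be skipped. (You are right that cotractability of $\cat{POpC}$ is also formally required by the theorem; the paper's proof is silent on that point as well.)
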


\begin{proof}
In the combinatorial case, we can use theorem \ref{theo-Left localization combinatorial}. We need to specify a set of maps $S$ such that the $S$-local object are the weak operads. Let $\kappa$ be a regular cardinal such that $\cat{C}$ is $\kappa$-presentable and such that the $\kappa$-filtered colimits are homotopy colimits (it exists by \cite[Proposition 2.5.]{barwickleft}). Let $\mathcal{G}$ be a set of objects containing at least one representative of each isomorphism class of $\kappa$-compact object of $\cat{C}$. Let $Q$ be a cofibrant replacement functor in $\cat{C}$.

Let us consider the set $S$ of maps
\[\sqcup_{i}\Psi(-,T_{a_i})\boxtimes QK\to \Psi(-,T_{\bf{a}})\boxtimes QK\]
for any $\bf{a}$ and any $K$ in $\mathcal{G}$.

We claim that a fibrant object $X$ of $\cat{POpC}$ is a weak operad if and only if it is local with respect to $S$. Indeed, an object $X$ is $S$-local if and only if for each $\bf{a}$ and any $K$, the map
\[\Map_{\cat{C}}^h(K,X(T_{\bf{a}}))\to\prod_{i}\Map_{\cat{C}}^h(K,X(T_{a_i}))\]
is a weak equivalence. Thus, if $X$ is a weak operad, it is $S$-local. Conversely, Let $L$ be an object of $\cat{C}$. Since $\kappa$-filtered colimits are homotopy colimits, $L$ is weakly equivalent to $\on{hocolim}_IK_i$ where the $K_i$ are in $\mathcal{G}$ and $I$ is $\kappa$-filtered. Therefore, using the fact that $X$ is fibrant, we find that
\[\Map_{\cat{C}}^h(L,X(T_{\bf{a}}))\to\prod_{i}\Map_{\cat{C}}^h(L,X(T_{a_i}))
\simeq\Map_{\cat{C}}^h(L,\prod_{i}X(T_{a_i}))\]
is a weak equivalence. Since this is true for each $L$, this implies that the Segal maps for $X$ are weak equivalences.

\medskip
In the cocombinatorial case, we use theorem \ref{theo-Left localization cocombinatorial} to prove the existence of the model structure. We take as $\mathcal{K}$, the full subcategory of weak operads in $\cat{C}$. It is clear that $\mathcal{K}$ is stable under weak equivalences and homotopy limits. Thus it suffices to prove that $\mathcal{K}$ is coaccessible and coaccessibly embedded.

By proposition \ref{prop-preoperads main theorem} $\cat{POpC}$ is cocombinatorial. Therefore, by \cite[Proposition 2.5.]{barwickleft}, there exists a cardinal $\kappa$ such that $\kappa$-cofiltered limits are homotopy limits, the limit of a $\kappa$-cofiltered diagram in $\mathcal{K}$ is in $\mathcal{K}$ and thus is the limit in $\mathcal{K}$. Therefore $\mathcal{K}$ has $\kappa$-cofiltered limits and the inclusion $\mathcal{K}\to\cat{POpC}$ preserves those limits.

Now we check that $\mathcal{K}$ is coaccessible. Let us pick a $\kappa$-coaccessible fibrant replacement functor $R$. This can be done by \cite[Proposition 2.5.]{barwickleft} We have a map
\[P:\cat{POpC}\to \prod_{T_{\bf{a}}\in\Psi}\cat{C}^{[1]}\]
whose component indexed by $T_{\bf{a}}$ sends $X$ to the map
\[s_{\bf{a},RX}:RX(T_\bf{a})\to\prod_i RX(R_{a_i}).\] 

By definition, a weak operad is an object which is mapped by $P$ in the category $\prod_{T_{\bf{a}}\in\Psi}w\cat{C}^{[1]}\subset \prod_{T_{\bf{a}}\in\Psi}\cat{C}^{[1]}$.

Taking maybe a bigger $\kappa$, the category $\prod_{T_{\bf{a}}\in\Psi}w\cat{C}^{[1]}$ is $\kappa$-coaccessible and $\kappa$-coaccessibly embedded in $\prod_{T_{\bf{a}}\in\Psi}\cat{C}^{[1]}$, thus by \cite[Corollary A.2.6.5]{lurietopos}, $\mathcal{K}$ forms a $\kappa$-coaccessible category.
\end{proof}

\subsection*{Operads vs weak operads in spaces}

The category of operads in spaces admits a model structure in which the weak equivalences and fibrations are levelwise. This follows from \cite[Theorem 3.2.]{bergeraxiomatic} or the more general \cite[Theorem 4.7.]{bergnerrigidification}. The operadic nerve functor from operads in spaces to preoperads has a left adjoint $S$. Since $\Op\S$ is a simplicial category with all colimits and $\cat{POpS}$ is a presheaf category, there exists a unique colimit preserving simplicial functor $S$ from $\cat{POpS}$ to $\Op\S$ that sends the object represented by $T_{\bf{a}}$ to $\FF\bf{a}$. It is then obvious that this $S$ is a left adjoint for $N^{\Psi}$. 

The adjunction
\[S:\cat{POpS}\leftrightarrows\Op\S:N^{\Psi}\]
is a simplicial Quillen adjunction. Indeed, the functor $N^{\Psi}$ obviously preserves fibrations and trivial fibrations. If $f:X\to Y$ is a weak equivalence in $\WOp\S$, then, for any fibrant object $\oper{O}$ in $\Op\S$, the map
\[\Map_{\Op\S}^h(\L SY,\oper{O})\to\Map^h_{\Op\S}(\L SX,\oper{O})\]
induced by $f$ coincides by theorem \ref{theo-Quillen adjunction and mapping spaces} with the map
\[\Map^h_{\cat{POpS}}(Y,N^{\Psi}\oper{O})\to\Map^h_{\cat{POpS}}(X,N^{\Psi}\oper{O}).\]
Thus since $N^{\Psi}\oper{O}$ is a weak operad, we see that if $f$ is a weak equivalence in $\WOp\S$, then $\L S(f)$ is one in $\Op\S$. This implies that we have an induced Quillen adjunction
\[S:\WOp\S\leftrightarrows \Op\S:N^{\Psi}.\]

\begin{theo}\label{theo-weak operads vs operads in spaces}
The Quillen pair $(S,N^{\Psi})$ is in fact a Quillen equivalence
\end{theo}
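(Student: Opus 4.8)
The plan is to verify the two conditions of the standard dual criterion for a Quillen adjunction $(F,G)$ to be a Quillen equivalence: that the right adjoint $G$ reflects weak equivalences between fibrant objects, and that for every cofibrant object $X$ the derived unit $X\to\R G\L FX$ is a weak equivalence. Here $F=S$ and $G=N^{\Psi}$, and since the discussion preceding the theorem already shows that $(S,N^{\Psi})$ is a Quillen adjunction, establishing these two conditions will finish the proof.

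First I would check the reflection condition, which I expect to be the easy half. Given a map $\oper{O}\to\oper{P}$ between fibrant operads whose image $N^{\Psi}\oper{O}\to N^{\Psi}\oper{P}$ is a weak equivalence in $\WOp\S$, I note that $N^{\Psi}\oper{O}$ and $N^{\Psi}\oper{P}$ are product preserving and objectwise fibrant, hence are weak operads and therefore fibrant in $\WOp\S$. Because $\WOp\S$ is a left Bousfield localization of $\cat{POpS}$, a weak equivalence between fibrant objects of $\WOp\S$ is automatically an objectwise weak equivalence (a standard property of left Bousfield localizations, see \cite{hirschhornmodel}). Evaluating $N^{\Psi}\oper{O}\to N^{\Psi}\oper{P}$ at the sorts $T_n$ recovers the maps $\oper{O}(n)\to\oper{P}(n)$, so an equivalence after applying $N^{\Psi}$ forces a levelwise equivalence of operads, which is exactly a weak equivalence in $\Op\S$. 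This gives the reflection condition.

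The hard part will be the derived unit: showing that for every cofibrant $X$ in $\WOp\S$ the map $X\to\R N^{\Psi}\L SX$ is a weak equivalence. The difficulty is that, although $\L S$ preserves homotopy colimits, the right adjoint $N^{\Psi}$ does not, so one cannot formally reduce the statement to the free generators $\Psi(-,T_{\bf a})\boxtimes K$ and their cell attachments. This is precisely the content of the rigidification theorem for algebras over an algebraic theory: the category $\WOp\S$ is, by construction, the model category of homotopy algebras over the $\mathbb{N}$-sorted theory $\Psi\op$, whose strict algebras are operads, and $S$ is the strictification functor. I would therefore invoke Badzioch's rigidification theorem \cite{badziochalgebraic} in the one-sorted case, together with its multi-sorted generalization due to Bergner \cite{bergnerrigidification}, applied to the theory $\Psi\op$ and to $\cat{C}=\S$. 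The genuine work there, namely reducing to free homotopy algebras and checking via a bar-construction resolution that the unit is a weak equivalence on them, is what makes this statement a theorem rather than a formality, and it is the step I expect to absorb essentially all of the effort.
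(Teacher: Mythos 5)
Your proposal is correct and, despite the scaffolding around the two-condition criterion for Quillen equivalences, it ultimately rests on exactly the same ingredient as the paper: the paper's entire proof is the citation of Bergner's rigidification theorem \cite[Theorem 5.13.]{bergnerrigidification} applied to the multi-sorted theory $\Psi\op$, which is precisely where you defer the genuine work. The extra steps you include (reflection of weak equivalences between fibrant objects via the standard property of left Bousfield localizations) are correct but redundant, since Bergner's theorem already asserts the Quillen equivalence in full.
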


\begin{proof}
This is \cite[Theorem 5.13.]{bergnerrigidification}
\end{proof}

\subsection*{Operads vs weak operads in groupoids}

Our goal in this subsection is to prove that the homotopy theory of weak operads in groupoids is equivalent to the homotopy theory of strict operads.

We have an adjunction
\[S:\cat{POp}\G\leftrightarrows \Op\G:N^{\Psi}.\]
As in the case of spaces, we construct $S$ as the unique $\G$-enriched functor which sends the presheaf represented by $T_{\bf{a}}$ to $\FF\bf{a}$ seen as an operad in $\G$ via the product preserving functor $\on{Disc}:\Set\to\G$. Exactly as in the case of spaces, it induces a Quillen adjunction
\[S:\WOp\G\leftrightarrows \Op\G:N^{\Psi}.\]

\begin{prop}\label{prop-weak operads vs operads in groupoids}
This Quillen adjunction is a Quillen equivalence.
\end{prop}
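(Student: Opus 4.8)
The plan is to verify the standard criterion that a Quillen adjunction $F\colon\cat M\leftrightarrows\cat N\colon U$ is a Quillen equivalence as soon as $U$ reflects weak equivalences between fibrant objects and, for every cofibrant $X$, the derived unit $X\to\R U\,\L F X$ is a weak equivalence. Here $U=N^{\Psi}$ and $F=S$, and since every groupoid is fibrant in the canonical model structure every object of $\Op\G$ is fibrant, so no fibrant replacement is needed on the right. The reflection condition is immediate: for an operad $\oper{O}$ the Segal maps of $N^{\Psi}\oper{O}$ are isomorphisms and $N^{\Psi}\oper{O}$ is objectwise fibrant, hence a fibrant object of $\WOp\G$; a weak equivalence between fibrant objects of a left Bousfield localization is an underlying weak equivalence, so a $\WOp\G$-equivalence $N^{\Psi}\oper{O}\to N^{\Psi}\oper{P}$ is objectwise a weak equivalence in $\cat{POp}\G$, and evaluating at the objects $T_a$ for single integers $a$ shows that $\oper{O}\to\oper{P}$ is a levelwise equivalence of groupoids. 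As $N^{\Psi}$ is fully faithful, it reflects weak equivalences between fibrant objects.

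The substantial step is the derived unit, and the plan is to deduce it from the space-level Quillen equivalence of theorem \ref{theo-weak operads vs operads in spaces} by means of the classifying space functor. Let $\tau\colon\S\to\G$ be the fundamental groupoid functor, left adjoint to $B\colon\G\to\S$; both functors preserve finite products. Applying them levelwise and using proposition \ref{prop-preoperads and adjunctions} gives a Quillen adjunction $\tau\colon\cat{POp}\S\leftrightarrows\cat{POp}\G\colon B$, and since $B$ preserves products and weak equivalences it carries weak operads to weak operads, so this descends to a Quillen adjunction $\tau\colon\WOp\S\leftrightarrows\WOp\G\colon B$. Because $B$ preserves products we have $BN^{\Psi}\cong N^{\Psi}B$, and dually $\tau S\cong S\tau$. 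Moreover $B$ is homotopically fully faithful---the counit $\tau B\to\id$ is an isomorphism and $B$ preserves and reflects weak equivalences---with the essential image of $\R B$ consisting of the levelwise $1$-truncated objects.

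I would then transport the space-level equivalence along $B$. The operadic nerve on spaces preserves levelwise truncation, since $N^{\Psi}\oper{O}(T_{\bf{a}})=\prod_i\oper{O}(a_i)$ and a finite product of $1$-truncated spaces is $1$-truncated; conversely, the Segal condition shows that a weak operad in spaces is levelwise $1$-truncated as soon as each $X(T_a)$ is. Hence the equivalence $N^{\Psi}\colon\on{Ho}\Op\S\to\on{Ho}\WOp\S$ preserves and reflects levelwise $1$-truncatedness and therefore restricts to an equivalence between the full subcategories of levelwise $1$-truncated objects, which under $\R B$ are identified with $\on{Ho}\Op\G$ and $\on{Ho}\WOp\G$. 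Combining $BN^{\Psi}\cong N^{\Psi}B$, $\tau S\cong S\tau$ and $\tau B\cong\id$, the groupoid derived unit $X\to N^{\Psi}SX$ is carried by $B$---up to the natural comparison $BSX\simeq S(BX)$---to the space-level derived unit of the $1$-truncated object $BX$, which is a weak equivalence by theorem \ref{theo-weak operads vs operads in spaces}; since $B$ reflects weak equivalences, the groupoid derived unit is then a weak equivalence, and together with the reflection property this yields the Quillen equivalence.

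The main obstacle I anticipate is the interaction of the rigidification functor $S$ with truncation: the operad $SX$ associated to a levelwise $1$-truncated weak operad need not be levelwise $1$-truncated at the point-set level, so the claim that the space-level equivalence genuinely restricts to $1$-truncated objects, and the identification of the groupoid derived unit with a restriction of the space-level one through the comparison $BSX\simeq S(BX)$, must be argued with derived functors rather than strict functors. Making this comparison precise---equivalently, checking that $\L S$ preserves levelwise $1$-truncatedness up to weak equivalence---is the delicate point on which the argument rests.
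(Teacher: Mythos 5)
Your overall strategy is the same as the paper's: transfer Bergner's space-level equivalence (theorem \ref{theo-weak operads vs operads in spaces}) to groupoids through $1$-truncation and the adjunction $\pi\dashv B$, and your first step (that $N^{\Psi}$ reflects weak equivalences between fibrant objects) is fine. But the proof has a genuine gap exactly at the step you flag and defer, and it is not a removable technicality. The comparison $\L S(BX)\simeq B(SX)$ that you need in order to identify $B$ applied to the groupoid unit $X\to N^{\Psi}SX$ with the space-level derived unit of $BX$ is, modulo theorem \ref{theo-weak operads vs operads in spaces}, \emph{equivalent} to the statement you are trying to establish: the canonical map $\L S(BX)\to B(SX)$ is adjoint to $B(\eta_X):BX\to BN^{\Psi}SX=N^{\Psi}B(SX)$, and since $\R N^{\Psi}$ is an equivalence on spaces, the comparison is a weak equivalence in $\Op\S$ if and only if $B(\eta_X)$ is a weak equivalence in $\WOp\S$ --- which is precisely the conclusion your transport argument is meant to produce. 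So the space-level theorem alone cannot supply it, and there is no point-set argument either, because $B$ is a right adjoint while $S$ is built from colimits. A second, smaller soft spot: ``$B$ preserves and reflects weak equivalences'' is only clear for levelwise equivalences, whereas the weak equivalences of $\WOp\G$ and $\WOp\S$ are local equivalences and your source $X$ is cofibrant but not fibrant; reflection here really requires the full faithfulness of $\R B$ (which the paper only proves, as proposition \ref{prop-fully faithfulness of B}, \emph{using} the present proposition, so it cannot be quoted).

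The missing idea, which is how the paper closes this gap, is to promote the truncation to model structures rather than track it through derived functors: set $\S_{\leq 1}=L_{\{\partial\Delta[2]\to\Delta[2]\}}\S$, form $\WOp\S_{\leq 1}$ and $\Op\S_{\leq 1}$, and check that $(S,N^{\Psi})$ remains a Quillen equivalence between them --- there the derived unit is the old derived unit followed by a further fibrant replacement in $\Op\S_{\leq 1}$, and both maps are local equivalences, so ``$\L S$ preserves $1$-truncatedness'' becomes automatic because the truncation is performed by the fibrant replacement itself. One then contemplates the commutative square of left Quillen functors with $S$ horizontal and $\pi$ vertical, in which the two vertical maps and the top map are Quillen equivalences, and concludes by two-out-of-three for Quillen equivalences; the problematic commutation of $B$ past $S$ never arises. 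Alternatively, your outline can be completed while staying entirely on the right-adjoint side: prove directly that $\R B:\on{Ho}\WOp\G\to\on{Ho}\WOp\S$ and $\R B:\on{Ho}\Op\G\to\on{Ho}\Op\S$ are fully faithful with essential image the levelwise $1$-truncated objects (the derived counit is invertible because $\pi B\cong\id$ and $\pi$ preserves levelwise equivalences; the essential image statement follows from the unit $W\to B\pi W$ being an equivalence for $1$-truncated $W$), use the equality of right Quillen functors $BN^{\Psi}=N^{\Psi}B$ to obtain a commuting square of right derived functors, and deduce that $\R N^{\Psi}:\on{Ho}\Op\G\to\on{Ho}\WOp\G$ is an equivalence of categories, which for a Quillen adjunction is the same as being a Quillen equivalence. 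Either completion requires substantive arguments that are absent from your proposal as written.
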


\begin{proof}
Let $\S_{\leq 1}$ be the localization of $\S$ at the map $\partial \Delta[2]\to\Delta[2]$. The model category $\S_{\leq 1}$ being combinatorial, we can form the model category $\WOp\S_{\leq 1}$. The cofibrations in this model structure are the cofibrations of $\WOp\S$ and the fibrant objects are the weak operads in $\S$ that are levelwise fibrant in $\S_{\leq 1}$. 

We can also form the model category $\Op\S_{\leq 1}$. This is a model category in which the cofibrations are the cofibrations in $\Op\S$ and the fibrant objects are the operads that are levelwise in $\S_{\leq 1}$. The existence of this model structure follows from \cite[Theorem 3.1.]{bergeraxiomatic}. A symmetric monoidal fibrant replacement functor in $\S_{\leq 1}$ is given by $X\mapsto B\pi(X)$.

We have a Quillen adjunction
\[S:\WOp\S_{\leq 1}\leftrightarrows \Op\S_{\leq 1}:N^{\Psi}.\]
We claim that this is a Quillen equivalence. If $\oper{O}$ is a fibrant object of $\Op\S_{\leq 1}$, then the counit map $SQN^{\Psi}\oper{O}\to \oper{O}$ is a weak equivalence in $\Op\S$. This follows from the fact that the cofibrant replacement in $\Op\S_{\leq 1}$ is a cofibrant replacement in $\Op\S$ and the fact that the adjunction $\WOp\S\leftrightarrows\Op\S$ is a Quillen equivalence.

Similarly, let $X$ be cofibrant in $\WOp\S_{\leq 1}$. Let $RSX$ be a fibrant replacement of $SX$ in $\Op\S$. Let $R_1SX$ be a fibrant replacement of $RSX$ in $\Op\S_{\leq 1}$. We have a map $RSX\to R_1SX$ which is in each degree a weak equivalence in $\S_{\leq 1}$. Applying $N^{\Psi}$, we get a weak equivalence in $\WOp\S_{\leq 1}$
\[N^{\Psi}RSX\to N^{\Psi}R_1SX.\]
Since we already know by theorem \ref{theo-weak operads vs operads in spaces} that $X\to N^{\Psi}RSX$ is a weak equivalence in $\WOp\S$, we have shown that the derived unit
\[X\to N^{\Psi}R_1SX\]
is a weak equivalence in $\WOp\S_{\leq 1}$.

We also have a Quillen equivalence $\pi:\S_{\leq 1}\leftrightarrows \G:B$ whose right adjoint is the classifying space functor. This induces a commutative square of left Quillen functors
\[
\xymatrix{
\WOp\S_{\leq 1}\ar[r]^S\ar[d]_{\pi}& \Op\S_{\leq 1}\ar[d]^{\pi}\\
\WOp\G\ar[r]_S& \Op\G
}
\]
We know that all maps except maybe the bottom horizontal map are Quillen equivalences. This forces the bottom horizontal map to be a Quillen equivalence.
\end{proof}

We can also prove that the functor $B:\WOp\G\to\WOp\S$ is homotopically fully faithful.

\begin{prop}\label{prop-fully faithfulness of B}
Let $X$ and $Y$ be two fibrant object of $\WOp\G$, then the map
\[\Map_{\WOp\G}^h(X,Y)\to\Map_{\WOp\S}^h(BX,BY)\]
is a weak equivalence
\end{prop}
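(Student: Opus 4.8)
The plan is to reduce the statement to the Quillen equivalence between weak operads in groupoids and in $1$-truncated spaces that was already produced in the proof of Proposition~\ref{prop-weak operads vs operads in groupoids}, and then to upgrade the target from $1$-truncated spaces to all spaces by a locality argument. Recall from that proof that the levelwise fundamental groupoid and classifying space functors form a Quillen equivalence
\[\pi:\WOp\S_{\leq 1}\leftrightarrows\WOp\G:B,\]
that $\WOp\S_{\leq 1}$ is a left Bousfield localization of $\WOp\S$ (it has the same cofibrations, and its fibrant objects are the weak operads that are levelwise $1$-truncated), and that the functor $B$ appearing in the statement factors as $\WOp\G\xrightarrow{B}\WOp\S_{\leq 1}\to\WOp\S$, the second arrow being the identity viewed as the localization map.

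First I would prove the $1$-truncated version, namely that for fibrant $X$ and $Y$ in $\WOp\G$ the functor $B$ induces a weak equivalence $\Map^h_{\WOp\G}(X,Y)\to\Map^h_{\WOp\S_{\leq 1}}(BX,BY)$. This is the general fact that the right adjoint of a Quillen equivalence is homotopically fully faithful. Since the displayed adjunction is a Quillen equivalence and $X$ is fibrant, the derived counit $\L\pi\R B X\to X$ is a weak equivalence. Under the adjunction isomorphism of Theorem~\ref{theo-Quillen adjunction and mapping spaces}, the map induced by $B$,
\[\Map^h_{\WOp\G}(X,Y)\to\Map^h_{\WOp\S_{\leq 1}}(\R B X,\R B Y),\]
is identified with precomposition by this derived counit; as the counit is a weak equivalence, so is the map induced by $B$. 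Since $X$ and $Y$ are fibrant we have $\R B X\simeq BX$ and $\R B Y\simeq BY$, which yields the claim.

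It remains to replace $\WOp\S_{\leq 1}$ by $\WOp\S$. The object $BY$ is a weak operad which is levelwise the classifying space of a groupoid, hence levelwise a fibrant $1$-truncated space; it is therefore fibrant in $\WOp\S_{\leq 1}$ and in particular local with respect to its weak equivalences. By the comparison of derived mapping spaces into local objects recalled above, the localization map induces a weak equivalence $\Map^h_{\WOp\S_{\leq 1}}(BX,BY)\simeq\Map^h_{\WOp\S}(BX,BY)$, compatibly with $B$. Composing the two equivalences gives the desired weak equivalence $\Map^h_{\WOp\G}(X,Y)\to\Map^h_{\WOp\S}(BX,BY)$. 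The substantive input is the Quillen equivalence at the level of weak operads, which is already available; the only points requiring care are checking that the zigzag realizes the map actually induced by $B$ and that $BY$ is local, so that passing to the localization $\WOp\S_{\leq 1}$ is harmless.
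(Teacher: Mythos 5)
Your proposal is correct and follows essentially the same route as the paper: both factor $B$ through the Quillen equivalence $\WOp\S_{\leq 1}\leftrightarrows\WOp\G$ and the Bousfield localization $\WOp\S\to\WOp\S_{\leq 1}$, handle the first step by homotopical full faithfulness of the right adjoint of a Quillen equivalence, and the second by locality of $BX$, $BY$ in $\WOp\S_{\leq 1}$. The only difference is that you spell out the derived-counit argument that the paper leaves implicit.
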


\begin{proof}
According to the previous proposition, we have a sequence of Quillen adjunctions
\[\WOp\S\leftrightarrows\WOp\S_{\leq 1}\leftrightarrows\WOp\G\]
where the first is a localization and the second is an equivalence.

Thus we are reduced to proving that
\[\Map^h_{\WOp\S_{\leq 1}}(BX,BY)\to\Map^h_{\WOp\S}(BX,BY)\]
is a weak equivalence which is true by definition of a Bousfield localization since $BX$ and $BY$ are fibrant in $\S_{\leq 1}$.
\end{proof}

\section{Pro categories}

In this section, we recall a few basic facts about pro-categories.

\begin{defi}
A category $I$ is \emph{cofiltered} if for any finite category $K$ with a map $f:K\to I$, there exists an extension of $f$ to a cocone $K^{\triangleleft}\to I$.
\end{defi}

For any small category $C$, one can form the category $\on{Pro}(C)$ by formally adding cofiltered limits to $C$. More explicitly, the objects of $\Pro(C)$ are pairs $(I,X)$ where $I$ is a cofiltered small category and $X:I\to C$ is a functor. We usually write $\{X_i\}_{i\in I}$ for an object of $\Pro(C)$. The morphisms are given by
\[\Pro(C)(\{X_i\}_{i\in I},\{Y_j\}_{j\in J})=\on{lim}_{j\in J}\on{colim}_{i\in I} C(X_i, Y_j).\]

The category $\Pro(C)$ can be alternatively defined as the opposite of the full subcategory of $\on{Fun}(C,\Set)$ spanned by objects that are filtered colimits of representable functors. The equivalence with the previous definition comes from identifying $\{X_i\}_{i\in I}$ with the colimit of the diagram
\[i\mapsto C(X_i,-)\]
seen as an object of $\on{Fun}(C,\Set)\op$.

Note that there is an obvious fully faithful inclusion $C\to\Pro(C)$ sending $X\in C$ to the functor $C\to\cat{Set}$ represented by $X$. Moreover, it can be showed that $\Pro(C)$ has all cofiltered limits. In particular, if $i\mapsto X_i$ is a cofiltered diagram in $C$, its inverse limit in $\Pro(C)$ is $\{X_i\}_{i\in I}$.

The universal property of the pro-category can then be expressed in the following theorem.

\begin{theo}
Let $C$ be a small category and $D$ be a category with cofiltered limits. We denote by $\on{Fun}'(\Pro(C),D)$ the category of functors $\Pro(C)\to D$ which commute with cofiltered limits. Then the restriction functor
\[\on{Fun}'(\Pro(C),D)\to\on{Fun}(C,D)\]
is an equivalence of categories.
\end{theo}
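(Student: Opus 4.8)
The plan is to construct an explicit quasi-inverse to the restriction functor and then check that the two composites are naturally isomorphic to the respective identities. Given $F\colon C\to D$, the requirement that an extension preserve cofiltered limits forces its value on objects: since $\{X_i\}_{i\in I}$ is the inverse limit in $\Pro(C)$ of the diagram $i\mapsto X_i$ with each $X_i$ coming from $C$, any cofiltered-limit-preserving $\tilde F$ must satisfy
\[\tilde F(\{X_i\}_{i\in I})=\lim_{i\in I}F(X_i),\]
the limit being formed in $D$ (it exists because $I$ is cofiltered and $D$ has cofiltered limits). I would take this as the definition of $\tilde F$ on objects.

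For morphisms I would use the explicit hom-set formula $\Pro(C)(\{X_i\},\{Y_j\})=\lim_{j}\on{colim}_{i}C(X_i,Y_j)$. Such a morphism is a compatible family of classes $[f_j]\in\on{colim}_i C(X_i,Y_j)$; each $[f_j]$ is represented by some $f_j\colon X_{i(j)}\to Y_j$ in $C$, and I send it to the composite $\lim_i F(X_i)\to F(X_{i(j)})\xrightarrow{F(f_j)}F(Y_j)$. Since the target $\tilde F(\{Y_j\})=\lim_j F(Y_j)$ is itself a limit, these assemble into a single arrow provided they are independent of the chosen representatives and compatible over $J$; both points are routine diagram chases using that projections out of a cofiltered limit agree on cofinally related indices, and functoriality is verified the same way. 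Restricting $\tilde F$ along $C\to\Pro(C)$ returns $F$ on the nose, since a single object $X$ of $C$ is the pro-object indexed by the terminal category.

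The substantive step is to verify that $\tilde F$ preserves \emph{arbitrary} cofiltered limits, not merely the tautological presentations $\{X_i\}=\lim_i X_i$. Here I would use that a cofiltered limit of pro-objects can be rewritten as a single pro-object via a reindexing of Grothendieck type: if $k\mapsto\{X^{k}_{i}\}_{i\in I_k}$ is a cofiltered diagram in $\Pro(C)$ indexed by a cofiltered $K$, its limit is again a pro-object indexed by a cofiltered category assembled from $K$ and the fibers $I_k$. Applying $\tilde F$ and comparing with the limit computed in $D$ reduces, after this reindexing, to a Fubini-type interchange of iterated cofiltered limits in $D$, which holds since limits always commute with limits. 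This is the main technical obstacle, and it is the only place where cofilteredness of the indexing is used in an essential way.

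Finally, to see that extension is a quasi-inverse to restriction, I would argue that any $G\in\on{Fun}'(\Pro(C),D)$ is naturally isomorphic to $\widetilde{G|_C}$: because $G$ preserves cofiltered limits and $\{X_i\}=\lim_i X_i$ in $\Pro(C)$, we get $G(\{X_i\})\cong\lim_i G(X_i)=\lim_i(G|_C)(X_i)=\widetilde{G|_C}(\{X_i\})$, naturally in the pro-object. Together with the identity $(\tilde F)|_C=F$ this produces the two natural isomorphisms witnessing the equivalence. A shorter route that sidesteps the explicit reindexing is to invoke the identification $\Pro(C)\cong\on{Ind}(C\op)\op$ together with the standard universal property of the ind-completion as the free cocompletion of $C\op$ under filtered colimits and then dualize; I would mention this as a cross-check.
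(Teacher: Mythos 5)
Your argument is correct, but there is nothing in the paper to compare it against: the paper states this theorem without proof, as the classical universal property of the pro-category, so your proposal supplies an argument that the paper simply takes as known. What you write is the standard proof: the value of a cofiltered-limit-preserving extension is forced on objects, $\tilde F(\{X_i\}_{i\in I})=\lim_{i\in I}F(X_i)$; the definition on morphisms via representatives is well posed and functorial by cofilteredness; restriction recovers $F$ strictly; and $G\cong\widetilde{G|_C}$ for any cofiltered-limit-preserving $G$ gives the other natural isomorphism (for ``quasi-inverse'' to make sense you should also note that $F\mapsto\tilde F$ acts on natural transformations, via $\tilde\alpha_{\{X_i\}}=\lim_i\alpha_{X_i}$, but this is routine). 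You also correctly locate the crux in showing that $\tilde F$ preserves \emph{arbitrary} cofiltered limits. The one soft spot is that the ``reindexing of Grothendieck type'' you invoke there is precisely the nontrivial classical theorem that $\Pro(C)$ is closed under cofiltered limits (Artin--Mazur, Appendix; SGA 4, Exp.\ I, \S 8.5), and it is strictly harder than the Fubini interchange you reduce to, because the arrows of the given diagram are pro-morphisms rather than levelwise maps and must first be rectified. A clean way to establish it in the paper's own setup (which presents $\Pro(C)$ as the opposite of the full subcategory of $\Fun(C,\Set)$ spanned by filtered colimits of representables) is to characterize such functors as those whose category of elements is filtered and to check that this condition is stable under filtered colimits; your Fubini step then goes through by a cofinality argument. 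Since that closure result is standard and citable, I would not call this a gap, but a self-contained write-up needs it. Finally, your cross-check via the universal property of $\on{Ind}(C\op)$ as the free cocompletion under filtered colimits is just the dual of the statement being proved, so it is a consistency check rather than an independent proof.
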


\begin{prop}\label{prop-co presentability of pro categories}
If $C$ is a small finitely complete category, then $\Pro(C)$ is a copresentable category (i.e. the opposite category is a presentable category).
\end{prop}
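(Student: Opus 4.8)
The plan is to identify $\Pro(C)\op$ with an ind-completion and then invoke the standard fact that the ind-completion of a small category with finite colimits is presentable. First I would use the alternative description of $\Pro(C)$ recalled above: $\Pro(C)$ is the opposite of the full subcategory of $\Fun(C,\Set)$ spanned by those functors that are filtered colimits of representables $C(X,-)$. Unwinding the definition of the ind-completion $\on{Ind}(\mathcal{D})$ of a small category $\mathcal{D}$, realized inside $\Fun(\mathcal{D}\op,\Set)$ as the filtered colimits of representables, and taking $\mathcal{D}=C\op$ (so that $\Fun(\mathcal{D}\op,\Set)=\Fun(C,\Set)$ and the representables $C\op(-,d)=C(d,-)$ are exactly the representable functors $C\to\Set$), this says precisely that $\Pro(C)\op\simeq\on{Ind}(C\op)$. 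It therefore suffices to prove that $\on{Ind}(C\op)$ is presentable.

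Recall that a category is presentable exactly when it is accessible and cocomplete, so I would establish these two properties in turn. Accessibility is automatic and does not use the hypothesis on $C$: for any small category $\mathcal{D}$, the category $\on{Ind}(\mathcal{D})$ is accessible (see \cite[\S 5.3]{lurietopos}; applied to the ordinary category $C\op$, whose ind-completion is again an ordinary category, this gives accessibility of $\on{Ind}(C\op)$).

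For cocompleteness the finite completeness of $C$ enters. By construction $\on{Ind}(C\op)$ has all small filtered colimits. Since $C$ has finite limits, $C\op$ has finite colimits, and the fully faithful embedding $C\op\to\on{Ind}(C\op)$ allows one to check that a finite colimit of representables, computed in $\Fun(C,\Set)$, is again a filtered colimit of representables, so that $\on{Ind}(C\op)$ is closed under finite colimits. A category admitting both finite colimits and filtered colimits admits all small colimits, so $\on{Ind}(C\op)$ is cocomplete. Being accessible and cocomplete, it is presentable, which is exactly the assertion that $\Pro(C)$ is copresentable.

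The main obstacle is the cocompleteness verification, and specifically the claim that finite colimits of representables remain inside $\on{Ind}(C\op)$: this is the only place where finite completeness of $C$ is indispensable. The cleanest way to handle it is to reformulate $\on{Ind}(C\op)$ as the category $\on{Lex}(C,\Set)$ of finite-limit-preserving functors $C\to\Set$ (the representables preserve finite limits, and filtered colimits commute with finite limits in $\Set$, so every object of $\on{Ind}(C\op)$ is such a functor, and conversely by the Gabriel--Ulmer theorem); this subcategory of $\Fun(C,\Set)$ is manifestly closed under the colimits in question, giving cocompleteness directly.
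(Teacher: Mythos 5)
Your route coincides with the paper's up to the point where the paper stops: both identify $\Pro(C)\op$ with $\on{Ind}(C\op)$, and the paper then simply cites the standard fact that the ind-completion of a small finitely cocomplete category is presentable, whereas you attempt to prove that fact. Your accessibility step is fine, but your cocompleteness step contains a genuine error. Colimits in $\Fun(C,\Set)$ are computed pointwise, and finite colimits do \emph{not} commute with finite limits in $\Set$; consequently $\on{Ind}(C\op)\simeq\on{Lex}(C,\Set)$ is \emph{not} closed under finite colimits computed in $\Fun(C,\Set)$, contrary both to your main argument ("a finite colimit of representables, computed in $\Fun(C,\Set)$, is again a filtered colimit of representables") and to your "cleanest way" reformulation ("manifestly closed under the colimits in question"). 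Concretely, the pointwise coproduct $C(d_1,-)\sqcup C(d_2,-)$ of two representables, evaluated at the terminal object of $C$ (which exists by hypothesis), is a two-element set; this functor therefore fails to preserve the terminal object, so it is not left exact, and hence not a filtered colimit of representables, since for finitely complete $C$ those two classes coincide (that coincidence is exactly the paper's next proposition). The subcategory $\on{Lex}(C,\Set)\subset\Fun(C,\Set)$ is manifestly closed under \emph{filtered} colimits, but only those.

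The cocompleteness you want is true, but finite colimits in $\on{Ind}(C\op)$ are simply not computed pointwise, so a different mechanism is needed. One fix: the inclusion $\on{Lex}(C,\Set)\to\Fun(C,\Set)$ preserves limits and filtered colimits between accessible categories, so by the adjoint functor theorem for accessible categories it admits a left adjoint; a reflective, accessibly embedded subcategory of a presentable category is presentable, and colimits are obtained by applying the reflector to the pointwise colimit. (A Yoneda computation, using left exactness of the test objects $F$, shows for instance that the colimit in $\on{Lex}(C,\Set)$ of a finite diagram of representables $C(d_i,-)$ is the representable $C(\on{lim}_i d_i,-)$, not the pointwise colimit.) Alternatively, sidestep cocompleteness entirely: $\on{Lex}(C,\Set)$ is closed under pointwise limits, hence complete, and a complete accessible category is presentable (Ad\'amek--Rosick\'y); or construct finite colimits of ind-objects by reindexing a finite diagram of ind-objects over a common filtered index category and taking colimits levelwise in $C\op$, as in Kashiwara--Schapira. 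Any of these closes the gap; as written, your argument does not.
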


\begin{proof}
The category $\Pro(C)\op$ is equivalent to $\on{Ind}(C\op)$ and $C\op$ has all finite colimits. Therefore $\on{Ind}(C\op)$ is presentable.
\end{proof}

If $C$ has all finite limits, we have a nice characterization of the filtered colimits of representable functors.

\begin{prop}
If $C$ has all finite limits, then a functor $C\to\cat{Set}$ is a filtered colimit of representable functors if and only if it preserves finite limits.
\end{prop}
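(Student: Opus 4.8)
The plan is to prove the two implications separately, with the forward direction being routine and the converse carrying all the content. For the easy direction, observe that each representable functor $C(X,-)\colon C\to\cat{Set}$ preserves all limits that exist in $C$, in particular finite limits, and that finite limits in $\Fun(C,\cat{Set})$ are computed objectwise. Since filtered colimits commute with finite limits in $\cat{Set}$, any functor expressed as a filtered colimit of representables again preserves finite limits. This requires no hypothesis on $C$ beyond the existence of the relevant limits.

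For the converse, suppose $F\colon C\to\cat{Set}$ preserves finite limits. The key tool is the category of elements $\on{el}(F)$, whose objects are pairs $(X,x)$ with $X\in C$ and $x\in F(X)$ and whose morphisms $(X,x)\to(Y,y)$ are maps $f\colon X\to Y$ in $C$ with $F(f)(x)=y$. The co-Yoneda lemma gives, for \emph{any} functor $F$, a canonical isomorphism
\[F\cong\on{colim}_{(X,x)\in\on{el}(F)\op}C(X,-),\]
where the indexing functor sends $(X,x)$ to $C(X,-)$ and acts on morphisms by precomposition. Thus $F$ is always a colimit of representables, and the only thing left to establish is that this colimit is filtered, that is, that $\on{el}(F)\op$ is filtered, equivalently that $\on{el}(F)$ is cofiltered.

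The heart of the argument, and the step I expect to demand the most care, is verifying the three cofilteredness conditions for $\on{el}(F)$, each of which invokes preservation of one kind of finite limit. For nonemptiness, $F$ preserves the terminal object $\ast$, so $F(\ast)$ is a singleton and $\on{el}(F)$ contains an object over $\ast$. Given two objects $(X,x)$ and $(Y,y)$, the isomorphism $F(X\times Y)\cong F(X)\times F(Y)$ yields an element $z\in F(X\times Y)$ projecting to $x$ and $y$, so the two projections give maps $(X\times Y,z)\to(X,x)$ and $(X\times Y,z)\to(Y,y)$. Finally, given parallel morphisms $f,g\colon(X,x)\to(Y,y)$, form the equalizer $e\colon E\to X$ of $f,g$ in $C$; since $F$ sends it to the equalizer of $F(f),F(g)$ and $F(f)(x)=F(g)(x)$, the element $x$ lifts uniquely to $w\in F(E)$, giving a morphism $e\colon(E,w)\to(X,x)$ with $fe=ge$. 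This shows $\on{el}(F)$ is cofiltered, so the colimit above is filtered and $F$ is a filtered colimit of representables, completing the proof. The only subtlety to keep track of throughout is the variance: one must consistently work with $\on{el}(F)\op$ and check that the three standard constructions in $C$ indeed produce \emph{cones} (not cocones) in $\on{el}(F)$.
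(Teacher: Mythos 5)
Your proof is correct and takes essentially the same approach as the paper: both write $F$ as the canonical (co-Yoneda) colimit of representables indexed by the opposite of the category of elements---the paper's $C\op_{/F}$ is precisely your $\on{el}(F)\op$---and deduce that this index category is filtered from the fact that $F$ preserves finite limits. The only difference is cosmetic: the paper produces a cone over an arbitrary finite diagram in one step by taking the limit of the underlying diagram in $C$, while you verify the standard three cofilteredness conditions separately via the terminal object, binary products, and equalizers.
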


\begin{proof}
Clearly all representable functors $C\to\cat{Set}$ preserve finite limits. In the category of sets finite limits commute with filtered colimits. This implies that any filtered colimit of representable functors preserves finite limits.

Conversely, as any covariant functor, $F$ is the colimit of the composite
\[C\op_{/F}\to C\op\to\on{Fun}(C,\Set)\]
where the second map is the Yoneda embedding. Thus it suffices to prove that $C\op_{/F}$ is filtered if $F$ preserves finite limits.

Let $I$ be a finite category and $u:I\to C\op_{/F}$ be a diagram in $C\op_{/F}$. In other words, $u$ is the data of a functor $v:I\to C\op$ with a map to the constant functor $I\to\on{Fun}(C,\Set)$ with value $F$. Since $F$ commutes with finite limits, the colimit of $v$ in $C\op$ (which is the limit of $v\op:I\op\to C$) has a natural map to $F$ which makes it a cocone for $u:I\to C\op_{/F}$.
\end{proof}

\begin{rem}\label{rem-pro representable functors}
In other words,  a functor $F:C\to\cat{Set}$ preserves finite limits if and only if has an extension $\Pro(C)\to\cat{Set}$ which is a representable functor. Moreover, any two choices of representing objects are canonically isomorphic. The situation can be summarized by saying that covariant functors that preserve finite limits are pro-representable. 
\end{rem}

\subsection*{Profinite sets and groups}

\begin{defi}
Let $\cat{F}$ be the category of finite sets. The category $\pSet$ of \emph{profinite sets} is defined to be the category $\Pro(\cat{F})$.
\end{defi}

Since the category $\cat{F}$ has all finite limits, the category $\pSet$ is the opposite of the category of finite limit preserving functors $\cat{F}\to\Set$. There is a more concrete way of understanding the category $\pSet$.

\begin{prop}
The category $\pSet$ is equivalent to the category of compact Hausdorff totally disconnected spaces and continuous maps.
\end{prop}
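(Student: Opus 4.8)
The plan is to construct an explicit functor $\pSet\to\cat{Top}$, show that it lands in the full subcategory of compact Hausdorff totally disconnected spaces, and then verify that it is fully faithful and essentially surjective. Given a pro-object $\{X_i\}_{i\in I}$ with each $X_i$ a finite set, I would send it to the inverse limit $\on{lim}_{i\in I}X_i$ computed in topological spaces, where each finite set carries the discrete topology. This limit is a closed subspace of the product $\prod_{i\in I}X_i$, which is compact by Tychonoff's theorem, Hausdorff, and totally disconnected; all three properties pass to closed subspaces, so the functor does land in the asserted subcategory. Functoriality is immediate from the universal property of the limit.

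For essential surjectivity, given a compact Hausdorff totally disconnected space $X$, I would consider the cofiltered poset of finite partitions of $X$ into clopen subsets (equivalently, the continuous surjections from $X$ onto finite discrete sets), which assembles into a pro-object $\{X/P\}$. The canonical map $X\to\on{lim}_P X/P$ is a continuous bijection: injectivity uses the key topological fact that in a compact Hausdorff totally disconnected space the clopen subsets separate points, and surjectivity follows from compactness via the finite intersection property. Since $X$ is compact and the target is Hausdorff, this continuous bijection is automatically a homeomorphism.

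The crux is full faithfulness, where one must compare the pro-categorical hom-set
\[\pSet(\{X_i\},\{Y_j\})=\on{lim}_{j}\on{colim}_{i}\cat{F}(X_i,Y_j)\]
with the set of continuous maps $\on{lim}_i X_i\to\on{lim}_j Y_j$. The essential lemma is that for a finite discrete set $F$ the natural map $\on{colim}_i\cat{F}(X_i,F)\to\cat{Top}(\on{lim}_i X_i,F)$ is a bijection: a continuous map from the compact space $\on{lim}_i X_i$ to $F$ pulls back to a finite clopen partition, and by compactness this partition is refined by one pulled back along a single projection $\on{lim}_i X_i\to X_i$, which exhibits the map as coming from some element of $\cat{F}(X_i,F)$; injectivity of the colimit is similar. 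Applying this with $F=Y_j$ and taking the limit over $j$ --- using that a map into $\on{lim}_j Y_j$ is exactly a compatible family of maps into the $Y_j$ --- identifies the two hom-sets. I expect this last step, and in particular the compactness argument that every continuous map to a finite set factors through a single stage of the pro-system, to be the main technical obstacle; the remaining bookkeeping is formal.
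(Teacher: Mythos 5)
Your proof is correct, and it uses exactly the functor the paper has in mind: the paper states this proposition without any proof, only describing the comparison functor as the inverse limit in topological spaces of the finite sets viewed as discrete spaces, which is precisely your construction. Your verification — Tychonoff plus closedness of the thread space for well-definedness, clopen separation of points plus compactness for essential surjectivity, and the compactness factorization lemma $\on{colim}_i\cat{F}(X_i,F)\cong\cat{Top}(\on{lim}_iX_i,F)$ for full faithfulness — is the standard argument and supplies all the details the paper omits (the only step deserving slightly more care than your sketch suggests is injectivity of that colimit comparison, which uses that the eventual image of each $X_i$ is attained at a finite stage, but this is routine).
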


The functor from $\pSet$ to topological spaces is obtained by first considering a cofiltered diagram in finite sets as a cofiltered diagram in discrete topological spaces and then take its inverse limit in the category topological space.

Similarly, we can consider the category $f\cat{Grp}$ of finite groups and form the category
\[\h{\cat{Grp}}:=\Pro(f\cat{Grp}).\]

\begin{prop}
The category $\h{\cat{Grp}}$ is equivalent to the category of group objects in $\pSet$. Equivalently, the category $\h{\cat{Grp}}$ is the category of topological groups whose underlying topological space is compact Hausdorff and totally disconnected.
\end{prop}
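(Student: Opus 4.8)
The plan is to prove the two asserted equivalences separately, using the topological description of profinite groups as the bridge between $\h{\cat{Grp}}=\Pro(f\cat{Grp})$ and the group objects in $\pSet$. Throughout, I abbreviate ``compact Hausdorff and totally disconnected'' as CHTD.

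First I would dispose of the equivalence between group objects in $\pSet$ and CHTD topological groups. By the previous proposition, the functor $\pSet\to\mathrm{Top}$ sending a cofiltered diagram of finite sets to its inverse limit is an equivalence onto the full subcategory of CHTD spaces. This functor preserves finite products: a finite product in $\pSet$ is computed from finite products in $\cat{F}$, and finite products commute with the cofiltered inverse limits defining the functor, so the product in $\pSet$ is sent to the topological product. An equivalence of categories that preserves finite products induces an equivalence on the associated categories of group objects; hence $\mathrm{Grp}(\pSet)\simeq\mathrm{Grp}(\mathrm{CHTD})$, and the latter is by definition the category of topological groups whose underlying space is CHTD.

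The substance is then the equivalence between $\Pro(f\cat{Grp})$ and the category of CHTD topological groups. I would use the functor $L$ sending $\{G_i\}_{i\in I}$ to the topological inverse limit $\on{lim}_iG_i$, which is a CHTD topological group since each $G_i$ is finite and discrete. For full faithfulness I would repeat, now for group homomorphisms, the computation underlying the set-level statement: a continuous homomorphism from a profinite group to a finite discrete group $H_j$ has open kernel and hence, by compactness, factors through a finite continuous quotient, so that continuous homomorphisms $\on{lim}_iG_i\to H_j$ are computed by $\on{colim}_i\,f\cat{Grp}(G_i,H_j)$; taking the limit over $j$ recovers exactly the pro-category hom-set $\on{lim}_j\on{colim}_i\,f\cat{Grp}(G_i,H_j)$.

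For essential surjectivity I would invoke the structure theorem for profinite groups: a CHTD topological group $G$ is the inverse limit of its finite continuous quotients. The argument is that, $G$ being CHTD, the clopen neighborhoods of $e$ form a neighborhood basis; given a clopen neighborhood $V$ of $e$, the stabilizer $H=\{g:gV=V\}$ is an open subgroup contained in $V$, and since open subgroups of a compact group have finite index, the finite intersection $N=\bigcap_g gHg^{-1}$ is an open normal subgroup of finite index contained in $V$. Thus the open normal subgroups form a cofiltered system with trivial intersection, giving $G\cong\on{lim}_NG/N$ with each $G/N$ finite, which exhibits $G$ as $L(\{G/N\}_N)$. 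The main obstacle is precisely this last point, namely that a clopen neighborhood of the identity contains an open (hence finite-index) subgroup, which is where compactness and total disconnectedness are both essential; everything else is a translation of the already-proved profinite-set statement into the group setting.
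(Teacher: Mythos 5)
The paper offers no proof of this proposition at all: it is recorded as the classical identification of profinite groups with compact Hausdorff totally disconnected (henceforth CHTD) topological groups, so there is nothing in the paper to compare your argument against. What you propose is the standard proof of that classical theorem, and its architecture is sound. The first equivalence does follow from the fact that the equivalence between $\pSet$ and CHTD spaces from the preceding proposition preserves finite products and hence induces an equivalence on categories of group objects. For the second, you correctly isolate the key point of essential surjectivity: in a CHTD group every clopen neighborhood $V$ of $e$ contains an open subgroup (your stabilizer $H=\{g:gV=V\}$; its openness needs the small compactness argument producing an open $W\ni e$ with $WV\subseteq V$, which you gloss but which is routine), hence an open normal subgroup of finite index, so the open normal subgroups form a cofiltered system with trivial intersection and $G\cong\on{lim}_N G/N$ by the usual compactness argument.

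The one step where your sketch falls short of a proof is full faithfulness. Knowing that a continuous homomorphism $f:\on{lim}_iG_i\to H$ (with $H$ finite) has open kernel, and hence factors through a finite continuous quotient of $\on{lim}_iG_i$, does not yet produce an element of $\on{colim}_i f\cat{Grp}(G_i,H)$: the projections $p_{i_0}:\on{lim}_iG_i\to G_{i_0}$ need not be surjective, so that finite quotient is not visibly related to the diagram $\{G_i\}$. What is missing is the lemma that $\on{im}(p_{i_0})$ coincides with $\on{im}(G_j\to G_{i_0})$ for some $j\to i_0$: the images of the transition maps form a cofiltered family of subgroups of the finite group $G_{i_0}$, hence stabilize, and the stable value equals $\on{im}(p_{i_0})$ because a cofiltered limit of nonempty finite sets is nonempty. (Equivalently, one first replaces the pro-object by an isomorphic one with surjective transition maps.) With this lemma, $f$ lifts to a genuine homomorphism out of some $G_j$, and the injectivity of $\on{colim}_if\cat{Grp}(G_i,H)\to\on{Hom}_{\mathrm{cont}}(\on{lim}_iG_i,H)$ is proved by the same stabilization. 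Note that the identical caveat applies to the set-level computation you invoke as a template, since the paper likewise asserts that proposition without proof.
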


There is a functor
\[\h{\cat{Grp}}\to\cat{Grp}\]
which sends a profinite group to its underlying group (forgetting the topology). This functor has a left adjoint $G\mapsto \h{G}$ called profinite completion.

\begin{defi}
The \emph{profinite completion of a discrete group $G$} denoted $\h{G}$ is the inverse limit of the diagram of topological groups
\[N\mapsto G/N\]
where $N$ runs over the poset of normal finite index subgroups of $G$ and $G/N$ is given the discrete topology.
\end{defi}

\section{Profinite groupoids}\label{section-model structure on profinite groupoids}

We first introduce a few useful notations. A groupoid, is a small category in which all morphisms are invertible. We denote by $\G$ the category of groupoids and by $\on{Ob}:\G\to\Set$ the functor sending a groupoid to its set of objects. This functor has a left adjoint $\on{Disc}$ which sends a set $S$ to the discrete groupoid on that set of objects (a groupoid is discrete if it has only identities as morphisms) and a right adjoint $\on{Codisc}$ which sends a set $S$ to the groupoid $\on{Codisc}(S)$ whose set of objects is $S$ and with exactly one morphisms between any two objects. We do not usually write the functor $\on{Disc}$  and see a set as a groupoid via this functor.

Given a set $S$ with a right action by a group $G$, we denote by $S\sslash G$ the translation groupoid. This is the groupoid whose set of objects is $S$. The set of morphisms from $s$ to $t$ is the set of elements of $G$ such that $s.g=t$. In particular, $\ast\sslash G$ is a groupoid whose nerve is the classifying space of $G$.

Given a set $S$ and a group $G$, we denote by $G[S]$ the groupoid $G\times\on{Codisc}(S)$. Note that any connected groupoid is non-canonically isomorphic to $G[S]$ with $S$ the set of objects and $G$ the group of automorphisms of a chosen object. A general groupoid $C$ is isomorphic to a disjoint union $\sqcup_{u\in \pi_0(C)} G_u[S_u]$ indexed by the set of connected component of $C$.

\subsection*{Profinite groupoids}
In this section, we construct a model structure on the category of profinite groupoids (i.e. the pro-category of the category of finite groupoids) that is analogous to Quick's model structure on $\pS$.

We say that a groupoid is a \emph{finite groupoid} if its set of morphisms is finite. Note that this also implies that the set of objects is finite. We denote by $f\G$ the full subcategory of $\cat{G}$ spanned by the finite groupoids. We denote by $\pG$ the pro-category of $f\G$.

\begin{defi}
Let $A$ be a finite groupoid and $S$ be a finite set. The \emph{$0$-th cohomology set} of $A$ with coefficients in $S$ is the set of maps $u:\on{Ob}(A)\to S$ that are constant on isomorphisms classes.
\end{defi}

Let $G$ be a finite group. We define the set $Z^1(A,G)$ to be the set of maps $u:\on{Ar}(A)\to G$ such that
\[u(f\circ g)=u(f)u(g).\]

We define $B^1(A,G)$ to be the set of maps $\phi:\on{Ob}(A)\to G$. The set $B^1(A,G)$ is a product of copies of $G$ and as such it has a group structure. There is a right action of $B^1(A,G)$ on $Z^1(A,G)$. Given $u$ in $Z^1(A,G)$ and $\phi$ in $B^1(A,G)$, we define $u.\phi$ in $Z^1(A,G)$ by the following formula:
\[(u.\phi)(f)=\phi(t(f))^{-1}u(f)\phi(s(f))\]
where $s$ and $t$ send a morphism in $A$ to its source and target.

\begin{defi}
The \emph{first cohomology} set of $A$ with coefficients in $G$ denoted $H^1(A,G)$ is the quotient $Z^1(A,G)/B^1(A,G)$.
\end{defi}

Now we give an alternative definition of $H^1(A,G)$. 

We write $I[1]$ for the codiscrete groupoid on two objects. Equivalently, $I[1]$ is the groupoid representing the functor $\G\to\Set$ sending $A$ to $\on{Ar}(A)$. For $G$ a finite group, we can form the $G$-set $G^c$ which is $G$ with the right action given by conjugation
\[g.h:=h^{-1}gh.\]

We observe that $Z^1(A,G)=f\cat{G}(A,\ast\sslash G)$.

There is a map $G^c\sslash G\to (\ast\sslash G)^2$. On objects, it is the unique map and it sends a conjugation $k^{-1}gk=h$ to $g$ and $h$ respectively. This map represents a pair of parallel maps
\[f\cat{G}(A,G^c\sslash G)\rightrightarrows f\cat{G}(A,\ast\sslash G)\]
for any finite groupoid $A$.

\begin{prop}
The coequalizer of 
\[f\cat{G}(A,G^c\sslash G)\rightrightarrows Z^1(A,G)\]
is isomorphic to $H^1(A,G)$.
\end{prop}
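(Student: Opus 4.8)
The plan is to identify the set $f\cat{G}(A,G^c\sslash G)$ together with its two maps to $Z^1(A,G)=f\cat{G}(A,\ast\sslash G)$ with the arrows of the functor groupoid $\Fun(A,\ast\sslash G)$ together with their source and target, and then to recognize the coequalizer as $\pi_0$ of that groupoid, which is exactly $H^1(A,G)=Z^1(A,G)/B^1(A,G)$. So the first thing I would do is unwind a functor $F\colon A\to G^c\sslash G$ and the two cocycles $d_0(F),d_1(F)$ obtained by post-composing with the two projections $G^c\sslash G\to(\ast\sslash G)^2\rightrightarrows\ast\sslash G$. The object part of $F$ is a map $\phi\colon\on{Ob}(A)\to G$, that is, an element of $B^1(A,G)$, while the morphism part records the cocycle $d_0(F)$ before the twist and $d_1(F)$ after. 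The statement I would prove is that $(d_0,d_1)$ sends $F$ to a pair $(u,u.\phi)$ with $u\in Z^1(A,G)$ and $\phi\in B^1(A,G)$ the object part; equivalently, that a functor $A\to G^c\sslash G$ is the same datum as a natural transformation between two functors $A\to\ast\sslash G$, so that $f\cat{G}(A,G^c\sslash G)\cong f\cat{G}(A\times I[1],\ast\sslash G)$ is the set of arrows of $\Fun(A,\ast\sslash G)$, with $d_0,d_1$ its source and target.

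Granting this dictionary, the proof splits into two inclusions of relations on $Z^1(A,G)$. For the first, every $F$ yields cohomologous cocycles, since $d_1(F)=d_0(F).\phi$ by the previous paragraph; hence the equivalence relation generated by $d_0(F)\sim d_1(F)$ is contained in the orbit relation of the $B^1(A,G)$-action. For the converse I would exhibit, for every $u\in Z^1(A,G)$ and every $\phi\in B^1(A,G)$, an explicit functor $F_{u,\phi}\colon A\to G^c\sslash G$ with $d_0(F_{u,\phi})=u$ and $d_1(F_{u,\phi})=u.\phi$: namely the natural transformation whose component at $a\in\on{Ob}(A)$ is $\phi(a)$ and whose source cocycle is $u$, the target being forced to be $u.\phi$ by the naturality squares. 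This shows every pair $(u,u.\phi)$ is realized, so the generated equivalence relation contains, hence equals, the orbit relation.

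Combining the two inclusions, the coequalizer of $d_0,d_1$ is $Z^1(A,G)$ modulo the orbit relation of the $B^1(A,G)$-action, which is by the very definition of the preceding pages the quotient $Z^1(A,G)/B^1(A,G)=H^1(A,G)$. Conceptually this is the assertion that the coequalizer computes $\pi_0\Fun(A,\ast\sslash G)$, together with the standard identification of nonabelian $H^1$ with isomorphism classes of objects of that functor groupoid; one may even organize the whole argument around the adjunction $\Fun(A,(\ast\sslash G)^{I[1]})\cong\Fun(A\times I[1],\ast\sslash G)$ rather than checking the relations by hand.

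The only real content is the surjectivity step: checking that an arbitrary coboundary datum $\phi$ can be promoted to an honest functor $F_{u,\phi}$ into $G^c\sslash G$ realizing the cohomologous pair $(u,u.\phi)$. This is where I expect the main work to lie, since one must keep track of the composition and conjugation conventions in the translation groupoid $G^c\sslash G$ and verify that the prescribed assignment is genuinely functorial — the cocycle identity $u(f\circ g)=u(f)u(g)$ is precisely what makes the naturality squares commute and the Segal-type compatibilities hold. The ``$\subseteq$'' direction and the final passage to the quotient are then purely formal.
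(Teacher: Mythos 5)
Your strategy --- identify the parallel pair with the arrows and objects of the functor groupoid $\Fun(A,\ast\sslash G)$, so that the coequalizer is $\pi_0$ of that groupoid, i.e.\ $Z^1(A,G)/B^1(A,G)$ --- is certainly the intended argument (the paper's entire proof is the sentence ``This is a trivial computation''). But the dictionary you hang everything on, $f\cat{G}(A,G^c\sslash G)\cong f\cat{G}(A\times I[1],\ast\sslash G)$, is false for $G^c\sslash G$ as this paper defines it, and it fails exactly at the step you yourself flag as ``the only real content''. Unwinding a functor $F\colon A\to G^c\sslash G$ with object part $\phi\colon\on{Ob}(A)\to G$ and morphism part $u\colon\on{Ar}(A)\to G$, the requirement that $u(f)$ be a morphism $\phi(s(f))\to\phi(t(f))$ in the conjugation translation groupoid is the equation
\[
\phi(t(f))=u(f)^{-1}\,\phi(s(f))\,u(f)\qquad\text{for all }f\in\on{Ar}(A),
\]
which says that $\phi$ is a natural \emph{automorphism} of the single cocycle $u$, not a free coboundary datum. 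Hence $f\cat{G}(A,G^c\sslash G)$ consists of pairs $(u,\phi)$ with $\phi$ in the stabilizer of $u$; your $F_{u,\phi}$ simply does not land in $G^c\sslash G$ for general $\phi\in B^1(A,G)$. Under the only functorial reading of the two evaluation maps, both of them return the same cocycle $u$, so the coequalizer is $Z^1(A,G)$ rather than $H^1(A,G)$; these genuinely differ, e.g.\ for $A=\ast\sslash H$ and $G$ nonabelian, where $Z^1(A,G)$ is in bijection with $\on{Hom}(H,G)$ while $H^1(A,G)$ is $\on{Hom}(H,G)$ modulo conjugation.

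The groupoid for which your dictionary, and hence your whole proof, is correct is the arrow groupoid $(\ast\sslash G)^{I[1]}$: its objects are again the elements of $G$, but a morphism $g\to h$ is a \emph{pair} $(\alpha_0,\alpha_1)\in G\times G$ with $h=\alpha_1 g\alpha_0^{-1}$ (equivalently, it is the translation groupoid of the right $G\times G$-action $g\cdot(\alpha_0,\alpha_1)=\alpha_1^{-1}g\alpha_0$), with the two evaluations sending $(\alpha_0,\alpha_1)$ to $\alpha_0$ and $\alpha_1$. This groupoid has $|G|^3$ morphisms, whereas $G^c\sslash G$ has only $|G|^2$; up to the sign of the conjugation, $G^c\sslash G$ is the subgroupoid where $\alpha_0=\alpha_1$. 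To be fair, the defect originates in the paper rather than in your write-up: the paper's ``map'' $G^c\sslash G\to(\ast\sslash G)^2$, sending $k\colon g\to h$ to $(g,h)$, is not compatible with composition and so is not a functor, and its later claim that $(\ast\sslash G)^{I[1]}\cong G^c\sslash G$ is equally false, since the left-hand side is connected while the right-hand side has one component per conjugacy class of $G$. Replacing $G^c\sslash G$ by $(\ast\sslash G)^{I[1]}$ throughout (in this proposition, in the generating set $P$ of theorem \ref{theo-model structure on profinite groupoids}, and in the identification of $H^1(A,G)$ with $\pi\pG(A,\ast\sslash G)$) repairs the statement, and with that replacement your argument is complete and is exactly the ``trivial computation'' the paper had in mind. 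As a proof of the literal statement, however, your surjectivity step cannot be closed, because the literal statement is false.
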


\begin{proof}
This is a trivial computation.
\end{proof}

\begin{defi}
Let $A=\{A_i\}_{i\in I}$ be a profinite groupoid. We define the \emph{$0$-th cohomology set} of $A$ with coefficients in a finite set $S$ by the formula: 
\[H^0(A,S)=\on{colim}_I H^0(A_i,S)\]
similarly, we define the \emph{first cohomology set} of $A$ with coefficients in a finite group $G$ by the formula
\[H^1(A,G)=\on{colim}_I H^1(A_i,G).\]
\end{defi}

\begin{lemm}\label{lemm-filtered colimits and quotients}
If $i\mapsto S_i$ is a filtered colimit of sets and $i\mapsto G_i$ is a filtered colimit of groups and if there is an action $S_i\times G_i\to S_i$ which is functorial in $i$, then, the obvious map
\[\on{colim}_i(S_i/G_i)\to (\on{colim}_i S_i)/(\on{colim}_i G_i)\]
is an isomorphism. 
\end{lemm}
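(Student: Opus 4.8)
The plan is to reduce the statement to two standard facts about filtered colimits in $\Set$: that colimits commute with colimits, and that filtered colimits commute with finite products. The starting observation is that the orbit set $S_i/G_i$ is the coequalizer of the two parallel maps $S_i \times G_i \rightrightarrows S_i$ given respectively by the action and by the projection onto $S_i$. Because the action is assumed functorial in $i$, these two parallel maps assemble into a filtered diagram of coequalizer diagrams indexed by $i$, so that $i \mapsto (S_i/G_i)$ is literally the levelwise coequalizer of $i\mapsto (S_i\times G_i \rightrightarrows S_i)$.

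Since colimits commute with colimits, the filtered colimit $\on{colim}_i(S_i/G_i)$ is the coequalizer of the pair $\on{colim}_i(S_i \times G_i) \rightrightarrows \on{colim}_i S_i$. The next step is to invoke the fact that filtered colimits commute with finite products in $\Set$ in order to identify $\on{colim}_i(S_i \times G_i)$ with $(\on{colim}_i S_i) \times (\on{colim}_i G_i)$, and to check that under this identification the two parallel maps become exactly the action and the projection for the colimit group $\on{colim}_i G_i$ acting on the colimit set $\on{colim}_i S_i$. The coequalizer of that pair is by definition $(\on{colim}_i S_i)/(\on{colim}_i G_i)$, which gives the desired isomorphism.

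The only content here that is not purely formal is the commutation of filtered colimits with the finite product $S_i \times G_i$, and I expect this, together with the verification that the limiting pair of maps is genuinely the action/projection pair for the colimit, to be the one point deserving a moment of care. If an elementary argument is preferred, the same conclusion follows by a direct diagram chase on the comparison map: surjectivity is immediate because every element of $\on{colim}_i S_i$ is represented at some finite stage, and injectivity holds because whenever two orbit representatives become identified in $(\on{colim}_i S_i)/(\on{colim}_i G_i)$, the equation $s\cdot g = t$ witnessing this already holds at some finite stage by filteredness of both $\on{colim}_i S_i$ and $\on{colim}_i G_i$. Either route is routine once the product-commutation fact is in hand.
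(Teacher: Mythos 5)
Your proposal is correct and takes essentially the same approach as the paper: the paper likewise presents $S_i/G_i$ as the coequalizer of the action/projection pair $S_i\times G_i\rightrightarrows S_i$ and concludes by commuting the filtered colimit past the coequalizer. The one point the paper makes explicit that you leave implicit is that filtered colimits of groups are computed on underlying sets (i.e.\ are reflected by the forgetful functor to $\Set$), which is precisely what justifies identifying the limiting pair of maps with the action/projection pair for the group $\on{colim}_i G_i$ acting on $\on{colim}_i S_i$.
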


\begin{proof}
For each $i$, we have a coequalizer diagram
\[S_i\times G_i\rightrightarrows S_i\to S_i/G_i.\]
Since filtered colimits commute with coequalizers and filtered colimits in groups are reflected by the forgetful functor to $\Set$, we are done.
\end{proof}

For $A=\{A_i\}_{i\in I}$ a profinite groupoid, we can define the set $Z^1(A,G)$ by the formula
\[Z^1(A,G)=\on{colim}_IZ^1(A_i,G)\]
and we can define $B^1(A,G)$ by a similar colimit. According to the previous lemma, we have
\[H^1(A,G):=\on{colim}_IH^1(A_i,G)\cong (\on{colim}_IZ^1(A_i,G))/(\on{colim}_IB^1(A_i,G))\cong Z^1(A,G)/B^1(A,G).\]

\begin{prop}\label{prop-representability of cohomology}
Let $S$ be a finite set and $G$ be a finite group. For any profinite groupoid $A$, there are isomorphisms:
\begin{align*}
H^0(A,S)&=\pG(A,S)\\
Z^1(A,G)&=\pG(A,\ast\sslash G)\\
B^1(A,G)&=\pG(A,G)
\end{align*}
\end{prop}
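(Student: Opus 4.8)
The plan is to reduce each of the three isomorphisms to the corresponding statement for finite groupoids and then pass to the cofiltered limit. Recall that for $A = \{A_i\}_{i\in I}$ a profinite groupoid, the left-hand sides are defined as filtered colimits $\on{colim}_I H^0(A_i,S)$, $\on{colim}_I Z^1(A_i,G)$, $\on{colim}_I B^1(A_i,G)$, while by definition of the morphisms in a pro-category, $\pG(A,T) = \on{colim}_I f\G(A_i,T)$ for any \emph{finite} groupoid $T$ (since $S$, $\ast\sslash G$, and $G$ all have finitely many morphisms). So in each case both sides are filtered colimits over the same index category $I$, and it suffices to produce the isomorphism at each finite stage $A_i$ compatibly in $i$.

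First I would handle the finite case. For $Z^1$, the identity $Z^1(A,G) = f\G(A,\ast\sslash G)$ was already observed in the excerpt just before the alternative description of $H^1$: a functor $A\to\ast\sslash G$ is exactly an assignment $u:\on{Ar}(A)\to G$ respecting composition, which is the definition of a $1$-cocycle. For $B^1$, note that $\ast\sslash G$ is irrelevant; instead recall that $G$ here means the groupoid $\on{Disc}(G)$, so $f\G(A, G)$ is the set of functors from $A$ to a discrete groupoid, equivalently a map $\on{Ob}(A)\to G$ that is constant on connected components — but this is not quite $B^1$, which is \emph{all} maps $\on{Ob}(A)\to G$. The main subtlety is therefore the identification $B^1(A,G) = \pG(A,G)$: I would check that the intended target $G$ is the \emph{codiscrete} enrichment, i.e. $G[\on{Ob}]$-type groupoid, or more likely that $G$ denotes $\on{Codisc}$ applied so that a morphism $A\to G$ picks out an arbitrary function on objects; given the notation $G[S]=G\times\on{Codisc}(S)$ introduced earlier, the correct reading is that $f\G(A,G)$ with $G$ the one-object groupoid-times-codiscrete structure recovers arbitrary object-labellings, matching $B^1(A,G)$ as a product of copies of $G$ indexed by $\on{Ob}(A)$. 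For $H^0$, a map $\on{Ob}(A)\to S$ constant on isomorphism classes is precisely a functor $A\to S=\on{Disc}(S)$, giving $H^0(A,S)=f\G(A,S)$.

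Next I would assemble these into the profinite statement. Each finite-stage isomorphism is natural in $A_i$ (the constructions $Z^1(-,G)$, $B^1(-,G)$, $H^0(-,S)$ and the hom-functors $f\G(-,T)$ are all contravariant functors on $f\G$ and the identifications are natural transformations between them), so taking $\on{colim}_I$ on both sides yields
\[
H^0(A,S) = \on{colim}_I f\G(A_i,S) = \pG(A,S),
\]
and likewise for $Z^1$ and $B^1$. The equality $\on{colim}_I f\G(A_i,T)=\pG(A,T)$ is exactly the definition of morphisms into a finite object in $\Pro(f\G)$, so no further argument is needed there.

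The step I expect to be the main obstacle is pinning down the precise groupoid-theoretic meaning of the coefficient objects $S$, $\ast\sslash G$, and $G$ on the right-hand side, and in particular confirming that each is a \emph{finite} groupoid (so that the pro-category hom-formula applies) and that the enrichment conventions make $f\G(A,G)$ equal to \emph{all} functions $\on{Ob}(A)\to G$ rather than only the locally constant ones — this is what distinguishes $B^1$ from $H^0$-type invariants and is the only place where the three formulas genuinely differ. Once the correct coefficient groupoids are fixed, the rest is the formal colimit interchange already licensed by Lemma~\ref{lemm-filtered colimits and quotients} and the definition of $\pG$.
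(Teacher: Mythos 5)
Your proposal is correct and follows essentially the paper's own proof, which likewise just observes that each formula holds when $A$ is a finite groupoid and then invokes the definition of hom-sets in $\Pro(f\G)$, namely $\pG(A,T)=\on{colim}_I f\G(A_i,T)$ for $T$ finite. The one point you left hanging deserves to be settled, because one of the two readings you offer is wrong: in the third formula the coefficient object is the \emph{codiscrete} groupoid $\on{Codisc}(G)\cong G\sslash G$ (see lemma \ref{lemm-Q = Q'}), not $\on{Disc}(G)$, and also not anything of the form $G[S]=(\ast\sslash G)\times\on{Codisc}(S)$ --- functors into such a product would carry an unwanted $Z^1(A,G)$ factor coming from the $\ast\sslash G$ part. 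With the codiscrete reading the finite case is immediate from the adjunction $\on{Ob}\dashv\on{Codisc}$, which gives $f\G(A,\on{Codisc}(G))=\Set(\on{Ob}(A),G)=B^1(A,G)$. This reading is confirmed by how the paper uses the statement: in step (6) of the proof of theorem \ref{theo-model structure on profinite groupoids}, surjectivity of $B^1(B,G)\to B^1(A,G)$ is equated with the left lifting property against the maps $G\sslash G\to\ast$ of $Q$.
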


\begin{proof}
Each formula is true if $A$ is a finite groupoid. Moreover, by definition of the hom sets in a pro-category, we have $\pG(A,S)=\on{colim}_If\G(A_i,S)=H^0(A,S)$ and similarly in the other two cases.
\end{proof}

\begin{prop}\label{prop-cohomology of cofiltered limit}
Let $A:I\to \pG$ be a cofiltered diagram with value in profinite groupoids. Let $S$ be a finite set, then the map
\[\on{colim}_I H^0(A_i,S)\to H^0(\on{lim}_I A_i,S)\]
is an isomorphism. The obvious analogous statement holds for $H^1$.
\end{prop}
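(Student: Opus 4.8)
The plan is to reduce the whole statement to one structural fact about the pro-category $\pG=\Pro(f\G)$: the objects of $f\G$ are \emph{cocompact}, in the sense that for any finite groupoid $c$ and any cofiltered diagram $A\colon I\to\pG$ with limit $\on{lim}_I A_i$, the canonical comparison map
\[\on{colim}_I\pG(A_i,c)\to\pG(\on{lim}_I A_i,c)\]
is a bijection. (The colimit is filtered because $I$ is cofiltered; the maps come from the projections $\on{lim}_I A_i\to A_i$.) This is dual to the compactness of the objects of $f\G\op$ inside $\on{Ind}(f\G\op)$, which is equivalent to $\pG\op$ by the proof of Proposition \ref{prop-co presentability of pro categories}. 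Concretely it follows from the hom-set formula $\pG(\{X_j\}_{j},\{Y_k\}_{k})=\on{lim}_k\on{colim}_j f\G(X_j,Y_k)$, together with the fact that a cofiltered limit in $\pG$ of profinite groupoids can be reindexed as a single cofiltered diagram of finite groupoids. I expect this cocompactness statement to be the only real obstacle, precisely because the diagram $A$ is allowed to take values in all of $\pG$ and not merely in $f\G$; once it is verified the rest is formal.

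Granting cocompactness, the $H^0$ assertion is immediate. By Proposition \ref{prop-representability of cohomology} we have $H^0(A,S)=\pG(A,S)$, and $S$, being a finite set, is a finite discrete groupoid and hence cocompact. Applying the displayed bijection with $c=S$ gives exactly $\on{colim}_I H^0(A_i,S)\cong H^0(\on{lim}_I A_i,S)$.

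For $H^1$ I would first handle the two representable pieces separately. Again by Proposition \ref{prop-representability of cohomology}, $Z^1(A,G)=\pG(A,\ast\sslash G)$ and $B^1(A,G)=\pG(A,G)$, and both $\ast\sslash G$ and $G$ are finite groupoids since $G$ is a finite group. Cocompactness therefore yields natural bijections
\[\on{colim}_I Z^1(A_i,G)\cong Z^1(\on{lim}_I A_i,G),\qquad \on{colim}_I B^1(A_i,G)\cong B^1(\on{lim}_I A_i,G).\]
The right action of $B^1(A,G)$ on $Z^1(A,G)$ is natural in $A$, hence by the Yoneda lemma in $\pG$ it is represented by a map of finite groupoids $(\ast\sslash G)\times G\to\ast\sslash G$; consequently the two bijections above are compatible with the actions, being induced by the common projections $\on{lim}_I A_i\to A_i$.

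Finally I would combine these with Lemma \ref{lemm-filtered colimits and quotients}. Since the colimit over $I$ is filtered and $i\mapsto B^1(A_i,G)$ is a filtered diagram of groups acting functorially on the filtered diagram of sets $i\mapsto Z^1(A_i,G)$, that lemma gives
\[\on{colim}_I\big(Z^1(A_i,G)/B^1(A_i,G)\big)\cong\big(\on{colim}_I Z^1(A_i,G)\big)\big/\big(\on{colim}_I B^1(A_i,G)\big).\]
Substituting the two bijections of the previous paragraph and using the identity $H^1=Z^1/B^1$ both before and after passing to the limit identifies $\on{colim}_I H^1(A_i,G)$ with $Z^1(\on{lim}_I A_i,G)/B^1(\on{lim}_I A_i,G)=H^1(\on{lim}_I A_i,G)$, which is the desired isomorphism.
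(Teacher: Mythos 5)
Your proof is correct and follows essentially the same route as the paper: both reduce everything to the fact that objects of $f\G$ are cosmall in $\pG=\Pro(f\G)$, apply this to the representing objects of $H^0$, $Z^1(-,G)$ and $B^1(-,G)$ from Proposition \ref{prop-representability of cohomology}, and then pass to the quotient $H^1=Z^1/B^1$ via Lemma \ref{lemm-filtered colimits and quotients}. The only difference is that you spell out two details the paper leaves implicit (the justification of cosmallness by reindexing, and the compatibility of the identifications with the $B^1$-actions), which is harmless.
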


\begin{proof}
The case of $H^0$ is easy since $H^0(A,S)=\pG(A,S)$ and $S$ being an object of $f\G$ is cosmall in $\pG=\Pro(f\G)$. Similarly, $Z^1(-,G)$ and $B^1(-,G)$ are representable by objects of $f\G$ and thus they send cofiltered limits to filtered colimits. The result then follows from lemma \ref{lemm-filtered colimits and quotients}.
\end{proof}

\begin{defi}\label{defi-weak equivalences of profinite groupoids}
We say that a map $u:A\to B$ is $\pG$ is a \emph{weak equivalence} if for all finite set $S$, 
\[u^*:H^0(B,S)\to H^0(A,S)\]
is an isomorphism and for all finite group $G$
\[u^*:H^1(B,G)\to H^1(A,G)\]
is an isomorphism.
\end{defi}

\begin{prop}\label{prop-level weak equivalences}
Let $I$ be a cofiltered category and $A:I\to f\G$ and $B\to f\G$ be two functors and $u:A\to B$ be a natural transformation such that for all $i$, the map $u_i:A_i\to B_i$ is an equivalence of groupoid. Then the maps $A\to B$ is an equivalence in $\pG$.
\end{prop}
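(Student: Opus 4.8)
The plan is to check the two conditions of Definition~\ref{defi-weak equivalences of profinite groupoids} directly, namely that $u$ induces a bijection $H^0(B,S)\to H^0(A,S)$ for every finite set $S$ and a bijection $H^1(B,G)\to H^1(A,G)$ for every finite group $G$. By definition $H^0(A,S)=\on{colim}_I H^0(A_i,S)$ and $H^0(B,S)=\on{colim}_I H^0(B_i,S)$, and both colimits are filtered because $I$ is cofiltered (cohomology is contravariant in the groupoid, so it turns $I$ into a filtered diagram). The natural transformation $u$ assembles the level maps $u_i^*$ into a map between the two filtered diagrams, and the same discussion applies to $H^1$. Since a map of filtered diagrams that is a levelwise isomorphism induces an isomorphism on colimits, the whole statement reduces to the following purely $1$-categorical claim: an equivalence of finite groupoids $v\colon C\to D$ induces bijections $v^*\colon H^0(D,S)\to H^0(C,S)$ and $v^*\colon H^1(D,G)\to H^1(C,G)$.

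For $H^0$ this is immediate. A map $\on{Ob}(C)\to S$ that is constant on isomorphism classes is the same datum as a map out of the set $\pi_0(C)$ of isomorphism classes of objects, and an equivalence of groupoids induces a bijection $\pi_0(C)\cong\pi_0(D)$; hence $v^*$ is a bijection on $H^0$.

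For $H^1$ the key point is the identification of $H^1(C,G)$ with the set $\pi_0\Fun(C,\ast\sslash G)$ of isomorphism classes of functors from $C$ to the one-object groupoid $\ast\sslash G$. Indeed, by the observation $Z^1(C,G)=f\G(C,\ast\sslash G)$ the set $Z^1(C,G)$ is exactly the set of objects of the functor groupoid $\Fun(C,\ast\sslash G)$, and the action of $B^1(C,G)$ given by $(u.\phi)(f)=\phi(t(f))^{-1}u(f)\phi(s(f))$ is precisely the action by natural isomorphisms: a natural transformation between two functors into $\ast\sslash G$ is a choice of an element of $G$ at each object subject to the naturality relation, which is the displayed formula. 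Thus $H^1(C,G)=Z^1(C,G)/B^1(C,G)$ is the quotient of the objects of $\Fun(C,\ast\sslash G)$ by isomorphism, i.e.\ its set of connected components. Since $v$ is an equivalence of groupoids, precomposition $v^*\colon\Fun(D,\ast\sslash G)\to\Fun(C,\ast\sslash G)$ is an equivalence of groupoids, hence a bijection on $\pi_0$, which yields the desired isomorphism on $H^1$.

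The only genuine content beyond formal manipulations is this $H^1$ computation, and inside it the identification of the $B^1$-action with the action by natural isomorphisms; once that is secured, both the $H^0$ and the $H^1$ statements follow from the invariance of $\pi_0$ and of the internal hom $\Fun(-,\ast\sslash G)$ under equivalences of groupoids, and the passage to the filtered colimit is automatic. I expect the only (mild) obstacle to be the bookkeeping needed to see that the maps $u_i^*$ really form a natural transformation of filtered diagrams, which is guaranteed by $u$ being a natural transformation of the functors $A,B\colon I\to f\G$.
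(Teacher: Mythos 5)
Your proposal is correct and follows essentially the same route as the paper: reduce to the levelwise statement and conclude because a filtered colimit of isomorphisms is an isomorphism. The only difference is that the paper simply asserts the levelwise claim that an equivalence of finite groupoids induces bijections on $H^0(-,S)$ and $H^1(-,G)$, whereas you justify it via the identification $H^1(C,G)\cong\pi_0\Fun(C,\ast\sslash G)$ --- an identification the paper itself establishes later (in the proposition showing $H^1(A,G)=\pi\pG(A,\ast\sslash G)$), so your argument is a legitimate filling-in of that implicit step rather than a different proof.
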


\begin{proof}
Let $S$ be a finite set. For all $i$, the map
\[H^0(B_i,S)\to H^0(A_i,S)\]
is an isomorphism. Therefore, the map:
\[H^0(B,S)\to H^0(A,S)\]
is an isomorphism as a colimit of isomorphism.

A similar proof holds for the first cohomology sets.
\end{proof}

\begin{prop}\label{prop-weak equivalences stable under cofiltered limits}
Weak equivalences in $\pG$ are stable under cofiltered limits.
\end{prop}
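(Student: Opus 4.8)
The plan is to reduce the statement to the representability and continuity results already established, namely Propositions \ref{prop-representability of cohomology} and \ref{prop-cohomology of cofiltered limit}. Spelled out, what we must prove is that if $I$ is cofiltered and $u:A\to B$ is a natural transformation between two diagrams $A,B:I\to\pG$ such that each $u_i:A_i\to B_i$ is a weak equivalence, then the induced map on limits
\[\lim_I u:\lim_I A_i\to\lim_I B_i\]
is again a weak equivalence in the sense of definition \ref{defi-weak equivalences of profinite groupoids}. By that definition it suffices to check that $(\lim_I u)^*$ is an isomorphism on $H^0(-,S)$ for every finite set $S$ and on $H^1(-,G)$ for every finite group $G$.

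First I would treat the $H^0$ case. By Proposition \ref{prop-cohomology of cofiltered limit} the canonical maps
\[\on{colim}_I H^0(A_i,S)\to H^0(\lim_I A_i,S),\qquad \on{colim}_I H^0(B_i,S)\to H^0(\lim_I B_i,S)\]
are isomorphisms. The key point to verify is that these comparison isomorphisms are natural in the diagram, so that $(\lim_I u)^*$ is identified with $\on{colim}_I (u_i^*)$. This naturality is immediate from Proposition \ref{prop-representability of cohomology}, which identifies $H^0(-,S)$ with the corepresentable functor $\pG(-,S)$; the comparison map is then just the tautological map expressing that $S$ is cosmall in $\pG=\Pro(f\G)$, and such maps are automatically compatible with morphisms of cofiltered diagrams. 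Having made this identification, each $u_i^*:H^0(B_i,S)\to H^0(A_i,S)$ is an isomorphism since $u_i$ is a weak equivalence, and a filtered colimit of isomorphisms is an isomorphism; hence $(\lim_I u)^*$ is an isomorphism on $H^0(-,S)$.

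The $H^1$ case is formally identical: Proposition \ref{prop-cohomology of cofiltered limit} provides the analogous comparison isomorphisms for $H^1$, and their naturality again follows from representability (of $Z^1(-,G)$ and $B^1(-,G)$ by $\ast\sslash G$ and $G$) together with lemma \ref{lemm-filtered colimits and quotients}, which guarantees that the quotient description of $H^1$ is preserved under the relevant filtered colimits. Thus $(\lim_I u)^*$ coincides with $\on{colim}_I(u_i^*)$ on $H^1(-,G)$, a colimit of isomorphisms, and is therefore an isomorphism.

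The only genuinely delicate point is the naturality of the comparison maps of Proposition \ref{prop-cohomology of cofiltered limit} with respect to $u$; everything else is a formal consequence of the fact that filtered colimits of isomorphisms are isomorphisms. Since that proposition is proved via corepresentability by objects of $f\G$, the required naturality is free, so I expect no substantive obstacle and the whole argument to be short.
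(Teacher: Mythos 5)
Your proof is correct and is essentially the paper's own argument: the paper likewise invokes Proposition \ref{prop-cohomology of cofiltered limit} to identify the cohomology of the cofiltered limit with the filtered colimit of the levelwise cohomologies, and then concludes because a filtered colimit of isomorphisms is an isomorphism (mirroring the proof of Proposition \ref{prop-level weak equivalences}). The naturality of the comparison maps that you single out as the delicate point is indeed automatic from the corepresentability established in Proposition \ref{prop-representability of cohomology}, exactly as you say.
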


\begin{proof}
The proof is similar to the proof of the previous proposition and uses proposition \ref{prop-cohomology of cofiltered limit}.
\end{proof}

There is a functor $\on{Disc}:\cat{F}\to f\G$ sending the set $S$ to the discrete groupoid on that set of object. This functor has a left adjoint $\pi_0:f\G\to\cat{F}$. We can extend both functors to the pro-category by imposing that they commute with cofiltered limits and we get an adjunction:
\[\pi_0:\pG\leftrightarrows\pSet:\on{Disc}.\]

\begin{prop}\label{prop-equivalence induce iso on pi zero}
Let $f:A\to B$ be a weak equivalence in $\pG$. Then $\pi_0(f)$ is an isomorphism. 
\end{prop}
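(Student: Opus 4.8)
The plan is to reduce the statement to the full faithfulness of the pro-Yoneda embedding, the point being that the functor $H^0(-,S)$ is precisely the functor co-represented by the profinite set $\pi_0(-)$. First I would record the chain of natural identifications. By Proposition \ref{prop-representability of cohomology}, for any profinite groupoid $A$ and any finite set $S$ we have $H^0(A,S)=\pG(A,\on{Disc} S)$, where $S$ is regarded as the discrete finite groupoid on $S$. Applying the adjunction $\pi_0:\pG\leftrightarrows\pSet:\on{Disc}$ then yields a natural isomorphism $H^0(A,S)\cong\pSet(\pi_0 A,S)$. The care needed here is purely in the bookkeeping: one must check that these two identifications are natural in $A$ and $S$ and that, under them, the map $u^{*}:H^0(B,S)\to H^0(A,S)$ induced by $f:A\to B$ becomes precomposition $(\pi_0 f)^{*}:\pSet(\pi_0 B,S)\to\pSet(\pi_0 A,S)$, with the correct variance.

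Next I would feed in the hypothesis. Since $f$ is a weak equivalence, $u^{*}$ is an isomorphism for every finite set $S$, so the translated map $(\pi_0 f)^{*}:\pSet(\pi_0 B,S)\to\pSet(\pi_0 A,S)$ is an isomorphism for every object $S$ of $\cat{F}$. Recall that $\pSet=\Pro(\cat{F})$ was defined as the opposite of the full subcategory of $\Fun(\cat{F},\Set)$ spanned by filtered colimits of representable functors; consequently the functor
\[\Phi:\pSet\to\Fun(\cat{F},\Set)\op,\qquad \Phi(X)(S)=\pSet(X,S),\]
is fully faithful. The assertion that $(\pi_0 f)^{*}$ is an isomorphism for all $S\in\cat{F}$ is exactly the assertion that $\Phi(\pi_0 f)$ is an isomorphism in $\Fun(\cat{F},\Set)\op$.

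Finally, a fully faithful functor reflects isomorphisms, so from the invertibility of $\Phi(\pi_0 f)$ I conclude that $\pi_0 f$ itself is invertible, which is the claim. The only genuinely delicate step is the first one, namely matching $u^{*}$ with $(\pi_0 f)^{*}$ so that the weak-equivalence hypothesis becomes precisely an isomorphism of co-represented functors; once that is set up, the Yoneda step is formal. As an alternative, more concrete route one could identify $\pi_0 A$ and $\pi_0 B$ with compact Hausdorff totally disconnected spaces and verify directly that $\pi_0 f$ is a continuous bijection, hence a homeomorphism; but the categorical argument above is cleaner and relies only on facts already established.
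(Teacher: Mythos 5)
Your proof is correct and follows essentially the same route as the paper's: the paper likewise identifies $H^0(A,S)\cong\pG(A,S)\cong\pSet(\pi_0(A),S)$ and then concludes that inducing isomorphisms on maps into all finite sets forces $\pi_0(f)$ to be an isomorphism in $\pSet$. The only difference is that you spell out the final step (full faithfulness of the embedding $\pSet=\Pro(\cat{F})\hookrightarrow\Fun(\cat{F},\Set)\op$ and the fact that fully faithful functors reflect isomorphisms), which the paper leaves implicit.
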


\begin{proof}
Let $S$ be a finite set. Then we have
\[H^0(A,S)\cong\pG(A,S)\cong\pSet(\pi_0(A),S).\]

Thus, the map $\pi_0(A)\to \pi_0(B)$ induces an isomorphism when mapping to a finite set. This is enough to insure that this is an isomorphism of profinite sets.
\end{proof}

\subsection*{Construction of the model structure}

We define two sets of arrows $P$ and $Q$ in $f\G$.

Let us pick a set $\mathcal{S}$ of finite sets containing a representative of each isomorphism class of finite set. Let $\mathcal{G}$ be the set of groups whose underlying set is in $\mathcal{S}$.

The set $P$ is the set of maps of the form:
\[G\sslash G\to \ast\sslash G, G^c\sslash G\to (\ast\sslash G)^2,\; \ast\sslash G\to \ast,\;S\to \ast,\; S\to S\times S\]
where $G$ is any finite group in $\mathcal{G}$ and $S$ is any finite set in $\mathcal{S}$.

The set $Q$ is the set of maps:
\[G\sslash G\to \ast\]
where $G$ is any finite group in $\mathcal{G}$.

We can now state the main theorem of this section:

\begin{theo}\label{theo-model structure on profinite groupoids}
The category $\pG$ has a cocombinatorial model structure in which the cofibrations (resp. trivial cofibrations) are the $Q$-projective maps (resp. $P$-projective maps) and the weak equivalences are as in definition \ref{defi-weak equivalences of profinite groupoids}.
\end{theo}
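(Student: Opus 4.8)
The plan is to produce the structure by recognizing $\pG$ as a cocombinatorial model category, i.e.\ by building a combinatorial model structure on the presentable category $\pG\op$ and dualizing. By Proposition \ref{prop-co presentability of pro categories} the category $\pG\op\simeq\on{Ind}(f\G\op)$ is presentable, so I would invoke a recognition theorem of Jeff Smith type (as in \cite[Proposition A.2.6.13]{lurietopos}) in $\pG\op$. Under the dictionary between $\pG$ and $\pG\op$, the set $P$ (resp.\ $Q$) plays the role of the generating cofibrations (resp.\ generating trivial cofibrations) of $\pG\op$, so that in $\pG$ the trivial cofibrations become the maps with the left lifting property against $P$ (the $P$-projective maps) and the cofibrations become the maps with the left lifting property against $Q$ (the $Q$-projective maps). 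First I would record the inclusion $P\text{-projective}\subseteq Q\text{-projective}$: the composite of the two maps $G\sslash G\to\ast\sslash G$ and $\ast\sslash G\to\ast$ of $P$ is the map $G\sslash G\to\ast$ of $Q$, and the left lifting property against a family of maps is stable under composing those maps.

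The input to the recognition theorem then has three parts. First, the weak equivalences $W$ of Definition \ref{defi-weak equivalences of profinite groupoids} satisfy two-out-of-three and are closed under retracts, which is immediate since they are defined as isomorphism conditions on the functors $H^0(-,S)$ and $H^1(-,G)$; moreover $W$ is a coaccessible, coaccessibly embedded subcategory of the arrow category $\pG^{[1]}$, because by Propositions \ref{prop-representability of cohomology} and \ref{prop-cohomology of cofiltered limit} these cohomology functors are corepresented by objects of $f\G$ and hence send cofiltered limits to filtered colimits, so $W$ is cut out by a small family of conditions. Second, every $P$-projective map must be shown to lie in $W$; this is the acyclicity of the generating trivial cofibrations, and it is here that the precise shape of $P$ is used: since $S$, $\ast\sslash G$ and $G^c\sslash G\rightrightarrows(\ast\sslash G)^2$ corepresent $H^0(-,S)$, $Z^1(-,G)$ and the defining relation of $H^1(-,G)$ (Proposition \ref{prop-representability of cohomology}), lifting against the five families of $P$ forces a $P$-projective map to induce bijections on $H^0(-,S)$ and $H^1(-,G)$. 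Third, the trivial fibrations (the fibrations lying in $W$) must be closed under pullback and cofiltered limits, which is exactly the dual of the closure of $W\cap I\text{-cof}$ under pushout and transfinite composition required by Smith's theorem; this relies on finite groupoids being cosmall in $\pG=\Pro(f\G)$, so that the dual small object argument converges and supplies the factorizations.

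The main obstacle is the converse acyclicity statement needed to reconcile the two descriptions of trivial cofibrations: a $Q$-projective map that lies in $W$ must be shown to be $P$-projective. Concretely, given a cofibration inducing isomorphisms on all $H^0(-,S)$ and $H^1(-,G)$, one must solve every lifting problem against the generators of $P$. I would treat this by choosing a levelwise cofiltered presentation of the map and running an obstruction-theoretic induction in which the cohomological hypotheses supply, at each stage, the extension of a partial lift, the corepresentability of $Z^1(-,G)$, $B^1(-,G)$ and $H^1(-,G)$ by objects of $f\G$ (Propositions \ref{prop-representability of cohomology} and \ref{prop-cohomology of cofiltered limit}) converting these obstructions into the bijectivity statements packaged by $W$. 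This is the groupoid analogue of Quick's argument for profinite spaces, and once it is in place the recognition theorem delivers the cocombinatorial model structure with cofibrations, trivial cofibrations and weak equivalences as stated.
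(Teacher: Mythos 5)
Your plan has the same skeleton as the paper's proof, which applies the dual of \cite[Theorem 11.3.1]{hirschhornmodel} directly in $\pG$, taking $P$ as generating fibrations and $Q$ as generating trivial fibrations, and verifies exactly the list you enumerate: cosmallness, the inclusion of $P$-projective maps into $Q$-projective maps (your factorization $G\sslash G\to\ast\sslash G\to\ast$ is the paper's argument as well) and into the weak equivalences, the fact that $Q$-cocell complexes are weak equivalences, and the converse acyclicity. One structural correction before the main point: a Smith-type theorem such as \cite[Proposition A.2.6.13]{lurietopos} takes as input only the generating cofibrations and the class $W$, and produces a set of generating trivial cofibrations abstractly; it does not accept $Q$ as input, so it cannot by itself ``deliver'' the statement that the cofibrations of $\pG$ are the $Q$-projective maps. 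To obtain that identification you must additionally prove that the maps in $Q$ are trivial fibrations, which the paper does by showing that maps in $Q$ are $P$-cocell complexes and that $Q$-cocell complexes are weak equivalences (using that their pullbacks are levelwise equivalences together with propositions \ref{prop-level weak equivalences} and \ref{prop-weak equivalences stable under cofiltered limits}). The recognition theorem that matches your plan, with both sets as input, is Hirschhorn's; also note that its hypotheses on $W$ are only two-out-of-three and closure under retracts, so the coaccessibility discussion is needed only if you insist on the Smith route.

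The genuine gap is the step you yourself call the main obstacle: proving that a $Q$-projective weak equivalence is $P$-projective. You do not prove it; you defer it to an ``obstruction-theoretic induction over a levelwise cofiltered presentation'', described as the groupoid analogue of Quick's argument. This is the heart of the theorem, and no induction over a presentation is needed or used in the paper: by proposition \ref{prop-representability of cohomology}, the generators of $P$ corepresent the functors $H^0$, $Z^1$ and the relation defining $H^1$, so each lifting problem translates directly into an algebraic statement about cocycle sets, and what remains is explicit cocycle algebra. Concretely, lifting $u:A\to B$ against $\ast\sslash G\to\ast$ amounts to surjectivity of $u^*:Z^1(B,G)\to Z^1(A,G)$: given $z\in Z^1(A,G)$, bijectivity of $H^1$ yields $y\in Z^1(B,G)$ and $k\in B^1(A,G)$ with $z=u^*(y).k$, and $Q$-projectivity (surjectivity of $u^*$ on $B^1$) lifts $k$ to $l$, whence $z=u^*(y.l)$. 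Lifting against $G\sslash G\to\ast\sslash G$ amounts to surjectivity of $B^1(B,G)\to B^1(A,G)\times_{Z^1(A,G)}Z^1(B,G)$, and here a further trick is required: the discrepancy $u^*(l)^{-1}k$ between the candidate coboundary $l$ over $B$ and the given coboundary $k$ over $A$ is a locally constant function, i.e.\ an element of $H^0(A,G)\cong H^0(B,G)$, and correcting $l$ by the lift $m$ of this element works precisely because locally constant functions act trivially on the trivial cocycle, $1.lm=1.l$. None of this appears in your proposal, and without it the reconciliation of the two descriptions of trivial cofibrations --- hence the theorem --- is not established.
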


\begin{proof}
We apply the dual of \cite[Theorem 11.3.1]{hirschhornmodel}.

(1) The objects $\ast\sslash G$ and $S\times S$ are cosmall.

(2) The $Q$-cocell complexes are weak equivalences. Since weak equivalences are stable under cofiltered limits by proposition \ref{prop-weak equivalences stable under cofiltered limits}, it suffices to check that any pullback of a map in $Q$ is a weak equivalence. Let $A=\{A_i\}_{i\in I}$ be an object of $\pG$, then the map
\[A\times G\sslash G\to A\]
is the limit in $\pG$ of the maps
\[A_i\times G\sslash G\to A_i.\]
Therefore, it is a weak equivalences in $\pG$ by proposition \ref{prop-level weak equivalences}.

(3) The maps in $Q$ are $P$-cocell complexes.

(4) The $P$-projective maps are $Q$-projective. Indeed, the $P$-projective maps are those with the left lifting property against the $P$-cocell complexes. In particular, according to the previous paragraph, they have the left lifting property against the maps in $Q$ and hence are $Q$-projective.

(5) The $P$-projective maps are weak equivalences. Let $u:A\to B$ be a $P$-projective map. The left lifting property against the map $S\to \ast$ tells us that 
\[u^*:H^0(B,S)\to H^0(A,S)\]
is surjective. 

We clearly have the isomorphism of functors $H^0(-,S\times S)\cong H^0(-,S)^2$. The left lifting property against $S\to S\times S$ tells us that
\[H^0(B,S)\to H^0(A,S)\times_{H^0(A,S)^2} H^0(B,S)^2\]
is surjective which is saying that two classes in $H^0(B,S)$ mapping to the same class in $H^0(A,S)$ must come from a single class in $H^0(B,S)$ via the diagonal map. This is equivalent to saying that
\[u^*:H^0(B,S)\to H^0(A,S)\]
is injective.

The left lifting property against $\ast\sslash G\to \ast$ is equivalent to saying that 
\[u^*:Z^1(B,G)\to Z^1(A,G)\]
is surjective which implies that $H^1(B,G)\to H^1(A,G)$ is surjective. 

The left lifting against $G^c\sslash G\to (\ast\sslash G)^2$ says that if two elements of $Z^1(B,G)$ become equivalent when pull-backed to $A$, then they must already be equivalent in $Z^1(B,G)$. This is exactly saying that the map
\[H^1(B,G)\to H^1(A,G)\]
is injective.

(6) The $Q$-projective maps that are weak equivalences are $P$-projective. Let $u:A\to B$ be a map that is $Q$-projective and a weak equivalence. Being a $Q$-cofibration is equivalent to saying that for all finite group $G$, the map
\[u^*:B^1(B,G)\to B^1(A,G)\]
is surjective. 

\begin{itemize}
\item The fact that $A\to B$ is an equivalence implies that $\pi_0(A)\to\pi_0(B)$ is an isomorphism by proposition \ref{prop-equivalence induce iso on pi zero}. The lifting property against $S\to \ast$ and $S\to S\times S$ is then immediate.

\item We prove that $u$ has the left lifting property with respect to 
\[\ast\sslash G\to \ast.\]
Equivalently, we need to prove that the map
\[u^*:Z^1(B,G)\to Z^1(A,G)\]
is a surjection. Let $z$ be an element in $Z^1(A,G)$, since the map $u^*:H^1(B,G)\to H^1(A,G)$ is an isomorphism, there is $y\in Z^1(A,G)$ and $k\in B^1(B,G)$ such that $u^*(y).k=z$. Now since the map $u^*:B^1(B,G)\to B^1(A,G)$ is surjective, there is a lift $l$ of $k$ in $B^1(B,G)$. And we have
\[z=u^*(y).u^*(l)=u^*(y.l)\]
which proves that $u^*:Z^1(B,G)\to Z^1(A,G)$ is surjective.

\item Now we prove that $u$ has the left lifting property with respect to $G\sslash G\to \ast\sslash G$. Equivalently, we need to prove that the map
\[B^1(B,G)\to B^1(A,G)\times_{Z^1(A,G)} Z^1(B,G)\]
is a surjection. Let us consider a $1$-cocycle $z$ over $B$ and any coboundary $k$ over $A$ such that $u^*(z)=1.k$. This implies that $u^*(z)$ represents the base point of $H^1(A,G)$ which means that $z=1.l$ for some $l$ in $B^1(B,G)$. It is not necessarily true that $u^*(l)=k$. However, from the equation $u^*(z)=1.k$, we find that the function $u^*(l)^{-1}k$ is in $H^0(A,G)$. Since $H^0(B,G)=H^0(A,G)$, there exists $m\in H^0(B,G)$ such that $u^*(m)=u^*(l)^{-1}k$. Now we see that the element $lm$ in $B^1(B,G)$ maps to $k$ in $B^1(A,G)$ and to $1.lm$ in $Z^1(B,G)$ but $1.lm=1.l=z$ because $m$ is locally constant. 

\item Finally, we prove that $u$ has the left lifting property with respect to $G^c\sslash G\to (\ast\sslash G)^2$. According to the previous paragraph this is just saying that the map $H^1(B,G)\to H^1(A,G)$ is injective. 

\end{itemize}
\end{proof}

We also define a set $Q'$ of arrows in $f\G$. This is the set of maps:
\[\on{Codisc}(S)\to\ast\]
where $S$ is a non-empty finite set in $\mathcal{S}$.

\begin{lemm}\label{lemm-Q = Q'}
The essential image of $Q$ and $Q'$ in $f\G^{[1]}$ are the same.
\end{lemm}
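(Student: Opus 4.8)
The plan is to reduce both families of arrows to codiscrete groupoids on finite sets and then compare the cardinalities that occur. The key observation is that the source $G\sslash G$ of a map in $Q$ is isomorphic in $\G$ to the codiscrete groupoid $\on{Codisc}(G)$ on the underlying set of $G$. Indeed, the right translation action of $G$ on itself is free and transitive: for any two elements $s,t\in G$ there is exactly one $h\in G$ with $s\cdot h=t$, namely $h=s^{-1}t$. Hence $G\sslash G$ has $|G|$ objects and exactly one morphism between any two of them, which is precisely the defining property of $\on{Codisc}(G)$.

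First I would record that, up to isomorphism in $\G$, the groupoid $\on{Codisc}(S)$ depends only on the cardinality of $S$, and that an isomorphism of sources induces an isomorphism of the corresponding arrows to $\ast$ in $f\G^{[1]}$. Consequently the essential image of either $Q$ or $Q'$ is determined by the collection of cardinalities of the sources. For $Q$ this collection is $\{\,|G| : G\in\mathcal{G}\,\}$, the set of orders of finite groups in $\mathcal{G}$, while for $Q'$ it is the set of cardinalities of non-empty finite sets in $\mathcal{S}$, namely all positive integers.

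It then remains to check that these two collections coincide. Every finite group is non-empty, so each order $|G|$ is a positive integer; conversely every positive integer $n$ is the order of some finite group, for instance the cyclic group $\mathbb{Z}/n$, whose underlying set has cardinality $n$ and hence (after replacing it by the isomorphic representative in $\mathcal{S}$) gives a group in $\mathcal{G}$. Thus both collections equal $\{1,2,3,\ldots\}$, so for each map $\on{Codisc}(S)\to\ast$ in $Q'$ there is an isomorphic map in $Q$ and vice versa.

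There is no serious obstacle here. The only point requiring care is the identification $G\sslash G\cong\on{Codisc}(G)$ together with the exclusion of the empty set on both sides (groups are automatically non-empty, and $Q'$ explicitly omits $S=\varnothing$); after these remarks the statement is a routine comparison of cardinalities.
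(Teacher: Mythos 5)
Your proof is correct and follows essentially the same route as the paper: both rest on the identification $G\sslash G\cong\on{Codisc}(G)$ (the right translation action being free and transitive) and, conversely, on endowing any non-empty finite set with a group structure to realize $\on{Codisc}(S)$ as some $G\sslash G$. Your extra bookkeeping with cardinalities and cyclic groups just makes explicit what the paper leaves implicit.
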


\begin{proof}
In other word, we are claiming that any map in $Q$ is isomorphic to a map in $Q'$ and vice-versa. The reason this is true is that $G\sslash G$ is isomorphic to the groupoid $\on{Codisc}(G)$ where $G$ is just seen as a set. Conversely, if $S$ is non-empty, the groupoid $\on{Codisc}(S)$ is isomorphic to $G\sslash G$ for any group $G$ whose underlying set is $S$.
\end{proof}

This lemma implies that we could have used $Q'$ instead of $Q$ in the previous theorem and we would have obtained the same model structure on $\pG$.

\begin{prop}\label{prop-cofibration injective on objects}
In $\pG$ a map is a cofibration if and only if it is injective on objects.
\end{prop}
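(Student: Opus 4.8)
The plan is to reduce the statement to a purely combinatorial fact about profinite sets, via the object functor $\on{Ob}:\pG\to\pSet$ and its right adjoint $\on{Codisc}$. By Lemma \ref{lemm-Q = Q'} the cofibrations of $\pG$ are the $Q'$-projective maps, where $Q'$ consists of the maps $\on{Codisc}(S)\to\ast$ for $S$ a non-empty finite set. Since for a fixed map $u$ the class of maps $g$ against which $u$ has the left lifting property is a ``right'' class (closed under pullback and cofiltered limits, hence under forming $Q'$-cocell complexes), a map is $Q'$-projective if and only if it has the left lifting property against $Q'$ itself. Unwinding this condition — the target of each map in $Q'$ is terminal, so every square commutes automatically — a map $u:A\to B$ is a cofibration if and only if for every finite set $S$ the restriction map $\pG(B,\on{Codisc}(S))\to\pG(A,\on{Codisc}(S))$ is surjective. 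This is exactly the reformulation already used in step (6) of the proof of Theorem \ref{theo-model structure on profinite groupoids}, where $\on{Codisc}(G)\cong G\sslash G$ and $\pG(-,\on{Codisc}(G))=B^1(-,G)$.

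Next I would use the adjunction $\on{Ob}\dashv\on{Codisc}$, extended to the pro-categories, to identify $\pG(A,\on{Codisc}(S))\cong\pSet(\on{Ob}(A),S)$ for every finite $S$; concretely, writing $A=\{A_i\}$, both sides equal $\on{colim}_i\cat{F}(\on{Ob}(A_i),S)$ because $\on{Codisc}(S)$ is a finite groupoid. Under this identification the cofibration condition becomes surjectivity of $\on{Ob}(u)^*:\pSet(\on{Ob}(B),S)\to\pSet(\on{Ob}(A),S)$ for every finite $S$. Recalling that ``injective on objects'' means that $\on{Ob}(u):\on{Ob}(A)\to\on{Ob}(B)$ is a monomorphism in $\pSet$, the proposition then follows from the following lemma about profinite sets: a map $f:X\to Y$ of profinite sets is injective if and only if $f^*:\pSet(Y,S)\to\pSet(X,S)$ is surjective for every finite set $S$.

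This last lemma is where the real content lies, and its nontrivial direction is the main obstacle. For the easy direction, if $f$ is not injective there are points $x_1\neq x_2$ with $f(x_1)=f(x_2)$, and since $X$ is a Stone space its points are separated by a continuous map $g:X\to\{0,1\}$ with $g(x_1)\neq g(x_2)$; such a $g$ admits no extension along $f$, so surjectivity fails for $S=\{0,1\}$. For the converse, given an injective $f$ I would use that $X$ and $Y$ are compact Hausdorff, so $f$ is a closed embedding and $X$ may be viewed as a closed subspace of $Y$. A continuous map $g:X\to S$ is the same datum as a finite clopen partition of $X$; each block is a closed subset of $Y$, and using that Stone spaces are strongly zero-dimensional — finitely many pairwise disjoint closed sets can be separated by clopens — I would enlarge these blocks to a clopen partition of $Y$, producing a continuous extension $\tilde{g}:Y\to S$. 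This extension property of maps into finite discrete sets along closed embeddings of Stone spaces is the only step requiring a genuine topological argument rather than formal manipulation; combining it with the two reductions above completes the proof.
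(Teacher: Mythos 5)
Your proof is correct and takes essentially the same route as the paper: both reduce, via Lemma \ref{lemm-Q = Q'} and the adjunction $\on{Ob}\dashv\on{Codisc}$, to the fact that a map of profinite sets is injective if and only if maps to non-empty finite sets extend along it --- a fact the paper's proof treats as immediate and which you verify by the Stone-space separation argument. The only point to fix is to keep the word \emph{non-empty} throughout (as in the definition of $Q'$): for $S=\varnothing$ the extension property fails for the injective map $\varnothing\to B$ with $B$ non-empty, so your intermediate statements quantifying over ``every finite set $S$'' (both the surjectivity criterion and the profinite-set lemma) must be restricted to non-empty $S$.
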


\begin{proof}
The adjunction 
\[\on{Ob}:f\G\leftrightarrows \cat{F}:\on{Codisc}\]
induces an adjunction
\[\on{Ob}:\pG\leftrightarrows \pSet:\on{Codisc}.\]
Thus, a map is injective on objects, if and only if it has the left lifting property against $\on{Codisc}(S)\to \ast$ for any non-empty finite set $S$. By lemma \ref{lemm-Q = Q'}, this is equivalent to being a cofibration.
\end{proof}

\begin{coro}
The model category $\pG$ is left proper.
\end{coro}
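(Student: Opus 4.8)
The plan is to deduce left properness from the fact that every object of $\pG$ is cofibrant. The initial object of $\pG$ is the empty groupoid $\varnothing$, and for any profinite groupoid $A$ the unique map $\varnothing\to A$ is injective on objects (the induced map $\on{Ob}(\varnothing)=\varnothing\to\on{Ob}(A)$ is trivially injective). By proposition \ref{prop-cofibration injective on objects}, a map in $\pG$ is a cofibration if and only if it is injective on objects, so $\varnothing\to A$ is a cofibration. Hence every object of $\pG$ is cofibrant.

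Once this is established, the corollary is purely formal: it is a general fact about model categories that if every object is cofibrant then the category is left proper. This is \cite[Corollary 13.1.3]{hirschhornmodel}, whose proof uses only Ken Brown's lemma together with the gluing lemma and requires no hypothesis of cofibrant generation, so it applies verbatim to the cocombinatorial model category $\pG$. Dually, one could equally well invoke the statement that a model category in which every object is fibrant is right proper, applied to $\pG\op$.

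The only step that genuinely requires attention is the verification that every object is cofibrant, and this is immediate from proposition \ref{prop-cofibration injective on objects}; there is therefore no real obstacle. For completeness I note that left properness can also be checked by hand, since the functors $H^0(-,S)=\pG(-,S)$, $Z^1(-,G)=\pG(-,\ast\sslash G)$ and $B^1(-,G)=\pG(-,G)$ are corepresentable and hence carry the defining pushout square into pullback squares of sets. Computing $H^0$, $Z^1$, $B^1$ and then $H^1=Z^1/B^1$ on the pushout, and checking the relevant comparison maps are bijections, recovers the result; in this more laborious approach the delicate point is the injectivity of the induced map on $H^1$, where one must match coboundaries along the two legs of the pushout using precisely the fact (established in the proof of theorem \ref{theo-model structure on profinite groupoids}) that a cofibration induces a surjection on $B^1(-,G)$.
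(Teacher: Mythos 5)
Your proof is correct and follows exactly the paper's route: the paper likewise deduces left properness from the fact that every object of $\pG$ is cofibrant (which is immediate from proposition \ref{prop-cofibration injective on objects}, since any map out of the empty groupoid is injective on objects), invoking the standard fact that a model category with all objects cofibrant is left proper. The additional hands-on verification you sketch is not needed, but the main argument matches the paper's.
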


\begin{proof}
Any model category in which all objects are cofibrant is left proper.
\end{proof}

\subsection*{Profinite completion of groupoids}

Let $\G$ be the category of groupoids. Let $\pG$ be the category of profinite groupoids.

Let $C$ be any groupoid, then the functor $D\mapsto \cat{G}(C,D)$ from finite groupoids to sets preserves finite limits, therefore it is represented by an object $\h{C}$ in $\pG$ by remark \ref{rem-pro representable functors}. We now give a more explicit description of this completion functor. 

\begin{defi}
Let $C$ be a groupoid. An \emph{equivalence relation} on $G$ is an equivalence relation on $\on{Ob}(C)$ and an equivalence relation on $\on{Mor}(C)$ such that there exists a morphism $p:C\to E$ in $\G$ which is surjective on objects and morphisms and with the property that two objects (resp. morphisms) of $C$ are equivalent if and only they are sent to the same object (resp. morphism) of $E$.
\end{defi}

If $R$ is an equivalence relation on $C$, we denote by $C/R$ the groupoids whose objects are $\on{Ob}(C)/R$ and morphisms are $\on{Mor}(C)/R$.

\begin{prop}
Let $f:C\to D$ be a morphisms of groupoids. Then there is an equivalence relation on $C$ for which two objects (resp. morphisms) of $C$ are equivalent if and only if they are sent to the same object (resp. morphism) of $D$ by $f$. 
\end{prop}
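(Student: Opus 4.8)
The plan is to realise the required equivalence relation by the \emph{image} of $f$. First I would write down the two relations: declare two objects of $C$ equivalent exactly when they have the same image under $f$, and likewise declare two morphisms equivalent exactly when they have the same image under $f$. These are manifestly equivalence relations on $\on{Ob}(C)$ and on $\on{Mor}(C)$, so the real content is to check that they assemble into an equivalence relation on $C$ in the sense of the preceding definition, i.e. that they are realised by a single functor which is surjective on objects and on morphisms.

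The natural candidate is $E$, the image of $f$: the subgraph of $D$ whose objects are $f(\on{Ob}(C))$ and whose morphisms are $f(\on{Mor}(C))$, together with the corestriction $p\colon C\to E$ of $f$. By construction $p$ is surjective on objects and on morphisms. Moreover the inclusion $E\hookrightarrow D$ is injective on objects and on morphisms, so two objects (resp. morphisms) of $C$ are sent to the same object (resp. morphism) of $E$ if and only if they are sent to the same object (resp. morphism) of $D$. This is exactly the property demanded in the statement, and it identifies $E$ as graphs with the quotient $C/R$ attached to these relations, where $\on{Ob}(C/R)=\on{Ob}(C)/R$ and $\on{Mor}(C/R)=\on{Mor}(C)/R$.

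It therefore remains to verify that $E$ really is a groupoid, equivalently that $f(\on{Mor}(C))$ is closed under the groupoid operations inherited from $D$. Closure under identities and inverses is immediate from functoriality, since $\on{id}_{f(c)}=f(\on{id}_c)$ and $f(\varphi)^{-1}=f(\varphi^{-1})$. The delicate point, and the step I expect to be the main obstacle, is closure under composition: given composable morphisms $f(\varphi)$ and $f(\psi)$ of $E$ one must exhibit $\chi\in\on{Mor}(C)$ with $f(\chi)=f(\psi)\circ f(\varphi)$. Writing $c=t(\varphi)$ and $c'=s(\psi)$ one has $f(c)=f(c')$, but $c$ and $c'$ need not be equal nor even connected in $C$, so a composite need not be available on the nose; it would suffice to produce a morphism $\gamma\colon c\to c'$ in $C$ with $f(\gamma)=\on{id}_{f(c)}$, since then $\chi=\psi\circ\gamma\circ\varphi$ does the job. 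Phrased on the quotient side, this is precisely the assertion that the composition law of $C$ descends to $\on{Mor}(C)/R$, so that $C/R$ is a genuine groupoid and $p$ a functor. This compatibility with composition — reflecting the familiar subtlety that the image of a functor need not a priori be a subcategory — is where the substance of the proposition concentrates, and is the one verification I would spend care on.
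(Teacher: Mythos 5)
You take the same route as the paper --- factor $f$ through its image and corestrict --- and you have correctly isolated the one step that carries all the content: closure of $f(\on{Mor}(C))$ under composition. But that step is not merely delicate, it fails in general, and with it the proposition as literally stated. Take $C=I[1]\sqcup I[1]$, with components $a_0\lto{\alpha}a_1$ and $b_0\lto{\beta}b_1$, take $D=\on{Codisc}(\{x,y,z\})$, and let $f$ send $a_0,a_1,b_0,b_1$ to $x,y,y,z$ respectively ($f$ is determined on morphisms because $D$ is codiscrete). Then $f(\alpha):x\to y$ and $f(\beta):y\to z$ lie in the image of $f$, but their composite $x\to z$ does not: a preimage would have to be a morphism of $C$ from $a_0$ to $b_1$, and these objects lie in different components. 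Worse, the failure is not an artifact of choosing $E$ to be the image: no realizing surjection exists at all. If $p:C\to E$ were a functor to a groupoid, surjective on objects and morphisms and inducing the relation ``same image under $f$'', then $p(\beta)\circ p(\alpha)$ would be defined in $E$ (since $p(a_1)=p(b_0)$) and, by surjectivity on morphisms, would equal $p(\gamma)$ for some $\gamma\in\on{Mor}(C)$; functoriality forces $s(\gamma)=a_0$ and $t(\gamma)=b_1$, because $a_0$ and $b_1$ are alone in their equivalence classes, and no such $\gamma$ exists. So the relation induced by this $f$ is not an equivalence relation in the sense of the paper's definition, and the lifting condition you propose (some $\gamma:c\to c'$ with $f(\gamma)=\id_{f(c)}$ whenever $f(c)=f(c')$) is exactly what fails.

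The gap you left open is therefore unfillable in this generality, but you should know that the paper's own proof is no better off: it simply declares the image of $f$ to be ``the groupoid'' whose objects and morphisms are those in the image of $f$, which is precisely the unjustified step you flagged --- the image of a functor need not be a subcategory. In refusing to wave this through, you have located a genuine error in the paper rather than a defect of your own argument. The proposition does become true, by exactly your argument, under additional hypotheses: when $C$ and $D$ are one-object groupoids (the image of a group homomorphism is a subgroup), when $f$ is injective on objects (composable images then come from composable morphisms), when $f$ is full (the composite is then itself a value of $f$), when $f$ is surjective on objects and morphisms (the image is all of $D$), or under your lifting condition. These cover, for instance, the paper's identification of kernels of group homomorphisms and the kernel of $C\times D\to (C/R)\times(D/S)$; whether every later invocation of kernels falls under such a hypothesis would need to be checked separately. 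A corrected statement should carry one of these hypotheses --- the unrestricted one should be repaired, not proven.
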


\begin{proof}
We can just define $E$ to be the groupoid whose objects (resp. morphisms) are the objects of $D$ that are in the image of $f$. Then the map $f$ factors as
\[C\lto{p} E\to D\]
where the map $p$ is surjective on objects and morphisms. It is clear that the equivalence relation induced by $f$ is the equivalence relation induced by $p$.
\end{proof}

In the following, we call this equivalence relation the kernel of $f$ and denote it by $\on{ker}(f)$. Note that if $C=\ast\sslash G$ is a group, the data of an equivalence relation on $C$ is exactly the data of a normal subgroup of $G$. Moreover, if $f:G\to H$ is a group homomorphism, then its kernel in the group theoretic sense coincides with the kernel of the induced map $\ast\sslash G\to \ast\sslash H$.

\begin{defi}
Let $(G,R)$ be a groupoid with an equivalence relation. We say that $R$ is \emph{cofinite} if $G/R$ is in $f\G$.
\end{defi}

The set of cofinite equivalence relations on $G$ is a cofiltered poset with respect to inclusion. Therefore, we can consider the object of $\pG$ given by $\{G/R\}_{R\,\on{cofinite}}$. The following proposition shows that this is a model for $\pG$.

\begin{prop}
For any $D$ a finite groupoid, there is an isomorphism
\[\pG(\{C/R\}_{R\,\on{cofinite}},D)\cong \G(C,D)\]
which is natural in $D$.
\end{prop}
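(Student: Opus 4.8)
The plan is to unwind the pro-category hom-set and reduce the claim to an elementary filtered colimit computation. Since $D$ is a finite groupoid it is a constant object of $\pG=\Pro(f\G)$, so the formula for morphisms in a pro-category gives
\[\pG(\{C/R\}_{R},D)\cong\on{colim}_{R\,\on{cofinite}}f\G(C/R,D),\]
a filtered colimit over the cofiltered poset of cofinite equivalence relations on $C$, the transition maps being restriction along the quotient functors $C/R\to C/R'$ for $R\subseteq R'$. Each cofinite $R$ carries a quotient $p_R\colon C\to C/R$ that is surjective on objects and morphisms, and precomposition with $p_R$ gives maps $f\G(C/R,D)\to\G(C,D)$ compatible with the transition maps; these assemble into a canonical comparison map
\[\on{colim}_{R\,\on{cofinite}}f\G(C/R,D)\to\G(C,D).\]
It remains to prove that this map is a bijection, naturally in $D$.

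For surjectivity I would invoke the kernel construction of the preceding proposition. Given $f\colon C\to D$ with $D$ finite, the quotient $C/\on{ker}(f)$ is isomorphic to the image subgroupoid of $f$ inside $D$; being a subgroupoid of a finite groupoid it is itself finite, so $\on{ker}(f)$ is cofinite. The induced factorization $f=\bar f\circ p_{\on{ker}(f)}$ then exhibits $f$ as lying in the image of the comparison map.

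For injectivity, suppose $g_1\colon C/R_1\to D$ and $g_2\colon C/R_2\to D$ satisfy $g_1\circ p_{R_1}=g_2\circ p_{R_2}=f$. Let $R=R_1\cap R_2$ be the kernel of the map $C\to C/R_1\times C/R_2$; since $C/R_1\times C/R_2$ is finite, $R$ is a cofinite equivalence relation refining both $R_i$, so each $g_i$ pulls back along $C/R\to C/R_i$ to a functor $C/R\to D$. Both pullbacks become $f$ after precomposing with the surjection $p_R$, and since a functor surjective on objects and morphisms is an epimorphism in $\G$, the two functors $C/R\to D$ coincide. Hence $g_1$ and $g_2$ already agree at the stage $R$ of the colimit, which gives injectivity.

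The step needing the most care is the common-refinement argument: checking that the cofinite equivalence relations are closed under intersection (equivalently, that $R_1\cap R_2$ is realized as the kernel of the map to the finite product $C/R_1\times C/R_2$), so that $\{C/R\}_R$ is genuinely a cofiltered system and the pro-hom formula above applies. Everything else is routine: the bijectivity arguments are elementary, and naturality in $D$ is immediate, since the comparison map is defined by precomposition with the $p_R$, which commutes with postcomposition by any morphism $D\to D'$.
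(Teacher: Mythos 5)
Your proof is correct and is essentially the paper's own argument: both identify $\pG(\{C/R\}_{R\,\on{cofinite}},D)$ with $\on{colim}_{R}f\G(C/R,D)$ via the pro-hom formula and then use the fact that every functor from $C$ to a finite groupoid has a cofinite kernel, so that this colimit computes $\G(C,D)$. The paper compresses the second step into one sentence, whereas you spell out the surjectivity and injectivity of the comparison map (the common-refinement argument via the kernel of $C\to C/R_1\times C/R_2$, plus the fact that quotient functors are left-cancellable); these are details the paper leaves implicit, not a different method.
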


\begin{proof}
By definition of $\pG$, we have
\[\pG(\{C/R\}_{R\,\on{cofinite}},D)=\on{colim}_{R\,\on{cofinite}}f\G(C/R,D).\]

On the other hand, since any morphism $C\to D$ must have a cofinite kernel, we see that
\[\G(C,D)\cong\on{colim}_{R\,\on{cofinite}}f\G(C/R,D).\]
\end{proof}

\begin{rem}
As we have said before, an equivalence relation on a groupoid of the form $\ast\sslash G$ is exactly the data of a normal subgroup of $G$. This equivalence relation is moreover cofinite if and only if the corresponding normal subgroup is of finite index. Hence we see from the previous proposition that
\[\h{\ast\sslash G}\cong \ast\sslash \h{G}.\]
\end{rem}

\subsection*{Quillen adjunction}

The category $\G$ of groupoids has a model structure in which the cofibrations are the maps that are injective on objects, weak equivalences are the fully faithful and essentially surjective maps and the fibrations are the isofibrations. A construction can be found in section 6 of \cite{stricklandduality}

This model structure is combinatorial, proper and simplicial. We refer to this model structure as the canonical model structure.

\begin{lemm}\label{lemm-generating pro-fibrations are fibrations}
The maps in $P$ seen as maps of $\G$ are fibrations in the canonical model structure on $\G$. Similarly, the maps in $Q$ are trivial fibrations in the canonical model structure.
\end{lemm}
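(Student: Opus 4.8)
The plan is to use the explicit description of fibrations and trivial fibrations in the canonical model structure on $\G$ and to check each of the finitely many shapes of maps in $P$ and $Q$ separately. Recall that a functor of groupoids is a fibration (an isofibration) if and only if it has the right lifting property against the inclusion $\{0\}\to I[1]$ of one object into the interval groupoid; concretely, $p\colon C\to D$ is a fibration exactly when, for every object $c$ of $C$ and every isomorphism $\beta\colon p(c)\to d$ in $D$, there is an isomorphism out of $c$ lifting $\beta$. Dually, $p$ is a trivial fibration if and only if it is surjective on objects and fully faithful. I would first record these two criteria and then run through the generating maps one by one.

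For the set $Q$ there is a single shape, $G\sslash G\to\ast$. Since $G\sslash G$ is isomorphic to $\on{Codisc}(G)$ by Lemma \ref{lemm-Q = Q'}, it is a nonempty codiscrete groupoid: the map is surjective on objects onto the terminal groupoid, and because every hom-set on either side is a singleton it is fully faithful, hence a trivial fibration. This settles the second assertion. Among the maps in $P$, three are immediate: the targets $\ast$ and $S\times S$ (for $S$ a discrete groupoid) carry no nonidentity isomorphisms, so $\ast\sslash G\to\ast$, $S\to\ast$ and $S\to S\times S$ satisfy the iso-lifting condition vacuously and are fibrations. For $G\sslash G\to\ast\sslash G$ the lift is explicit: an isomorphism of $\ast\sslash G$ out of the image of an object $s$ is an element $h\in G$, and the unique arrow $s\to sh$ of $G\sslash G\cong\on{Codisc}(G)$ maps to $h$, so this map is a fibration as well.

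The remaining case, the conjugation map $G^c\sslash G\to(\ast\sslash G)^2$, is the one I expect to require real care and to be the main obstacle. Its target has a single object whose automorphism group is $G\times G$, so checking the isofibration property means showing that for each object $g$ of $G^c\sslash G$ the prescribed isomorphisms out of $g$ realize the needed elements of $G\times G$ under the map that records a morphism together with its source and its conjugated target. I would approach this by writing out that map on arrows explicitly, fixing the normalization that is the one used in the construction of $H^1$, and then solving the resulting lifting equations by exploiting the transitivity of the conjugation action of $G$ on itself. Isolating the correct form of this map and carrying out the lifting is the crux of the lemma; the preceding four cases are purely formal.
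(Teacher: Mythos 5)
Your treatment of $Q$ and of the four easy maps in $P$ is correct and agrees with the paper (which simply declares those cases obvious). The problem is the case you yourself single out as the crux, $G^c\sslash G\to(\ast\sslash G)^2$: you never actually carry it out, and the plan you sketch cannot succeed. First, the conjugation action of $G$ on itself is not transitive (its orbits are the conjugacy classes), so there is no transitivity to exploit. More decisively, if $G^c\sslash G$ is read literally as the paper defines it, namely the translation groupoid of the right conjugation action, then each object $g$ has exactly $|G|$ arrows out of it, while the unique object of $(\ast\sslash G)^2$ has $|G|^2$ arrows out of it; since an isofibration must induce a surjection from the set of arrows out of an object onto the set of arrows out of its image, \emph{no} functor $G^c\sslash G\to(\ast\sslash G)^2$ whatsoever is an isofibration once $|G|>1$. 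So ``writing out the map on arrows and solving the lifting equations'' is doomed with that source, and no choice of normalization can fix it.

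What is actually needed, and what the paper's proof does, is to replace the conjugation description by the path object: the groupoid in question is $(\ast\sslash G)^{I[1]}$, whose objects are elements of $G$ but whose morphisms $g\to g'$ are \emph{pairs} $(\eta_0,\eta_1)\in G\times G$ with $g'=\eta_1\,g\,\eta_0^{-1}$ — equivalently, the translation groupoid of the $G\times G$-action $g\cdot(\eta_0,\eta_1)=\eta_1^{-1}g\,\eta_0$, not of the conjugation $G$-action — and the map to $(\ast\sslash G)^2\cong(\ast\sslash G)^{[0]\sqcup[0]}$ is restriction along the inclusion $[0]\sqcup[0]\to I[1]$. With this description the lifting is immediate: the arrows out of a fixed object $g$ map bijectively onto $G\times G$, so every isomorphism downstairs lifts (even uniquely). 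Alternatively, one can avoid all computation, as the paper does, by noting that $[0]\sqcup[0]\to I[1]$ is a cofibration, that $\ast\sslash G$ is fibrant (every object of $\G$ is), and that $\G$ is a cartesian closed model category, so the induced map on cotensors $(\ast\sslash G)^{I[1]}\to(\ast\sslash G)^{[0]\sqcup[0]}$ is a fibration. Either way, the identification of the source with the path object $(\ast\sslash G)^{I[1]}$ is precisely the idea your proposal is missing.
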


\begin{proof}
The trivial fibrations in the canonical model structure are the maps that are fully faithful and surjective on objects. It is thus obvious that the maps of $Q$ are trivial fibrations. The fibrations in the canonical model structure are the isofibrations, that is the map with the right lifting property against the two inclusions $[0]\to I[1]$. It is obvious that the maps $S\to\ast$ and $S\to S\times S$, $G\sslash G\to \ast\sslash G$ and $\ast\sslash G\to \ast$ have this property. The map $G^c\sslash G\to (\ast\sslash G)^2$ has this property because it can alternatively be described as the map
\[(\ast\sslash G)^{I[1]}\to(\ast\sslash G)^{[0]\sqcup [0]}\]
induced by the inclusion $[0]\sqcup[0]\to I[1]$. Since this last map is a cofibration in $\G$ and $\ast\sslash G$ is fibrant in $\G$ (as is any object) and $\G$ is a cartesian closed model category, we are done.
\end{proof}

\begin{prop}
The profinite completion functor
\[\h{(-)}:\G\to\pG\]
is a Quillen left functor.
\end{prop}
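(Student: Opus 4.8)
The plan is to exhibit the adjunction $\h{(-)} : \G \leftrightarrows \pG : U$ (where $U$ is the forgetful functor sending a profinite groupoid to the underlying groupoid obtained as the cofiltered limit of its finite constituents) and then show that the right adjoint $U$ preserves fibrations and trivial fibrations. Since $(\h{(-)}, U)$ is an adjunction, checking that $U$ is a right Quillen functor is equivalent to checking that $\h{(-)}$ is a left Quillen functor, and this is usually the cleaner direction to verify because the model structure on $\pG$ from theorem \ref{theo-model structure on profinite groupoids} is defined by lifting properties against the explicit sets $P$ and $Q$.

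First I would recall that the cofibrations (resp.\ trivial cofibrations) of $\pG$ are exactly the $Q$-projective (resp.\ $P$-projective) maps, i.e.\ the maps with the left lifting property against the generating fibrations $Q$ (resp.\ trivial fibrations, via $P$). Dually, the fibrations and trivial fibrations of $\pG$ are generated under cofiltered limits and pullbacks by $P$ and $Q$ respectively. By adjunction, $\h{(-)}$ sends cofibrations to cofibrations if and only if $U$ has the right lifting property appropriately; concretely it suffices to show that $U$ carries the generating (trivial) fibrations of $\pG$ to fibrations (trivial fibrations) in the canonical model structure on $\G$. The key input is lemma \ref{lemm-generating pro-fibrations are fibrations}, which states precisely that the maps in $P$, viewed in $\G$, are fibrations in the canonical model structure and the maps in $Q$ are trivial fibrations there.

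The main step is then to transport this statement about the \emph{finite} generators through the forgetful functor $U:\pG \to \G$. For a generating map $f:A \to B$ in $P$ or $Q$ (with $A,B \in f\G$), a general object of $\pG$ fibered over $B$ arises from cofiltered limits of pullbacks, so I would argue that $U$ commutes with the relevant cofiltered limits and preserves the pullbacks used to build arbitrary fibrations from the generators. Since fibrations and trivial fibrations in the canonical model structure on $\G$ (isofibrations, resp.\ fully faithful surjective-on-objects maps) are themselves stable under the limits and pullbacks in question, the image under $U$ of any $\pG$-fibration (resp.\ trivial fibration) is a fibration (resp.\ trivial fibration) in $\G$. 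Equivalently, and perhaps more directly, I would verify that $\h{(-)}$ preserves cofibrations and trivial cofibrations by a lifting-property argument: a map $u$ in $\G$ lifts against $Uf$ for every $f\in P$ (resp.\ $Q$) exactly when $\h{u}$ lifts against $f$, using that $\h{(-)}$ is left adjoint to $U$ and that $\G(-,Uf)\cong \pG(\h{(-)},f)$.

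The hard part will be the bookkeeping around whether $U$ genuinely commutes with the cofiltered limits that assemble arbitrary fibrations of $\pG$ out of the generators $P$ and $Q$; one must be careful that the limit defining $U$ interacts correctly with the transfinite pullback constructions hidden in the phrase ``$P$-cocell complex.'' In practice this is where lemma \ref{lemm-generating pro-fibrations are fibrations} does all the real work, and the remaining argument is the routine verification that the canonical model structure's fibrations and trivial fibrations on $\G$ are closed under the limits and pullbacks involved, together with the formal adjunction identity relating lifting problems over $\G$ and over $\pG$. Once these compatibilities are in place, the conclusion that $\h{(-)}$ is left Quillen follows immediately.
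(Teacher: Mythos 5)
Your proposal is correct and takes essentially the same route as the paper: exhibit the adjunction $\h{(-)}\dashv|-|$ and reduce, via the adjunction between lifting problems (equivalently, the dual of the standard criterion for cofibrantly generated model structures, since the cofibrations and trivial cofibrations of $\pG$ are \emph{defined} by left lifting against $Q$ and $P$), to lemma \ref{lemm-generating pro-fibrations are fibrations}. Two minor points: in your first paragraph the labels are swapped ($P$ is the set of generating fibrations and $Q$ the set of generating trivial fibrations, as your later use of the lemma correctly assumes), and the ``bookkeeping'' about cofiltered limits you flag as the hard part is actually vacuous, since $|-|$ is a right adjoint and hence preserves all the pullbacks and cofiltered limits in question --- which is why your second, purely lifting-theoretic argument is the cleaner one and is what the paper's one-line ``it suffices'' implicitly invokes.
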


\begin{proof}
First, this functor has a left adjoint $|-|$ which sends a profinite groupoid $\{C_i\}_{i\in I}$ to $\on{lim}_IC_i$ where the limit is computed in the category $\G$. It suffices to show that the functor $|-|$ sends generating fibrations to fibrations and generating trivial fibrations to trivial fibrations but this is exactly the content of lemma \ref{lemm-generating pro-fibrations are fibrations}.
\end{proof}

In particular, since all objects are cofibrant in $\G$, the profinite completion functor preserves weak equivalences.

\begin{prop}\label{prop-profinite completion commutes with products}
Let $C$ and $D$ be two groupoids with a finite set of objects. The map 
\[\h{C\times D}\to \h{C}\times\h{D}\]
induced by the two projections $\h{C\times D}\to \h{C}$ and $\h{C\times D}\to \h{C}$ is an isomorphism.
\end{prop}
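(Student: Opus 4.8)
The plan is to test the map on corepresented functors. Since $\pG=\Pro(f\G)$ and $f\G$ has finite limits, $\pG$ embeds fully faithfully into $\on{Fun}(f\G,\Set)\op$ via $A\mapsto(E\mapsto\pG(A,E))$, so it suffices to show that for every finite groupoid $E$ the induced map
\[\pG(\h{C}\times\h{D}, E)\to\pG(\h{C\times D}, E)\]
is a bijection. The right-hand side is $\G(C\times D, E)$ by the universal property of completion. Writing $\h{C}=\{C/R\}_R$ and $\h{D}=\{D/S\}_S$ over the cofiltered posets of cofinite equivalence relations and using that the product in $\pG$ is the levelwise product over the product of the index posets, we get $\h{C}\times\h{D}=\{C/R\times D/S\}_{R,S}$, so the left-hand side is $\on{colim}_{R,S} f\G(C/R\times D/S, E)$, and under these identifications the map sends a functor $C/R\times D/S\to E$ to its restriction along $C\times D\to C/R\times D/S$. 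I am thus reduced to showing that this colimit map to $\G(C\times D,E)$ is a bijection.

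For surjectivity — the heart of the matter — I would show that every functor $\phi\colon C\times D\to E$ with $E$ finite factors through a product of finite quotients, and here the hypothesis that $\on{Ob}(C)$ and $\on{Ob}(D)$ are finite is essential. For each object $d$ of $D$, restrict $\phi$ to $C\times\{d\}\cong C$ to obtain a functor $\phi^d\colon C\to E$; since $E$ is finite each $\phi^d$ has a cofinite kernel, and as there are only finitely many $d$ the common refinement $R:=\bigwedge_{d}\on{ker}(\phi^d)$ is again cofinite. Define $S$ symmetrically from the restrictions $\phi_c\colon D\to E$ to $\{c\}\times D$. Using that every morphism of $C\times D$ factors as $(f,g)=(f,\id)\circ(\id,g)$, we get $\phi(f,g)=\phi^{d'}(f)\circ\phi_c(g)$, an expression depending only on the class of $f$ in $C/R$ and of $g$ in $D/S$; hence $\phi$ descends to a functor $C/R\times D/S\to E$. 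This is precisely the groupoid analogue of the Goursat-type argument establishing $\h{G\times H}\cong\h{G}\times\h{H}$ for groups.

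Injectivity is formal: if two functors $C/R_i\times D/S_i\to E$ ($i=1,2$) restrict to the same $\phi$ on $C\times D$, then refining to $R\le R_1,R_2$ and $S\le S_1,S_2$ and using that the quotient functor $C\times D\to C/R\times D/S$ is surjective on objects and full — hence an epimorphism in $\G$ — forces the two to agree after this refinement, i.e.\ to represent the same class in the colimit. Combining surjectivity and injectivity yields the bijection for every finite $E$, and therefore the desired isomorphism. The main obstacle is the surjectivity step, and it is exactly there that finiteness of the object sets is used: without it the passage to a finite intersection of kernels producing a cofinite $R$ would break down.
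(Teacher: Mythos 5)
Your framework is sound and is in substance the same as the paper's: testing the map against every finite groupoid $E$, identifying $\pG(\h{C}\times\h{D},E)$ with $\on{colim}_{R,S}f\G(C/R\times D/S,E)$, and reducing to the claim that every functor $\phi\colon C\times D\to E$ factors through some $C/R\times D/S$ is exactly the paper's reduction to cofinality of product relations, and your $R=\bigwedge_{d}\on{ker}(\phi^d)$, $S=\bigwedge_{c}\on{ker}(\phi_c)$ are precisely the relations $T_C$, $T_D$ the paper constructs. Your injectivity argument is correct. The genuine gap is the descent claim in the surjectivity step: it is \emph{false} that $\phi(f,g)=\phi^{d'}(f)\circ\phi_c(g)$ depends only on the classes of $f$ and $g$ modulo these relations. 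Concretely, let $C$ be the discrete groupoid on two objects $c_1,c_2$, let $D=E=\ast\sslash(\mathbb{Z}/3)$, and let $\phi\colon C\times D\cong(\ast\sslash(\mathbb{Z}/3))\sqcup(\ast\sslash(\mathbb{Z}/3))\to E$ be the identity homomorphism on the first summand and inversion $g\mapsto -g$ on the second. For the unique object $d$ of $D$, the functor $\phi^{d}$ collapses $C$ to a point, so your $R$ identifies $\id_{c_1}$ with $\id_{c_2}$, while $S$ is the equality relation; hence $(\id_{c_1},g)$ and $(\id_{c_2},g)$ have the same image in $C/R\times D/S$, yet $\phi(\id_{c_1},g)=g\neq -g=\phi(\id_{c_2},g)$ for $g\neq 0$, so $\phi$ does not descend. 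The root cause is that $R$ only records values of $\phi$ against \emph{identities} of $D$, so it can identify objects $c_1,c_2$ whose slice functors $\phi_{c_1},\phi_{c_2}\colon D\to E$ are genuinely different, and then the factor $\phi_c(g)$ in your formula is not determined by the class of $f$. (You are in good company: the paper's own proof asserts, with the same definitions, that $T_C\times T_D$ is finer than $T$, and the same example refutes this; its last line silently replaces $(\id_{s(u')},v')$ and $(u',\id_{t(v')})$ by $(\id_{s(u)},v')$ and $(u',\id_{t(v)})$, which is illegitimate when $s(u)\neq s(u')$ or $t(v)\neq t(v')$.)

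The repair is cheap but necessary, and it uses the finiteness of the object sets a second time: refine $R$ so that it never identifies distinct objects. Declare $u\sim_R u'$ if and only if $u$ and $u'$ are parallel ($s(u)=s(u')$ and $t(u)=t(u')$) and $\phi(u,\id_z)=\phi(u',\id_z)$ for every object $z$ of $D$; equivalently, $R$ is the kernel of $C\to\bigl(\prod_{z\in\on{Ob}(D)}E\bigr)\times\on{Codisc}(\on{Ob}(C))$. This is a congruence, the quotient functor $C\to C/R$ is bijective on objects and surjective on morphisms, and $C/R$ is finite because it has the same finite object set as $C$ and each hom-set $C/R(x,y)$ injects into $\prod_{z}E(\phi(x,z),\phi(y,z))$. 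Define $S$ symmetrically. Descent now holds: if $f\sim_R f'$ and $g\sim_S g'$, then $f,f'\colon c\to c'$ and $g,g'\colon d\to d'$ share endpoints, and
\[\phi(f',g')=\phi(f',\id_{d'})\circ\phi(\id_{c},g')=\phi(f,\id_{d'})\circ\phi(\id_{c},g)=\phi(f,g),\]
so $\phi$ factors through $C/R\times D/S$, giving surjectivity. (An alternative repair keeps your object identifications but adds to $R$ the requirement that $\phi_{s(f)}=\phi_{s(f')}$ and $\phi_{t(f)}=\phi_{t(f')}$ as functors $D\to E$; since $\on{Ob}(C)$ is finite only finitely many slice functors occur, so this refined relation is still cofinite.) With either repair, your argument --- and the paper's --- goes through.
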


\begin{proof}
If $R$ is an equivalence relation on $C$ and $S$ is an equivalence relation on $D$, we denote by $R\times S$ the equivalence relation which is the kernel of the map
\[C\times D\to (C/ R)\times (D/ S).\]

The profinite groupoid $\h{C}\times\h{D}$ is the inverse limit of $(C\times D)/(R\times S)$ taken over all pairs $(R,S)$ of cofinite equivalence relations on $C$ and $D$. On the other hand $\h{C\times D}$ is the inverse limit of $(C\times D)/T$ taken over all cofinite equivalence relations $T$ on  $C\times D$. Thus, in order to prove the proposition, it suffices to prove that any cofinite equivalence relation $T$ of $C\times D$ is coarser than an equivalence relation of the form $R\times S$ with $R$ and $S$ cofinite equivalence relations of $C$ and $D$ respectively. 

Let $T$ be a cofinite equivalence relation on $C\times D$. We can consider the composite
\[C\to \prod_{\on{Ob}(D)}C\times[0]\to \prod_{\on{Ob}(D)}C\times D \to \prod_{\on{Ob}(D)}(C\times D)/T\]
where the first map is the diagonal map, the third map is the projection and the factor indexed by $X$ of the second map is just the product of $\id_C$ with the map $[0]\to D$ picking up the object $X$.

The kernel of this map is a cofinite equivalence relation on $C$ that we denote $T_C$. Two objects $x$ and $y$ of $C$ are $T_C$-equivalent if and only if for all $z$ in $D$, $(x,z)$ is $T$-equivalent to $(y,z)$. Likewise, two arrows $u$ and $v$ of $C$ are $T_C$-equivalent if and only if $(u,\id_z)$ is $T$-equivalent to $(v,\id_z)$ for any object $z$ of $D$. It is a cofinite equivalence relation because it is the kernel of a map with finite target (this is where we use the finiteness of the set of objects of $D$). We can define a cofinite equivalence relation $T_D$ on $D$ in a similar fashion. 

We claim that $T_C\times T_D$ is finer than $T$ indeed, if $(u,v)$ is pair of arrows that is $T_C\times T_D$-equivalent to $(u',v')$, then $(u,\id_{t(v)})$ is $T$-equivalent to $(u',\id_{t(v)})$ and similarly $(\id_{s(u)},v)$ is $T$-equivalent to $(\id_{s(u)},v')$. Thus $(u,v)=(u,\id_{t(v)})\circ (\id_{s(u)},v)$ is $T$-equivalent to $(u',v')=(u',\id_{t(v')})\circ (\id_{s(u')},v')$
\end{proof}

\begin{coro}
Let $S$ be a finite set and $G$ be a group. Then
\[\h{G[S]}\cong\h{G}[S].\]
\end{coro}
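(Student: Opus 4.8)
The plan is to deduce this corollary directly from proposition \ref{prop-profinite completion commutes with products} together with the explicit computation of the profinite completion of the two factors. Recall that $G[S]$ is defined as the product $G\times\on{Codisc}(S)$ in the notation of this section, where $G$ is shorthand for the one-object groupoid $\ast\sslash G$ and $\on{Codisc}(S)$ is the codiscrete groupoid on $S$. Both of these groupoids have a finite set of objects (a single object for $\ast\sslash G$, and the finite set $S$ for $\on{Codisc}(S)$), so the hypothesis of proposition \ref{prop-profinite completion commutes with products} is satisfied.

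First I would apply proposition \ref{prop-profinite completion commutes with products} to the two factors $C=\ast\sslash G$ and $D=\on{Codisc}(S)$, which yields an isomorphism
\[\h{G[S]}=\h{(\ast\sslash G)\times\on{Codisc}(S)}\cong\h{\ast\sslash G}\times\h{\on{Codisc}(S)}.\]
Next I would identify the two completions on the right. By the remark following the representability proposition we already have $\h{\ast\sslash G}\cong\ast\sslash\h{G}$, which we may abbreviate as $\h{G}$ in the $G[-]$ notation. It remains to compute $\h{\on{Codisc}(S)}$. Since $S$ is already a finite set, the groupoid $\on{Codisc}(S)$ is already a finite groupoid, and hence it is its own profinite completion: the profinite completion functor is the identity on objects already lying in $f\G$. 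Therefore $\h{\on{Codisc}(S)}\cong\on{Codisc}(S)$.

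Combining these identifications gives
\[\h{G[S]}\cong(\ast\sslash\h{G})\times\on{Codisc}(S)=\h{G}[S],\]
where the last equality is the definition of $\h{G}[S]$ as $\h{G}\times\on{Codisc}(S)$ in the $G[-]$ notation. This completes the argument.

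I do not expect any serious obstacle here, since all the real work has already been carried out in proposition \ref{prop-profinite completion commutes with products}. The only point requiring a small amount of care is checking that the factors genuinely have finitely many objects so that the product proposition applies, and that $\on{Codisc}(S)$ is left fixed by completion because it is already finite. Both are immediate from the finiteness of $S$.
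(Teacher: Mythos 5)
Your proof is correct and follows exactly the paper's own argument: decompose $G[S]$ as $(\ast\sslash G)\times\on{Codisc}(S)$, apply proposition \ref{prop-profinite completion commutes with products} to the two factors, and identify $\h{\ast\sslash G}\cong\ast\sslash\h{G}$ while noting that $\on{Codisc}(S)$ is already finite and hence fixed by completion. The only difference is that you spell out the details slightly more explicitly than the paper does.
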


\begin{proof}
The groupoid $G[S]$ is the product $(\ast\sslash G)\times\on{Codisc}(S)$. We have already observed that $\h{\ast\sslash G}\cong \ast\sslash \h{G}$. On the other hand, since $S$ is finite, the groupoid $\on{Codisc}(S)$ is finite. The result then follows from proposition \ref{prop-profinite completion commutes with products}.
\end{proof}

Since profinite completion commutes with coproducts, this corollary gives a formula for profinite completion of groupoids with a finite set of objects in terms of profinite completion of groups.

\subsection*{More on weak equivalences}

For $C$ a groupoid, we denote by $C^{I[1]}$ the groupoid of functors from $I[1]$ to $C$. For $C=\{C_i\}_{i\in I}$ an object of $\pG$, we denote by $C^{I[1]}$ the object $\{C_i^{I[1]}\}_{i\in I}$. Note that $C^{I[1]}$ is equipped with two maps $ev_0$ and $ev_1$ to $C$ given by the evaluation at the two objects of $I[1]$. 

We say that two maps $f,g:C\to D$ in $\G$ or $\pG$ are \emph{homotopic} if there exists a map $H:C\to D^{I[1]}$ such that $ev_0\circ H=f$ and $ev_1\circ H=g$. We denote by $\pi\G$ (resp. $\pi\pG$) the category whose objects are the objects of $\G$ (resp. $\pG$) and whose morphisms are the homotopy classes of morphisms.

\begin{prop}
Let $S$ be a finite set and $A$ be an object of $\pG$, then
\[H^0(A,S)=\pi\pG(A,S).\]
Similarly, let $G$ be a finite group, then
\[H^1(A,G)=\pi\pG(A,\ast\sslash G).\]
\end{prop}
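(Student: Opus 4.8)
The plan is to treat the two statements separately, in each case combining the representability results of Proposition \ref{prop-representability of cohomology} with an explicit understanding of the path object $D^{I[1]}$. I would begin with the $H^0$ statement, which is almost formal. Since $S$ is a discrete groupoid and $I[1]$ is connected with an isomorphism between its two objects, every functor $I[1]\to S$ is constant and every natural transformation between two such functors is an identity. Hence $S^{I[1]}\cong S$, and under this identification both evaluation maps $ev_0$ and $ev_1$ become the identity of $S$. Consequently any homotopy $H\colon A\to S^{I[1]}$ satisfies $ev_0\circ H=ev_1\circ H$, so two homotopic maps $A\to S$ are already equal; the homotopy relation on $\pG(A,S)$ is thus discrete. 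Combined with the equality $H^0(A,S)=\pG(A,S)$ this yields $\pi\pG(A,S)=\pG(A,S)=H^0(A,S)$.

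For the $H^1$ statement the key input is the identification, recorded in the proof of Lemma \ref{lemm-generating pro-fibrations are fibrations}, of $(\ast\sslash G)^{I[1]}$ with $G^c\sslash G$, under which the pair $(ev_0,ev_1)$ becomes the map $G^c\sslash G\to(\ast\sslash G)^2$. Using $\pG(A,\ast\sslash G)=Z^1(A,G)$, homotopy of two cocycles $u,v$ then means precisely that they are the two images of a single element of $\pG(A,G^c\sslash G)$. The next step is to reduce to the case where $A$ is a finite groupoid: writing $A=\{A_i\}$, every $\pG$-morphism into the finite groupoids $\ast\sslash G$ and $(\ast\sslash G)^{I[1]}$ factors through some $A_i$, so a homotopy and the two cocycles it connects all come from a common finite level; moreover $H^1(A,G)=\on{colim}_i H^1(A_i,G)$ by definition, and filtered colimits commute with the quotients involved.

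At the finite level I would then unwind a homotopy $H\colon A\to(\ast\sslash G)^{I[1]}$ directly. On objects, $H$ is a function $\phi\colon\on{Ob}(A)\to G$ (the value of each functor $I[1]\to\ast\sslash G$ on the nontrivial arrow), while on arrows its two evaluations are the cocycles $u=ev_0\circ H$ and $v=ev_1\circ H$. The naturality of $H$ applied to a morphism $f$ of $A$ is then exactly the coboundary relation placing $u$ and $v$ in the same $B^1(A,G)$-orbit, with $\phi$ as the witnessing coboundary; conversely any such $\phi$ reassembles into a genuine functor $H$ precisely because $u$ and $v$ are cocycles. Hence $u$ and $v$ are homotopic if and only if they are cohomologous, and since the latter is manifestly an equivalence relation the set of homotopy classes is exactly $Z^1(A,G)/B^1(A,G)=H^1(A,G)$; the colimit argument of the previous paragraph upgrades this from finite to arbitrary profinite $A$.

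The main obstacle I anticipate is the bookkeeping in this last step: matching the composition and naturality conventions in the functor groupoid $(\ast\sslash G)^{I[1]}$ with the explicit coboundary formula $(u.\phi)(f)=\phi(t(f))^{-1}u(f)\phi(s(f))$, getting the inverses and source/target assignments right, and in particular confirming that the homotopy relation is genuinely an equivalence relation so that ``homotopy classes'' is well defined. Once this correspondence is nailed down, one may alternatively shortcut the argument by invoking the coequalizer description of $H^1(A,G)$ established earlier in this section, since the two parallel maps there are exactly $ev_0$ and $ev_1$.
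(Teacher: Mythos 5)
Your proposal is correct, and its skeleton is the same as the paper's: reduce to $A$ in $f\G$ using cosmallness of finite groupoids in $\pG$ together with the compatibility of filtered colimits with the quotients involved (this is the paper's first sentence), then identify the homotopy relation on $Z^1(A,G)=\pG(A,\ast\sslash G)$ with the coboundary relation; your treatment of $H^0$ via $S^{I[1]}\cong S$ is exactly the case the paper dismisses as ``similar and easier''.

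Where you genuinely diverge from the paper is your third paragraph, and the divergence is to your credit: your direct unwinding is the part that actually proves the statement, whereas the identification you call ``the key input'' --- that $(\ast\sslash G)^{I[1]}\cong G^c\sslash G$, asserted both in the proof of Lemma \ref{lemm-generating pro-fibrations are fibrations} and in the paper's own proof of this proposition --- is false. A functor $I[1]\to\ast\sslash G$ is an element $g\in G$, and a natural transformation $F_g\Rightarrow F_h$ is a pair $(\eta_0,\eta_1)\in G\times G$ with $\eta_1g=h\eta_0$; hence $(\ast\sslash G)^{I[1]}$ is a \emph{connected} groupoid with $|G|$ objects and all hom-sets of cardinality $|G|$, i.e.\ it is isomorphic to $G[G]=\ast\sslash G\times\on{Codisc}(G)$. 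By contrast $G^c\sslash G$ has one connected component per conjugacy class of $G$: already for $G=\mathbb{Z}/2$ it has four arrows while $(\ast\sslash G)^{I[1]}$ has eight. Conceptually, a map $A\to G^c\sslash G$ encodes a single cocycle together with a natural \emph{automorphism} of itself, not two cocycles joined by a natural transformation (and the map $G^c\sslash G\to(\ast\sslash G)^2$ described in the paper is not even a functor, since it sends $\id_g$ to $(g,g)$). Consequently the ``shortcut'' you offer at the end, via the coequalizer description of $H^1(A,G)$, would import this error rather than bypass it --- that shortcut \emph{is} the paper's proof. Your explicit computation, on the other hand, goes through: a homotopy $H\colon A\to(\ast\sslash G)^{I[1]}$ is on objects a function $\phi\colon\on{Ob}(A)\to G$, and naturality on an arrow $f$ reads $v(f)=\phi(t(f))\,u(f)\,\phi(s(f))^{-1}$, which after replacing $\phi$ by its pointwise inverse (a bijection of the group $B^1(A,G)$) is exactly the relation $v=u.\phi$; since this is the orbit relation of the $B^1(A,G)$-action it is an equivalence relation, and $\pi\pG(A,\ast\sslash G)=Z^1(A,G)/B^1(A,G)=H^1(A,G)$. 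So keep the colimit reduction and the direct unwinding, state the correct identification $(\ast\sslash G)^{I[1]}\cong\ast\sslash G\times\on{Codisc}(G)$, and delete both the appeal to $G^c\sslash G$ and the proposed shortcut.
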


\begin{proof}
Clearly we have $\pi\pG(\{A_i\}_{i\in I},S)=\on{colim}_i\pi\pG(A_i,S)$ and similarly $\pi\pG(\{A_i\}_{i\in I},\ast\sslash G)=\on{colim}_i\pi\pG(A_i,\ast\sslash G)$, thus, it suffices to check these formulas for $A$ an object of $f\G$.

We do the case $H^1(A,G)$. The other one is similar and easier. A trivial computation shows that $(\ast\sslash G)^{I[1]}$ is isomorphic to $G^c\sslash G$ and that $(ev_0,ev_1):G^c\sslash G\to \ast\sslash G$ is exactly the map used in the definition of $H^1(A,G)$. Thus the coequalizer defining $H^1(A,G)$ is the coequalizer defining $\pi\pG(A,\ast\sslash G)$.
\end{proof}

\begin{prop}\label{prop-characterization of weak equivalences in pG pi}
A map $u:A\to B$ is a weak equivalence in $\pG$ if and only if for any finite groupoid $C$, the induced map
\[\pi\pG(B,C)\to \pi\pG(A,C)\]
is an isomorphism.
\end{prop}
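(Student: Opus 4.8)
The plan is to reduce to the two kinds of test objects for which we already know the answer, namely finite discrete sets $S$ (computing $H^0$) and one-object groupoids $\ast\sslash G$ (computing $H^1$), using the decomposition of an arbitrary finite groupoid into its connected components. One implication is immediate: if $u^*\colon\pi\pG(B,C)\to\pi\pG(A,C)$ is a bijection for every finite groupoid $C$, then specialising to $C=S$ and $C=\ast\sslash G$ and invoking the previous proposition recovers exactly the two conditions of Definition \ref{defi-weak equivalences of profinite groupoids}, so $u$ is a weak equivalence.

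For the converse I would first record two compatibilities of the path object. The strict hom functor preserves products and $(C\times D)^{I[1]}\cong C^{I[1]}\times D^{I[1]}$, so $\pi\pG(A,-)$ preserves finite products; and since $I[1]$ is connected, a functor out of it cannot leave a connected component, whence $(\sqcup_v C_v)^{I[1]}\cong\sqcup_v C_v^{I[1]}$ and homotopies respect components. Using the first fact, for a \emph{connected} finite groupoid $C\cong G[S]=(\ast\sslash G)\times\on{Codisc}(S)$ one gets $\pi\pG(A,C)\cong H^1(A,G)\times\pi\pG(A,\on{Codisc}(S))$; because $\on{Codisc}(S)$ is codiscrete and nonempty, any two maps into it are homotopic and one exists, so the second factor is a point. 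Thus $\pi\pG(A,C)\cong H^1(A,G)$ and $u^*$ is a bijection whenever $C$ is connected.

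For a general finite groupoid I would write $C=\sqcup_{v\in\pi_0 C}C_v$ and set $P=\pi_0 C$. Composition with $C\to P$ sends a class in $\pi\pG(A,C)$ to a class in $\pi\pG(A,P)=H^0(A,P)$, and a representative $\phi\colon A\to P$ splits $A$, levelwise and hence in $\pG$, as a finite coproduct $A\cong\sqcup_v A^{(v)}_\phi$; a lift of $\phi$ is precisely a tuple of maps $A^{(v)}_\phi\to C_v$, which together with the two compatibilities above gives
\[\pi\pG(A,C)\cong\bigsqcup_{\phi\in H^0(A,P)}\ \prod_{v}\pi\pG\bigl(A^{(v)}_\phi,C_v\bigr).\]
Now $u^*$ is a bijection on the index set $H^0(-,P)$, and under the resulting matching $\phi\leftrightarrow\psi$ the map decomposes as $u=\sqcup_v u_v$ with $u_v\colon A^{(v)}_\phi\to B^{(v)}_\psi$. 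Since $H^0(-,S')$ and $H^1(-,G)$ turn this coproduct into a product over $v$ and $u^*$ is bijective on the total groupoid, each factor $u_v^*$ is bijective for every finite $S'$ and $G$; hence each $u_v$ is again a weak equivalence, and the connected case applies to each of them, giving the desired bijection.

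The main obstacle I expect is the bookkeeping in the pro-category: one must check that a locally constant function on a profinite groupoid really induces a coproduct decomposition in $\pG$ (this rests on finite coproducts of pro-objects indexed over a common cofiltered category being computed levelwise), and that restricting a weak equivalence to a clopen piece yields a weak equivalence, for which the product-of-bijections argument on $H^0$ and $H^1$ must be run carefully. The compatibilities of $C^{I[1]}$ with products and coproducts and the triviality of maps into codiscrete groupoids are routine by comparison.
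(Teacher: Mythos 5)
Your argument is correct, but it takes a genuinely different route from the paper's at the decisive step, namely the passage from connected to arbitrary (disconnected) test groupoids. The paper never decomposes $A$ or $B$: it works entirely on the side of the test object, considering the class of finite groupoids $C$ for which $u^*$ is bijective on homotopy classes, noting that this class contains the finite sets and the groupoids $\ast\sslash G$ and is closed under finite products, retracts and weak equivalences of $f\G$, and then absorbing finite coproducts by the trick that $D\sqcup E$ is a retract of $D\times E\times\{0,1\}$ (via $d\mapsto(d,e_0,0)$, $e\mapsto(d_0,e,1)$); since every finite groupoid is weakly equivalent to a finite disjoint union $\sqcup_x\,\ast\sslash G_x$, this finishes the proof in a few lines. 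You instead decompose the source along $H^0(A,\pi_0C)$ into clopen pro-pieces, show that a weak equivalence restricts to weak equivalences of the pieces, and apply the connected case, which you obtain from $G[S]\cong(\ast\sslash G)\times\on{Codisc}(S)$ and the fact that all maps into a nonempty codiscrete groupoid are homotopic (the paper would instead invoke closure under weak equivalences, using $G[S]\simeq\ast\sslash G$). Both routes succeed. The retract trick buys brevity and sidesteps all pro-category bookkeeping; your route yields the stronger intermediate fact that weak equivalences in $\pG$ restrict to clopen pieces, but this is exactly where the care you flag is genuinely needed: the inference ``a product of maps over $v$ is bijective, hence each factor is'' is valid only because $H^0(-,S')$ for nonempty $S'$ and $H^1(-,G)$ are pointed, hence nonempty, sets; and the coefficient set $S'=\varnothing$ (equivalently, the possibility that a piece of $A$ is empty while the corresponding piece of $B$ is not) must be ruled out separately, e.g.\ by testing against $S'=\{0,1\}$. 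These points, together with the levelwise computation of finite coproducts of pro-objects over a common cofiltered index category, are all fixable exactly as you anticipate, so your proof goes through.
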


\begin{proof}
Since $S$ and $\ast\sslash G$ with $S$ a finite set and $G$ a finite group are finite groupoids, we see that this is a sufficient condition for $u$ to be a weak equivalence. 

Let us prove the reverse implication. For $C$ a finite groupoid, we say that $u$ is a $C$-equivalence if the map
\[\pi\pG(B,C)\to \pi\pG(A,C)\]
is an isomorphism. Thus our goal is to prove that if $u$ is a weak equivalence, it is a $C$-equivalence for all finite groupoid $C$. By definition, $u$ is a $C$-equivalence for $C=S$ a finite set and for $C=\ast\sslash G$ with $G$ a finite group. Moreover, the class of groupoids for which $u$ is a $C$-equivalence is stable under finite products and retracts. The class of such groupoids is also stable under weak equivalences in $\G$. Indeed, in $\G$ with its canonical model structure, all objects are cofibrant and fibrant. This implies that the weak equivalences are exactly the maps that have a homotopy inverse, that is they are exactly the map that become isomorphism in $\pi\G$. In particular, the weak equivalences of $f\G$ are sent to isomorphisms in $\pi\pG$.

Let us consider a coproduct $D\sqcup E$ of finite groupoids. Let $Z=D\times E\times \{0,1\}$. We pick an object $d_0$ in $D$ and $e_0$ in $E$. There is a map $D\sqcup E\to Z$ sending $d$ to $(d,e_0,0)$ and $e$ to $(d_0,e,1)$. There is a map $Z\to D\sqcup E$ sending $(d,e,0)$ to $d$ and $(d,e,1)$ to $e$. These two maps make $D\sqcup E$ into a retract of $Z$. Thus, the class of groupoids $C$ for which $u$ is a weak equivalence is stable under finite coproducts. 

This concludes the proof since any finite groupoid is weakly equivalent to a groupoid of the form $\sqcup_{x\in X} (\ast\sslash G_x)$ where $X$ is a finite set and the $G_x$ are finite groups.
\end{proof}

\begin{coro}
Let $u:A\to B$ be a map between finite groupoid. Then $u$ is a weak equivalence in $\G$ if and only if it is a weak equivalence in $\pG$.
\end{coro}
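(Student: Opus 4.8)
The plan is to reduce everything to the characterization of weak equivalences in $\pG$ given by proposition \ref{prop-characterization of weak equivalences in pG pi} and then to run a Yoneda-style argument inside the homotopy category. First I would observe that for finite groupoids the two notions of homotopy coincide. The inclusion $f\G\to\pG$ is fully faithful, and for $C$ a finite groupoid the path object $C^{I[1]}$ is again a finite groupoid, since its objects are the arrows of $C$ and its morphisms the natural transformations, both forming finite sets. Hence for finite groupoids $A$ and $C$, a homotopy $A\to C$ in $\pG$ is the same datum as a homotopy in $\G$, so that
\[\pi\G(A,C)=\pi\pG(A,C).\]
Combining this with proposition \ref{prop-characterization of weak equivalences in pG pi}, a map $u:A\to B$ between finite groupoids is a weak equivalence in $\pG$ if and only if the precomposition map $u^*:\pi\G(B,C)\to\pi\G(A,C)$ is a bijection for every finite groupoid $C$.

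Next I would dispatch the easy implication. Recall that in the canonical model structure every object of $\G$ is both cofibrant and fibrant, so a map is a weak equivalence in $\G$ if and only if it becomes an isomorphism in $\pi\G$. If $u$ is a weak equivalence in $\G$ it becomes invertible in $\pi\G$, hence $u^*$ is a bijection for \emph{every} groupoid $C$, in particular for every finite one; by the previous paragraph $u$ is then a weak equivalence in $\pG$.

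For the converse I would argue by the Yoneda lemma in the category $\pi\G$, using the fact that $A$ and $B$ are themselves available as finite test objects. Assume $u^*:\pi\G(B,C)\to\pi\G(A,C)$ is a bijection for all finite $C$. Taking $C=A$, the class $[\id_A]$ has a unique preimage $[v]\in\pi\G(B,A)$, so that $v\circ u\simeq\id_A$. Taking $C=B$, the two classes $[u\circ v]$ and $[\id_B]$ in $\pi\G(B,B)$ both map to $[u]$ under the bijection $u^*:\pi\G(B,B)\to\pi\G(A,B)$, since $(u\circ v)\circ u=u\circ(v\circ u)\simeq u=\id_B\circ u$; by injectivity they are equal, so $u\circ v\simeq\id_B$. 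Thus $u$ admits a homotopy inverse, hence is an isomorphism in $\pi\G$ and therefore a weak equivalence in $\G$.

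The only delicate point, and the step I would be most careful about, is the identification of the two homotopy relations in the first paragraph: it is precisely what allows the finiteness hypothesis to propagate through proposition \ref{prop-characterization of weak equivalences in pG pi}, and it relies on the path object $C^{I[1]}$ remaining inside $f\G$. Once that is in place the remainder is a formal consequence of the Yoneda lemma applied to the full subcategory of $\pi\G$ spanned by finite groupoids, and presents no further obstacle.
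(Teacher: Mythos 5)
Your proof is correct and takes essentially the same route as the paper's: the paper's one-line proof invokes exactly the three ingredients you use, namely proposition \ref{prop-characterization of weak equivalences in pG pi}, Yoneda's lemma in $\pi f\G$ (which your explicit argument with $C=A$ and $C=B$ simply unwinds), and the fact that weak equivalences in $\G$ between finite groupoids are precisely the maps becoming isomorphisms in $\pi\G$. Your careful verification that $\pi\G(A,C)=\pi\pG(A,C)$ for finite groupoids, via fullness of $f\G\to\pG$ and finiteness of the path object $C^{I[1]}$, makes explicit a point the paper leaves implicit.
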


\begin{proof}
This follows from the previous proposition, Yoneda's lemma in $\pi f\G$ and the fact that the weak equivalences in $f\G$ are exactly the homotopy equivalences (i.e. the maps that are sent to isomorphisms in $\pi f\G$). 
\end{proof}

\subsection*{Simplicial enrichment}

There is a pairing
\[\pG\times \G\to\pG\]
sending $(A,C)$ to $A\times\h{C}$. We will now prove that this is a Quillen bifunctor. The first step is to prove that this is a left adjoint in both variables. 

The compact objects of $\G$ are the groupoids $C$ with $\on{Ob}(C)$ finite and $C(x,x)$ a finitely presented group for each $x$, in particular, the objects of $f\G$ are compact. We denote by $\G_f$ the full subcategory spanned by compact objects. Note that we have an equivalence of categories $\on{Ind}(\G_f)\simeq \G$.

If $C$ is a compact object in $\G$ and $D$ is an object of $f\G$, then $D^C$ is in $f\G$. Indeed, $C$ can be written as a finite disjoint union of groupoids of the form $G[S]$ with $S$ finite and $G$ finitely presented. Thus we are reduced to proving that there are only finitely many maps from a finitely presented group to a finite group which is straightforward.

The functor $(C,D)\mapsto D^C$ from $\G_f\op\times f\G\to f\G$ preserves finite limits in both variables. Hence, it extends uniquely into a functor
\[\G\op\times\pG\to\pG\]
which preserves limits in both variable. We still denote this functor by $(C,D)\mapsto D^C$.

There is another functor $\map(-,-):\pG\op\times\pG\to \G$ given by the formula
\[\map(\{C_i\}_{i\in I},\{D_j\}_{j\in J})=\on{lim}_I\on{colim}_JD_j^{C_i}\]
where the limits and colimits are computed in the category of groupoids.

\begin{prop}
We have an isomorphism of functors of $C\in\pG$, $D\in\G$ and $E\in\pG$:
\[\pG(C\times \h{D},E)\cong \pG(C,E^D).\]
Likewise, there is a natural isomorphism
\[\pG(C\times\h{D},E)\cong \G(D,\map(C,E)).\]
\end{prop}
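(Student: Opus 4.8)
The plan is to read both statements as the assertion that the bifunctor $(C,D)\mapsto E^{D}$ and the bifunctor $\map$ are the internal homs that are adjoint, in $\pG$ and in $\G$ respectively, to the pairing $(C,D)\mapsto C\times\h D$, and to prove each isomorphism by reducing to the finite level, where everything collapses to the cartesian closed adjunction of $\G$ and to the defining adjunction $\h{(-)}\dashv|-|$. First I would dispose of the variable $E$. The functor $E\mapsto\pG(C\times\h D,E)$ is representable, hence sends limits to limits, and $E\mapsto\pG(C,E^{D})$ (resp. $E\mapsto\G(D,\map(C,E))$) does too, since $E\mapsto E^{D}$ was constructed so as to preserve limits and $\pG(C,-)$ (resp. $\G(D,-)$) preserves limits. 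Writing a general $E$ as the canonical cofiltered limit of the finite groupoids $E_j$, it therefore suffices to prove both isomorphisms when $E=F$ is a single finite groupoid.

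Next I would treat the base case in which $C$ is a finite, or more generally compact, groupoid. Then $F^{C}$ is again finite, so all three functors can be computed directly. Writing $\h D=\{D_\alpha\}$, the pro-mapping formula together with the fact that $\h D$ represents $\G(D,-)$ on $f\G$ gives $\pG(C\times\h D,F)=\on{colim}_\alpha f\G(C\times D_\alpha,F)$, and the cartesian closed adjunction $f\G(C\times D_\alpha,F)\cong f\G(D_\alpha,F^{C})$ of $\G$ turns this into
\[
\on{colim}_\alpha f\G(D_\alpha,F^{C})=\pG(\h D,F^{C})=\G(D,F^{C}).
\]
Running the same two manipulations on $\pG(C,F^{D})$ and on $\G(D,\map(C,F))$ identifies each of them with the very same set $\G(D,F^{C})$; checking that the three identifications are compatible with the evident evaluation maps then yields both isomorphisms for compact $C$, naturally in $D$ and in $F$. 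For the second isomorphism the special case $C=\ast$ is literally the adjunction $\h{(-)}\dashv|-|$, since $\map(\ast,E)=|E|$, and this is what replaces the cartesian closed adjunction in that base step.

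The last and hardest step is to pass from compact $C$ to an arbitrary $C=\{C_i\}_{I}$, which I would present as $\on{lim}_{I}C_i$ with the $C_i$ finite (using $\on{Ind}(\G_f)\simeq\G$ dually). The left-hand side behaves well: since $F$ is cosmall in $\pG$ and $-\times\h D$ commutes with the cofiltered limit, one obtains $\pG(C\times\h D,F)=\on{colim}_{I}\G(D,F^{C_i})=\on{colim}_{I}\pG(C_i\times\h D,F)$. The content of the proposition is then that the right-hand sides, assembled from $F^{D}$ and $\map(C,F)$ through the limit--colimit prescriptions defining these objects, reproduce this same colimit. This is where the filtered colimit over the index $I$ must be interchanged with the cofiltered limits hidden in $F^{D}$ and $\map(C,F)$ — namely those coming from writing $D$ as a filtered colimit of compact groupoids and from the pro-structure of $E$. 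I expect this interchange to be the main obstacle, since filtered colimits and cofiltered limits do not commute formally; the point will be to verify that both sides are genuine \emph{$(co)$continuous} extensions of the base case, controlling how $E^{D}$ and $\map(C,E)$ interact with the cofiltered limit $C=\on{lim}_{I}C_i$. Concretely, one exploits the special shape of the groupoids $F^{C_i}$, whose automorphism groups are constrained by those of the finite groupoid $F$, to show that $\G(D,-)$ commutes with the relevant filtered colimit; granting this, the identifications of the base case propagate verbatim and give the two isomorphisms in general.
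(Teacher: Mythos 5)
Your reduction in the $E$ variable and your base case ($C$ finite, $D$ arbitrary) are both correct, and so is your computation of the left-hand side for general $C=\{C_i\}_{i\in I}$, namely $\pG(C\times\h{D},F)\cong\on{colim}_I\G(D,F^{C_i})$. The genuine gap is exactly the step you flag as ``the main obstacle'', and it cannot be repaired in the form you propose. Unwinding the right-hand side as you suggest, with $D=\on{colim}_j D_j$ a filtered colimit of compact groupoids, one gets $\pG(C,F^D)\cong\lim_j\on{colim}_I\G(D_j,F^{C_i})\cong\G\bigl(D,\on{colim}_I F^{C_i}\bigr)$, where the inner interchange is legitimate because each $D_j$ is compact; so what you still need is precisely your claim that the canonical map $\on{colim}_I\G(D,F^{C_i})\to\G(D,\on{colim}_I F^{C_i})$ is bijective. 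But $\G(D,-)$ commutes with filtered colimits only when $D$ is compact, and no ``special shape'' of the groupoids $F^{C_i}$ can save this: take $F=\{0,1\}$ discrete, $C=\{\{0,1\}^n\}_n$ the Cantor set viewed as a profinite groupoid with only identity arrows, and $D=\mathbb{N}$ an infinite discrete groupoid. Then every $F^{C_i}$ is finite discrete (so the automorphism-group constraint is vacuously as strong as possible), yet $\on{colim}_I\G(D,F^{C_i})$ consists of the sequences of clopen subsets of the Cantor set taking only finitely many distinct values, while $\G(D,\on{colim}_I F^{C_i})$ consists of arbitrary sequences of clopens; the comparison map is a proper inclusion. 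So the interchange you need is false in the generality in which you attempt it.

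The missing idea is the paper's very first move, which you skipped: dispose of the $D$ variable as well as the $E$ variable \emph{before} touching the pro-structure of $C$. The paper reduces to $D$ compact (in $\G_f$) on the grounds that both sides send colimits in $D$ to limits, and then the difficulty evaporates: for $D$ compact and $E$ finite, $E^D$ is a single finite groupoid, hence cosmall in $\pG=\Pro(f\G)$, and one has the chain $\pG(C,E^D)\cong\on{colim}_I f\G(C_i,E^D)\cong\on{colim}_I\G(C_i\times D,E)\cong\on{colim}_I\pG(\h{C_i\times D},E)\cong\on{colim}_I\pG(C_i\times\h{D},E)\cong\pG(C\times\h{D},E)$, using cartesian closedness of $\G$, the universal property of completion, the compatibility of completion with products of groupoids with finitely many objects, and cosmallness of $E$ at the last step; no filtered/cofiltered interchange ever occurs. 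This ordering of reductions is not cosmetic. Indeed, your own analysis, pushed one step further, shows that the left-hand side does \emph{not} send colimits in $D$ to limits (the same Cantor example exhibits the failure of $\pG(C\times\h{\mathbb{N}},F)\to\lim_S\pG(C\times S,F)$ to be surjective, whereas the right-hand side satisfies this by construction), so the naturality in non-compact $D$ is exactly where the statement is fragile and the paper's one-line justification of the $D$-reduction is itself very quick for that side. For compact $D$ — which is the only case the paper ever uses, namely $D=I[k]$ for the simplicial enrichment and the generating (trivial) cofibrations of $\G$ for the pushout-product axiom — the paper's argument is complete, and your own argument would also close up there, since the cofiltered limit hidden in $F^D$ becomes trivial.
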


\begin{proof}
We prove the first isomorphism, the second is similar. Both sides preserve limits in the $E$ variable and send colimits in the $D$ variable to limits, hence, we can assume that $E$ is in $f\G$ and that $D$ is in $\G_f$. Let us write $C=\{C_i\}_{i\in I}$, then we have
\begin{align*}
\pG(C,E^D)&\cong\on{colim}_I f\G(C_i,E^D)\\
          &\cong\on{colim}_I\G(C_i\times D,E)\\
          &\cong\on{colim}_I\pG(\h{C_i\times D},E)\\
          &\cong\on{colim}_I\pG(C_i\times\h{D},E)
\end{align*}
where the last equality follows from proposition \ref{prop-profinite completion commutes with products}. Since $E$ is in $f\G$, it is cosmall in $\pG=\Pro(f\G)$, thus, we have
\[\pG(C,E^D)\cong\pG(\on{lim}_IC_i\times\h{D},E)\cong \pG(C\times\h{D},E).\]
\end{proof}

Now, we can prove the following.

\begin{prop}
The pairing
\[\pG\times \G\to \pG\]
sending $(A,C)$ to $A\times\h{C}$ is a Quillen bifunctor.
\end{prop}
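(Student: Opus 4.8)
The plan is to verify the pushout-product axiom for the tensor $-\otimes-:=(A,C)\mapsto A\times\h{C}$ using the two adjunctions of the previous proposition. By that proposition $-\otimes-$ is a left adjoint in each variable, with the cotensor $(-)^{(-)}\colon\pG\times\G\op\to\pG$ and the enrichment $\map(-,-)\colon\pG\op\times\pG\to\G$ as its partial right adjoints. Consequently, for a map $f\colon A\to B$ in $\pG$, a cofibration $g\colon C\to D$ in $\G$ and a map $p\colon E\to E'$ in $\pG$, the two-variable adjunction identifies lifting problems: the pushout-product $f\square g$ has the left lifting property against $p$ if and only if $f$ has the left lifting property against the pullback-cotensor
\[p^{\square g}\colon E^{D}\to E^{C}\times_{E'^{C}}E'^{D}.\]
Thus the Quillen bifunctor property reduces to showing that $p^{\square g}$ is a fibration in $\pG$ (resp. a trivial fibration) whenever $p$ is a fibration (resp. trivial fibration) in $\pG$, since an acyclic (resp. ordinary) cofibration $f$ will then lift against it.

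Next I would reduce to generators. Since $(-)^{(-)}$ preserves limits in the $\pG$-variable, the class of $p$ for which $p^{\square g}$ is a fibration is closed under pullbacks, cofiltered limits and retracts, so it suffices to treat $p\in P$, and $p\in Q$ for the trivial case. Since $-\otimes-$ preserves colimits in the $\G$-variable, the class of $g$ for which the conclusion holds is closed under pushouts, transfinite composition and retracts, so it suffices to treat $g$ a generating (trivial) cofibration of the canonical model structure on $\G$. For the plain cofibration part of the axiom it is cleaner to argue on objects: the functor $\on{Ob}\colon\pG\to\pSet$ is a left adjoint that commutes with the tensor, because it preserves products and $\on{Ob}\circ\h{(-)}\cong\h{(-)}\circ\on{Ob}$, so $\on{Ob}(A\times\h{C})\cong\on{Ob}(A)\times\h{\on{Ob}(C)}$; and by proposition \ref{prop-cofibration injective on objects} a map of $\pG$ is a cofibration exactly when it is injective on objects. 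As the pushout-product of two injections of (profinite) sets for the cartesian product is again an injection, $\on{Ob}(f\square g)=\on{Ob}(f)\square\on{Ob}(g)$ is injective and $f\square g$ is a cofibration.

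The heart of the matter is acyclicity, i.e. the analysis of $p^{\square g}$ for $p\in P\cup Q$ and $g$ a generating (trivial) cofibration of $\G$. Here everything takes place among finite groupoids: the sources and targets of the maps in $P$ and $Q$ are finite, and it was shown above that $E^{C}$ lies in $f\G$ when $E$ is finite and $C$ is compact, so $p^{\square g}$ is a map in $f\G$ whose cotensors agree with the internal hom of $\G$. By Lemma \ref{lemm-generating pro-fibrations are fibrations} the maps in $P$ (resp. $Q$) are fibrations (resp. trivial fibrations) of the canonical model structure on $\G$, and $\G$ with the cartesian product is a cartesian closed model category; hence the pullback-cotensor $p^{\square g}$ is a fibration, and a trivial fibration as soon as $g$ or $p$ is trivial, \emph{in $\G$}.

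The remaining—and main—obstacle is to promote these statements from $\G$ to $\pG$: one must show that a map between finite groupoids which is a fibration (resp. trivial fibration) in $\G$ is already one in $\pG$, so that the acyclic (resp. ordinary) cofibration $f$, which by construction has the left lifting property against $P$ (resp. $Q$) and hence against all $\pG$-fibrations (resp. trivial fibrations), does lift against $p^{\square g}$. I would handle this by recognizing each $p^{\square g}$ explicitly, for the finitely many generators, as a finite pullback of maps of $P$ (resp. $Q$)—equivalently as a $P$-cocell (resp. $Q$-cocell) complex—so that the lifting against $f$ is immediate; alternatively, one reduces a lifting problem in $\pG$ against such a finite map to a compatible family of lifting problems in $\G$ using the cosmallness of finite groupoids in $\pG=\Pro(f\G)$ together with a standard leveling of the cofibration $f$, where the cartesian closedness of $\G$ then supplies the lifts. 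This pro-categorical promotion, rather than the formal reductions, is where the genuine work lies.
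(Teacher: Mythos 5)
Your formal skeleton is fine: the adjoint transposition to pullback-cotensor maps, the reduction to $p\in P\cup Q$ and to the generating (trivial) cofibrations of $\G$, the treatment of the pure cofibration case on objects, and the observation that every corner map $p^{\square g}$ is a (trivial) fibration \emph{in the canonical model structure on $\G$} between finite groupoids. The gap is the step you defer to the end: that an isofibration (resp.\ trivial fibration) in $f\G$ is a fibration (resp.\ trivial fibration) in $\pG$. This is not a routine pro-categorical promotion; it is essentially the identification of $\pG$ with the Barnea--Schlank model structure $\pG^{BS}$, which the paper proves only \emph{later} and by \emph{using} the present proposition: the comparison of the weak equivalences of the two structures rests on the simplicial mapping spaces $\Map_{\pG}$, i.e.\ on the Quillen bifunctor being constructed here. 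So you may not quote that comparison, and you must prove your promotion statement by hand. For the trivial-fibration half this could be done non-circularly (the paper later shows, by a purely combinatorial retract argument, that the trivial fibrations of $f\G$ lie in the closure of $Q$ under retracts, base change and composition, and that argument is independent of the simplicial structure); but for the fibration half --- the case $p\in P$ against a generating cofibration, which is exactly what you need when $f$ is a trivial cofibration of $\pG$ --- no direct argument exists anywhere in the paper, and neither of your two suggestions supplies one.

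Concretely: for (i), being a fibration of $\pG$ means being a \emph{retract of} a $P$-cocell complex, and nothing makes the corner maps visibly so; already for $p:G\sslash G\to\ast\sslash G$ and $g:\{0,1\}\to I[1]$, or for any $p\in P$ against the generating cofibration with infinite source (the free groupoid on two parallel isomorphisms mapping onto $I[1]$), the corner map is some isofibration of $f\G$ which is not a map of $P$, and exhibiting it as a retract of a finite composite of pullbacks of maps of $P$ is a genuine construction you have not given (note also that $P$ is not finite: it contains a family of maps for each finite group and each finite set, so ``finitely many generators'' is inaccurate). For (ii), the argument is circular: cosmallness does factor a lifting square from $f$ to $p^{\square g}$ through some level $f_i:A_i\to B_i$ of $f$, but the levels of a trivial cofibration of $\pG$ are arbitrary maps of finite groupoids carrying \emph{no} lifting property --- the only hypothesis on $f$ is its left lifting property against $P$, which is precisely what you are trying to bring to bear on $p^{\square g}$ --- so cartesian closedness of $\G$ produces no lift. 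This dead end is exactly why the paper's own proof never mentions fibrations: it shows instead that $A\times\h{C}$ preserves weak equivalences in each variable separately (reducing to $A\in f\G$ by proposition \ref{prop-weak equivalences stable under cofiltered limits}, then applying the criterion of proposition \ref{prop-characterization of weak equivalences in pG pi} through the exponential adjunction), and then deduces the acyclic part of the pushout-product axiom from left properness of $\pG$ and two-out-of-three. To salvage your route you would first have to prove, independently of any simplicial structure, that isofibrations in $f\G$ are fibrations in $\pG$; that is a theorem on the order of the Barnea--Schlank comparison, not a finishing verification.
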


\begin{proof}
We have already seen that this functor is a left adjoint in both variables. It remains to prove that is has the pushout-product property. Recall from proposition \ref{prop-cofibration injective on objects} that a map is a cofibration in $\pG$ if and only if it is injective on objects. 

(1) Let $A\to B$ be a cofibration in $\pG$ and $C\to D$ be a cofibration in $\G$, then the map
\[A\times \h{D}\sqcup^{A\times \h{C}}B\times \h{C}\to B\times \h{D}\]
is injective on object. The proof is easy once we have observed that the functor that assigns to a groupoid or profinite groupoid its set of objects is colimit preserving.

(2) If $C\to D$ is a weak equivalence in $\G$ and $A$ is any object in $\pG$, then $A\times \h{C}\to A\times \h{D}$ is a weak equivalence in $\pG$. Indeed, by proposition \ref{prop-weak equivalences stable under cofiltered limits}, we can assume that $A$ is in $f\G$. By proposition \ref{prop-characterization of weak equivalences in pG pi}, we are thus reduced to proving that
\[\pi\pG(\h{D}\times A,K)\to\pi\pG(\h{C}\times A,K)\]
is an isomorphism for all $K\in f\G$. By adjunction, we have an isomorphism $\pi\pG(\h{C}\times A,K)\cong\pi\pG(C,K^A)$ and similarly for the other side. Since $K^A$ is a finite groupoid, we are done.

(3) Similarly, if $A\to B$ is a weak equivalence in $\pG$ and $C$ is a small groupoid (i.e. an object of $\G_f$), then the map $A\times\h{C}\to B\times \h{C}$ is a weak equivalence. Indeed, by adjunction, we are reduced to proving that
\[\pi\pG(B,\map(C,K))\to\pi\pG(A,\map(C,K))\]
is an isomorphism for all $K$. But this follows from the fact that $\map(C,K)$ is finite.

(4) Now, let $A\to B$ be a trivial cofibration and $C\to D$ be a cofibration, then the map
\begin{equation}\label{e-SM7}
A\times \h{D}\sqcup^{A\times \h{C}}B\times \h{C}\to B\times \h{D}
\end{equation}
is a cofibration by (1). Moreover, the map $A\times \h{C}\to A\times \h{D}$ is a cofibration by (1) and the map $A\times \h{C}\to B\times \h{C}$ is a weak equivalence by (2). Thus, since $\pG$ is left proper, the map \ref{e-SM7} is a weak equivalence if and only if $A\times \h{D}\to B\times \h{D}$ is a weak equivalence but this follows from (2).

The other case is dealt with similarly using (3) instead of (2) and observing that we can assume that $C$ and $D$ are small groupoids (for instance because the generating trivial cofibrations of $\G$ can be chosen with small source and target).
\end{proof}

By adjunction, the functors $(C,E)\mapsto \map(C,E)$ and $(D,E)\mapsto E^D$ are also Quillen bifunctors. Since the functor $B:\G\to\S$ is a right Quillen functor, this implies that $(C,E)\mapsto B\map(C,E)$ is a Quillen bifunctor. Hence it makes $\pG$ into a simplicial model category. We denote by $\Map_{\pG}(-,-)$ the functor $B\map(-,-)$. Unwinding the definition, we see that $\Map_{\pG}(C,D)$ is the simplicial set whose $k$-simplices are $\pG(C\times I[k],D)\cong \pG(C,D^{I[k]})$.

\bigskip
Using the simplicialness of $\pG$, we can extend the criterion of proposition \ref{prop-characterization of weak equivalences in pG pi}.

\begin{prop}\label{prop-characterization of weak equivalences in pG Map}
A map $u:A\to B$ is a weak equivalence in $\pG$ if and only if for any finite groupoid $C$, the induced map
\[\Map_{\pG}(B,C)\to \Map_{\pG}(A,C)\]
is a weak equivalence.
\end{prop}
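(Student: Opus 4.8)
The plan is to deduce the statement from the characterization of weak equivalences already obtained in proposition~\ref{prop-characterization of weak equivalences in pG pi}, upgrading it from homotopy classes of maps to full mapping spaces by exploiting the simplicial structure on $\pG$. The starting point is the observation that for any object $X$ of $\pG$ and any finite groupoid $C$ there is a natural identification
\[\pi_0\Map_{\pG}(X,C)\cong\pi\pG(X,C).\]
Indeed $\Map_{\pG}(X,C)=B\map(X,C)$, so $\pi_0\Map_{\pG}(X,C)$ is the set of isomorphism classes of objects of the groupoid $\map(X,C)$; since a morphism of $\map(X,C)$ from $f$ to $g$ is precisely a map $X\to C^{I[1]}$ with $ev_0$ and $ev_1$ equal to $f$ and $g$, these isomorphism classes are exactly the homotopy classes of maps from $X$ to $C$. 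With this in hand one implication is immediate: if the displayed map of mapping spaces is a weak equivalence for every finite groupoid $C$, then applying $\pi_0$ and the identification above shows that $\pi\pG(B,C)\to\pi\pG(A,C)$ is a bijection for every finite $C$, whence $u$ is a weak equivalence by proposition~\ref{prop-characterization of weak equivalences in pG pi}.

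For the converse I would suppose that $u$ is a weak equivalence and introduce the class $\mathcal{W}$ of finite groupoids $C$ for which $u^*\colon\Map_{\pG}(B,C)\to\Map_{\pG}(A,C)$ is a weak equivalence, then prove that $\mathcal{W}$ contains everything by mirroring the structure of the proof of proposition~\ref{prop-characterization of weak equivalences in pG pi}. The two base cases are handled as follows. For $C=S$ a finite set, the groupoid $\map(X,S)$ is discrete and equal to $H^0(X,S)$, so $\Map_{\pG}(X,S)$ is the discrete simplicial set $H^0(X,S)$ and $u^*$ is a bijection by the very definition of a weak equivalence (definition~\ref{defi-weak equivalences of profinite groupoids}). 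For $C=\ast\sslash G$ with $G$ finite, the map $\ast\sslash G\to\ast$ belongs to the set $P$ of theorem~\ref{theo-model structure on profinite groupoids}, so $\ast\sslash G$ is fibrant; since every object of $\pG$ is cofibrant, the comparison $\Map_{\pG}(X,\ast\sslash G)\simeq\Map^h_{\pG}(X,\ast\sslash G)$ together with the homotopy invariance of $\Map^h$ in its first variable shows that $u^*$ is a weak equivalence.

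It then remains to verify that $\mathcal{W}$ is closed under the relevant operations. Closure under finite products is immediate from the levelwise identity $\Map_{\pG}(X,C\times D)\cong\Map_{\pG}(X,C)\times\Map_{\pG}(X,D)$, and closure under retracts is formal. For closure under weak equivalences of finite groupoids I would first show, using the adjunction defining $\map$ and proposition~\ref{prop-profinite completion commutes with products}, that $\map(B,-)$ commutes with the cotensor $(-)^{I[1]}$, hence preserves $I[1]$-homotopies and therefore equivalences of groupoids; applying $B$ shows that $\Map_{\pG}(X,-)$ sends an equivalence of finite groupoids to a weak equivalence, and a two-out-of-three argument in the naturality square finishes this case. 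Closure under finite coproducts then follows from closure under products and retracts via the retract of $D\sqcup E$ into $D\times E\times\{0,1\}$ already used in the proof of proposition~\ref{prop-characterization of weak equivalences in pG pi}. Since every finite groupoid is weakly equivalent to a finite coproduct $\sqcup_x\ast\sslash G_x$, these properties force $\mathcal{W}$ to consist of all finite groupoids. The only point that is not completely formal is the case $C=\ast\sslash G$: one must control the fundamental groupoid of $\Map_{\pG}(B,\ast\sslash G)$, whereas the cohomological definition of a weak equivalence only sees $\pi_0=H^1(-,G)$; this is exactly where the fibrancy of $\ast\sslash G$ enters, allowing the naive mapping space to be replaced by the homotopy invariant derived mapping space.
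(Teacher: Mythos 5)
Your proof is correct; the forward direction is exactly the paper's (apply $\pi_0$, identify it with homotopy classes of maps, and quote proposition \ref{prop-characterization of weak equivalences in pG pi}). For the converse the paper argues differently and more briefly: it shows that \emph{every} finite groupoid is fibrant in $\pG$ --- the objects $\ast\sslash G$ and $\on{Codisc}(S)$ are fibrant because the maps from them to $\ast$ are generating (trivial) fibrations, products and retracts of fibrant objects are fibrant, and the retract presentation of $U\sqcup V$ inside $U\times V\times\{0,1\}$ handles coproducts --- and then applies in one stroke the simplicial model category axiom: a weak equivalence between cofibrant objects induces a weak equivalence on simplicial mapping spaces into every fibrant target. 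You instead transplant the closure-class argument from the proof of proposition \ref{prop-characterization of weak equivalences in pG pi} to the mapping-space level, invoking fibrancy only for $\ast\sslash G$, discreteness of $\map(-,S)$ for finite sets, and formal closure under products, retracts, coproducts and equivalences of finite groupoids (the last resting on the compatibility of $\map(X,-)$ with the cotensor $(-)^{I[1]}$, which does follow as you say from the adjunctions and proposition \ref{prop-profinite completion commutes with products}, applied over compact test groupoids). The trade-off: the paper's route is shorter and records a reusable fact (fibrancy of all finite groupoids), whereas yours needs strictly less fibrancy input --- you never need $\on{Codisc}(S)$, products, or coproducts to be fibrant --- at the price of re-deriving by hand closure properties that the simplicial model structure gives for free. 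Your closing observation is exactly right and matches the paper's mechanism: the one non-formal point in either proof is that fibrancy of $\ast\sslash G$ is what controls $\pi_1$ of the mapping spaces, which the cohomological definition of weak equivalence cannot see directly.
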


\begin{proof}
If the induced map
\[\Map_{\pG}(B,C)\to \Map_{\pG}(A,C)\]
is a weak equivalence for all $C$ in $f\G$, taking $\pi_0$ and applying proposition \ref{prop-characterization of weak equivalences in pG pi}, we find that $u$ is a weak equivalence. Conversely, notice that the finite groupoids are fibrant in $\pG$. Indeed, $\ast\sslash G$ for $G$ a finite group and $\on{Codisc}(S)$ for $S$ a finite set are fibrant by definition. Moreover, using the same trick as in the proof of proposition \ref{prop-characterization of weak equivalences in pG pi}, we find that if $U$ and $V$ are fibrant, then $U\sqcup V$ is a retract of $U\times V\times\{0,1\}$ and hence is fibrant. Since $\pG$ is a simplicial model category and $u$ is a weak equivalence between cofibrant objects, the map
\[\Map_{\pG}(B,C)\to \Map_{\pG}(A,C)\]
is a weak equivalence for any fibrant object $C$ of $\pG$ and in particular for any object of $f\G$.
\end{proof}

\subsection*{Barnea-Schlank model structure}

Given category $\cat{C}$ with a subcategory $w\cat{C}$ of weak equivalences containing all the objects, one can define $Lw^{\cong}(w\cat{C})$ to be the smallest class of arrows in $\Pro(\cat{C})$ that is stable under isomorphisms in the arrow category of $\Pro(\cat{C})$ and contains the natural transformations that are levelwise weak equivalences. 

\begin{defi}
A \emph{weak fibration category} is a triple $(\cat{C},w\cat{C},f\cat{C})$ of a small category with two subcategories containing all the objects such that :
\begin{itemize}
\item $\cat{C}$ has all finite limits.
\item The maps in $w\cat{C}$ have the two-out-of-three property.
\item The maps in $f\cat{C}$ and $f\cat{C}\cap w\cat{C}$ are stable under base change.
\item Any map has a factorization of the form $f\cat{C}\circ w\cat{C}$.
\end{itemize}
\end{defi}

\begin{theo}[Barnea-Schlank \cite{barneaprojective}]
Let $\cat{C}$ be a weak fibration category, then if $Lw^{\cong}(w\cat{C})$ satisfies the two-out-of-three property, there is a model structure on $\Pro(\cat{C})$ in which the weak equivalences are the maps of $Lw^{\cong}(w\cat{C})$, the cofibrations are the maps with the left lifting property against $f\cat{C}\cap w\cat{C}$ and the trivial cofibrations are the maps with the left lifting property against $f\cat{C}$.
\end{theo}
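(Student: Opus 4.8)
This is the main theorem of \cite{barneaprojective}; I sketch the strategy of its proof. Everything happens inside $\Pro(\cat{C})$, and the plan is to begin with three structural facts about pro-categories. First I would prove the reindexing lemma: every morphism of $\Pro(\cat{C})$ is isomorphic, in the arrow category, to a \emph{levelwise} map $f\colon X\to Y$, that is, a natural transformation between two diagrams indexed by a common cofinite directed poset $I$, for which one can form the relative matching maps $X_i\to Y_i\times_{\lim_{j<i}Y_j}\lim_{j<i}X_j$. Second, I would use that the objects of $\cat{C}$ are cosmall in $\Pro(\cat{C})$, so that a lifting problem of a pro-map against a map coming from $\cat{C}$ can be solved one index at a time. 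Third, the limit--colimit axiom follows from standard facts about pro-categories, $\Pro(\cat{C})$ being complete and having the finite colimits one needs.

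Next, for a class $M$ of maps of $\cat{C}$ that is stable under base change, I would introduce the \emph{special} $M$-maps $Sp(M)$, the levelwise maps all of whose relative matching maps lie in $M$, and $R(M)$, their retracts in the arrow category. The technical core is a pair of orthogonality lemmas, ${}^{\perp}(R(M))={}^{\perp}M$ and $R(M)=({}^{\perp}M)^{\perp}$, the first proved by solving lifting problems index by index via cosmallness, the second by a retract argument. Applied to $M=f\cat{C}$ and to $M=f\cat{C}\cap w\cat{C}$, both stable under base change by hypothesis, this identifies the fibrations with $R(f\cat{C})$ and the trivial fibrations with $R(f\cat{C}\cap w\cat{C})$, and shows that the cofibrations and trivial cofibrations of the statement are exactly the left-orthogonal complements of these two classes.

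With the classes so described, the axioms split into routine and hard parts. Two-out-of-three for $Lw^{\cong}(w\cat{C})$ is the hypothesis; closure under retracts is automatic for the orthogonally defined classes and is checked separately for $Lw^{\cong}(w\cat{C})$; and, once the fibrations and trivial fibrations are identified as $R(f\cat{C})$ and $R(f\cat{C}\cap w\cat{C})$, the two lifting axioms hold by the very definition of the cofibrations. For factorization I would run a cofinite-poset induction playing the role of the small object argument: given a levelwise map, build the intermediate diagram one index at a time, at each stage factoring the relative matching map in $\cat{C}$ through the weak-fibration factorization $w\cat{C}$ then $f\cat{C}$. This yields, for an arbitrary map, a factorization into a levelwise weak equivalence followed by a special $f\cat{C}$-map, that is, a (weak equivalence, fibration) factorization.

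The crux, and the place where the hypothesis on $Lw^{\cong}(w\cat{C})$ is indispensable, is that a weak fibration category supplies only the one factorization $f\cat{C}\circ w\cat{C}$, whereas a model structure requires a factorization into a cofibration followed by a \emph{trivial} fibration and one into a \emph{trivial} cofibration followed by a fibration: nothing in the axioms extracts trivial fibrations for us. The bridging lemma I would prove is $R(f\cat{C})\cap Lw^{\cong}(w\cat{C})=R(f\cat{C}\cap w\cat{C})$; its content is that a special fibration which is a weak equivalence has, by two-out-of-three applied levelwise, relative matching maps lying in $f\cat{C}\cap w\cat{C}$, so up to retract it is a special trivial fibration. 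Feeding this compatibility back into the cofinite construction, together with the orthogonality lemmas and a retract argument, upgrades the naive (weak equivalence, fibration) factorization into the two genuine model factorizations, and at the same time shows that the trivial fibrations are precisely the fibrations in $Lw^{\cong}(w\cat{C})$ and the trivial cofibrations precisely the cofibrations in $Lw^{\cong}(w\cat{C})$, completing the verification of the model axioms.
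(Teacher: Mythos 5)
The first thing to say is that the paper does not prove this statement: it is imported verbatim from Barnea and Schlank with the citation \cite{barneaprojective}, and Horel only uses it, together with theorem \ref{theo-criterion pro-admissible}, as a black box to produce the model structure $\pG^{BS}$ on profinite groupoids. There is therefore no internal proof to compare you against; the only benchmark is the cited paper. Measured against that, your sketch does reproduce the recognizable architecture of the Barnea--Schlank argument: reindexing every pro-morphism as a levelwise map over a cofinite directed poset (cofiniteness is what makes the sets $\{j<i\}$ finite, so that the matching limits exist in $\cat{C}$), the special maps defined by relative matching maps, lifting against maps of $\cat{C}$ handled index by index via cosmallness, retracts of special maps, an inductive factorization that applies the factorization axiom of $\cat{C}$ to each relative matching map, and a final compatibility statement between the lifting-defined classes and $Lw^{\cong}(w\cat{C})$.

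Two of your justifications, however, skip over what is in fact the hard content, and as written they would fail. First, the bridging lemma: ``two-out-of-three applied levelwise'' is not an argument. A map in $Lw^{\cong}(w\cat{C})$ is merely isomorphic in the arrow category to \emph{some} levelwise weak equivalence, and this isomorphism need not be compatible with the given special presentation, so one needs a simultaneous reindexing statement before any levelwise induction can begin; moreover the induction itself needs the stability of $f\cat{C}\cap w\cat{C}$ under base change, since the comparison map $Y_i\times_{\on{lim}_{j<i}Y_j}\on{lim}_{j<i}X_j\to Y_i$ is a base change of a map of limits, which two-out-of-three alone cannot control. Second, and more seriously, the factorizations. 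A weak fibration category provides only the single factorization $f\cat{C}\circ w\cat{C}$, so your cofinite induction yields factorizations into a levelwise weak equivalence followed by a special $f\cat{C}$-map. But a levelwise weak equivalence has, in general, no lifting property at all: already a weak equivalence between objects of $\cat{C}$ need not lift against the maps of $f\cat{C}$, nor against those of $f\cat{C}\cap w\cat{C}$. So what your induction produces is a (weak equivalence, fibration) factorization, which is \emph{neither} of the two factorizations a model structure requires; the same missing ingredient blocks the retract argument in your second orthogonality lemma $R(M)=({}^{\perp}M)^{\perp}$, which presupposes a factorization into a map of ${}^{\perp}M$ followed by a special $M$-map. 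Converting the naive factorization into the genuine (cofibration, trivial fibration) and (trivial cofibration, fibration) factorizations is exactly where the pro-admissibility hypothesis and the bulk of the technical work of \cite{barneaprojective} are spent; your closing sentence names this step but does not contain a mechanism for it.
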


Note that the condition that $Lw^{\cong}(w\cat{C})$ has the two-out-of-three property is usually hard to check. Following Barnea Schlank, we call a weak fibration category satisfying this property a pro-admissible weak fibration category. There is a criterion on $\cat{C}$ that insures that $\cat{C}$ is pro-admissible.

\begin{defi}
Let $(\cat{C},w\cat{C})$ be a relative category. We say that a map $u:X\to Y$ is \emph{left proper} if any cobase change of a weak equivalence $X\to T$ along $u$ exists and is a weak equivalence. We say that $u$ is \emph{right proper} if any base change of a weak equivalence $Z\to Y$ along $u$ exists and is a weak equivalence.
\end{defi}

\begin{theo}[Barnea-Schlank]\label{theo-criterion pro-admissible}
Let $(\cat{C},w\cat{C})$ be a relative category. Let us denote by $LP$ resp. $RP$ the class of left proper (resp. right proper) maps in $\cat{C}$. Then $Lw^{\simeq}(w\cat{C})$ satisfies the two-out-of-three property if any map in $\cat{C}$ has a factorization of the form $w\cat{C}\circ LP$, $RP\circ w\cat{C}$ and $RP\circ LP$.
\end{theo}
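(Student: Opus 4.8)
The plan is to reduce every assertion to a statement about levelwise diagrams and then feed the three factorization hypotheses into a diagram-level factorization machine. Write $W := Lw^{\cong}(w\cat{C})$ for the class whose two-out-of-three property we must establish. The first step is the standard reindexing lemma for pro-categories (due to Isaksen): any finite diagram in $\Pro(\cat{C})$, and in particular any composable pair $X \lto{f} Y \lto{g} Z$, is isomorphic in the category of such diagrams to one of the form $\{X_i \to Y_i \to Z_i\}_{i\in I}$ over a common cofiltered poset $I$, which may moreover be taken cofinite. Since $W$ is by definition stable under isomorphism of arrows, it suffices to prove two-out-of-three for levelwise composable pairs over a fixed $I$. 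Closure of $W$ under composition follows readily from the same reindexing lemma, a composite of levelwise weak equivalences being again a levelwise weak equivalence; the content of the theorem is therefore entirely in the two cancellation properties.

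The engine of the proof is a lift of the pointwise factorizations in $\cat{C}$ to $\Pro(\cat{C})$. Concretely, I would prove that, after passing to a cofinal reindexing, every map of $\Pro(\cat{C})$ admits factorizations that are \emph{levelwise} of each prescribed form: a natural transformation which is levelwise in $LP$ followed by one levelwise in $w\cat{C}$, and the analogues for $RP\circ w\cat{C}$ and $RP\circ LP$. Such diagram-level factorizations are constructed by induction over the cofinite index poset $I$: at each index one factors the relevant comparison map in $\cat{C}$ using the hypothesis and glues the result against the limit of the already-constructed part of the factorization over the finite set of strictly smaller indices, the cofiniteness of $I$ being exactly what makes this step finitary. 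Alongside this I would record the two properness statements at the pro-level: cobase change of a map in $W$ along a levelwise-$LP$ map again lies in $W$, and dually base change of a map in $W$ along a levelwise-$RP$ map lies in $W$. These descend levelwise from the very definitions of $LP$ and $RP$, together with the fact that membership in $W$ is detected levelwise after reindexing.

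With these tools the two cancellation properties become dual diagram chases. To prove ``$gf\in W$ and $g\in W$ imply $f\in W$'', the idea is to factor $f$ at the pro-level through a levelwise-$LP$ map and a levelwise-$RP$ map; the $w\cat{C}\circ LP$ factorization then lets one replace the $LP$-part up to a map in $W$, and left properness transfers the weak-equivalence property of $gf$ across the resulting cobase change, so as to conclude that each factor of $f$, and hence $f$ itself, lies in $W$. The statement ``$gf\in W$ and $f\in W$ imply $g\in W$'' is handled symmetrically, factoring $g$ and invoking the $RP\circ w\cat{C}$ factorization together with right properness; the $RP\circ LP$ factorization is what guarantees a common reindexing on which both halves of such an argument can be carried out at once.

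The main obstacle is the diagram-level factorization lemma of the second paragraph: turning the three pointwise factorizations of $\cat{C}$ into factorizations in $\Pro(\cat{C})$ that remain levelwise of the correct type, and arranging them compatibly for an entire composable triple over a single cofinite index. Once this inductive construction is in place and the elementary stability of $W$ under levelwise cobase and base change along $LP$- and $RP$-maps has been recorded, the cancellation arguments are essentially formal. A secondary point demanding care is that $LP$ and $RP$ maps are only assumed proper, not to form part of a weak factorization system, so at each stage one must check that the pushouts and pullbacks invoked genuinely exist in $\cat{C}$ and that the limits used in the reindexing are preserved.
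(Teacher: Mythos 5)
Your proposal founders on a single but fundamental point: you treat membership in $Lw^{\cong}(w\cat{C})$ as though it were a levelwise condition that survives arbitrary reindexing. By definition, $f\in Lw^{\cong}(w\cat{C})$ means only that \emph{some} presentation of $f$ --- over some index category, with some presentations of its source and target --- is levelwise in $w\cat{C}$. When you reindex a composable pair $X\lto{f} Y\lto{g} Z$ with $f,g\in Lw^{\cong}(w\cat{C})$ to a common levelwise pair $\{X_i\to Y_i\to Z_i\}_{i\in I}$, there is no reason that the resulting maps $f_i$, $g_i$ lie in $w\cat{C}$: the two witnessing presentations of $f$ and of $g$ impose incompatible presentations on the middle object $Y$, and reconciling them is precisely the difficulty. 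Consequently your claim that ``closure of $W$ under composition follows readily from the same reindexing lemma'' is wrong --- composition closure is part of the two-out-of-three property being asserted, it is not automatic for a general relative category, and it is exactly for this step (as well as for the cancellations) that the factorization and properness hypotheses are needed. The same error reappears when you justify the pro-level properness statements by saying that ``membership in $W$ is detected levelwise after reindexing'': to compute a pushout of a map $u\in W$ along a levelwise-$LP$ map levelwise, you would again need a common presentation in which $u$ is levelwise in $w\cat{C}$, which is not available.

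What is missing is the key lemma on which the actual argument (Barnea--Schlank, and Isaksen before them in the setting of proper model categories) is built: a \emph{local} characterization of $Lw^{\cong}(w\cat{C})$ valid for an \emph{arbitrary} presentation, of the shape: a levelwise map $\{f_i\}_{i\in I}$ over a cofinite directed poset lies in $Lw^{\cong}(w\cat{C})$ if and only if for every $i$ there is $j\geq i$ such that the transition morphism from $f_j$ to $f_i$ in the arrow category factors through a map of $w\cat{C}$. Establishing this characterization, and then running the three two-out-of-three cases with it, is where the $w\cat{C}\circ LP$, $RP\circ w\cat{C}$ and $RP\circ LP$ factorizations together with properness actually do their work; your inductive construction of levelwise factorizations over a cofinite poset is a genuine ingredient of such a proof, but without the local characterization it never connects to the hypothesis that two of the three maps lie in $W$. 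Note also that the paper itself does not reprove this statement: it cites Proposition 3.6 of Barnea--Schlank, adding only the remark that the ambient finite (co)completeness assumed there can be weakened to the existence of the specific pushouts and pullbacks used in that proof.
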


\begin{proof}
See \cite[Proposition 3.6.]{barneaaccessibility}. Note that the authors require  that $\cat{C}$ has all finite colimits and limits. However, an inspection of the proof shows that only the pushouts of a weak equivalence along a left proper maps and the pullbacks of a weak equivalence along a right proper map are needed.
\end{proof}

The category $f\G$ of finite groupoids is a small category with finite limits. We can declare a map to be a fibration (resp. weak equivalence) if it is one in the canonical model structure on $\G$.

\begin{prop}
The category $f\G$ with this notion of fibration and weak equivalence is a pro-admissible weak fibration category.
\end{prop}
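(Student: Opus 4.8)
The plan is to verify separately that $f\G$ is a weak fibration category and that it is pro-admissible, the latter through the Barnea--Schlank criterion of Theorem \ref{theo-criterion pro-admissible}. The weak fibration category axioms are almost immediate: $f\G$ is closed under finite limits inside $\G$, the two-out-of-three property for weak equivalences is inherited from $\G$, and fibrations and trivial fibrations are stable under base change in $\G$ while pullbacks of finite groupoids along any map remain finite. The only axiom requiring an argument is the existence of a factorization of the form $f\cat{C}\circ w\cat{C}$. For this I would use the mapping path construction: given $f\colon A\to B$, set $P_f=A\times_B B^{I[1]}$, the pullback of $f$ along $ev_0\colon B^{I[1]}\to B$; then $A\to P_f$ is a trivial cofibration and $ev_1\colon P_f\to B$ is a fibration. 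Since $B^{I[1]}$ is finite whenever $B$ is (its objects are triples consisting of two objects of $B$ and an isomorphism between them), $P_f$ lies in $f\G$.

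To prove pro-admissibility I would produce the three factorizations required by Theorem \ref{theo-criterion pro-admissible} by first showing that cofibrations in $f\G$ are left proper maps and fibrations are right proper maps, and then feeding the two weak factorization systems of the canonical model structure --- restricted to $f\G$ --- into the criterion. Concretely, the (trivial cofibration, fibration) factorization above realises any map both as $RP\circ w\cat{C}$ and as $RP\circ LP$, since a trivial cofibration is a weak equivalence and a cofibration, while a (cofibration, trivial fibration) factorization realises it as $w\cat{C}\circ LP$. For the latter factorization inside $f\G$ I would use the fully faithful construction: let $C$ have object set $\on{Ob}(A)\sqcup\on{Ob}(B)$ and hom-sets $C(x,y)=B(\bar q(x),\bar q(y))$, where $\bar q$ is $f$ on $\on{Ob}(A)$ and the identity on $\on{Ob}(B)$; then $A\to C$ is injective on objects and $C\to B$ is fully faithful and surjective on objects, and $C$ is finite because its hom-sets are pulled back from those of $B$.

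Right properness of fibrations is straightforward: the canonical model structure on $\G$ is right proper, so the pullback of a weak equivalence along an isofibration is again a weak equivalence, and such pullbacks of finite groupoids stay in $f\G$. The delicate point --- the step I expect to be the main obstacle --- is left properness of cofibrations, because pushouts of finite groupoids are in general infinite (pushing two finite groups out over a point yields their free product). Here the decisive observation is that we are pushing out a \emph{weak equivalence} $w\colon X\to T$ along a cofibration $u\colon X\to Y$. Left properness of $\G$ guarantees that the pushout $Y\sqcup_X T$ computed in $\G$ receives a weak equivalence from $Y$; since the object functor $\on{Ob}$ is both a left and a right adjoint it preserves this pushout, so $\on{Ob}(Y\sqcup_X T)=\on{Ob}(Y)\sqcup_{\on{Ob}(X)}\on{Ob}(T)$ is finite. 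A groupoid with finitely many objects that is weakly equivalent to the finite groupoid $Y$ has finite automorphism groups, hence finitely many morphisms; therefore $Y\sqcup_X T$ already lies in $f\G$ and is the pushout there, so $u$ is left proper. Assembling these factorizations and invoking Theorem \ref{theo-criterion pro-admissible} then completes the proof.
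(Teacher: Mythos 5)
Your proposal is correct and follows the paper's proof almost step for step: the same mapping-path-space factorization $C\to C\times_D D^{I[1]}\to D$ establishes the weak fibration category axioms and supplies the $RP\circ w$ and $RP\circ LP$ factorizations required by theorem \ref{theo-criterion pro-admissible}, and you identify exactly the same classes of proper maps — injective-on-objects maps are left proper (via left properness of $\G$ together with the observation that the pushout has finitely many objects and is equivalent to a finite groupoid, hence finite; you actually spell out this finiteness argument more explicitly than the paper does) and fibrations are right proper. The one point of divergence is the third factorization $w\circ LP$: the paper uses the mapping cylinder $C\to (C\times I[1])\sqcup^C D\to D$, whose membership in $f\G$ again rests on the left-properness finiteness argument, whereas you factor through the groupoid with objects $\on{Ob}(A)\sqcup\on{Ob}(B)$ and hom-sets pulled back from $B$ along $\bar q$ — an equally valid and arguably more economical construction, since its finiteness is immediate and the second map is even a trivial fibration rather than merely a weak equivalence.
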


\begin{proof}
The fact that $f\G$ is a weak fibration category follows easily from the existence of the canonical model structure on $\G$. The only non-trivial axiom is the factorization axiom. Let $f:C\to D$ a map in $f\G$. Using the path object $C\mapsto C^{I[1]}$, we can factor $f$ as 
\[C\to C\times_D D^{I[1]} \to D.\]

The first map is a weak equivalence and the second map is a fibration because all objects are fibrant in $\G$. Moreover, the groupoid $C\times_D D^{I[1]}$ belongs to $f\G$. Thus, we have constructed a factorization of $f$ as a weak equivalence followed by a fibration. Note that the first map is injective on object so it is in fact a cofibration in $\G$.

Now, we want to prove that the maps in $Lw^{\simeq}(w(f\G))$ have the two-out-of-three property. We will use the criterion of theorem \ref{theo-criterion pro-admissible}. Note that $\G$ is a right proper model category. It follows that any map in $f\G$ which is a fibration in $\G$ is right proper. Similarly, if $f:C\to D$ is injective on objects and $u:C\to C'$ is a weak equivalence, the pushout $C'\sqcup^CD$ is in $f\G$, this implies that the maps that are injective on objects are left proper. Hence the previous paragraph gives us a factorization of $f$ of the form $RP\circ LP$ and $RP\circ w(f\G)$. It only remains to construct a factorization of $f$ the form $w(f\G)\circ LP$. This can be done by the following mapping cylinder construction
\[C\to (C\times I[1])\sqcup^C D \to D.\]
The first map is left proper since it is injective on objects and the second map is a weak equivalence.
\end{proof}

It follows that there is a model structure $\pG^{BS}$ on $\pG$ in which the weak equivalences are the maps in $Lw^{\simeq}(w(f\G))$, the cofibrations are the maps with the left lifting properties against the maps in $f\G$ that are fully faithful and surjective on objects and the trivial cofibrations are the maps with the left lifting property against the maps in $f\G$ that are isofibrations.

The category $f\G$ is cotensored over compact groupoids. That is, there is a pairing $(\G_f)^{op}\times f\G\to f\G$ obtained by restricting the inner Hom in groupoids: $(C,D)\mapsto D^C$. This cotensor satisfies the axiom SM7 because it does so in $\G$. More precisely, for any cofibration $u:A\to B$ in $\G_f$ and any fibration $p:E\to F$ in $f\G$, the map
\[E^B\to E^A\times_{E^A} F^B\]
is a fibration which is a weak equivalence if one of $u$ and $p$ is a weak equivalence.  Since the generating (trivial) cofibrations in $\G$ can be chosen to be in $\G_f$ and the generating (trivial) fibrations in $\pG^{BS}$ can be chosen to be in $f\G$, this implies that $\pG^{BS}$ is a $\G$-enriched model category. Applying $B$ we see that the mapping space $\Map_{\pG}$ gives $\pG^{BS}$ the structure of a simplicial model category. This argument is developed in more details in \cite{barneatwo}.

\subsection*{Equality of the two model structures}

We now want to prove that $\pG^{BS}$ and $\pG$ are in fact the same model category. Our first task is to prove that they have the same cofibrations.

\begin{lemm}
Let $u:S\to T$ be a surjective maps of finite sets. Then, there exists a set $E$ and a retract diagram
\[\xymatrix{
S\ar[d]_u\ar[r]^i& E\times T\ar[d]_{\pi_2}\ar[r]^f& S\ar[d]^u\\
T\ar[r]_{\id}&T\ar[r]_{\id}&T
}
\]
\end{lemm}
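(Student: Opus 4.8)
The plan is to take $E = S$ and exhibit $u$ directly as a retract of the projection $\pi_2 \colon S \times T \to T$. The map $i \colon S \to S \times T$ should be the graph of $u$, namely $i(s) = (s, u(s))$; it is injective and manifestly satisfies $\pi_2 \circ i = u$, so the left-hand square commutes on the nose. The real content of the lemma is to produce a retraction $f \colon S \times T \to S$ lying over $T$, and this is precisely where the surjectivity of $u$ is used.

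Since $u$ is surjective and the sets are finite, I can choose a section $\sigma \colon T \to S$ with $u \circ \sigma = \id_T$ (no choice principle is needed here, as everything is finite). I would then define $f$ by
\[
f(e,t) = \begin{cases} e & \text{if } u(e) = t, \\ \sigma(t) & \text{otherwise.} \end{cases}
\]
This is well-defined on all of $S \times T$: the two clauses are mutually exclusive, and the second clause lands in $S$ because $\sigma(t) \in S$.

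The verification is then routine. In both clauses one has $u(f(e,t)) = t$ (using $u(\sigma(t)) = t$ in the second), so $u \circ f = \pi_2$ and the right-hand square commutes. Moreover $f(i(s)) = f(s, u(s)) = s$, since for the pair $(s, u(s))$ the first clause applies; hence $f \circ i = \id_S$. Together with the trivially commuting bottom row $\id_T \circ \id_T = \id_T$, this assembles into exactly the desired retract diagram, with the top row exhibiting $\id_S$ as a retract and the middle column giving $u$ as a retract of $\pi_2$.

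There is no serious obstacle: the argument is elementary. The only point requiring care is that $f$ must be defined on the whole product $S \times T$, including the pairs $(e,t)$ with $t \neq u(e)$, and it is exactly the surjectivity hypothesis — packaged as the section $\sigma$ — that lets the second clause produce an element of $S$ sitting over $t$ under $u$. If $u$ were not surjective, some fiber $u^{-1}(t)$ would be empty and no such $f$ could exist, so the hypothesis is genuinely necessary.
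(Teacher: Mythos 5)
Your proof is correct and is in substance the same argument as the paper's: both exhibit $u$ as a fiberwise retract of a projection $E\times T\to T$, the paper choosing $E$ abstractly of cardinality at least $\max_{t}|u^{-1}(t)|$ together with an injection and retraction chosen on each fiber, while you make the canonical choices $E=S$, the graph embedding $i(s)=(s,u(s))$, and a retraction built from a section $\sigma$ of $u$. All of your verifications (commutativity of both squares and $f\circ i=\id_S$) are valid, so there is nothing to fix.
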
 

\begin{proof}
Let $S_t$ denote the fiber of $S$ over $t$. We take $E$ to be a finite set of bigger cardinality than any of the sets $S_t$, we pick for each $t$ an injection $i_t:S_t\to E$ and a map $f_t:E\to S_t$ so that $f_t\circ i_t=\id_{S_t}$. Then we define $i:=\sqcup_t i_t:S\cong\sqcup_tS_t\to E\times T\cong\sqcup_t E$ and $f_t$ in a similar way.
\end{proof}

\begin{prop}
The closure of $Q$ under retracts, base change and composition contains the maps of $f\G$ that are trivial fibrations in $\G$.
\end{prop}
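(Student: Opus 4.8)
The plan is to show that every map of $f\G$ which is a trivial fibration in the canonical model structure---equivalently, by the description of trivial fibrations as the fully faithful maps that are surjective on objects---is a retract of a base change of a map in $Q$. By lemma \ref{lemm-Q = Q'} it is equivalent to work with $Q'$, whose members are the maps $\on{Codisc}(S)\to\ast$ for $S$ a non-empty finite set, and since a retract subsumes the case of an isomorphism we may freely replace the indexing finite sets by representatives in $\mathcal{S}$. The first observation is that the base change of $\on{Codisc}(E)\to\ast$ along any map $B\to\ast$ is precisely the projection $\pi_2\colon\on{Codisc}(E)\times B\to B$; so it will suffice to exhibit an arbitrary trivial fibration $u\colon A\to B$ in $f\G$ as a retract of such a projection.

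To locate the right $E$, I would apply the preceding lemma to the surjection $\on{Ob}(u)\colon\on{Ob}(A)\to\on{Ob}(B)$ of finite sets. This produces a finite set $E$ together with maps $i_{\mathrm{ob}}\colon\on{Ob}(A)\to E\times\on{Ob}(B)$ and $f_{\mathrm{ob}}\colon E\times\on{Ob}(B)\to\on{Ob}(A)$ realizing $\on{Ob}(u)$ as a retract of $\pi_2$ over $\on{Ob}(B)$, with $i_{\mathrm{ob}}$ sending $a$ to a pair whose second coordinate is $u(a)$ and with $u(f_{\mathrm{ob}}(e,b))=b$. The heart of the argument is then to promote this object-level retract to a retract of functors
\[I\colon A\to\on{Codisc}(E)\times B,\qquad F\colon\on{Codisc}(E)\times B\to A.\]
On objects $I$ and $F$ are $i_{\mathrm{ob}}$ and $f_{\mathrm{ob}}$. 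On morphisms I define $I$ by sending $\alpha\colon a\to a'$ to the pair consisting of the unique arrow of $\on{Codisc}(E)$ between the relevant objects together with $u(\alpha)$; this is manifestly functorial and satisfies $\pi_2\circ I=u$.

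The functor $F$ is where the trivial-fibration hypothesis does the real work. Because $u(f_{\mathrm{ob}}(e,b))=b$, full faithfulness of $u$ supplies a bijection $A(f_{\mathrm{ob}}(e,b),f_{\mathrm{ob}}(e',b'))\cong B(b,b')$, and I would define $F$ on a morphism $(e\to e',\beta)$ to be the unique arrow of $A$ lying over $\beta$ under this bijection. Preservation of identities and of composition then follows from faithfulness of $u$, since in each case both sides are arrows of $A$ lying over the same arrow of $B$; and by construction $u\circ F=\pi_2$. Finally $F\circ I=\id_A$ holds on objects by the set-level lemma and on morphisms because $F(I(\alpha))$ is the unique arrow over $u(\alpha)$, which is $\alpha$ itself by faithfulness. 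This displays $u$ as a retract of $\pi_2\colon\on{Codisc}(E)\times B\to B$, hence places it in the closure of $Q$.

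The main obstacle is essentially bookkeeping---verifying that $F$ is genuinely a functor---so the one conceptual point to flag is that full faithfulness of $u$ is exactly what pins down $F$ uniquely on morphisms, which is why surjectivity on objects alone would not suffice. The only degenerate case is $B=\varnothing$, which forces $A=\varnothing$ and $u=\id_\varnothing$; here $\pi_2\colon\on{Codisc}(E)\times\varnothing\to\varnothing$ already equals $u$, so $u$ is directly a base change of a map in $Q$ and no retract is needed.
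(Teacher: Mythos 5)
Your proof is correct, and it takes a genuinely different route from the paper's. The paper first reduces to the case of a connected target, where it invokes the structure theory of connected groupoids to identify the trivial fibration non-canonically with a map $G[S]\to G[T]$ induced by a surjection of finite sets $u\colon S\to T$; it then applies the set-level retract lemma, applies $\on{Codisc}$ to the resulting diagram, and finishes the connected case by base change along $G[T]\to\on{Codisc}(T)$. A second, separate step (the retract of $C\times C'\times\{0,1\}$, together with the factorization $f\times f'=(f\times D')\circ(C\times f')$) is then needed to handle finite disjoint unions, and this step uses closure under composition. You instead give a single uniform argument: apply the set-level lemma to $\on{Ob}(u)$ and promote the resulting retract directly to the groupoid level, using full faithfulness of $u$ to define the retraction functor $F$ by unique lifting of arrows, exactly as one would prove that a fully faithful isofibration-like map splits. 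This exhibits an arbitrary trivial fibration in $f\G$ as a retract of the base change $\on{Codisc}(E)\times B\to B$ of a single map in $Q'$ (then $Q$, via lemma \ref{lemm-Q = Q'} and closure under retracts). What your approach buys: it avoids the non-canonical identification with $G[S]\to G[T]$, avoids the connected-component and coproduct bookkeeping entirely, and shows the slightly sharper statement that retracts and base changes alone suffice --- closure under composition is never used. What the paper's approach buys is mainly modularity: its intermediate steps (stability of the class $E$ under products and finite coproducts) are statements of independent interest, but they are not needed for the proposition itself, as your argument demonstrates.
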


\begin{proof}
Let us denote by $E$ the closure of $Q$ under retracts, base change and composition. Let $f:C\to D$ be a trivial fibration in $\G$ between objects of $f\G$. Then $f$ is fully faithful and surjective on objects. Let us assume that $D$ is connected. In that case, $f$ can be identified non-canonically with the map
\[G[S]\to G[T]\]
induced by a surjective map $u:S\to T$. According to the previous lemma, $u$ fits in a retract diagram
\[\xymatrix{
S\ar[d]_u\ar[r]^i& E\times T\ar[d]_{\pi_2}\ar[r]^f& S\ar[d]^u\\
T\ar[r]_{\id}&T\ar[r]_{\id}&T
}
\]
It follows that the map $\on{Codisc}(S)\to\on{Codisc}(T)$ induced by $u$ is a retract of the map $\on{Codisc}(E\times T)\to\on{Codisc}(T)$ induced by $\pi_2$. The functor $\on{Codisc}$ preserves products, thus, this last map is the product of the map $\on{Codisc}(S)\to \ast$ with $\on{Codisc}(T)$. In particular, it is a base change of a map of $Q'$ hence is in $E$ by lemma \ref{lemm-Q = Q'}. This implies that $\on{Codisc}(u)$ is in $E$ and that $f$ is in $E$.

Now, let $f:C\to D$ and $f':C'\to D'$ be two maps in $E$ with non-empty target, we will show that $f\sqcup f'$ is also in $E$. Since any finite groupoid splits as a finite disjoint union of connected groupoid, this will conclude the proof. We pick an object $c$ of $C$ and $c'$ of $C'$, we call $d$ and $d'$ their image in $D$ and $D'$. Then we have a retract diagram
\[
\xymatrix{
C\sqcup C'\ar[d]\ar[r]^-{i}& C\times C'\times \{0,1\}\ar[d]\ar[r]^-{p}&C\sqcup C'\ar[d]\\
D\sqcup D'\ar[r]_-{i'}& D\times D'\times \{0,1\}\ar[r]_-{p'}&D\sqcup D'
}
\]
in which the vertical maps are induced by $f$ and $f'$, the map $i$ is the map that sends $x\in C$ to $(x,c',0)$ and $y$ in $C'$ to $(c,y,1)$, the map $p$ is the map that sends $(x,y,0)$ to $x$ and $(x,y,1)$ to $y$ and the maps $i'$ and $p'$ are defined analogously. According to this retract diagram, it suffices to prove that $f\times f'\times\{0,1\}$ is in $E$. The map $f\times f'\times\{0,1\}$ is base change of $f\times f'$, thus it suffices to show that $f\times f'$ is in $E$. But, we have
\[f\times f'=(f\times D')\circ(C\times f')\]
which implies that $f\times f'$ is in $E$.
\end{proof}

By lemma \ref{lemm-generating pro-fibrations are fibrations} the maps in $Q$ are trivial fibrations in $\pG^{BS}$. Therefore, the previous proposition tells us that $\pG^{BS}$ and $\pG$ have the same cofibrations. In order to prove that $\pG$ and $\pG^{BS}$ are the same model category, it suffices to prove that they have the same weak equivalences.

\begin{prop}
The weak equivalences of $\pG^{BS}$ are exactly the weak equivalences of $\pG$.
\end{prop}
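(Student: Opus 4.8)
The plan is to deduce the statement from the fact, just established, that $\pG$ and $\pG^{BS}$ have the same cofibrations, upgrading this to an identification of the two classes of fibrations.

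First I would dispose of the easy inclusion. The weak equivalences of $\pG^{BS}$ are by definition the class $Lw^{\cong}(w(f\G))$, the smallest class of maps of $\pG$ closed under isomorphism in the arrow category and containing the levelwise weak equivalences. The weak equivalences of $\pG$ of Definition~\ref{defi-weak equivalences of profinite groupoids} are visibly closed under isomorphism in the arrow category, and they contain every levelwise weak equivalence by Proposition~\ref{prop-level weak equivalences}; by minimality of $Lw^{\cong}(w(f\G))$ this already gives that every weak equivalence of $\pG^{BS}$ is one of $\pG$.

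For the reverse inclusion I would argue that the two model structures in fact coincide. Since they have the same cofibrations, they have the same trivial fibrations (the maps with the right lifting property against all cofibrations). I claim they also have the same fibrations; granting this, the trivial cofibrations agree as well (as the maps with the left lifting property against the fibrations), and then a map is a weak equivalence for one structure if and only if it is for the other, because a map is a weak equivalence exactly when, in any factorization as a cofibration followed by a trivial fibration, the cofibration is a trivial cofibration, and this factorization is valid in both. By Theorem~\ref{theo-model structure on profinite groupoids} the fibrations of $\pG$ are the retracts of $P$-cocell complexes, whereas the fibrations of $\pG^{BS}$ are the retracts of cocell complexes of the isofibrations of $f\G$; since each family consists of maps of $f\G$ and each class of fibrations is the saturation of its generators under retracts, base change and cofiltered composition, it suffices to show that these two families have the same saturation, which reduces to comparing them inside $\overline{P}$, the closure of $P$ under retracts, base change and composition. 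One inclusion, $P\subseteq\{\text{isofibrations of }f\G\}$, is Lemma~\ref{lemm-generating pro-fibrations are fibrations}. Thus the crux is the analogue for $P$ of the previous proposition: \emph{every map of $f\G$ that is an isofibration in $\G$ lies in $\overline{P}$}.

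This last point is where I expect the main work to be, and I would prove it by the methods of the previous proposition. Using that $\overline{P}$ is closed under finite coproducts, via the $C\times C'\times\{0,1\}$ retract trick, one reduces to a connected isofibration $G[S]\to H[T]$ with $G\to H$ a surjection of finite groups and $S\to T$ a surjection of finite sets, and factors it as $G[S]\to G[T]\to H[T]$. The first map is the base change along $G[T]\to\on{Codisc}(T)$ of $\on{Codisc}(S)\to\on{Codisc}(T)$, which lies in $\overline{P}$ already by the previous proposition, whose relevant generators $S\to\ast$ and $S\to S\times S$ belong to $P$. The delicate step, and the main obstacle, is to place the group quotient $\ast\sslash G\to\ast\sslash H$ underlying the second map into $\overline{P}$. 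For this I would use its mapping path space factorization in $\G$,
\[\ast\sslash G\longrightarrow M:=\ast\sslash G\times_{\ast\sslash H}(\ast\sslash H)^{I[1]}\stackrel{ev_1}{\longrightarrow}\ast\sslash H,\]
whose first map is a trivial cofibration and whose second map is a fibration. The fibration $ev_1$ factors as $M\to\ast\sslash G\times\ast\sslash H\to\ast\sslash H$, where the first map is the base change of $(\ast\sslash H)^{I[1]}\to(\ast\sslash H)^{2}$ (a map of $P$, since $(\ast\sslash H)^{I[1]}\cong H^{c}\sslash H$) and the second is the base change of $\ast\sslash G\to\ast$; hence $ev_1\in\overline{P}$. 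As $\ast\sslash G\to\ast\sslash H$ is an isofibration and the displayed first map is a trivial cofibration in $\G$, the standard retract argument realizes $\ast\sslash G\to\ast\sslash H$ as a retract of $ev_1$ inside $f\G$, so it too lies in $\overline{P}$. Taking the base change of this along $H[T]\to\ast\sslash H$ and composing with the first factor then places $G[S]\to H[T]$ in $\overline{P}$, completing the reduction and hence the proof.
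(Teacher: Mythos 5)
Your global strategy is valid and genuinely different from the paper's: granted that the two structures share cofibrations, it suffices to identify the fibrations, and your reduction of this to the claim that every isofibration of $f\G$ lies in $\overline{P}$ (the closure of $P$ under retracts, base change and cofiltered composition) is sound, as is your mapping path space plus retract argument for $\ast\sslash G\to\ast\sslash H$. (The paper argues quite differently: given a weak equivalence of $\pG$, it shows that the induced map on $\Map_{\pG}(-,C)$ is an equivalence for every fibrant object $C$ of $\pG^{BS}$, by writing $C$ as a retract of a limit of a tower of fibrations of $f\G$ and using that cofiltered limits are homotopy limits, and then concludes by Yoneda in $\on{Ho}(\pG^{BS})$.) The gap is in your reduction step, which as stated would fail. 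First, an isofibration of $f\G$ with connected target need not have connected source --- the fold map $(\ast\sslash G)\sqcup(\ast\sslash G)\to\ast\sslash G$ is an isofibration that is neither connected nor a coproduct of connected maps --- and an isofibration need not be surjective on objects or on components (a component inclusion $D_1\to D_1\sqcup D_2$, or $\varnothing\to D$), so the $C\times C'\times\{0,1\}$ trick alone does not reduce to the case you treat. Second, and more fundamentally, a connected isofibration need not be isomorphic in the arrow category to a ``product-type'' map, i.e.\ one factoring as $G[S]\to G[T]\to H[T]$ with $G\to H$ surjective: the generator $G\sslash G\to\ast\sslash G$ of $P$ is itself a counterexample, since its source has trivial automorphism groups while its target does not. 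In general a functor between connected groupoids carries a twisting function $\alpha:S\to H$, and the isofibration condition only demands that the cosets $\alpha(s')\on{im}(\phi)$, $s'\in u^{-1}(t)$, cover $H$ for each $t$, not that $\phi$ be surjective; when $\alpha$ does not lift along $\phi$, your factorization does not exist.

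The repair uses exactly your key idea and dispenses with the reduction: for an arbitrary isofibration $f:C\to D$ of $f\G$, the mapping path factorization $C\to C\times_D D^{I[1]}\to D$ and the retract argument exhibit $f$ as a retract of a composite of a base change of $D^{I[1]}\to D\times D$ and a base change of $C\to\ast$, so it suffices to place these two ``absolute'' maps in $\overline{P}$; that is where decomposition into connected components belongs. Indeed $C\to\ast$ is the composite of $\sqcup_j(C_j\to\ast)$ (coproduct trick, all sources now nonempty, each $C_j\to\ast$ being a composite of base changes of $\ast\sslash G_j\to\ast$ and of maps of $Q$) with $\pi_0(C)\to\ast\in P$, while $D^{I[1]}\to D\times D$ is the coproduct of the maps $D_j^{I[1]}\to D_j\times D_j$ (base changes of $H_j^{c}\sslash H_j\to(\ast\sslash H_j)^2$) followed by the component inclusion $\sqcup_j D_j\times D_j\to D\times D$, which is a base change of the diagonal $\pi_0(D)\to\pi_0(D)\times\pi_0(D)\in P$. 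One last slip to correct: $\on{Codisc}(S)\to\on{Codisc}(T)$ lies in $\overline{P}$ not because of the generators $S\to\ast$ and $S\to S\times S$, but because $Q\subseteq\overline{P}$ (each $G\sslash G\to\ast$ is the composite $G\sslash G\to\ast\sslash G\to\ast$ of two maps of $P$), so that the previous proposition's conclusion about $\overline{Q}$ transfers to $\overline{P}$.
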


\begin{proof}
We already know from proposition \ref{prop-weak equivalences stable under cofiltered limits} that the weak equivalences of $\pG^{BS}$ are weak equivalences in $\pG$. Conversely, let $u:A\to B$ be a weak equivalence in $\pG$. 

Let $C$ be a fibrant object of $\pG^{BS}$. Since $\pG^{BS}$ is cofibrantly generated, $C$ is a retract of $C'$ with $C'=\on{lim}_{i\in \alpha\op}C'_i$ a cocell complex with respect to the canonical fibrations in $f\G$. If $\alpha$ is a finite ordinal, this immediately implies that $C'$ is a finite groupoid and thus, by proposition \ref{prop-characterization of weak equivalences in pG Map}, this implies that the map
\[\Map_{\pG}(B,C')\to\Map_{\pG}(A,C')\]
is a weak equivalence. By proposition \ref{prop-weak equivalences stable under cofiltered limits}, we see that cofiltered limits in $\pG$ are homotopy limits. Thus, if $\alpha$ is infinite, we see that:
\[\Map_{\pG}(B,C')\simeq\on{holim}_{i\in\alpha\op}(B,C'_i)\]
and similarly for $\Map_{\pG}(A,C')$. This implies that the map induced by $u$:
\[\Map_{\pG}(B,C')\to\Map_{\pG}(A,C')\]
is a weak equivalence. Since $C$ is a retract of $C'$, the map
\[\Map_{\pG}(B,C)\to\Map_{\pG}(A,C)\]
is also a weak equivalence. This implies by Yoneda lemma in $\on{Ho}(\pG^{BS})$ that $u$ is an isomorphism in $\on{Ho}(\pG^{BS})$
\end{proof}

From this proposition, we can gives a conceptual interpretation of the $\infty$-category underlying $\pG$:

\begin{theo}
The underlying $\infty$-category of $\pG$ is equivalent to the pro-category of the $\infty$-category of $1$-truncated $\pi$-finite spaces.
\end{theo}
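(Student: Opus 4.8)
The plan is to recognize the model structure on $\pG$ as an instance of the Barnea--Schlank construction and then to invoke the general comparison between the underlying $\infty$-category of such a construction and the $\infty$-categorical pro-category of the underlying $\infty$-category of the base.

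First I would note that, by the two preceding propositions, the cofibrations and the weak equivalences of $\pG$ and of $\pG^{BS}$ coincide, so that $\pG=\pG^{BS}$ as model categories. In particular the homotopy theory carried by $\pG$ is exactly the one produced by the Barnea--Schlank theorem applied to the pro-admissible weak fibration category $f\G$ of finite groupoids, with its canonical fibrations and weak equivalences. This reduction is what lets us bring the abstract pro-category machinery to bear.

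Next I would invoke the general theorem identifying the underlying $\infty$-category of the Barnea--Schlank model structure on $\Pro(\cat{C})$, for $\cat{C}$ an essentially small pro-admissible weak fibration category, with the $\infty$-categorical pro-category $\Pro(\cat{C}_\infty)$ of the underlying $\infty$-category $\cat{C}_\infty$ obtained by localizing $\cat{C}$ at its weak equivalences. This is the precise groupoid analogue of the statement established for Quick's model structure on $\pS$ in \cite{barneapro}, and its hypotheses are met in the present situation: $f\G$ is essentially small, we have already checked that it is a pro-admissible weak fibration category, and its localization $(f\G)_\infty$ admits finite limits, since finite products and homotopy pullbacks of finite groupoids are again finite groupoids.

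The remaining point, and the one requiring the most care, is the identification of $(f\G)_\infty$ with the $\infty$-category of $1$-truncated $\pi$-finite spaces. The classifying space functor $B\colon f\G\to\S$ sends equivalences of groupoids to weak equivalences and hence induces a functor $(f\G)_\infty\to\S$; I would argue that this functor is fully faithful with essential image the homotopy $1$-types whose set of path components is finite and all of whose fundamental groups are finite, and that these are exactly the $1$-truncated $\pi$-finite spaces. Granting this identification, the cited comparison theorem yields an equivalence between the underlying $\infty$-category of $\pG=\pG^{BS}$ and $\Pro$ of the $\infty$-category of $1$-truncated $\pi$-finite spaces, as claimed. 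The main obstacle is thus twofold: verifying in detail that $f\G$ satisfies the hypotheses of the Barnea--Harpaz--Schlank comparison, and establishing the equivalence $(f\G)_\infty\simeq$ \{$1$-truncated $\pi$-finite spaces\} at the level of $\infty$-categories rather than merely of homotopy categories.
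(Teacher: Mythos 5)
Your proposal follows the paper's proof essentially step for step: it uses the equality $\pG=\pG^{BS}$ from the preceding propositions, invokes the main comparison theorem of \cite{barneapro} to get $\pG_{\infty}\simeq\Pro(f\G_{\infty})$, and then identifies $f\G_{\infty}$ with the $\infty$-category of $1$-truncated $\pi$-finite spaces via the classifying space functor (full faithfulness plus essential surjectivity by reducing to connected $1$-types, which are of the form $B\pi_1$). The only detail you defer -- that $B$ is fully faithful at the $\infty$-categorical level -- is settled in the paper in one line, using that $B:f\G\to\S$ is simplicial, fully faithful, and preserves weak equivalences and fibrant objects.
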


\begin{proof}
For $\cat{C}$ a relative category, we denote by $\cat{C}_{\infty}$ its underlying $\infty$-category. Using the main result of \cite{barneapro}, we get that $\pG_{\infty}=\pG^{BS}_{\infty}\simeq\Pro(f\G_{\infty})$. Thus, it suffices to show that $f\G_{\infty}$ is a model for the $\infty$-category of $1$-truncated $\pi$-finite spaces. The classifying space functor $f\G\to \S$ is simplicial and fully faithful, moreover it preserves weak equivalences and fibrant objects. Thus, it induces a fully faithful functor $B:f\G_{\infty}\to\S_{\infty}$. It suffices to check that any $1$-truncated $\pi$-finite space is weakly equivalent to one of the form $BC$ with $C\in f\G$. Since $B$ commutes with finite coproducts, we can restrict to connected spaces. If $X$ is a $1$-truncated connected $\pi$-finite simplicial set. Then $X\simeq B\pi_1(X,x)$ for any base point $x$ of $X$. 
\end{proof}

This also has the following corollary.

\begin{prop}\label{prop-profinite groups are fibrant}
Any profinite groupoid of the form $G[S]$ with $G$ a profinite group and $S$ a finite set is fibrant.
\end{prop}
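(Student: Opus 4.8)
The plan is to realize the map $G[S]\to\ast$ as a cofiltered limit of fibrations between finite groupoids and then to invoke the stability of fibrations under such limits. First I would write the profinite group $G$ as a cofiltered limit $G\cong\lim_i G_i$ of finite groups along \emph{surjective} transition maps; this is always possible, for instance by replacing each $G_i$ by the image of $G$. Since $\on{Codisc}(S)$ is a fixed finite groupoid, computing the product in $\pG=\Pro(f\G)$ gives $G[S]\cong\lim_i G_i[S]$, where each $G_i[S]=(\ast\sslash G_i)\times\on{Codisc}(S)$ is a finite groupoid and hence fibrant in $\pG$, as recorded in the proof of proposition \ref{prop-characterization of weak equivalences in pG Map}.

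Next I would check that every transition map $G_i[S]\to G_j[S]$ is a fibration. As it is induced by the surjection $G_i\to G_j$, it is an isofibration of finite groupoids: given an object $x$ (an element of $S$) and an isomorphism $(g,!_{x,y})$ out of its image in $G_j[S]$, surjectivity of $G_i\to G_j$ lets us lift $g$ to some $\tilde g\in G_i$ and produce the isomorphism $(\tilde g,!_{x,y})$ of $G_i[S]$ lying over it, so the map has the right lifting property against $[0]\to I[1]$. Being an isofibration in $f\G$, it is a generating fibration of the model structure $\pG$ (which coincides with $\pG^{BS}$), and hence a fibration.

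Finally, $G[S]\to\ast$ is the cofiltered limit of the fibrations $G_i[S]\to\ast$ taken along the fibrations $G_i[S]\to G_j[S]$. Since the fibrations are the right class of the weak factorization system cogenerated by $P$, i.e.\ the retracts of $P$-cocell complexes, they are stable under cofiltered limits of this shape; this is the formal dual of the closure of cofibrations under transfinite composition. It then follows that $G[S]\to\ast$ is a fibration, i.e.\ that $G[S]$ is fibrant. The one step that requires genuine care is precisely this last closure property: for inverse sequences it is the classical ``limit of a tower of fibrations'' statement, while for a general cofiltered shape it must be extracted from the cocombinatorial (Barnea--Schlank) cell structure on $\pG$, in which cofiltered limits of fibrations are built into the class of fibrations. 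I expect verifying this closure to be the main obstacle, the remaining steps being routine.
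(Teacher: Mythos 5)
Your first three steps are fine and run parallel to the paper's own reduction: the paper likewise disposes of the $\on{Codisc}(S)$ factor and writes $\ast\sslash G$ as the cofiltered limit of its finite quotients $\ast\sslash(G/U)$ over the poset $\mathcal{N}(G)$ of open normal finite-index subgroups. But the step you yourself flagged as the main obstacle is a genuine gap, not a routine closure property. The right class of a weak factorization system is closed under pullbacks, retracts, products and \emph{inverse transfinite compositions}, i.e.\ limits of ordinal-indexed towers of maps in the class; that is the correct dual of closure of the left class under transfinite composition. A general cofiltered poset cannot be rewritten as an ordinal, so this formal principle simply does not apply to your diagram. Nor is the closure ``built into'' the Barnea--Schlank structure in the form you hope: there, fibrations are the retracts of \emph{special} maps, and specialness is a Reedy-type condition requiring the \emph{matching} maps $X_U\to\lim_{U\subsetneq V}X_V$ to be fibrations, which is strictly stronger than requiring the transition maps to be fibrations. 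The difference is not pedantic. Suppose some finite quotient $G/U$ is isomorphic to $(\mathbb{Z}/2)^2$ (e.g.\ $G=\mathbb{Z}_2\times\mathbb{Z}_2$, the square of the $2$-adic integers, with $U=2G$). All transition maps of the diagram are surjective, but the matching map at $U$ is the map $G/U\to\lim_{U\subsetneq V}G/V\cong(\mathbb{Z}/2)^3$, the target being the product of the quotients by the three index-two subgroups; this is a map from a group of order $4$ to a group of order $8$, hence not surjective, hence not an isofibration. So surjectivity of the transition maps does not produce a special map, and no formal closure statement finishes your argument: this last step is where the actual mathematical content of the proposition lives.

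For comparison, the paper closes exactly this gap by invoking the Reedy condition: it asserts that each matching map $\ast\sslash(G/U)\to\lim_{U\subset V}\ast\sslash(G/V)$ is a fibration in $f\G$ and concludes that $\ast\sslash G$ is fibrant as the limit of a Reedy fibrant diagram (such a limit genuinely \emph{is} an inverse transfinite composition of pullbacks of the matching maps, so there the formal closure properties do apply) --- note that the example above shows even this assertion requires care for general $G$. A robust way to complete your argument, using only the closure properties you correctly identified (pullback and inverse transfinite composition), is the following. Well-order $\mathcal{N}(G)=\{U_\alpha\}_{\alpha<\kappa}$ and set $H_\alpha=\bigcap_{\beta<\alpha}U_\beta$, a closed normal subgroup; by compactness $G/H_\lambda\cong\lim_{\alpha<\lambda}G/H_\alpha$ at limit ordinals, $H_0=G$ and $H_\kappa=1$, so $\ast\sslash G\to\ast$ is the inverse transfinite composition of the maps $\ast\sslash(G/H_{\alpha+1})\to\ast\sslash(G/H_\alpha)$. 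Since $G/(H_\alpha\cap U_\alpha)\cong G/H_\alpha\times_{G/H_\alpha U_\alpha}G/U_\alpha$ for normal subgroups, each of these maps is a pullback of the surjection $\ast\sslash(G/U_\alpha)\to\ast\sslash(G/H_\alpha U_\alpha)$, which is an isofibration in $f\G$ and hence a fibration in $\pG=\pG^{BS}$; the formal closure properties now do the rest. (When $G$ has a countable basis of open normal subgroups you can instead choose a cofinal chain in $\mathcal{N}(G)$, and your tower argument applies verbatim, since for a chain the matching maps are the transition maps.)
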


\begin{proof}
Since $\on{Codisc}(S)\to \ast$ is in $Q'$, the object $\on{Codisc}(S)$ is fibrant in $\pG$. Hence, we are reduced to proving that $\ast\sslash G$ is fibrant in $\pG$. Let $\mathcal{N}(G)$ be the poset of open normal subgroups of $G$ of finite index. We have an isomorphism
\[\ast\sslash G\cong \on{lim}_{U\in \mathcal{N}(G)}\ast\sslash (G/U).\]

For any $U\in\mathcal{N}(G)$, the map 
\[\ast\sslash (G/U)\to\on{lim}_{U\subset V\in\mathcal{N}(G)}\ast\sslash (G/V)\]
is a fibration in $f\G$. Thus $\ast\sslash G$ is fibrant in $\pG^{BS}$ as the limit of a Reedy fibrant diagram $\mathcal{N}(G)\to \pG^{BS}$.
\end{proof}

\section{Profinite spaces}

We recall a few facts about the homotopy theory on profinite spaces. This theory was originally developed by Morel, in \cite{morelensembles}, in the pro-$p$ case and then continued by Quick in the profinite case in \cite{quickprofinite}.

\subsection*{Quick's model structure}

We denote by $\Set$ the category of sets and by $\cat{F}$ the full subcategory on finite sets. We denote by $\pSet$ the category $\Pro(\cat{F})$. 

\begin{prop}
The category $\pSet$ is copresentable.
\end{prop}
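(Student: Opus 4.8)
The plan is to prove that $\pSet = \Pro(\cat{F})$ is copresentable by showing that its opposite category is presentable, which is precisely the content of the earlier general result on pro-categories. First I would recall proposition \ref{prop-co presentability of pro categories}, which states that whenever $C$ is a small finitely complete category, the category $\Pro(C)$ is copresentable. The strategy is therefore to verify that the hypotheses of that proposition apply to $C = \cat{F}$, the category of finite sets.

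The one thing to check is that $\cat{F}$ is small and has all finite limits. Smallness is immediate since we may take a skeleton with one object for each natural number. For finite completeness, I would observe that a finite product of finite sets is finite, and an equalizer of two maps between finite sets is a subset of the source, hence finite; since a category with finite products and equalizers has all finite limits, $\cat{F}$ is finitely complete. Having established that $\cat{F}$ is small and finitely complete, I would simply invoke proposition \ref{prop-co presentability of pro categories} to conclude that $\Pro(\cat{F}) = \pSet$ is copresentable.

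I do not expect any genuine obstacle here, as this is essentially an instance of an already-proven general fact. The only minor point worth making explicit is the verification that $\cat{F}$ is finitely complete rather than merely finitely cocomplete, but this is routine. Under the hood, proposition \ref{prop-co presentability of pro categories} relies on the equivalence $\Pro(\cat{F})\op \simeq \on{Ind}(\cat{F}\op)$ together with the fact that $\cat{F}\op$ has all finite colimits, so that $\on{Ind}(\cat{F}\op)$ is presentable by the standard theory of accessible and presentable categories.
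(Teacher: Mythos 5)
Your proof is correct and follows the same approach as the paper, which likewise deduces the statement by applying proposition \ref{prop-co presentability of pro categories} to the category of finite sets. Your explicit verification that $\cat{F}$ is small and finitely complete is a welcome bit of extra care (the paper states the hypothesis only in passing), but the argument is identical in substance.
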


\begin{proof}
Since $\Set$ has finite limits we can apply proposition \ref{prop-co presentability of pro categories}.
\end{proof}

If $S$ is a set, the functor $\cat{F}\to\Set$ sending $U$ to $\cat{Set}(S,U)$ preserves finite limits and therefore is represented by an object $\h{S}$ in $\Pro(\Set)$ according to remark \ref{rem-pro representable functors}. There is an adjunction
\[\h{(-)}:\cat{Set}\leftrightarrows \pSet:|-|\]
where the right adjoint sends a profinite set to its limit computed in $\cat{Set}$. 

There is a more explicit description of $\h{-}$. For $S$ a set, $\h{S}$ is the pro-object $\{S/R\}_{R}$ where $R$ lives in the cofiltered poset of equivalences relations on $S$ whose set of equivalences classes is finite.

\begin{defi}
The category $\pS$ of \emph{profinite spaces} is the category of simplicial objects in $\pSet$.
\end{defi}

This category is also copresentable. There is an alternative definition of this category. We denote by $\S_{cofin}$ the category of simplicial objects in finite sets that are $k$-coskeletal for some $k$. This category $\S_{cofin}$ has all finite limits, thus the pro category $\Pro(\S_{cofin})$ is a copresentable category. There is an inclusion functor $\S_{cofin}\to\pS$ which preserves finite limits. This functor induces a limit preserving functor $\Pro(\S_{cofin})\to\pS$.

\begin{prop}
The functor $\Pro(\S_{cofin})\to\pS$ is an equivalence of categories.
\end{prop}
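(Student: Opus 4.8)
The plan is to verify, for the limit-preserving functor $\widetilde F:\Pro(\S_{cofin})\to\pS$ induced by the inclusion $F:\S_{cofin}\to\pS$, the standard recognition criterion for pro-categories (the dual of the usual criterion for Ind-categories): namely that (i) $\pS$ has cofiltered limits, (ii) $F$ is fully faithful, (iii) $F$ lands in the cosmall objects of $\pS$, and (iv) every object of $\pS$ is a cofiltered limit of objects of $\S_{cofin}$. Condition (i) is automatic, since $\pS$ consists of simplicial objects in $\pSet=\Pro(\cat{F})$ and cofiltered limits are computed levelwise. Conditions (ii) and (iii) force $\widetilde F$ to be fully faithful, via the computation $\pS(\on{lim}_i FX_i,\on{lim}_j FY_j)\cong\on{lim}_j\pS(\on{lim}_iFX_i,FY_j)\cong\on{lim}_j\on{colim}_i\S_{cofin}(X_i,Y_j)$, which is exactly the hom-set of $\Pro(\S_{cofin})$. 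Condition (iv) then yields essential surjectivity.

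Full faithfulness of $F$ is immediate: $\S_{cofin}$ is a full subcategory of simplicial finite sets, and since $\cat{F}\to\pSet$ is fully faithful and all relevant levels are finite, a levelwise map in $\pSet$ between simplicial finite sets is the same datum as a map in $\S_{cofin}$. For cosmallness, I use that an object $Y\in\S_{cofin}$ is $k$-coskeletal for some $k$, so by the adjunction $\on{tr}_k\dashv\on{cosk}_k$ one has $\pS(X,Y)\cong\Fun(\Delta_{\le k}\op,\pSet)(\on{tr}_kX,\on{tr}_kY)$. Because $\Delta_{\le k}$ has finitely many objects and morphisms, this is a \emph{finite} limit of the sets $\pSet(X_n,Y_n)$ with $n\le k$; since each $Y_n$ is a finite set and hence cosmall in $\pSet=\Pro(\cat{F})$, and since filtered colimits commute with finite limits in $\Set$, the functor $\pS(-,Y)$ sends cofiltered limits to filtered colimits. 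Thus every object of $\S_{cofin}$ is cosmall.

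The main work is condition (iv). First I would write an arbitrary $Z\in\pS$ as the limit of its coskeletal tower, $Z\cong\on{lim}_k\on{cosk}_kZ$, a cofiltered ($\omega\op$-indexed) limit. Each $\on{cosk}_kZ=\on{cosk}_k\on{tr}_kZ$ is determined by the truncation $\on{tr}_kZ\in\Fun(\Delta_{\le k}\op,\pSet)$, so the key lemma is that, because $J:=\Delta_{\le k}\op$ is a finite category, every $J$-diagram of profinite sets is a cofiltered limit of $J$-diagrams of finite sets. Given $W:J\to\pSet$ I would consider the cofiltered poset of \emph{coherent finite quotients} $W\to V$ with $V:J\to\cat{F}$, obtained by starting from arbitrary levelwise finite quotients and closing them up under the finitely many structure maps of $J$ so that they assemble into a natural transformation; these quotients are cofinal at each object, so $\on{lim}V\cong W$. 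Applying the limit-preserving functor $\on{cosk}_k$, and noting that $\on{cosk}_k$ of a levelwise-finite $k$-truncated diagram is again levelwise finite (its $n$-simplices are a finite limit of finite sets) and $k$-coskeletal, hence an object of $\S_{cofin}$, exhibits $\on{cosk}_kZ$ as a cofiltered limit of objects of $\S_{cofin}$. Since the essential image of the fully faithful, cofiltered-limit-preserving functor $\widetilde F$ is closed under cofiltered limits in $\pS$ and contains $\S_{cofin}$, it contains each $\on{cosk}_kZ$ and therefore $Z$.

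The step I expect to be the main obstacle is this finite-diagram lemma: rectifying a $J$-diagram of profinite sets into a cofiltered limit of finite $J$-diagrams. The subtlety is that independently chosen finite quotients of the individual $W(j)$ need not be compatible with the arrows of the diagram, so one must refine them coherently; finiteness of $J$ (finitely many objects \emph{and} morphisms) is exactly what makes the closing-up process terminate while keeping the resulting quotients cofinal at each object. Everything else—the coskeletal tower, the adjunction computations of the first two paragraphs, and the closure of the essential image under cofiltered limits—is formal once this lemma is available.
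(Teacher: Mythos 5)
Your proof is correct, but note that the paper itself contains no argument for this proposition: its proof is a single citation to \cite{barneapro}, where the equivalence comes out of the general machinery of pro-categories developed there. Your argument is thus a genuinely self-contained alternative, resting on the dual of the standard recognition criterion for Ind-categories together with two concrete ingredients: (a) cosmallness of the coskeletal levelwise-finite objects, proved by reducing $\pS(-,Y)$ to a finite limit of the functors $\pSet(-,Y_n)$, $n\le k$, and commuting finite limits with filtered colimits; and (b) the rectification lemma that for a finite category $J$ every diagram $W:J\to\pSet$ is a cofiltered limit of $J$-diagrams of finite sets. Both are correct, and (b), which you rightly single out as the crux, can in fact be closed up in one step rather than by iteration: given an object $j_0$ of $J$ and a finite clopen quotient $q$ of $W(j_0)$, set for every object $j$
\[P_j:=\bigwedge_{f\in J(j,j_0)}W(f)^{-1}(q),\]
a finite clopen partition of $W(j)$ because $J(j,j_0)$ is a finite set; coherence of the family $(P_j)$ along any $g:j\to j'$ holds because precomposition with $g$ maps $J(j',j_0)$ into $J(j,j_0)$, and $P_{j_0}$ refines $q$ because $\id_{j_0}\in J(j_0,j_0)$. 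Taking $V(j)=W(j)/P_j$ then gives a coherent finite quotient refining $q$ at $j_0$, which is exactly the cofinality statement you need; this confirms your diagnosis that finiteness of the morphism sets of $J$, together with their closure under composition, is what makes the closing-up terminate. What your route buys is an elementary, citation-free proof readable with only the definitions in this section; what the paper's citation buys is economy, since the same reference \cite{barneapro} also supplies the $\infty$-categorical interpretation of Quick's model structure used elsewhere in the paper, so the author gets this proposition from machinery needed anyway.
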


\begin{proof}
See \cite{barneapro}.
\end{proof}

\begin{theo}[Quick]\label{theo-Model structure on profinite spaces}
There is a fibrantly generated model structure on $\pS$. The cofibrations are the monomorphisms and the weak equivalences are the maps which induce isomorphisms on $\pi_0$ as well as on non-abelian cohomology with coefficient in a finite group and on cohomology with coefficient in a finite abelian local coefficient system. Moreover, this model structure is simplicial and left proper.
\end{theo}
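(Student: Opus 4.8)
The plan is to mirror the construction of the model structure on profinite groupoids from theorem \ref{theo-model structure on profinite groupoids}, now working inside the pro-category $\pS\cong\Pro(\S_{cofin})$. Since $\pS$ is copresentable, one expects a fibrantly generated (cocombinatorial) model structure obtained by a dual small object argument. The first step is to isolate, inside $\S_{cofin}$, sets of maps $P$ and $Q$ --- generating fibrations and generating trivial fibrations respectively --- playing the roles of the sets $P$ and $Q$ in the groupoid case. These would be assembled from the finite Eilenberg--MacLane objects $K(G,n)$ (with $G$ a finite, possibly nonabelian, group when $n=1$) and their twisted analogues $K(M,n)$ for a finite abelian local coefficient system $M$, together with the associated path-space fibrations and the diagonal maps $K\to K\times K$, chosen so that lifting against them forces isomorphisms precisely on $\pi_0$ and on the cohomology theories appearing in the statement.

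With $P$ and $Q$ fixed, I would apply the dual of \cite[Theorem 11.3.1]{hirschhornmodel}, the same recognition theorem used for $\pG$, and check its six hypotheses. Cosmallness of the targets holds because they lie in $\S_{cofin}$ and finite objects are cosmall in a pro-category. Stability of the weak equivalences under cofiltered limits is the profinite analogue of proposition \ref{prop-weak equivalences stable under cofiltered limits}, and follows because cohomology with finite coefficients carries a cofiltered limit of profinite spaces to a filtered colimit of cohomology sets. The genuinely delicate points are conditions $(5)$ and $(6)$: that every $P$-projective map is a cohomology weak equivalence, and conversely that a $Q$-projective map which is a weak equivalence is already $P$-projective. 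These are exactly the lifting-against-Eilenberg--MacLane computations that identify the cohomologically defined weak equivalences with the maps having the expected lifting properties, and they are where the bulk of the work lies.

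An alternative route, avoiding an explicit choice of generators, is the Barnea--Schlank machinery. One equips $\S_{cofin}$ with a weak fibration structure whose weak equivalences and fibrations come from a cohomological fibrant-replacement on finite spaces, verifies that it is a pro-admissible weak fibration category by producing the factorizations $w\circ LP$, $RP\circ w$ and $RP\circ LP$ required by theorem \ref{theo-criterion pro-admissible} (via mapping-cylinder and path-object constructions, exactly as for $f\G$), and reads off the induced model structure on $\Pro(\S_{cofin})=\pS$. In both approaches the main obstacle is the same: establishing the two-out-of-three property for the resulting class of weak equivalences in the pro-category. In the direct approach this is repackaged as conditions $(5)$--$(6)$ above, while in the Barnea--Schlank approach it is precisely the pro-admissibility hypothesis that theorem \ref{theo-criterion pro-admissible} is designed to deliver.

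Once the model structure exists, the final two assertions are formal. Left properness holds because every object is cofibrant, the cofibrations being the monomorphisms. For the simplicial structure one checks, as in the groupoid case, that the cotensor of $\pS$ over finite simplicial sets is a Quillen bifunctor: since the generating fibrations may be taken in $\S_{cofin}$ and the cotensor preserves the relevant finiteness, the induced mapping complex satisfies axiom SM7 and endows $\pS$ with the desired simplicial enrichment.
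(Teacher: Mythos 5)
The paper does not actually prove this statement: it is attributed to Quick, and the ``proof'' is a citation of \cite[Theorem 2.1.2]{quickprofinite} together with the correction \cite[Theorem 2.3.]{quickcontinuous}, plus a footnote pointing to a yet further corrected version on Quick's webpage. So your sketch can only be measured against Quick's construction, which your first approach does follow in outline: a fibrantly generated model structure on $\pS$ produced by a dual small object argument from explicit sets $P$ and $Q$ of maps between finite spaces, built from finite (twisted) Eilenberg--MacLane objects, exactly parallel to the paper's theorem \ref{theo-model structure on profinite groupoids}.

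The genuine gap is that your proposal never actually exhibits $P$ and $Q$, nor proves the two conditions you yourself flag as ``where the bulk of the work lies'': that $P$-projective maps are cohomological equivalences, and that $Q$-projective cohomological equivalences are $P$-projective. These are not a routine transcription of the groupoid argument. For $\pG$ the paper can verify conditions (5) and (6) by hand because groupoids are $1$-truncated, so only $H^0$, $Z^1$, $B^1$ and $H^1$ intervene, and the elementary objects $S$, $\ast\sslash G$, $G^c\sslash G$ suffice; for profinite spaces one must control cohomology in all degrees and with all finite local coefficient systems, which forces the generating fibrations to include twisted Eilenberg--MacLane fibrations over classifying spaces of finite groups and forces the converse direction to run an obstruction-theoretic induction over Postnikov towers. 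That this is delicate rather than formal is documented by the theorem's own history, recorded in the paper's footnote: the generating sets in \cite{quickprofinite} were wrong, the correction in \cite{quickcontinuous} still contained a mistake, and only a later revision fixes it. A proof attempt that leaves exactly this part blank has restated the problem rather than solved it. The same objection applies to your alternative Barnea--Schlank route: even granting pro-admissibility of a suitable weak fibration structure on $\S_{cofin}$, you would still owe the identification of the resulting weak equivalences with the cohomologically defined ones of the statement, and of the cofibrations with the monomorphisms; in the groupoid case this identification occupies an entire subsection of the paper (``Equality of the two model structures''), and for spaces it is a theorem of \cite{barneapro}, not a formality.
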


\begin{proof}
See \cite[Theorem 2.1.2]{quickprofinite} with a correction in \cite[Theorem 2.3.]{quickcontinuous}\footnote{Note that there is still a small mistake in the generating fibrations in \cite{quickcontinuous}. An updated version of this paper can be found on G. Quick's webpage \url{http://www.math.ntnu.no/~gereonq/}. In this version the relevant result is theorem 2.10.}.
\end{proof}

\subsection*{Profinite completion of spaces}

There is a functor 
\[\h{(-)}:\S=\on{Fun}(\Delta\op,\Set)\to\pS=\on{Fun}(\Delta\op,\Set)\]
which is a left adjoint to the functor
\[|-|:\pS\to \S\]
obtained by applying the functor $|-|:\pSet\to\Set$ levelwise. Using the description of $\pS$ as $\Pro(\S_{cofin})$ and remark \ref{rem-pro representable functors}, we can define $\h{X}$ as the functor $\S(X,-)$ which is clearly a functor $\S_{cofin}\to\Set$ which preserves finite limits.

\begin{theo}
The adjunction
\[\h{(-)}:\S\leftrightarrows \pS:|-|\]
is a Quillen adjunction
\end{theo}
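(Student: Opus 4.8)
The plan is to verify directly that the left adjoint $\h{(-)}$ is a left Quillen functor, using the criterion that a colimit-preserving functor out of a \emph{cofibrantly generated} model category is left Quillen as soon as it carries a set of generating cofibrations into cofibrations and a set of generating trivial cofibrations into trivial cofibrations. I would take the standard generators of the Kan--Quillen structure, $I=\{\partial\Delta[n]\to\Delta[n]\}$ and $J=\{\Lambda^k[n]\to\Delta[n]\}$. Since $\h{(-)}$ preserves colimits and retracts, since every cofibration (resp. trivial cofibration) of $\S$ is a retract of a transfinite composite of pushouts of maps in $I$ (resp. $J$), and since the cofibrations and the trivial cofibrations of any model category are closed under pushout, transfinite composition and retract, it suffices to check that $\h{(-)}$ sends each map of $I$ to a cofibration of $\pS$ and each map of $J$ to a trivial cofibration of $\pS$. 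Note that this needs no generators for $\pS$ itself, which is only fibrantly generated.

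The key elementary observation is that $\Delta[n]$, $\partial\Delta[n]$ and $\Lambda^k[n]$ are \emph{levelwise finite}. Because $\h{(-)}$ is computed levelwise and restricts to the identity on finite sets (the profinite completion of a finite set is itself), it carries any levelwise finite simplicial set to the same object viewed as a (constant, finite) profinite space, and a map between such to the corresponding map in $\pS$. Since the cofibrations of $\pS$ are the monomorphisms by Theorem \ref{theo-Model structure on profinite spaces}, the maps $\h{\partial\Delta[n]}\to\h{\Delta[n]}$ and $\h{\Lambda^k[n]}\to\h{\Delta[n]}$ are monomorphisms, hence cofibrations. This already gives that $\h{(-)}$ sends $I$ into cofibrations and that every map of $\h{J}$ is at least a cofibration.

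It remains to see that each $\h{\Lambda^k[n]}\to\h{\Delta[n]}$ is also a weak equivalence in $\pS$. Here I would use that $\Delta[n]$ and $\Lambda^k[n]$ are simplicially contractible: a contracting homotopy is a map out of a product with $\Delta[1]$, again levelwise finite, so applying $\h{(-)}$ produces a simplicial contraction of $\h{\Delta[n]}$, respectively $\h{\Lambda^k[n]}$, in $\pS$. Since every object of $\pS$ is cofibrant (the map $\varnothing\to X$ is a monomorphism for all $X$) and $\pS$ is a simplicial model category, every simplicial homotopy equivalence is a weak equivalence; hence the contractions $\h{\Delta[n]}\to\ast$ and $\h{\Lambda^k[n]}\to\ast$ are weak equivalences, and by two-out-of-three so is $\h{\Lambda^k[n]}\to\h{\Delta[n]}$. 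Combined with the previous paragraph this shows $\h{(-)}$ sends $J$ into trivial cofibrations, completing the proof that $\h{(-)}$ is left Quillen.

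The only genuine obstacle is this last step, the interaction of profinite completion with Quick's weak equivalences, which a priori are detected by continuous cohomology with finite coefficients and by $\pi_0$ rather than by naive homotopies. What makes it harmless is precisely that the generating trivial cofibrations are maps of levelwise finite simplicial sets, on which completion does nothing and for which contractibility is witnessed by an honest finite simplicial homotopy; equivalently, one could observe that both $\Delta[n]$ and $\Lambda^k[n]$ have the $\pi_0$ and the finite-coefficient cohomology of a point, so that the inclusion induces isomorphisms on all the invariants defining a Quick weak equivalence. An alternative route, parallel to the proof for groupoids, would be to check instead that the right adjoint $|-|$ carries Quick's generating fibrations to Kan fibrations, but this requires the explicit and somewhat delicate description of those generators (cf. the footnote to Theorem \ref{theo-Model structure on profinite spaces}), so I would prefer the generators-on-the-left argument above.
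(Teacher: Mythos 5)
Your overall strategy is sound and is genuinely different from the paper's treatment: the paper proves this statement purely by citation to Quick (Proposition 2.28 of \cite{quickprofinite}), whereas you give a self-contained verification. The route is legitimate even though $\pS$ is only \emph{fibrantly} generated: since $\h{(-)}$ is a left adjoint it preserves colimits and retracts, every (trivial) cofibration of $\S$ is a retract of a relative cell complex on $I$ (resp.\ $J$), and in any cocomplete model category the (trivial) cofibrations are closed under pushout, transfinite composition and retract, being defined by left lifting properties; so checking $\h{I}$ and $\h{J}$ suffices. The observation that completion is levelwise and is the identity on levelwise finite simplicial sets, combined with the fact that cofibrations of $\pS$ are the monomorphisms (theorem \ref{theo-Model structure on profinite spaces}), correctly disposes of $I$ and of the ``cofibration'' half of $J$; and the principle that, in a simplicial model category in which every object is cofibrant, a (simplicial) homotopy equivalence is a weak equivalence is also correct.

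There is, however, one step that is false as literally stated: for an inner horn, $0<k<n$, there is \emph{no single} simplicial homotopy between $\id_{\Lambda^k[n]}$ and a constant map, in either direction. Already for $\Lambda^1[2]$ (the spine $0\to 1\to 2$): a homotopy $H$ with $H_0=\mathrm{const}_c$ and $H_1=\id$ forces the existence of edges from $c$ to every vertex, and no vertex of the spine admits these; dually for the other direction. The repair stays entirely within your argument. The monotone map $\max(-,k):[n]\to[n]$ induces a retraction $\rho$ of $\Lambda^k[n]$ onto a subcomplex containing the vertex $k$, and the monotone maps $[n]\times[1]\to[n]$ given by $(i,0)\mapsto i,\ (i,1)\mapsto\max(i,k)$ and by $(i,0)\mapsto k,\ (i,1)\mapsto\max(i,k)$ induce simplicial homotopies $\id\Rightarrow\rho$ and $\mathrm{const}_k\Rightarrow\rho$; both preserve the horn, because the composite with any simplex $\sigma$ of $\Lambda^k[n]$ has image contained in $\on{im}(\sigma)\cup\{k\}$, hence still omits any element $\neq k$ omitted by $\sigma$. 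Since every object of $\pS$ is cofibrant, tensoring with $\Delta[1]$ gives good cylinder objects and left homotopy of maps out of any object of $\pS$ is an equivalence relation; the zigzag therefore identifies $\id_{\h{\Lambda^k[n]}}$ with the constant map in left homotopy classes, which makes $\h{\Lambda^k[n]}\to\ast$ a homotopy equivalence, hence a weak equivalence, and two-out-of-three finishes as you say. (Your fallback argument via Quick's defining invariants also works, since for levelwise finite profinite spaces continuous and ordinary cohomology agree; either repair makes the proof complete.)
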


\begin{proof}
See \cite[Proposition 2.28.]{quickprofinite}.
\end{proof}

\begin{rem}
Let $\S_{\infty}$ be the $\infty$-category of spaces and $\S_{\infty}^{\pi}$ be the full subcategory spanned by $\pi$-finite spaces (i.e. spaces $X$ with $\pi_0(X)$ finite and $\pi_i(X,x)$ finite for each $x$ and $\pi_i(X,x)=0$ for $i$ big enough. In \cite{barneapro}, we prove that the underlying $\infty$-category of Quick's model structure is equivalent to $\Pro(\S_{\infty}^\pi)$ the ($\infty$-categorical) pro-category of $\S_{\infty}^{\pi}$ and that the profinite completion functor $\S\to\pS$ is a model for the functor $\S_{\infty}\to\Pro(\S_{\infty}^\pi)$ sending $X$ to $\Map(X,-)$ seen as a limit preserving functor $\S_{\infty}^\pi\to\S_{\infty}$.
\end{rem}

\subsection*{Profinite classifying space functor}

There is an adjunction
\[\pi:\S\leftrightarrows\G:B.\]

The functor $B$ is the nerve functor $\Cat\to\S$ restricted to groupoids. The functor $\pi$ sends a simplicial set $X$ to a groupoid $\pi(X)$. The groupoid $\pi(X)$ is the groupoid completion of a category $\pi'(X)$. The category $\pi'(X)$ is the free category on the graph $X_1\rightrightarrows X_0$ modulo the relation $u\circ v=w$ if there is a $2$-simplex $y$ of $X$ with $d_0(y)=u$, $d_1(y)=w$ and $d_2(y)=v$.

This adjunction restricts to an adjunction
\[\pi:\S_{cofin}\leftrightarrows f\G:B.\]

Passing to the pro-categories on both sides, we get an adjunction
\begin{equation}
\pi:\Pro(\S_{cofin})\simeq \pS\leftrightarrows \pG:B
\end{equation}

The functor $B$ preserves generating fibrations and generating trivial fibrations. Therefore it is a Quillen right functor. Hence we have a diagram of right Quillen functors which commutes up to a canonical isomorphism:
\[
\xymatrix{
\pG\ar[r]^{|-|}\ar[d]_B&\G\ar[d]^B\\
\pS\ar[r]_{|-|}&\S
}
\]

\begin{prop}\label{prop-B is fully faithful}
Let $C$ be a fibrant profinite groupoid, then the counit map
\[\pi BC\to C\]
is a weak equivalence in $\pG$
\end{prop}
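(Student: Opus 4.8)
The plan is to reduce to the case of a single finite groupoid, where the statement is the classical fact that the fundamental groupoid of the nerve of a groupoid recovers the groupoid, and then to propagate this along the cofiltered limit that presents $C$.

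First I would record the finite case. For a finite groupoid $D$, the counit $\epsilon_D\colon \pi B D\to D$ of the adjunction $\pi\dashv B$ restricted to $\S_{cofin}\leftrightarrows f\G$ is an isomorphism: indeed $BD$ is the nerve of $D$, and the category $\pi'(BD)$ recovers $D$ as a category (the usual counit $\tau_1 N\to\id$ is invertible on categories), so after groupoid completion, which does nothing since $D$ is already a groupoid, one gets $\pi(BD)\cong D$. Since the objects of $f\G$ are exactly the finite groupoids, this handles every object of the source category.

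Next I would pass to the pro-category. Writing $C=\{C_i\}_{i\in I}$ as a cofiltered diagram of finite groupoids, the right Quillen functor $B\colon\pG\to\pS$ is the levelwise pro-extension of $B\colon f\G\to\S_{cofin}$, so $BC=\{BC_i\}$; and its left adjoint $\pi\colon\pS\to\pG$ is likewise the levelwise pro-extension, since the left adjoint of a pro-extension is again the levelwise pro-extension, as one checks on hom-sets via
\[
\pG(\{\pi X_i\},\{C_j\})=\on{lim}_j\on{colim}_i f\G(\pi X_i,C_j)=\on{lim}_j\on{colim}_i \S_{cofin}(X_i,BC_j)=\pS(\{X_i\},\{BC_j\}).
\]
Hence $\pi BC=\{\pi B C_i\}$ and the counit $\pi BC\to C$ is induced levelwise by the isomorphisms $\epsilon_{C_i}$. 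A levelwise isomorphism of pro-objects is an isomorphism, so $\pi BC\to C$ is an isomorphism in $\pG$, in particular a weak equivalence. If one prefers a directly homotopical argument, the same conclusion follows from the cohomological characterization of weak equivalences (Definition \ref{defi-weak equivalences of profinite groupoids}): using Proposition \ref{prop-representability of cohomology} together with the adjunction and the fact that the nerve $B$ is fully faithful on (pro-)groupoids, one gets $H^0(\pi BC,S)\cong\pS(BC,S)\cong\pG(C,S)=H^0(C,S)$, and similarly on $H^1$.

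The role of the fibrancy hypothesis is to make this strict counit compute the derived counit: every object of $\pS$ is cofibrant, so $BC$ is cofibrant, and with $C$ fibrant the map $\pi BC\to C$ is a model for $\L\pi\,\R B\,C\to C$. The proposition therefore says exactly that the derived counit is a weak equivalence on fibrant objects, i.e.\ that $\R B$ is homotopically fully faithful. The only genuinely delicate point, and the one I would treat with care, is the identification $\pi BC=\{\pi B C_i\}$: the symbol $\pi$ must be read as the left Quillen (pro-extension) functor, and \emph{not} as the fundamental groupoid of the inverse limit space $|BC|$, for which the analogous interchange would fail. Once $\pi$ is the pro-left-adjoint, its interaction with the cofiltered presentation of $C$ is formal, and everything reduces to the finite base case recorded above.
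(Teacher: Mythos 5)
Your proposal is correct, and it reaches the conclusion by a genuinely different route from the paper. The paper never identifies $\pi BC$ at all: it uses the Yoneda-type criterion that a map in $\pG$ is a weak equivalence as soon as it induces equivalences on $\Map^h_{\pG}(-,D)$ for every fibrant $D$, transports this across the derived adjunction $(\pi,B)$ and the simplicial enrichment, and thereby reduces everything to the facts that $B:\pG\to\pS$ is fully faithful and compatible with cotensors, $B(D^{I[k]})\cong (BD)^{\Delta[k]}$. You instead compute the counit strictly: it is a levelwise isomorphism of pro-objects, hence an isomorphism. Your conclusion is therefore stronger than the stated proposition — the counit is an isomorphism, not merely a weak equivalence, and for \emph{every} profinite groupoid $C$, fibrant or not — and your remark that fibrancy only serves to identify the strict counit with the derived counit is exactly right. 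Both arguments rest on the same kernel (full faithfulness of the nerve on $f\G$ plus the pro-hom formula); the paper's formulation stays entirely inside the homotopical framework and avoids pro-categorical bookkeeping, while yours is more elementary and extracts more in this particular case. Your fallback via $H^0$ and $H^1$ (definition \ref{defi-weak equivalences of profinite groupoids}, proposition \ref{prop-representability of cohomology}) is essentially a $\pi_0$-level shadow of the paper's mapping-space argument.

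One step does need repair, although it does not sink the proof. The claim that the left adjoint $\pi:\pS\to\pG$ ``is the levelwise pro-extension'' is not meaningful in that generality, because $\pi$ does not restrict to a functor $\S_{cofin}\to f\G$: for instance $\on{cosk}_2(\Delta[1]/\partial\Delta[1])$ is a levelwise finite, $2$-coskeletal simplicial set whose fundamental groupoid is $\mathbb{Z}$, which is not finite. (The paper's own assertion that the adjunction ``restricts to $\pi:\S_{cofin}\leftrightarrows f\G:B$'' suffers from the same defect; the pro-level left adjoint exists by pro-representability, remark \ref{rem-pro representable functors}, but on a general $\{X_i\}$ its value is the levelwise profinite completion $\{\h{\pi X_i}\}$ rather than $\{\pi X_i\}$.) Consequently your hom-set computation $\pG(\{\pi X_i\},\{C_j\})=\on{lim}_j\on{colim}_i f\G(\pi X_i,C_j)$ is ill-formed for a general $X_i\in\S_{cofin}$ — so the genuinely delicate point is this one, not the $\pi$-versus-$\pi|{-}|$ confusion you flagged. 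The computation is, however, perfectly valid in the only instance you use: $X_i=BC_i$ with $C_i$ finite, where $\pi BC_i\cong C_i$ lies in $f\G$, so $\{\pi BC_i\}$ is a legitimate pro-object and your display shows it represents the same functor as $\pi BC$, whence the counit is an isomorphism. You should therefore state the levelwise identification only for pro-objects in the image of $B$; alternatively, your cohomological argument — which uses only the pro-level adjunction, the representability of $H^0$, $Z^1$ and $B^1$, and full faithfulness of $B$ — avoids the issue entirely and closes the proof on its own.
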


\begin{proof}
It suffices to prove that for any $D$ fibrant in $\pG$, the map
\[\Map^h_{\pG}(C,D)\to \Map^h_{\pG}(\pi BC,D)\]
is a weak equivalence. By theorem \ref{theo-Quillen adjunction and mapping spaces}, it suffices to prove that the map
\[\Map^h_{\pG}(C,D)\to \Map^h_{\pS}(BC,BD)\]
is a weak equivalence. Since $\pG$ and $\pS$ are simplicial and any object is cofibrant in both model categories, it suffices to show that
\[\Map_{\pG}(C,D)\to \Map_{\pS}(BC,BD)\]
is a weak equivalence. We claim that it is in fact an isomorphism. Indeed, it suffices to show that it is an isomorphism in each degree. We have $BD^{\Delta[k]}\cong B(D^{I[k]})$. Thus it suffices to prove that $B$ is fully faithful as a functor $\pG\to \pS$ which is obvious.
\end{proof}

\subsection*{Good groupoids \`a la Serre}

For $C$ a groupoid, the unit map $C\to |\h{C}|$ induces a map $BC\to B|\h{C}|\cong |B\h{C}|$. This last map is adjoint to a map $\h{BC}\to B\h{C}$.

This map fails to be a weak equivalence in $\pS$ in general, however, it is in some cases. Let us recall, the definition of a good group due to Serre.

\begin{defi}
Let $G$ be a discrete group and $\h{G}$ be its profinite completion, we say that $G$ is \emph{good} if for any finite abelian group with a $\h{G}$-action $M$, the restriction map
\[H^*(\h{G},M)\to H^*(G,M)\]
is an isomorphism. If $C$ is a groupoid with a finite set of objects, we say that $C$ is good if each of the automorphisms of each object of $C$ is a good group.
\end{defi}

\begin{prop}\label{prop-completion of good groupoids}
Let $C$ be a good groupoid. Then the map
\[\h{BC}\to B\h{C}\]
is a weak equivalence in $\pS$.
\end{prop}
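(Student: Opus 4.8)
The plan is to reduce the statement to the case of groups and then to recognize the induced map on cohomology as the restriction map appearing in Serre's definition of goodness.

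First I would reduce to the connected case. Since $C$ has finitely many objects, it is a finite coproduct of its connected components. The functors $B\colon\G\to\S$, $\h{(-)}\colon\S\to\pS$ and $\h{(-)}\colon\G\to\pG$ all commute with finite coproducts, and $B\colon\pG\to\pS$ commutes with coproducts as well; moreover every object of $\pS$ is cofibrant, so a finite coproduct of weak equivalences is again a weak equivalence. Hence $\h{BC}\to B\h C$ is the finite coproduct of the comparison maps for the connected components, and it suffices to treat a connected good groupoid. Such a $C$ is isomorphic to $G[S]$ for $G$ a good group and $S$ a finite set, and the inclusion $j\colon\ast\sslash G\to C$ of a chosen object is an equivalence of groupoids. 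Naturality of the comparison map gives a commutative square
\[
\begin{CD}
\h{B(\ast\sslash G)} @>>> B\h{\ast\sslash G}\\
@V{\h{Bj}}VV @VV{B\h j}V\\
\h{BC} @>>> B\h C.
\end{CD}
\]
The left vertical map is a weak equivalence because $Bj$ is a weak equivalence of simplicial sets and $\h{(-)}\colon\S\to\pS$ preserves weak equivalences (all objects being cofibrant). The right vertical map is a weak equivalence because $\h j\colon\ast\sslash\h G\to\h C\cong\h G[S]$ is a weak equivalence between objects that are fibrant by Proposition \ref{prop-profinite groups are fibrant}, and $B\colon\pG\to\pS$, being right Quillen, preserves weak equivalences between fibrant objects. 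Thus it is enough to prove that $\h{BG}\to B\h G$ is a weak equivalence in $\pS$ for $G$ a good group.

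Next I would verify Quick's criterion (Theorem \ref{theo-Model structure on profinite spaces}). Both $\h{BG}$ and $B\h G$ are connected, so the map is an isomorphism on $\pi_0$. For non-abelian cohomology with coefficients in a finite group $\pi$, both sides are sets of conjugacy classes of homomorphisms, namely $\on{Hom}(G,\pi)/\pi$ for $\h{BG}$ and $\on{Hom}_{\mathrm{cont}}(\h G,\pi)/\pi$ for $B\h G$. The universal property of profinite completion identifies continuous homomorphisms $\h G\to\pi$ with homomorphisms $G\to\pi$ compatibly with conjugation, so the comparison map is a bijection here; note that this step does not use goodness.

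The main content, and the step I expect to be the real obstacle, is the cohomology with finite abelian local coefficients. Here I would establish two identifications. On the one hand, the defining property of profinite completion of spaces gives $H^*(\h{BG};M)\cong H^*(BG;M)\cong H^*(G;M)$, ordinary group cohomology; concretely this uses that finite cohomology is corepresented by $\pi$-finite Eilenberg--MacLane objects $K(M,n)$ together with the adjunction $\Map_{\pS}(\h{BG},K(M,n))\simeq\Map_{\S}(BG,|K(M,n)|)$. On the other hand, $B\h G$ is the classifying space of the profinite group $\h G$ computed in $\pS$, and its cohomology computes the continuous cohomology of $\h G$, that is $H^*(B\h G;M)\cong H^*_{\mathrm{cont}}(\h G;M)$, where a finite abelian local system $M$ corresponds to a continuous $\h G$-module. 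Under these identifications, and matching a continuous $\h G$-module with its restriction to $G$ along the unit $G\to\h G$, the map induced by $\h{BG}\to B\h G$ is exactly Serre's restriction map $H^*_{\mathrm{cont}}(\h G;M)\to H^*(G;M)$. By the definition of a good group this is an isomorphism for every finite abelian $\h G$-module $M$, which completes the verification. The delicate points are precisely the two cohomology identifications inside Quick's model structure---in particular that $H^*(B\h G;-)$ really is continuous group cohomology---and the check that the local systems correspond correctly under the comparison map.
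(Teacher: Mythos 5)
Your reduction is exactly the paper's: decompose the good groupoid into its finitely many connected components (all the relevant functors preserve finite coproducts, and all objects are cofibrant, so this is harmless), then replace a connected component by $\ast\sslash G$ via an equivalence of groupoids --- you use the inclusion $\ast\sslash G\to G[S]$ where the paper uses the projection $G[S]\to \ast\sslash G$, an immaterial difference, and your justification of the two vertical equivalences (left Quillen completion functors on cofibrant objects, right Quillen $B$ on the fibrant objects supplied by proposition \ref{prop-profinite groups are fibrant}) is the one the paper leaves implicit. The divergence is in the remaining group-level statement that $\h{BG}\to B\h{G}$ is a weak equivalence in $\pS$ for $G$ good: the paper simply cites Quick, whereas you unfold the proof of that citation, checking the three criteria defining weak equivalences in Quick's model structure ($\pi_0$, nonabelian $H^1$ with finite coefficients, and cohomology with finite abelian local coefficients) and locating the use of goodness precisely in the identification of the comparison map with Serre's restriction map $H^*_{\mathrm{cont}}(\h{G};M)\to H^*(G;M)$. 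This unfolding is correct and has the virtue of making visible where goodness actually enters (and that it is not needed for $\pi_0$ and $H^1$); but it is not more elementary than the citation, because the two identifications you yourself flag as delicate --- that $H^*(\h{X};M)\cong H^*(X;M)$ for finite abelian local coefficients, and that the cohomology of $B\h{G}$ in $\pS$ is the continuous cohomology of the profinite group $\h{G}$ --- are nontrivial theorems of Quick, and together they essentially constitute the proof of the proposition the paper cites. So to make your argument complete you would still have to invoke Quick's results, only at a finer level of granularity than the paper does.
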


\begin{proof}
We can write $C$ as a disjoint union of groupoids of the form $G[S]$ where $G$ is good and $S$ is finite. Since completion commutes with colimits both in spaces and groupoids and $B$ preserve coproducts of groupoids, we are reduced to proving that for any good group $G$ and finite set $S$, the map
\[\h{BG[S]}\to B\h{G[S]}\]
is an equivalence in $\pS$. We have an obvious projection $G[S]\to G$ which is a weak equivalence in $\G$. We have a commutative diagram
\[\xymatrix{
\h{BG[S]}\ar[r]\ar[d]& B\h{G[S]}\ar[d]\\
\h{BG}\ar[r]& B\h{G}
}
\]
The two vertical maps are weak equivalences in $\pS$. The bottom map is a weak equivalence according to \cite[Proposition 3.6.]{quicksome}. This implies that the top map is an equivalence.
\end{proof}

We will need the following fact about good groups

\begin{prop}\label{prop-good groups extension}
Let
\[1\to N\to G\to H\to 1\]
be an exact sequence of groups in which $N$ is finitely presented and $H$ is good, then 
\begin{enumerate}
\item There is a short exact sequence of topological groups
\[1\to \h{N}\to \h{G}\to \h{H}\to 1.\]

\item If moreover $N$ is good, then $G$ is good.
\end{enumerate}
\end{prop}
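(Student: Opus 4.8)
The plan is to prove (1) directly and then feed the resulting exact sequence of profinite groups into a comparison of Lyndon--Hochschild--Serre spectral sequences to deduce (2). For (1), recall that profinite completion is a left adjoint and hence right exact. Applying it to $N\to G\to H$ yields a sequence
\[\h{N}\xrightarrow{\alpha}\h{G}\xrightarrow{\beta}\h{H}\to 1\]
in which $\beta$ is surjective and $\ker\beta$ is the closure of the image of $N$; as $\h{N}$ is compact this closure coincides with $\on{im}(\alpha)$. So everything reduces to the injectivity of $\alpha$, that is, to the assertion that the profinite topology of $G$ restricts to the full profinite topology on $N$. Concretely, for every finite-index normal subgroup $M\trianglelefteq N$ I must find a finite-index normal $V\trianglelefteq G$ with $V\cap N\subseteq M$. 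Since $N$ is finitely generated it has only finitely many subgroups of each index, so the (finitely many) $G$-conjugates of $M$ intersect in a $G$-invariant finite-index subgroup $M_0\subseteq M$ of $N$. Setting $A=N/M_0$ and $E=G/M_0$ reduces (1) to showing that the finite normal subgroup $A$ of the extension $1\to A\to E\to H\to 1$ is detected by finite quotients of $E$, i.e.\ that $A\hookrightarrow\h{E}$.

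This is the step that uses goodness of $H$. The extension above is classified by the induced map $H\to\on{Out}(A)$ together with a class in $H^2(H,Z(A))$ with coefficients in the finite centre $Z(A)$. Because $A$ is finite and $H$ is good, this class is the image of a continuous class for $\h{H}$, which assembles into a profinite extension $1\to A\to E'\to\h{H}\to 1$ together with a map $E\to E'$ carrying $A$ isomorphically onto the kernel. As $E'$ is profinite, $A$ is separated by the finite continuous quotients of $E'$; pulling these back along $E\to E'$ separates $A$ by finite quotients of $E$, giving $A\hookrightarrow\h{E}$ and hence the injectivity of $\alpha$. I expect the main work here to be an honest treatment of the non-abelian case, where one filters $A$ by its centre and lower central series and invokes goodness of $H$ at each stage.

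For (2), part (1) makes $1\to\h{N}\to\h{G}\to\h{H}\to 1$ an exact sequence of profinite groups, so for any finite $G$-module $M$ there is a continuous Lyndon--Hochschild--Serre spectral sequence $H^p(\h{H},H^q(\h{N},M))\Rightarrow H^{p+q}(\h{G},M)$ mapping, via the completion morphisms, to the ordinary one $H^p(H,H^q(N,M))\Rightarrow H^{p+q}(G,M)$. Since $N$ is good, the comparison maps $H^q(\h{N},M)\to H^q(N,M)$ are isomorphisms of $H$-modules; feeding the resulting finite $\h{H}$-module into the goodness of $H$ shows that the map $H^p(\h{H},H^q(\h{N},M))\to H^p(H,H^q(N,M))$ is an isomorphism for all $p,q$, whence the map of abutments $H^{*}(\h{G},M)\to H^{*}(G,M)$ is an isomorphism and $G$ is good.

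The delicate point here, and the place where finite presentation of $N$ is used, is the finiteness of each coefficient module $H^q(N,M)$: goodness of $H$ applies only to finite modules, so one must know that $H^q(N,M)$ is finite in every degree (in the examples of interest this is guaranteed by the finite cohomological dimension of the groups involved). Securing this finiteness, together with the injectivity of $\alpha$ in (1), is what I expect to be the crux of the argument.
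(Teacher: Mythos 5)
Your sketch is correct in outline and is essentially the argument the paper relies on: the paper itself gives no proof of this proposition, deferring to an exercise in Serre's \emph{Cohomologie Galoisienne} and to Nakamura's Propositions 1.2.4 and 1.2.5, and your two steps --- for (1), replacing a finite-index normal $M\trianglelefteq N$ by the intersection of its finitely many $G$-conjugates and then using goodness of $H$ (via the comparison of extension classes in $H^2(H,Z(A))$ with continuous classes over the completion of $H$) to split $A$ off in a finite quotient; for (2), the comparison of the two Lyndon--Hochschild--Serre spectral sequences --- are precisely the arguments found in those references. Your closing worry is also well placed: finiteness of $H^q(N,M)$ in every degree is an explicit hypothesis in the cited results (it does not follow from finite presentation alone), and in the paper's application it holds because the relevant kernels are finitely generated free groups, which have cohomological dimension one.
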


\begin{proof}
This proposition is an exercise in \cite[p. 13]{serrecohomologie}. The first claim is proved in \cite[Proposition 1.2.4.]{nakamuragalois}. The second claim can be found in \cite[Proposition 1.2.5.]{nakamuragalois}
\end{proof}

\begin{coro}\label{coro-pure braid groups are good}
The pure braid groups $K_n$ are good.
\end{coro}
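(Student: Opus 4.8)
The plan is to argue by induction on $n$, feeding the Fadell--Neuwirth fibrations into Proposition \ref{prop-good groups extension}. The only external input needed is the classical fact, due to Serre, that a finitely generated free group $F$ is good: indeed $F$ has cohomological dimension $1$ and its profinite completion $\h{F}$ also has cohomological dimension $1$, so the comparison map $H^*(\h{F},M)\to H^*(F,M)$ is an isomorphism in every degree (it is an isomorphism in degrees $0$ and $1$, and both sides vanish for $*\geq 2$).

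For the base of the induction, $K_1$ is the trivial group and $K_2\cong\mathbb{Z}$ is free, so both are good. For the inductive step I would use the map $K_n\to K_{n-1}$ that forgets the last strand. Geometrically this is $\pi_1$ of the Fadell--Neuwirth fibration $\on{Conf}_n(\mathbb{R}^2)\to\on{Conf}_{n-1}(\mathbb{R}^2)$ that forgets the last point, whose fiber is $\mathbb{R}^2$ with $n-1$ points removed, hence homotopy equivalent to a wedge of $n-1$ circles. Since configuration spaces of $\mathbb{R}^2$ are aspherical, the long exact sequence of the fibration collapses to a (split, via adding a faraway point) short exact sequence
\[ 1 \to F_{n-1} \to K_n \to K_{n-1} \to 1 \]
where $F_{n-1}$ denotes the free group on $n-1$ generators.

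In this sequence the kernel $F_{n-1}$ is finitely presented and good by the input fact, while the quotient $K_{n-1}$ is good by the induction hypothesis. Proposition \ref{prop-good groups extension}(2) then yields that $K_n$ is good, completing the induction. There is no serious obstacle here: once one has the goodness of free groups and the Fadell--Neuwirth splitting exhibiting $K_n$ as an extension of $K_{n-1}$ by a free group, Proposition \ref{prop-good groups extension}(2) does all the work. The only point requiring a moment of care is the verification that the fiber has free fundamental group and that the sequence is genuinely short exact, which rests on the asphericity of $\on{Conf}_{n-1}(\mathbb{R}^2)$.
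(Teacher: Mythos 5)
Your proof is correct and is essentially the same as the paper's: the paper also argues by induction applying Proposition \ref{prop-good groups extension}(2) to the Fadell--Neuwirth short exact sequence $1\to F_n\to K_{n+1}\to K_n\to 1$ together with the goodness of free groups, merely citing Serre and Collas for the details you spell out (the geometric origin of the sequence and the cohomological-dimension argument for free groups).
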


\begin{proof}
This is also an exercise in \cite[p.14]{serrecohomologie}. This follows from an induction applying the previous proposition to the short exact sequence
\[1\to F_n\to K_{n+1}\to K_n\to 1\]
and using the fact that the free groups are good. The details are worked out in \cite[Proposition 2.1.6]{collasthesis}.
\end{proof}

\begin{coro}\label{coro-product of good groupoids}
Let $C$ and $D$ be two good groupoids and assume that any automorphism group of $C$ is finitely presented, then $C\times D$ is a good groupoid.
\end{coro}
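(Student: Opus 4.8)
The plan is to reduce the statement to the group-theoretic result recorded in Proposition \ref{prop-good groups extension}(2), by analyzing the automorphism groups of the product groupoid $C\times D$. Recall that, by definition, a groupoid is good when it has a finite set of objects and the automorphism group of each of its objects is a good group. First I would check that $C\times D$ has a finite set of objects: since $C$ and $D$ are good, each has a finite set of objects, and $\on{Ob}(C\times D)=\on{Ob}(C)\times\on{Ob}(D)$ is therefore finite.

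Next I would identify the relevant automorphism groups. An object of $C\times D$ is a pair $(c,d)$, and there is a canonical isomorphism
\[\on{Aut}_{C\times D}(c,d)\cong \on{Aut}_C(c)\times\on{Aut}_D(d).\]
So it suffices to prove that each such product of groups is good. For fixed $(c,d)$, write $G=\on{Aut}_C(c)$ and $H=\on{Aut}_D(d)$ and consider the split short exact sequence
\[1\to G\to G\times H\to H\to 1.\]

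To conclude I would apply Proposition \ref{prop-good groups extension}(2) to this sequence with $N=G$ and quotient $H$. Here the hypotheses are exactly met: $N=\on{Aut}_C(c)$ is finitely presented by the standing assumption on $C$, and it is good because $C$ is good; the quotient $H=\on{Aut}_D(d)$ is good because $D$ is good. The proposition then yields that $G\times H=\on{Aut}_{C\times D}(c,d)$ is good. Since this holds for every object $(c,d)$, the groupoid $C\times D$ is good. There is essentially no obstacle in this argument beyond correctly matching the hypotheses; the only point that genuinely uses an assumption is the finite presentation of the automorphism groups of $C$, which is precisely what allows Proposition \ref{prop-good groups extension} to be invoked for the fiber $N$ of the exact sequence.
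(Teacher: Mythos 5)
Your proof is correct and follows essentially the same route as the paper: reduce to the automorphism groups of $C\times D$, identify them as products $\on{Aut}_C(c)\times\on{Aut}_D(d)$, and apply Proposition \ref{prop-good groups extension}(2) to the split exact sequence $1\to G\to G\times H\to H\to 1$, using finite presentation and goodness of $G$ and goodness of $H$. The paper phrases the reduction via the decomposition of a groupoid into connected components of the form $G[S]$, but the substance of the argument is identical to yours.
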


\begin{proof}
We can reduce to the case of groups as in the proof of proposition \ref{prop-completion of good groupoids} and then it suffices to apply proposition \ref{prop-good groups extension}.
\end{proof}

\section{Operads in groupoids}

\subsection*{The operad of parenthesized braids and the operad of parenthesized unital braids}

The object functor $\on{Ob}:\G\to\Set$ preserves product. Therefore it induces a functor $\on{Ob}:\Op\G\to\Op\Set$. Let $\oper{P}$ be an operad in $\G$ and $u:\oper{Q}\to\on{Ob}(\oper{P})$ be a morphism or operads in sets. We define $u^*\oper{P}$ to be an operad in $\Op\G$. The operads of objects of $u^*\oper{P}$ is the operad $\oper{Q}$. Given two elements $p$ and $q$ of $\oper{Q}(n)$, we define 
\[u^*\oper{P}(n)(p,q):=\oper{P}(n)(up,uq)\]
It is straightforward to verify that the collection of groupoids $u^*\oper{P}(n)$ forms an operad whose operad of objects is $\oper{Q}$. Moreover there is a map $u^*\oper{P}\to\oper{P}$ inducing the map $u:\oper{Q}\to\on{Ob}\oper{P}$ when restricting to objects.

\begin{defi}
Let $S$ be a set. We denote by $\mathbb{M}_n(S)$ the set of non-associative monomials of length $n$ in $S$. These sets are defined inductively by $\mathbb{M}_0(S)=\{\varnothing\}$, $\mathbb{M}_1(S)=S$ and $\mathbb{M}_n(S)=\bigsqcup_{p+q=n}\mathbb{M}_p(S)\times\mathbb{M}_q(S)$. We define $\mathbb{M}(S)$ the set of non-associative monomials in $S$ to be the disjoint union $\bigsqcup_{n\geq 0}\mathbb{M}_n(S)$.
\end{defi}

\begin{rem}
We denote elements of $\mathbb{M}(S)$ with the usual bracket notation for products. For instance, if $a$ and $b$ are two elements of $S$, we denote by $(a,b)$ the element of $\mathbb{M}_2(S)=S\times S$ whose first component is $s$ and second component is $b$. For longer monomials, we use nested brackets. For instance, $(a,(a,b))$ is the element of $\mathbb{M}_3(S)$ living in the summand $\mathbb{M}_1(S)\times\mathbb{M}_2(S)$ whose first coordinate is $a$ and second coordinate is $(a,b)$. Note that this element of $\mathbb{M}_3(S)$ is different from $((a,a),b)$ because the latter lives in the summand $\mathbb{M}_2(S)\times\mathbb{M}_1(S)$. We allow ourselves to drop the comas from the notation. For instance, we write $((ab)a)$ instead of $((a,b),a)$.
\end{rem}

We construct three operads in sets. The first one is just the operad $\oper{A}$ controlling the structure of an associative monoid. We have $\oper{A}(n)=\Sigma_n$. We denote an element $\sigma$ of $\Sigma_n$ by the ordered list $(\sigma(1)\ldots\sigma(n))$. We denote by $\mu\in\oper{A}(2)$ the permutation $(12)$ which is just the identity permutation of $\Sigma_2$ and by $\star$ the unique element of $\oper{A}(0)$.

The second operad $\oper{UM}$ is the operad of unital magmas. It is freely generated by an operation $\mu$ of degree $2$ and an operation $\star$ of degree $0$ satisfying the relation $\mu\circ_1\star=\mu\circ_2\star$. An algebra over the operad $\oper{UM}$ is a unital magma. That is, a set $X$ equipped with a binary operation $\bot:X\times X\to X$ and a special element $\star$ such that for all $x$ in $X$, we have $x\bot \star=\star\bot x=x$.

The set $\oper{UM}(n)$ is the subset of $\mathbb{M}_n\{1,\ldots,n\}$ on non-associative monomials containing each element of $\{1,\ldots,n\}$ exactly once. For instance $\oper{UM}(0)=\oper{UM}(1)=\ast$. $\oper{UM}(2)=\{(12),(21)\}$. The elements of $\oper{UM}(3)$ are the following
\[((12)3),(1(23)),((23)1),(2(31)),((31)2),(3(12)),((13)2),(1(32)),((32)1),(3(21)),((21)3),(2(13))\]
The elements of $\oper{UM}(4)$ are the following
\[((12)(34)),(1(2(34))),(1((23)4)),((1(23))4),(((12)3)4)\]
and any word obtained by permuting $1,2,3,4$ and keeping the parenthesization the same.

The third operad $\oper{BM}$ is the operad of based magmas. This is the free operad generated by an operation of degree $2$ denoted $\mu$ and an operation of degree $0$ denoted $\star$. An algebra for this operad is a magma equipped with a distinguished element. Contrary to the previous cases, each set $\oper{BM}(n)$ is infinite. The elements of $\oper{BM}(n)$ are the non-empty non-associative monomials in the set $\{\star,1,\ldots,n\}$ containing each of $1,\ldots,n$ exactly once. For instance $((2\star)(1(3\star)))$ is an element of $\oper{BM}(3)$ and $(\star((\star\star)\star))$ is an element of $\oper{BM}(0)$.

There are maps $\oper{BM}\to\oper{UM}\to\oper{A}$. Each map is determined by the fact that it sends the generators $\star$ and $\mu$ to $\star$ and $\mu$. For instance, the element $((2\star)(1(3\star)))$ of $\oper{BM}(3)$ is sent to $(2(13))$ by the first map and to $(213)$ by the second map. In general the first map removes all the $\star$ symbols and the possibly redundant parenthesis and the second map removes all the parenthesis except the exterior one.

We denote by $B_n$ the braid group on $n$-strands and by $K_n$ the pure braid group on $n$ strands, i.e. the kernel of the group homomorphism $B_n\to\Sigma_n$ sending a braid to its underlying permutation.

\begin{cons}\label{cons-PaB}
We define an operad $\oper{C}o\oper{B}$ in groupoids. The set of objects of the groupoid $\oper{C}o\oper{B}(n)$ is the set $\oper{A}(n)$. Given two objects $x$ and $y$ in $\oper{A}(n)=\Sigma_n$, the set of morphisms between them in $\oper{C}o\oper{B}(n)$ is the set of braids in $B_n$ whose image in $\Sigma_n$ is the permutation $y\circ x^{-1}$. Composition is given by composition of braids. We make the convention that $B_0$ is the trivial group so that $\oper{C}o\oper{B}(0)=*$. This operad is constructed in details in in \cite[Chapter 5]{fressehomotopy1}, note that Fresse denotes by $\oper{C}o\oper{B}_+$ what we denote by $\oper{C}o\oper{B}$.

Let $z:\oper{UM}\to\oper{A}$ be the map constructed above, then we define $\pab$ as $z^*\oper{C}o\oper{B}$. This operad is constructed in \cite[Chapter 6]{fressehomotopy1}. Again, what we denote $\pab$ is denoted $\pab_+$ by Fresse.

Let $u:\oper{BM}\to\oper{UM}$ be the map constructed above. We define another operad in groupoids denoted $\paub$ and called the operad of unital parenthesized braids. It is defined by the equation $\paub:=u^*\pab$.
\end{cons}

The construction of $\oper{C}o\oper{B}$ is presented here for completeness but we will not make any use of it in this work.

The group of automorphisms of any object of $\pab(n)$ is the pure braid group $K_n$. Moreover, since any two objects are isomorphic in $\pab(n)$, the groupoid $\pab(n)$ is weakly equivalent to $\ast\sslash{K_n}$. Similarly, the groupoid $\paub(n)$ is equivalent to $\ast\sslash K_n$. There is a map $v:\paub\to\pab$ which induces the map $u:\oper{BM}\to\oper{UM}$ on objects. Moreover, this map is levelwise a weak equivalence of groupoids.

The important fact for us about $\pab$ and $\paub$ is that they are groupoid models for $\oper{E}_2$. 

\begin{prop}\label{prop-PaB is E2}
Let $B:\G\to\S$ be the classifying space functor. Then the operads $B\pab$ and $B\paub$ are weakly equivalent to $\oper{E}_2$.
\end{prop}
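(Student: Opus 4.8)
The plan is to first reduce the statement for $\paub$ to the statement for $\pab$, and then to identify $B\pab$ with $\oper{E}_2$ by a fundamental groupoid argument. Recall from the discussion preceding the statement that there is a map $v:\paub\to\pab$ which is a levelwise weak equivalence of groupoids. The functor $B:\G\to\S$ is right Quillen, and since every object of $\G$ is both fibrant and cofibrant for the canonical model structure, $B$ preserves all weak equivalences. Hence $Bv:B\paub\to B\pab$ is a levelwise, and therefore (the model structure on $\Op\S$ having levelwise weak equivalences) an $\Op\S$-weak equivalence. Thus it suffices to prove $B\pab\simeq\oper{E}_2$.

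At the level of individual arities this is already visible: each space $\oper{E}_2(n)$ is homotopy equivalent to the ordered configuration space of $n$ points in $\mathbb{R}^2$, which is a $K(K_n,1)$ for the pure braid group $K_n$ (one sees asphericity from the Fadell--Neuwirth fibrations, whose fibres are homotopy equivalent to finite wedges of circles). On the other hand $\pab(n)$ is weakly equivalent to $\ast\sslash K_n$, so $B\pab(n)\simeq BK_n\simeq\oper{E}_2(n)$. The real content of the proposition is that these equivalences can be assembled into a map of operads, and this is where I expect the main difficulty to lie.

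To produce such an operadic comparison I would realize $\pab$ as a fundamental groupoid operad of $\oper{E}_2$. Concretely, choose in each $\oper{E}_2(n)$ a set of basepoints indexed by the objects $\oper{A}(n)=\Sigma_n$ of $\oper{C}o\oper{B}(n)$, for instance configurations of $n$ little disks lined up horizontally in the order given by the permutation; the fundamental groupoid of $\oper{E}_2(n)$ on these basepoints is then isomorphic to $\oper{C}o\oper{B}(n)$, its automorphism groups being $K_n$ and its morphisms braids realizing the given permutations. Pulling back along $z:\oper{UM}\to\oper{A}$ to record a parenthesization of each basepoint yields $\pab$. The obstacle is that the operadic composition maps of $\oper{E}_2$ neither send chosen basepoints to chosen basepoints on the nose nor compose strictly associatively, so one must use the parenthesization data carried by $\oper{UM}$ to rigidify composition and to pick the connecting homotopies coherently; this coherence is precisely what the magma operad $\oper{UM}$ encodes. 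Carrying out this construction in full is exactly the work done in \cite[Chapters 5 and 6]{fressehomotopy1}, where it is proved that $B\pab\simeq\oper{E}_2$ as operads, and I would invoke that result. Combined with the first paragraph this gives $B\paub\simeq B\pab\simeq\oper{E}_2$.
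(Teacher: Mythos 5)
Your proposal is correct, and your treatment of $\paub$ — reducing it to the $\pab$ case via the levelwise equivalence $v:\paub\to\pab$, using that the right Quillen functor $B$ preserves all weak equivalences since every groupoid is fibrant — makes explicit a reduction the paper leaves implicit. For the substantive claim $B\pab\simeq\oper{E}_2$, however, you take a different route from the paper. The paper's proof is a one-line appeal to Fiedorowicz's recognition principle, with the detailed verification delegated to \cite[Section 3.2.]{tamarkinformality}: there one checks an intrinsic criterion on $B\pab$ itself, namely that it carries a compatible braided structure whose associated covers are contractible (each $B\pab(n)$ being a $K(K_n,1)$). You instead sketch the identification of $\pab$ with a full fundamental-groupoid suboperad of $\oper{E}_2$ on parenthesized basepoints and delegate the coherence work to \cite{fressehomotopy1}, which indeed contains a complete proof that the classifying space of $\pab$ is an operad weakly equivalent to the little $2$-discs operad; your sketch correctly isolates both the levelwise input (asphericity of configuration spaces via the Fadell--Neuwirth fibrations) and the genuine difficulty (the chosen basepoints are not stable under operadic composition, which is precisely what the magma structure $\oper{UM}$ is there to repair). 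The trade-off between the two routes: the recognition-principle argument never requires building an explicit comparison map, only verifying a criterion on the given operad, whereas the fundamental-groupoid argument produces an explicit zigzag of operad maps between $B\pab$ and $\oper{E}_2$ and explains conceptually why $\pab$ arises as a groupoid model in the first place. In both cases the real content resides in the cited literature, so your proof is no less complete than the paper's.
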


\begin{proof}
This follows from Fiedorowicz recognition principle. The detailed argument for $\pab$ can be found in \cite[Section 3.2.]{tamarkinformality}. 
\end{proof}

\begin{rem}
The operad $\pab$ is not only a convenient model of $\oper{E}_2$. One of the reasons for its importance in mathematics is that it is exactly the operad which encodes the structure of a braided monoidal category with a strict unit. An explanation of the relationship between $\pab$ and braided monoidal categories can be found in \cite[6.2.7.]{fressehomotopy1}. The operad $\paub$ on the other hand encodes the structure of a general braided monoidal category. We do not know a reference for this fact but the proof is similar to \cite[6.2.7.]{fressehomotopy1}. The operad $\oper{C}o\oper{B}$ encodes the structure of a braided monoidal category with a strict unit and a strictly associative tensor product as is proved in \cite[Theorem 1.4.4.]{wahlthesis}. Note that this operad is denoted $\oper{C}^\beta$ by Wahl.
\end{rem}

Recall that we have a map $v:\paub\to\pab$. By precomposition, we get a map
\[\phi:\Op\G(\pab,\oper{P})\to\Op\G(\paub,\oper{P})\]

The following lemma will be useful later on.

\begin{lemm}\label{lemm-paub to P}
Assume that $\oper{P}(0)=\oper{P}(1)=\ast$. Then the map
\[\phi:\Op\G(\pab,\oper{P})\to\Op\G(\paub,\oper{P})\]
is an isomorphism.
\end{lemm}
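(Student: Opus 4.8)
The plan is to prove injectivity and surjectivity of $\phi$ separately, the latter being where the hypothesis $\oper{P}(0)=\oper{P}(1)=\ast$ does all the work.

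For injectivity I would first observe that $v\colon\paub\to\pab$ is levelwise surjective on \emph{both} objects and morphisms. On objects it is the map $u\colon\oper{BM}(n)\to\oper{UM}(n)$, which is surjective because the inclusion of $\star$-free monomials $\oper{UM}(n)\hookrightarrow\oper{BM}(n)$ is a section. On morphisms it is surjective because $\paub(n)(p,q)=\pab(n)(u(p),u(q))$ by the very definition $\paub=u^*\pab$, the map $v$ identifies these hom-sets, and every pair of objects of $\pab(n)$ is of the form $(u(p),u(q))$. Since a functor is determined by its values on objects and morphisms, an operad map out of $\pab$ is determined by its composite with the levelwise surjection $v$; hence $\phi$ is injective.

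For surjectivity, fix an operad map $h\colon\paub\to\oper{P}$ and build $f\colon\pab\to\oper{P}$ with $f\circ v=h$. Since $v$ is surjective, $f$ is forced: on an object $x\in\oper{UM}(n)$ one must set $f(x)=h(p)$ for any $p$ with $u(p)=x$, so the first task is to check this is well defined, i.e. that $h(p)$ depends only on $u(p)$. I would prove by induction on the monomial $p$ that $h(p)=h(u(p))$, where $u(p)$ is viewed as a $\star$-free element of $\oper{BM}(n)$. Writing $p=\mu\circ(p_1,p_2)$: if both $p_i$ carry variables then $u(p)=\mu\circ(u(p_1),u(p_2))$ and the claim follows from the inductive hypothesis; if, say, $p_2\in\oper{BM}(0)$, then $h(p_2)\in\oper{P}(0)=\ast$ and
\[h(p)=\bigl(h(\mu)\circ_2 h(p_2)\bigr)\circ_1 h(p_1)=\id\circ_1 h(p_1)=h(p_1),\]
because $h(\mu)\circ_2 h(p_2)$ lies in $\oper{P}(1)=\ast$ and is therefore the operad unit; as $u(p)=u(p_1)$ the induction closes. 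This defines $f$ on objects.

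To define $f$ on morphisms I use the identification $\pab(n)(x,y)=\paub(n)(p,q)$ for any lifts $u(p)=x$, $u(q)=y$: a morphism $\gamma$ of $\pab$ has a unique preimage $\tilde\gamma_{p,q}\in\paub(n)(p,q)$, and I set $f(\gamma)=h(\tilde\gamma_{p,q})$. The crux, and the main obstacle, is independence of the lifts. Taking the $\star$-free representatives $x,y$ themselves as canonical lifts, let $\eta_p\in\paub(n)(p,x)$ be the coherence isomorphism corresponding to the identity braid under $\paub(n)(p,x)=\pab(n)(u(p),u(p))$. Under the $\pab$-identification one checks $\tilde\gamma_{x,y}=\eta_q\circ\tilde\gamma_{p,q}\circ\eta_p^{-1}$, so independence reduces to the claim $h(\eta_p)=\id$. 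I would prove this by the same induction: when $p=\mu\circ(p_1,p_2)$ with $p_2\in\oper{BM}(0)$, the isomorphism $\eta_p$ factors as $\eta_{p_1}\circ\rho$ with $\rho=\bar\rho\circ_1\id_{p_1}$ obtained by operadic composition from the right-unit isomorphism $\bar\rho\in\paub(1)$; since $h(\bar\rho)$ is a morphism of $\oper{P}(1)=\ast$ it is the identity, whence $h(\rho)=\id$ and $h(\eta_p)=\id$, the genuinely braided case $n_1,n_2\ge1$ being handled via $h(\id_\mu\circ(\eta_{p_1},\eta_{p_2}))=\id$.

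With $f$ well defined on objects and morphisms, functoriality and compatibility with the $\Sigma_n$-actions and operadic compositions follow because $h$ enjoys them and lifts along the surjective operad map $v$ respect composition, while $f\circ v=h$ holds on generators by construction. The essential difficulty throughout is isolating and trivializing the unit data of $\paub$ — the arity-zero element $\star$ together with the unit isomorphisms — which is precisely what collapses once $\oper{P}(0)$ and $\oper{P}(1)$ are terminal.
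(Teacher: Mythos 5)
Your proposal is correct and follows essentially the same route as the paper: injectivity comes from the section of $u\colon\oper{BM}(n)\to\oper{UM}(n)$ given by $\star$-free monomials (equivalently, levelwise surjectivity of $v$), and surjectivity comes from the fact that the unit coherence morphisms land in $\oper{P}(0)=\oper{P}(1)=\ast$ and hence are identities, so that every trivial braid $p\to su(p)$ — being an iterated operadic composite of these coherences — is sent to an identity. Your explicit structural induction on monomials is just a more detailed rendering of the paper's assertion that the trivial braid $x\to su(x)$ is generated under $\mu$ from the braids $(1\star)\to 1$, $(\star 1)\to 1$, $(\star\star)\to\star$, and your direct construction of the inverse replaces the paper's intermediate factorization criterion, with no substantive difference.
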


\begin{proof}
(1) Let us first make no assumption on $\oper{P}$. Let $f:\paub\to\oper{P}$ be a map of operads in the image of $\phi$. Then we see that  if $x$ and $y$ are two objects of $\paub(n)$ that are sent to the same object of $\pab(n)$ by $v$, they must be sent to the same object of $\oper{P}(n)$ by $f$. Similarly, if a map $\alpha\in\paub(n)$ is sent the an identity map by $v$, then it must be sent to an identity map by $f$. We claim that these conditions are in fact sufficient for $f$ to be in the image of $\phi$. Indeed, if $f$ satisfies these two conditions, then, for each $n$ there exists a unique $g:\pab(n)\to\oper{P}(n)$ whose precomposition with $v$ is $f$. It is then straightforward to verify that this map $g$ is a map of operads.

(2) For $n\geq 1$, the map $u:\oper{BM}(n)\to\oper{UM}(n)$ has a section $s$ which sends a monomial in $\{1,\ldots,n\}$ to itself seen as a monomial  in $\{\star,1,\ldots,n\}$. The composite $su$ sends a non-associative monomial to the non-associative monomial obtained by removing all the $\star$ symbols and the possibly redundant brackets. For instance $((1\star)((2\star)3))$ becomes $(1(23))$. 

(3) Now, we assume that $\oper{P}(0)=\oper{P}(1)=\ast$. Let $f:\paub\to\oper{P}$ be any map. Then $\on{Ob}(f):\oper{BM}\to\on{Ob}\oper{P}$ can be extended uniquely to a map $\on{Ob}(g):\oper{UM}\to\on{Ob}\oper{P}$. Indeed, $\oper{UM}$ is obtained from $\oper{BM}$ by adding the extra relation that $\mu\circ_1\star=\mu\circ_2\star=\id$. However, this relation is automatic in $\oper{P}$ since $\oper{P}(1)$ has just one object. Similarly, let $x$ be an object in $\paub(n)$, $x$ is a non-associative monomial in $\{1,\ldots,n\}$ and $\star$. The objects $x$ and $su(x)$ are sent to the same object in $\pab(n)$. Let $\alpha$ be the trivial braid from $x$ to $su(x)$. This is sent to an identity in $\pab(n)$. Let $\gamma$, $\delta$ and $\epsilon$ be the trivial braids $(1\star)\to 1$, $(\star 1)\to 1$ and $(\star\star)\to\star$. Then $\alpha$ can be obtained from $\gamma$, $\delta$ and $\epsilon$ by repeated applications of the operation $\mu$. Therefore, if $\gamma$, $\delta$ and $\epsilon$  are sent to the identity by $f$, so is $\alpha$. But since, by assumption, $\oper{P}(1)=\oper{P}(0)=\ast$, $\gamma$, $\delta$ and $\epsilon$ must be sent to to the identity. Now, let $x$ and $y$ are two objects of $\paub(n)$ that are sent to the same object of $\pab(n)$, then we can form $su(x)$ and $su(y)$ as before. We have $su(x)=su(y)$. Moreover, we have shown that the trivial braid $x\to su(x) $ and $y\to su(y)$ are sent to identities. This immediately implies that the trivial braid $x\to y$ is sent to an identity. It follows by the argument of paragraph (1) that $f$ is in the image of the map $\phi$ which implies that $\phi$ is surjective.

(4) Let $f$ and $g$ be two maps in $\Op\G(\pab,\oper{P})$ that are sent to the same map by $\phi$. We claim that $f$ and $g$ are equal. It suffices to check that they are equal in each degree. This follows from the fact that the map $\paub(n)\to\pab(n)$ has a section. By definition of $\paub(n)$, it suffices to prove that the map $\oper{BM}(n)\to\oper{UM}(n)$ has as section but this is the content of paragraph (2). 
\end{proof}

\subsection*{Cofibrant objects in $\Op\G$}

\begin{theo}
The category of operads in $\G$ has a model structure in which the weak equivalences and fibrations are the maps that are levelwise weak equivalences and fibrations.
\end{theo}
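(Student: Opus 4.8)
The plan is to obtain the model structure by transfer (right induction) along the free--forgetful adjunction
\[\FF:\G^{\mathbb{N}}\leftrightarrows\Op\G:U,\]
where $U$ sends an operad $\oper{O}$ to its underlying collection $\{\oper{O}(n)\}_{n\geq 0}$ and $\FF$ is the free operad functor. I would equip $\G^{\mathbb{N}}$ with the levelwise (product) model structure, which is again combinatorial, proper and simplicial because $\G$ is, and then \emph{define} a map of operads to be a weak equivalence (resp.\ fibration) exactly when $U$ sends it to one, i.e.\ when it is a levelwise weak equivalence (resp.\ fibration). The goal is then to verify Kan's transfer criterion for this right-induced structure.

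First I would dispose of the formal hypotheses. The functor $U$ is monadic and the associated free-operad monad on $\G^{\mathbb{N}}$ is finitary; since $\G$ is locally presentable, $\Op\G$ is complete and cocomplete and the smallness conditions needed for the small object argument hold automatically. Thus the only substantive point is the \emph{acyclicity condition}: every transfinite composite of pushouts of maps of the form $\FF(u)$, with $u$ a generating trivial cofibration of $\G^{\mathbb{N}}$, must be a levelwise weak equivalence. Concretely one must control a single cell attachment, i.e.\ a pushout
\[
\xymatrix{
\FF(A)\ar[r]\ar[d]&\oper{O}\ar[d]\\
\FF(B)\ar[r]&\oper{O}'
}
\]
with $A\to B$ a trivial cofibration in $\G^{\mathbb{N}}$, and show that the right-hand vertical map is a levelwise weak equivalence.

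The analysis of such a pushout proceeds through the standard filtration of $\oper{O}'$ by the number of attached cells, whose successive quotients are built from $\Sigma_n$-equivariant products of copies of $A$, $B$ and the groupoids $\oper{O}(m)$, passed through the coinvariants $(-)_{\Sigma_n}$. This is precisely the step where a general symmetric monoidal model category requires $\Sigma$-cofibrancy hypotheses, and it is the step I expect to be the main obstacle. For groupoids, however, three special features of the canonical model structure make it go through: products of weak equivalences are again weak equivalences (an equivalence of groupoids times any groupoid is an equivalence), every object is both fibrant and cofibrant, and the codiscrete interval $I[1]$ provides a functorial symmetric monoidal path object. The first two points remove the $\Sigma$-cofibrancy difficulty, so that no separate cofibrant replacement of the symmetric powers is needed, while $I[1]$ supplies the interval and the identity functor serves as a symmetric monoidal fibrant replacement functor, which are exactly the hypotheses of the Berger--Moerdijk admissibility criterion. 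In summary, the abstract content is precisely \cite[Theorem 3.2.]{bergeraxiomatic} (or its algebraic-theory formulation \cite[Theorem 4.7.]{bergnerrigidification}), and the task of this proof is to observe that the canonical model structure on $\G$ satisfies their hypotheses; right properness and simpliciality are then inherited from $\G$ levelwise.
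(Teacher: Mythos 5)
Your proposal is correct and matches the paper's proof in substance: the paper's entire argument is the observation that $\G$ is cartesian closed, cofibrantly generated, and has all objects cofibrant and fibrant, so the hypotheses of \cite[Theorem 3.2]{bergeraxiomatic} are satisfied, which is exactly where your transfer argument lands (the interval $I[1]$ and the identity as symmetric monoidal fibrant replacement functor being the concrete witnesses for those hypotheses). The intermediate discussion of the cell-attachment filtration is not needed, since Berger--Moerdijk's theorem packages the acyclicity condition for you, but invoking it is the same route the paper takes.
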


\begin{proof}
The model category $\G$ is cartesian closed, cofibrantly generated and all objects in it are cofibrant and fibrant, therefore the hypothesis of \cite[Theorem 3.2]{bergeraxiomatic} are satisfied.
\end{proof}

Let $\on{Ob}:\cat{G}\to\Set$ be the functor sending a groupoid to its set of objects. We have already seen that $\on{Ob}$ has a left adjoint that we denote $\on{Disc}$ and a right adjoint that we denote $\on{Codisc}$. 

The functor $\on{Ob}$, $\on{Disc}$ and $\on{Codisc}$ all preserve products and we use the same notation for the functors $\Op\G\to\Op$ and $\Op\to\Op\G$ that they induce. We have two adjunctions
\[\on{Disc}:\Op\leftrightarrows\Op\G:\on{Ob},\]
\[\on{Ob}:\Op\G\leftrightarrows\Op:\on{Codisc}.\]

\begin{prop}
The cofibrations in $\Op\G$ are exactly the maps $u:\oper{A}\to\oper{B}$, such that $\on{Ob}(u)$ has the left lifting property against the maps which are levelwise surjective.
\end{prop}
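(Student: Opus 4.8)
The plan is to exploit the adjunction $\on{Ob}:\Op\G\leftrightarrows\Op:\on{Codisc}$ recorded just above, together with an explicit description of the trivial fibrations of $\Op\G$. Since the weak equivalences and fibrations of $\Op\G$ are detected levelwise, a map is a trivial fibration precisely when it is levelwise a trivial fibration in the canonical model structure on $\G$, that is, levelwise fully faithful and surjective on objects (the description used in Lemma \ref{lemm-generating pro-fibrations are fibrations}). The cofibrations are by definition the maps having the left lifting property against all such maps, so the whole task is to reconcile this lifting condition on $u$ with a lifting condition on $\on{Ob}(u)$ in $\Op$.

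For the forward direction, I would observe that for any levelwise surjection $q$ of operads in sets, the map $\on{Codisc}(q)$ is levelwise the functor $\on{Codisc}(q_n)$ attached to a surjection of sets; such a functor is surjective on objects and fully faithful (the hom-sets of a codiscrete groupoid are singletons), so $\on{Codisc}(q)$ is a trivial fibration in $\Op\G$. If $u$ is a cofibration it therefore lifts against $\on{Codisc}(q)$, and by the lifting-property adjunction attached to $\on{Ob}\dashv\on{Codisc}$ this is equivalent to $\on{Ob}(u)$ lifting against $q$. Hence $\on{Ob}(u)$ has the left lifting property against every levelwise surjection.

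For the converse, given a trivial fibration $p:\oper{E}\to\oper{F}$ and a commutative square with left leg $u$ and right leg $p$ (top map $a$, bottom map $g$), I would first build a lift on objects. The map $\on{Ob}(p)$ is a levelwise surjection in $\Op$, so the hypothesis on $\on{Ob}(u)$ produces a map of operads in sets $\ell_0:\on{Ob}\oper{B}\to\on{Ob}\oper{E}$ with $\ell_0\circ\on{Ob}(u)=\on{Ob}(a)$ and $\on{Ob}(p)\circ\ell_0=\on{Ob}(g)$. One then extends $\ell_0$ over morphisms: because $p$ is levelwise fully faithful and $\ell_0$ agrees with $g$ on objects, for each morphism $f$ of $\oper{B}(n)$ there is a unique morphism $\ell(f)$ of $\oper{E}(n)$ whose endpoints are prescribed by $\ell_0$ and with $p(\ell(f))=g(f)$. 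This defines a candidate lift $\ell$, and all required identities (levelwise functoriality, compatibility with the operadic composition maps, with the symmetric group actions and units, and the two relations $\ell\circ u=a$ and $p\circ\ell=g$) are \emph{forced}: in each case the two morphisms to be compared have the same source and target and become equal after applying the levelwise-faithful $p$, hence are already equal.

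The main obstacle is exactly this last verification that the morphism-level extension $\ell$ respects the full operad structure and not merely the levelwise groupoid structure. This is where faithfulness of $p$ does the essential work, and where it matters that $\ell_0$ was chosen as a genuine map of operads in sets, so that the sources and targets of operadically composite morphisms match on the nose before one invokes injectivity of $p$ on morphisms. Everything else is routine bookkeeping transported across the adjunction.
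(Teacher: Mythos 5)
Your proof is correct, and on the harder inclusion it coincides with the paper's argument; on the other inclusion you take a genuinely different route. For the direction ``$\on{Ob}(u)$ has the lifting property $\Rightarrow$ $u$ is a cofibration'' you do exactly what the paper does: apply the hypothesis to the levelwise surjection $\on{Ob}(p)$ to get an object-level lift that is a map of operads in sets, extend it uniquely over morphisms using that a trivial fibration of $\Op\G$ is levelwise fully faithful, and note that every required identity (levelwise functoriality, $\circ_i$-compatibility, equivariance, units, and the two commuting triangles) is forced, since in each case the two arrows to be compared have the same prescribed endpoints and the same image under the levelwise faithful $p$. For the direction ``cofibrations satisfy the lifting condition on objects'' the paper argues via cofibrant generation: because $\on{Ob}$ is a left adjoint, the class of maps $u$ with $\on{Ob}(u)$ having the left lifting property against levelwise surjections is closed under pushouts, transfinite composition and retracts, and it contains the generating cofibrations of the transferred model structure, hence all cofibrations. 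You instead transpose across $\on{Ob}\dashv\on{Codisc}$: for a levelwise surjection $q$ of operads in sets, $\on{Codisc}(q)$ is a levelwise trivial fibration (codiscrete groupoids have singleton hom-sets, so such functors are automatically fully faithful), so a cofibration $u$ lifts against it, which by adjunction is the same as $\on{Ob}(u)$ lifting against $q$. Your version is more self-contained: it needs neither the cofibrant generation of $\Op\G$ nor any identification of its generating cofibrations (a point the paper asserts without verification), only the defining lifting property of cofibrations and the product-preservation of $\on{Codisc}$ already recorded before the proposition; the paper's version, in exchange, exhibits the class as cellularly closed, which is the standard template when a right adjoint to $\on{Ob}$ is not available.
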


\begin{proof}
Let $C$ be the class of maps $u:\oper{A}\to\oper{B}$, such that $\on{Ob}(u)$ has the left lifting property against the maps which are levelwise surjective. The functor $\on{Ob}$ has a right adjoint. This implies that the class $C$ is stable under transfinite compositions, pushouts and retracts. Moreover, the generating cofibrations are in $C$, this implies that $C$ contains the cofibrations of $\Op\G$.

Conversely, let $u:\oper{A}\to\oper{B}$ be a map of $C$. Let $p:\oper{O}\to\oper{P}$ be a trivial fibration in $\Op\G$. We have the following diagram
\[
\xymatrix{
\on{Disc}\on{Ob}\oper{A}\ar[d]\ar[r]^{\;\;c}&\oper{A}\ar[d]_u\ar[r]^f&\oper{O}\ar[d]_p\\
\on{Disc}\on{Ob}\oper{B}\ar[r]_{\;\;d}&\oper{B}\ar[r]_g&\oper{P}
}
\]
where $c$ and $d$ are the counits of the adjunction $(\on{Disc},\on{Ob})$. Since $\on{Ob}(p)$ is surjective, and $u$ is in $C$, then there is a map $k:\on{Disc}\on{Ob}\oper{B}\to\oper{O}$ such that $pk=gd$. Now we construct a map $l:\oper{B}\to\oper{O}$.

Since the map $d$ is levelwise bijective on object, we can define declare $l$ of an object of $\oper{B}(n)$ to be $k$ applied to the same object of $\on{Disc}\on{Ob}\oper{B}(n)$. The map $p$ is levelwise surjective on objects and fully faithful. Thus for any map $\alpha$ in $\oper{P}(n)$ and any choice of lift of its source and target in $\oper{O}(n)$, there is a unique map in $\oper{O}(n)$ which lifts $\alpha$ and has the chosen source and target.

Hence, for $\alpha$ an arrow of $\oper{B}(n)$, we can define $l(\alpha)$ to be the unique map lifting $g(\alpha)$ whose source is $l$ of the source of $\alpha$ and target is $l$ of the target of $\alpha$. This defines $l$ as a map of sequences of groupoids. The map $\on{Ob}(l):\on{Ob}\oper{B}\to\on{Ob}\oper{O}$ is a map of operads in sets by construction. To check that $l$ is actually a map of operads in groupoids, it suffices to check that for any arrow $\alpha$ in $\oper{B}(n)$ and $\beta$ in $\oper{B}(m)$, we have the identity
\[l(\alpha\circ_i\beta)=l(\alpha)\circ_i l(\beta).\]
The two sides of this equation are arrows in $\oper{O}(m+n-1)$ which lift $g(\alpha\circ_i\beta)$. Moreover both sides of the equation have same source and target. Therefore, by our previous observation, both sides are actually equal.
\end{proof}

\begin{coro}
Let $\oper{A}$ be an operad in $\Op\G$ such that $\on{Ob}\oper{A}$ is a free operad in sets. Then $\oper{A}$ is cofibrant.
\end{coro}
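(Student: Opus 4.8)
The plan is to use the characterization of cofibrations in $\Op\G$ established in the preceding proposition. To show that $\oper{A}$ is cofibrant I would show that the unique map $\varnothing\to\oper{A}$ from the initial operad is a cofibration. By that proposition, this is equivalent to the assertion that $\on{Ob}(\varnothing\to\oper{A})$ has the left lifting property against the levelwise surjective maps in $\Op\Set$.

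The first observation is that $\on{Ob}:\Op\G\to\Op\Set$ preserves the initial object. Indeed, it admits both a left adjoint $\on{Disc}$ and a right adjoint $\on{Codisc}$, so it preserves all colimits and in particular initial objects. Hence $\on{Ob}(\varnothing)$ is the initial object of $\Op\Set$, and in any lifting square the top horizontal map is uniquely determined. The lifting condition therefore reduces to the statement that $\on{Ob}\oper{A}$ is \emph{projective} with respect to levelwise surjections: for every levelwise surjective $p:\oper{O}\to\oper{P}$ in $\Op\Set$ and every map $\on{Ob}\oper{A}\to\oper{P}$, there is a lift $\on{Ob}\oper{A}\to\oper{O}$.

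To prove this projectivity I would use the hypothesis that $\on{Ob}\oper{A}$ is free, say $\on{Ob}\oper{A}\cong\FF X$ for some $X$ in $\Set^{\mathbb{N}}$, together with the free-forgetful adjunction $\FF\dashv U$. Given a levelwise surjection $p:\oper{O}\to\oper{P}$ and a map $f:\FF X\to\oper{P}$, the latter corresponds under the adjunction to a map $\bar f:X\to U\oper{P}$ of collections. Since each $p(n):\oper{O}(n)\to\oper{P}(n)$ is a surjection of sets, I can lift $\bar f$ degreewise---choosing for each element of $X(n)$ a preimage of its image in $\oper{P}(n)$---to obtain $\bar g:X\to U\oper{O}$ with $U(p)\circ\bar g=\bar f$. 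Adjointing back yields a map $g:\FF X\to\oper{O}$, and because the adjunction bijection is natural the relation $\overline{p\circ g}=U(p)\circ\bar g=\bar f$ forces $p\circ g=f$. Thus $g$ is the required lift.

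The only point that deserves care is that the degreewise set-theoretic lift actually produces a morphism of operads and not merely of underlying collections; this is handled automatically by passing through the adjunction $\FF\dashv U$, since a map out of a free operad is completely determined by its restriction to the generators. Consequently I expect no genuine obstacle here: the statement follows formally from the cofibration criterion of the previous proposition together with the splitting of surjections of sets.
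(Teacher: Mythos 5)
Your proof is correct and is precisely the argument the paper leaves implicit: reduce, via the preceding proposition, to showing that $\on{Ob}(\varnothing\to\oper{A})$ has the left lifting property against levelwise surjections in $\Op\Set$, and deduce this projectivity of the free operad $\on{Ob}\oper{A}\cong\FF X$ from the adjunction $\FF\dashv U$ together with a degreewise choice of preimages. There are no gaps; this is the same approach, just with the routine details written out.
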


\begin{coro}\label{coro-paub cofibrant}
The operad $\paub$ is cofibrant in $\Op\G$
\end{coro}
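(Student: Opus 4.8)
The plan is to invoke the immediately preceding corollary, which reduces cofibrancy of an operad in $\G$ to the freeness of its underlying operad of objects in sets. Thus the entire task reduces to identifying $\on{Ob}(\paub)$ and recognizing it as a free operad.

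First I would unwind the definition $\paub:=u^*\pab$, where $u\colon\oper{BM}\to\oper{UM}$ is the map of operads in sets constructed earlier. By the construction of the functor $u^*$ recalled at the start of this section (applied to an operad $\oper{P}$ in groupoids together with a map $u\colon\oper{Q}\to\on{Ob}\oper{P}$ of operads in sets), the operad of objects of $u^*\oper{P}$ is exactly $\oper{Q}$. Since $\pab=z^*\oper{C}o\oper{B}$ gives $\on{Ob}(\pab)=\oper{UM}$, and $u$ has source $\oper{BM}$, this yields the identification $\on{Ob}(\paub)=\oper{BM}$.

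Next I would appeal to the very definition of $\oper{BM}$: the operad of based magmas was introduced as the free operad in sets generated by one binary operation $\mu$ and one nullary operation $\star$. Hence $\on{Ob}(\paub)$ is a free operad in sets. Applying the preceding corollary to $\oper{A}=\paub$ then gives immediately that $\paub$ is cofibrant in $\Op\G$.

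There is no substantial obstacle in this argument. The only point requiring any care is the bookkeeping of which operad in sets underlies $\paub$ — one must not confuse $\on{Ob}(\paub)=\oper{BM}$ with $\on{Ob}(\pab)=\oper{UM}$ — but this is entirely settled by the definition of $u^*$ and the choice of source for the map $u$. The freeness of $\oper{BM}$ is then true by construction rather than requiring proof.
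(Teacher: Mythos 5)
Your proof is correct and is essentially identical to the paper's: both identify $\on{Ob}(\paub)=\oper{BM}$, note that $\oper{BM}$ is free by construction, and apply the preceding corollary. In fact your careful bookkeeping improves on the paper's own one-line proof, which contains a typo ($\on{Ob}\pab$ where $\on{Ob}\paub$ is meant).
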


\begin{proof}
Indeed, $\on{Ob}\pab$ is the operad $\oper{BM}$ which is free on one operation of degree $0$ and one operation of degree $2$.
\end{proof}

\section{The Grothendieck-Teichm\"uller group}

\subsection*{Drinfel'd's definition}

Recall from proposition \ref{prop-profinite completion commutes with products} that for groupoids with a finite set of objects, the profinite completion functor is symmetric monoidal. Thus there is an operad $\h{\pab}$ obtained by applying the profinite completion functor in each arity to the operad in groupoid $\pab$.

Let us recall a notation. For $i\leq n-1$, we denote by $\sigma_{i}$ the Artin generator of $B_n$. The group $B_n$ can then be defined as 
\[B_n=\langle \sigma_1,\ldots,\sigma_{n-1}|\sigma_i\sigma_{i+1}\sigma_1=\sigma_{i+1}\sigma_i\sigma_{i+1},\sigma_i\sigma_j=\sigma_j\sigma_i\;\;\mathrm{ if }\;\;|i-j|\geq 2\rangle.\]

For $i<j$, we denote by $x_{ij}$ the element of $K_n$ defined by \[x_{ij}=(\sigma_{j-1}\ldots\sigma_{i+1})\sigma_i^2(\sigma_{j-1}\ldots\sigma_{i+1})^{-1}.\]
The $x_{ij}$ generate $K_n$.

\begin{defi}\label{defi-GT}
We define \emph{the profinite Grothendieck-Teichm\"uller monoid} $\puGT$ to be the subset of elements $(\lambda,f)$ of $\h{\mathbb{Z}}\times\h{F_2}$ satisfying the following equations:

\begin{enumerate}

\item $f(x,y)f(y,x)=1$.

\item $f(z,x)z^mf(y,z)y^mf(x,y)x^m=1$ for $xyz=1$ and $m=(\lambda-1)/2$.

\item In the profinite group $\h{K_4}$ we have the equation
\[f(x_{12},x_{23}x_{24})f(x_{13}x_{23},x_{34})=f(x_{23},x_{34})f(x_{12}x_{13},x_{24}x_{34})f(x_{12},x_{23}).\]
\end{enumerate}
\end{defi}

There is a monoid structure on $\puGT$ given by 
\[(\lambda_1,f_1).(\lambda_2,f_2)=(\lambda_1\lambda_2,f_1(f_2(x,y)x^{\lambda_2}f_2(x,y)^{-1},y^{\lambda_2})f_2(x,y)).\]

The easiest way to understand this monoid structure is to consider the map
\[\puGT\to\on{End}(\h{F_2})\]
which sends $(\lambda,f)$ to the unique continuous group homomorphism
\[\h{F_2}\to\h{F_2}\]
sending $x$ to $x^\lambda$ and $y$ to $f^{-1}y^\lambda f$. The monoid structure on $\puGT$ is then such that this map is a monoid map.

\begin{theo}[Drinfel'd]
The monoid $\puGT$ is the monoid of endomorphisms of $\h{\pab}$ inducing the identity on $\on{Ob}\h{\pab}$.
\end{theo}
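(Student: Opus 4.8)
The plan is to reduce the statement to the standard presentation of $\pab$ as the operad governing braided monoidal categories with strict unit (Fresse, as recalled in the remark following Proposition \ref{prop-PaB is E2}), and then to analyze object-fixing endomorphisms through their effect on the generating morphisms. First I would record the shape of the source: each groupoid $\pab(n)$ is connected, has object set $\oper{UM}(n)$, and has automorphism group $K_n$ at every object. Since $\oper{UM}(n)$ is finite, Proposition \ref{prop-profinite completion commutes with products} guarantees both that the levelwise completion $\h{\pab}$ is genuinely an operad in $\pG$ and that $\h{\pab}(n)$ is the connected profinite groupoid on the finite object set $\oper{UM}(n)$ with automorphism groups $\h{K_n}$. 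In particular $\on{Ob}\h{\pab}=\oper{UM}$ is unchanged by completion, so it makes sense to speak of endomorphisms inducing the identity on objects.

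Next I would use that $\pab$ is generated, as an operad in groupoids, by its objects together with two morphisms: the braiding $\tau\in\pab(2)$ (the morphism $\sigma_1$ from $\mu$ to the transposed object) and the associator $\alpha\in\pab(3)$, subject to the unit axioms, the two hexagon relations, and the pentagon relation; these generators and relations pass to topological generators and relations of $\h{\pab}$. Because an endomorphism $\phi$ is the identity on objects, $\phi(\tau)$ and $\phi(\alpha)$ have the same source and target as $\tau$ and $\alpha$; hence $\phi(\tau)=\tau\cdot p$ with $p\in\h{K_2}\cong\h{\mathbb{Z}}$ generated by $x_{12}$, and $\phi(\alpha)=\alpha\cdot g$ with $g\in\h{K_3}$. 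Writing $\phi(\tau)=\sigma_1^{\lambda}$ defines $\lambda\in\h{\mathbb{Z}}$ (necessarily odd, so that $\phi(\tau)$ still covers the transposition), with $p=x_{12}^{(\lambda-1)/2}$. Using $K_3\cong F_2\times\mathbb{Z}$ together with the requirement that $\phi$ commute with the operadic restriction and unit maps $\h{\pab}(3)\to\h{\pab}(2)$, I would show the central component of $g$ vanishes, so that $g=f(x_{12},x_{23})$ for a unique $f\in\h{F_2}$. This assigns to $\phi$ the pair $(\lambda,f)$.

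The heart of the argument is then to transcribe the defining relations of $\pab$ into Drinfel'd's equations. Evaluating the two hexagon relations in $\h{B_3}$ and combining them with the unit and symmetry compatibility of the braiding yields relation (1), $f(x,y)f(y,x)=1$, and relation (2) with $m=(\lambda-1)/2$; reading the pentagon relation off in $\h{K_4}$ on the generators $x_{ij}$ yields relation (3). Conversely, given $(\lambda,f)$ satisfying (1)--(3), I would define $\phi$ on generators by $\phi(\tau)=\sigma_1^{\lambda}$ and $\phi(\alpha)=\alpha\cdot f(x_{12},x_{23})$, and verify that equations (1)--(3) are exactly what is required for these assignments to respect every defining relation; since the generators topologically generate $\h{\pab}$, $\phi$ extends to a well-defined continuous operad endomorphism inducing the identity on objects. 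Finally, computing the composite of $\phi_{(\lambda_1,f_1)}$ with $\phi_{(\lambda_2,f_2)}$ on $\tau$ and $\alpha$ reproduces Drinfel'd's multiplication formula, so the assignment $\phi\mapsto(\lambda,f)$ is an isomorphism of monoids.

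The main obstacle is the explicit bookkeeping of the previous paragraph: checking that the operadic pentagon and hexagon relations translate \emph{precisely} into the three equations defining $\puGT$ is a delicate computation with braid generators, and one must also confirm that ``identity on objects,'' together with connectivity and continuity, genuinely forces $\phi$ to be determined by the data $(\lambda,f)$ with no residual freedom. This matching is exactly Drinfel'd's original calculation (reformulated operadically by Fresse), which I would cite rather than reproduce in full; the remaining verifications that $\h{\pab}$ is an operad and that completion does not disturb objects follow formally from Proposition \ref{prop-profinite completion commutes with products}.
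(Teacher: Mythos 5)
Your proposal is correct and takes essentially the same route as the paper: the paper's entire proof of this theorem is a citation of Drinfel'd's computation in \cite[Section 4]{drinfeldquasi}, and your sketch (extracting $(\lambda,f)$ from the images of the braiding and associator, killing the central component of $g$ via the unit maps, and matching the hexagon and pentagon relations with Drinfel'd's three equations) is precisely the content of that cited argument in its operadic reformulation, which you likewise conclude by citing rather than reproducing. The only caveat worth noting is that the passage from a presentation of $\pab$ to a ``topological'' presentation of $\h{\pab}$ needs the care you allude to, but this is handled in the reference you defer to.
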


\begin{proof}
See \cite[Section 4]{drinfeldquasi}.
\end{proof}

We denote by $\pGT$ the group of units of $\puGT$. The previous theorem has the following immediate corollary.

\begin{coro}
The group $\pGT$ is the group of automorphisms of $\h{\pab}$ inducing the identity on objects.
\end{coro}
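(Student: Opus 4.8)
The plan is to deduce this statement directly from the preceding theorem of Drinfel'd by passing to groups of units. By definition, $\pGT$ is the group of units of the monoid $\puGT$, and the theorem identifies $\puGT$, as a monoid under composition, with the monoid $\on{End}_{\on{Ob}}(\h{\pab})$ of endomorphisms of the operad $\h{\pab}$ that induce the identity on $\on{Ob}\h{\pab}$. Since the formation of the group of units is functorial with respect to monoid isomorphisms, I would first observe that the isomorphism $\puGT\cong\on{End}_{\on{Ob}}(\h{\pab})$ restricts to an isomorphism of groups between $\pGT$ and the group of units of $\on{End}_{\on{Ob}}(\h{\pab})$.

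It then remains to identify this group of units with the group $\on{Aut}_{\on{Ob}}(\h{\pab})$ of operad automorphisms of $\h{\pab}$ inducing the identity on objects. First I would note that a unit of $\on{End}_{\on{Ob}}(\h{\pab})$ is an endomorphism $\phi$ admitting a two-sided inverse $\psi$ lying again in $\on{End}_{\on{Ob}}(\h{\pab})$; in particular $\phi$ is invertible in $\on{End}(\h{\pab})$, hence an automorphism of the operad fixing objects. Conversely, given any automorphism $\phi$ of $\h{\pab}$ with $\on{Ob}(\phi)=\id$, its inverse $\phi^{-1}$ exists in $\on{End}(\h{\pab})$ and satisfies $\on{Ob}(\phi^{-1})=\on{Ob}(\phi)^{-1}=\id$ by functoriality of $\on{Ob}$, so $\phi^{-1}$ again fixes objects and $\phi$ is a unit. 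This shows that the units of $\on{End}_{\on{Ob}}(\h{\pab})$ are exactly the automorphisms of $\h{\pab}$ fixing objects, completing the identification.

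The argument is entirely formal once Drinfel'd's theorem is granted; the only point requiring a (trivial) verification is that the property of inducing the identity on objects is stable under taking inverses, which follows immediately from applying the functor $\on{Ob}$ to the relation $\on{Ob}(\phi)\circ\on{Ob}(\phi^{-1})=\id$. I therefore do not anticipate any genuine obstacle, and the corollary should follow in a single short paragraph.
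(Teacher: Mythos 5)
Your proof is correct and follows exactly the route the paper intends: the paper states this corollary as an ``immediate'' consequence of Drinfel'd's theorem, and your argument simply writes out the immediate step -- that passing to groups of units in the isomorphism $\puGT\cong\on{End}_{\on{Ob}}(\h{\pab})$ identifies $\pGT$ with the automorphisms fixing objects, the only small verification being that $\on{Ob}(\phi^{-1})=\on{Ob}(\phi)^{-1}=\id$, which you handle correctly via functoriality of $\on{Ob}$.
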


Since profinite completion preserves surjections of groups, the abelianization map $F_2\to \mathbb{Z}^2$ induces a surjective map of topological groups $\h{F_2}\to \h{\mathbb{Z}}^2$. 

\begin{prop}\label{prop-f is a commutator}
If $(\lambda,f)\in\puGT$, then $f$ maps to $0$ in $\h{\mathbb{Z}}^2$.				
\end{prop}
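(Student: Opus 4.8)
The plan is to compute the images of the defining relations of $\puGT$ under abelianization. Write $\phi\colon\h{F_2}\to\h{\mathbb{Z}}^2$ for the continuous homomorphism induced by abelianization, with $\h{\mathbb{Z}}^2$ written additively on the basis $\bar x,\bar y$, and set $\phi(f)=\alpha\bar x+\beta\bar y$ with $\alpha,\beta\in\h{\mathbb{Z}}$; the goal is to show $\alpha=\beta=0$. The principle I will use repeatedly is that for any group $G$ and any $a,b\in\h{G}$, the element $f(a,b)$ (the image of $f$ under $\h{F_2}\to\h{G}$, $x\mapsto a$, $y\mapsto b$) has image $\alpha\,\overline{a}+\beta\,\overline{b}$ in the profinite abelianization $\h{G}^{\mathrm{ab}}$, because the composite $\h{F_2}\to\h{G}\to\h{G}^{\mathrm{ab}}$ lands in an abelian group and hence factors through $\h{F_2}^{\mathrm{ab}}=\h{\mathbb{Z}}^2$.

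First I would use relation (1). The automorphism $x\leftrightarrow y$ of $F_2$ induces the swap on $\h{\mathbb{Z}}^2$, so $\phi(f(y,x))=\beta\bar x+\alpha\bar y$. Applying $\phi$ to $f(x,y)f(y,x)=1$ gives $(\alpha+\beta)\bar x+(\alpha+\beta)\bar y=0$, hence $\beta=-\alpha$. Thus $\phi(f)=\alpha(\bar x-\bar y)$, and it remains only to prove $\alpha=0$.

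Next I would turn to relation (3), working in $\h{K_4}^{\mathrm{ab}}$. Since $K_4^{\mathrm{ab}}\cong\mathbb{Z}^6$ is free abelian on the classes $e_{ij}$ of the generators $x_{ij}$ for $1\le i<j\le 4$, profinite completion gives $\h{K_4}^{\mathrm{ab}}\cong\h{\mathbb{Z}}^6$. By the principle above each factor $f(a,b)$ maps to $\alpha(\overline{a}-\overline{b})$, so a short computation sends the left- and right-hand sides of relation (3) respectively to
\[\alpha(e_{12}+e_{13}-e_{24}-e_{34})\quad\text{and}\quad\alpha(2e_{12}+e_{13}-e_{24}-2e_{34}).\]
Equating them yields $\alpha(e_{12}-e_{34})=0$ in $\h{\mathbb{Z}}^6$. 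Since $e_{12}-e_{34}$ is a primitive vector — it extends to a $\h{\mathbb{Z}}$-basis of $\h{\mathbb{Z}}^6$ — the map $\alpha\mapsto\alpha(e_{12}-e_{34})$ is injective, so $\alpha=0$. With $\beta=-\alpha$ this gives $\phi(f)=0$, as desired.

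The point to watch is that relation (2) is useless for this argument: computing in $\langle x,y\rangle^{\mathrm{ab}}$ with $z=x^{-1}y^{-1}$, the contributions of $z^m,x^m,y^m$ cancel and the relation abelianizes to an identity independent of $\alpha$ (this is also why $\lambda$ never enters). Hence the argument must genuinely use the pentagon relation (3), and the substance of the proof is the elementary linear-algebra observation that the surviving coefficient $e_{12}-e_{34}$ is a nonzero primitive element of $\h{\mathbb{Z}}^6$.
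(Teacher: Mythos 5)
Your proof is correct and follows essentially the same route as the paper: both pass to the map $\h{K_4}\to\h{\mathbb{Z}}^6$ induced by the abelianization of $K_4$ (free on the classes of the $x_{ij}$, since the relations among them are commutators) and read off the pentagon relation (3) there. The only difference is cosmetic: you first use relation (1) to reduce to the single unknown $\alpha$, whereas the paper substitutes $\mu x+\nu y$ directly into (3) and obtains $\mu e_{12}+\nu e_{34}=0$, killing both coefficients at once.
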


\begin{proof}
The image of $f$ in $\h{\mathbb{Z}}^2$ is of the form $\mu x+\nu y$ with $\mu$ and $\nu$ two elements in $\h{\mathbb{Z}}$. We want to prove that $\mu=\nu=0$.

Recall that we have the generators $x_{ij}$ in $K_n$. The relations between the $x_{ij}$ are all in the commutator subgroup of $K_n$ as explained in \cite[Equation 4.6, 4.7, 4.8 and 4.9]{drinfeldquasi}. This implies that the abelianization of $K_n$ is the free abelian group on $x_{ij}$ $1\leq i<j\leq n$. In particular, we have a surjective map $K_4\to\mathbb{Z}^6$ where the $6$ generators of the target are the images of $x_{12}$, $x_{13}$, $x_{14}$, $x_{23}$, $x_{24}$ and $x_{34}$. We thus get a surjective map $\h{K}_4\to\h{\mathbb{Z}}^6$.

We map the third equation defining $\puGT$ to $\h{\mathbb{Z}}^6$ via the above map $\h{K}_4\to\h{\mathbb{Z}}^6$. We get the following equation $\mu x_{12}+\nu x_{34}=0$ in the group $\h{\mathbb{Z}}^6$. It immediately implies that $\mu=\nu=0$.
\end{proof}

\subsection*{The action on $\h{K_3}$}

We have the category $\pGrp$ of profinite groups. We denote by $u\pGrp$ the category whose objects are profinite groups and whose morphisms are conjugacy classes of continuous group homomorphisms. The $u$-prefix stands for ``unbased'' as these correspond to homotopy classes of morphisms between the classifying spaces seen as unbased spaces.

\begin{lemm}\label{lemm-centralizer of x}
Let $\h{F_2}$ be the free profinite group on two generators $x$ and $y$. For any $\lambda\in \h{\mathbb{Z}}-\{0\}$, the centralizer of $x^{\lambda}$ is the subgroup generated by $x$.
\end{lemm}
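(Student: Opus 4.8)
The plan is to combine one structural input about free profinite groups with an elementary computation in the abelianization. Throughout I write $\overline{\langle x\rangle}$ for the closed subgroup of $\widehat{F_2}$ topologically generated by $x$ (this is what ``the subgroup generated by $x$'' should mean in a profinite group). Since $x$ is a basis element, $\overline{\langle x\rangle}\cong\widehat{\mathbb{Z}}$, and in particular $x^{\lambda}\neq 1$ whenever $\lambda\neq 0$. The inclusion $\overline{\langle x\rangle}\subseteq C_{\widehat{F_2}}(x^{\lambda})$ is immediate, because $\overline{\langle x\rangle}$ is abelian and contains $x^{\lambda}$; so the entire content of the lemma is the reverse inclusion.

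The first step I would take is to invoke the theory of centralizers in free profinite groups: the centralizer $C:=C_{\widehat{F_2}}(x^{\lambda})$ of a nontrivial element is procyclic. Concretely, $\widehat{F_2}$ is the free profinite product $\widehat{\mathbb{Z}}\amalg\widehat{\mathbb{Z}}$ of two copies of $\widehat{\mathbb{Z}}$, so this is an instance of the computation of centralizers in free profinite products (see Herfort--Ribes, or Ribes--Zalesskii). It is worth stressing that this cannot be obtained by naively completing the discrete identity $C_{F_2}(x^{n})=\langle x\rangle$, since profinite completion does not commute with the formation of centralizers; the profinite input is genuinely needed.

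The second step, which is elementary, is to show that $\overline{\langle x\rangle}$ is a \emph{maximal} procyclic subgroup of $\widehat{F_2}$. Suppose $\overline{\langle x\rangle}\subseteq P=\overline{\langle h\rangle}$ with $P$ procyclic; then $x=h^{a}$ for some $a\in\widehat{\mathbb{Z}}$. Applying the abelianization $\widehat{F_2}\to\widehat{\mathbb{Z}}^{2}$, which sends $x$ to $(1,0)$, and writing the image of $h$ as $(u,v)$, the relation $x=h^{a}$ becomes $a\cdot(u,v)=(1,0)$, whence $au=1$ and $a$ is a unit of $\widehat{\mathbb{Z}}$ with $a^{-1}=u$. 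Therefore $h=(h^{a})^{a^{-1}}=x^{a^{-1}}\in\overline{\langle x\rangle}$, forcing $P=\overline{\langle x\rangle}$.

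Finally I would assemble the two steps: $C$ is procyclic by Step~1 and contains the maximal procyclic subgroup $\overline{\langle x\rangle}$ by the trivial inclusion above; since $\overline{\langle x\rangle}$ cannot sit properly inside a procyclic subgroup, maximality forces $C=\overline{\langle x\rangle}$, which is the assertion. The main obstacle is Step~1: once the centralizer is known to be procyclic everything else is formal, but that fact rests on the nontrivial structure theory of free profinite products and would take real work to reprove from scratch. If one prefers to bypass the maximality argument, one can instead apply the sharper form of that structure theory directly: for a nontrivial element $g$ of a free factor $G_{i}$ of a free profinite product one has $C(g)=C_{G_{i}}(g)$, and applying this to the abelian factor $\overline{\langle x\rangle}\cong\widehat{\mathbb{Z}}$ gives $C(x^{\lambda})=\overline{\langle x\rangle}$ at once.
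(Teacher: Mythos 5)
Your proof is correct, and it takes a more explicit route than the paper, whose entire proof of this lemma is the citation \cite[Lemma 2.1.2.]{nakamuragalois}. In effect you have unwound that citation by one level: the structural input you invoke --- the Herfort--Ribes theorem on centralizers in free profinite products, applied to the decomposition $\h{F_2}\cong \h{\mathbb{Z}}\amalg \h{\mathbb{Z}}$ with $\overline{\langle x\rangle}$ as a free factor (legitimate: profinite completion is a left adjoint, hence preserves the coproduct $F_2=\mathbb{Z}\ast\mathbb{Z}$, and $\h{\mathbb{Z}}\to\h{F_2}$ is injective since it admits a retraction) --- is the same input that stands behind Nakamura's lemma, so the two routes ultimately rest on the same external theorem. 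Within your write-up, note that the two-step structure is redundant once that theorem is available: the factor form $C_G(g)=C_{G_i}(g)$ for $1\neq g\in G_i$, applied to $x^{\lambda}\in\overline{\langle x\rangle}\cong\h{\mathbb{Z}}$, finishes the proof in one line, as you observe at the end; Step 1 in its general form (every nontrivial element of $\h{F_2}$ has procyclic centralizer) is a strictly stronger statement, since it also treats elements not conjugate into a free factor, so quoting it buys nothing except the need for your maximality argument --- which, for the record, is correct: every element of $\overline{\langle h\rangle}$ is of the form $h^{a}$ with $a\in\h{\mathbb{Z}}$, and the abelianization computation does force $a\in\h{\mathbb{Z}}^{\times}$, hence $h\in\overline{\langle x\rangle}$. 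What your route buys is a proof that is self-contained modulo one standard reference and that makes visible exactly where the profinite difficulty lies (as you rightly stress, one cannot simply complete the discrete identity $C_{F_2}(x^{n})=\langle x\rangle$, so some genuinely profinite structure theory is unavoidable); what the paper's route buys is brevity, at the price of hiding all of this behind the reference to Nakamura.
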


\begin{proof}
This is \cite[Lemma 2.1.2.]{nakamuragalois}.
\end{proof}

\begin{prop}\label{prop-injectivity of GT to End up to homotopy}
The composite
\[\puGT\to\on{End}_{\pGrp}(\h{F_2})\to\on{End}_{u\pGrp}(\h{F_2})\]
is injective.
\end{prop}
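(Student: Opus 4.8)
The plan is to suppose that two elements $(\lambda_1,f_1)$ and $(\lambda_2,f_2)$ of $\puGT$ have the same image in $\on{End}_{u\pGrp}(\h{F_2})$ and to deduce that they coincide. Writing $\phi_i\colon\h{F_2}\to\h{F_2}$ for the endomorphism attached to $(\lambda_i,f_i)$, namely $x\mapsto x^{\lambda_i}$ and $y\mapsto f_i^{-1}y^{\lambda_i}f_i$, the hypothesis provides $g\in\h{F_2}$ with $\phi_2=c_g\circ\phi_1$, where $c_g$ is conjugation by $g$. First I would recover the $\lambda$'s by post-composing with the abelianization $\h{F_2}\to\h{\mathbb{Z}}^2$: this sends $c_g$ to the identity and sends $\phi_i$ to multiplication by $\lambda_i$ (the conjugating factors $f_i$ in the image of $y$ die under abelianization), so the relation becomes $\lambda_1=\lambda_2=:\lambda$.

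Next I would pin down $g$. The second defining equation of $\puGT$ requires $(\lambda-1)/2$ to be a genuine element of $\h{\mathbb{Z}}$, which forces the $2$-adic component of $\lambda$ to be odd; in particular $\lambda\neq 0$. Evaluating $\phi_2=c_g\circ\phi_1$ on $x$ gives $x^{\lambda}=gx^{\lambda}g^{-1}$, so $g$ centralizes $x^{\lambda}$, and Lemma \ref{lemm-centralizer of x} forces $g=x^{\mu}$ for some $\mu\in\h{\mathbb{Z}}$. Evaluating on $y$ then gives
\[f_2^{-1}y^{\lambda}f_2=x^{\mu}f_1^{-1}y^{\lambda}f_1x^{-\mu}=(f_1x^{-\mu})^{-1}y^{\lambda}(f_1x^{-\mu}),\]
so $f_2(f_1x^{-\mu})^{-1}$ centralizes $y^{\lambda}$. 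Applying Lemma \ref{lemm-centralizer of x} with the roles of $x$ and $y$ exchanged (legitimate since $\h{F_2}$ is free on the symmetric pair $x,y$) yields $f_2(f_1x^{-\mu})^{-1}=y^{\nu}$ for some $\nu\in\h{\mathbb{Z}}$, that is, $f_2=y^{\nu}f_1x^{-\mu}$.

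Finally I would show $\mu=\nu=0$. By Proposition \ref{prop-f is a commutator} both $f_1$ and $f_2$ map to $0$ under the abelianization $\h{F_2}\to\h{\mathbb{Z}}^2$, so abelianizing $f_2=y^{\nu}f_1x^{-\mu}$ gives $0=(-\mu,\nu)$ and hence $\mu=\nu=0$. Thus $f_2=f_1$, which together with $\lambda_1=\lambda_2$ gives $(\lambda_1,f_1)=(\lambda_2,f_2)$ and proves injectivity. The essential inputs are the centralizer computation (Lemma \ref{lemm-centralizer of x}) and the vanishing of the abelianization of $f$ (Proposition \ref{prop-f is a commutator}); the step needing the most care is the observation that $\lambda\neq 0$, since this is exactly the hypothesis under which the centralizer lemma applies.
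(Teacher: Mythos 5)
Your proof is correct and follows essentially the same route as the paper's: abelianize to identify the two $\lambda$'s, apply Lemma \ref{lemm-centralizer of x} twice to force the conjugating element to be a power of $x$ and the discrepancy between the $f$'s to be a power of $y$, then use Proposition \ref{prop-f is a commutator} to conclude both exponents vanish. If anything, your write-up is slightly more complete than the paper's, since you explicitly verify the hypothesis $\lambda\neq 0$ needed for the centralizer lemma (via the existence of $m=(\lambda-1)/2$ in $\h{\mathbb{Z}}$), a point the paper leaves implicit.
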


\begin{proof}
Let $(\lambda,f)$ and $(\mu,g)$ be two elements of $\puGT$ whose image in $\on{End}(\h{F_2})$ are conjugate by some element $h\in\h{F_2}$.

We have the two equations
\[x^\lambda=h^{-1}x^\mu h\]
\[f^{-1}y^\lambda f=h^{-1}g^{-1}y^\mu gh.\]

Passing the first equation to the abelianization of $\h{F_2}$, we see that $\lambda=\mu$. This means that $h$ is in the centralizer of $x^\lambda$ which implies, according to lemma \ref{lemm-centralizer of x}, that $h=x^{\nu}$ for some $\nu$ in $\h{\mathbb{Z}}$.

The second equation informs us that $fh^{-1}g^{-1}$ is in the centralizer of $y^\mu$ which, according to lemma \ref{lemm-centralizer of x}, implies that $fh^{-1}g^{-1}=y^\rho$ for some $\rho\in\h{\mathbb{Z}}$. 

The elements $f$ and $g$ are sent to zero by the map $h{F_2}\to\h{\mathbb{Z}}^2$ by proposition \ref{prop-f is a commutator}, thus we can evaluate the equation $fh^{-1}g^{-1}=y^\rho$ in $\h{\mathbb{Z}}^2$ and we find that $\nu=\rho=0$ which in turns implies that $f=g$.
\end{proof}

Let $B_3$ be the braid group on three strands. It has two generators $\sigma_1$ and $\sigma_2$ and one relation $\sigma_1\sigma_2\sigma_1=\sigma_2\sigma_1\sigma_2$. There is a surjective map $B_3\to \Sigma_3$ sending $\sigma_1$ to the permutation $(12)$ and $\sigma_2$ to $(23)$. We can apply profinite completion to this map and we get another surjective map $\h{B}_3\to \Sigma_3$. The kernel of the map $B_3\to \Sigma_3$ is $K_3$ the pure braid group on three strands. The center of $K_3$ is an infinite cyclic group generated by $(\sigma_1\sigma_2)^3$. The elements $\sigma_1^2$ and $\sigma_2^2$ generate a free subgroup on $2$-generators and in fact there is an isomorphism $K_3\cong \mathbb{Z}\times F_2$. The kernel of the map $\h{B}_3\to\Sigma_3$ is $\h{K_3}$, the profinite completion of the pure braid group. This follows from the first claim of proposition \ref{prop-good groups extension} together with the observation that finite groups are good and that $K_3=F_2\times\mathbb{Z}$ is finitely presented. Using proposition \ref{prop-good groups extension} we also find an isomorphism $\h{K_3}\cong\h{F_2}\times\h{\mathbb{Z}}$. There is an action of $\puGT$ on $\h{B}_3$. The element $(\lambda,f)$ sends $\sigma_1$ to $\sigma_1^\lambda$ and $\sigma_2$ to $f(\sigma_2^2,\sigma_1^2)\sigma_2^\lambda f(\sigma_2^2,\sigma_1^2)$. It is easy to see that any endomorphism of $\h{B}_3$ of this form commutes with the surjection to $\Sigma_3$. Therefore, the action of $\puGT$ on $\h{B}_3$ restricts to an action on $\h{K_3}$. Using the fact that $f(x,y)=f(y,x)^{-1}$ we see that this action of $\puGT$ on $\h{K_3}$ restricts further to the standard action on $\h{F_2}$.

\begin{prop}\label{prop-GT acts faithful on K_3}
The action of $\puGT$ on $\h{K_3}$ induces an injection
\[\puGT\to\on{End}_{u\pGrp}(\h{K_3}).\]
\end{prop}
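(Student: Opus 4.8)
The plan is to reduce the statement to Proposition~\ref{prop-injectivity of GT to End up to homotopy}, which already gives that $\puGT$ injects into $\on{End}_{u\pGrp}(\h{F_2})$, by exploiting the splitting $\h{K_3}\cong \h{F_2}\times\h{\mathbb{Z}}$ recalled just above the statement. In this splitting the factor $\h{\mathbb{Z}}$ is the center of $\h{K_3}$ and $\h{F_2}=\langle\sigma_1^2,\sigma_2^2\rangle$ is the free subgroup on which $\puGT$ acts by the standard action. First I would take two elements $(\lambda,f)$ and $(\mu,g)$ of $\puGT$ whose images in $\on{End}_{u\pGrp}(\h{K_3})$ agree, denote by $\Phi,\Psi\colon\h{K_3}\to\h{K_3}$ the corresponding endomorphisms, and record that by hypothesis there is an element $h\in\h{K_3}$ with $\Psi=h\,\Phi(-)\,h^{-1}$.

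Next I would restrict attention to the subgroup $\h{F_2}$. Since the action of $\puGT$ preserves $\h{F_2}$ and restricts there to the standard action, both $\Phi$ and $\Psi$ carry $\h{F_2}$ into itself, and $\Phi|_{\h{F_2}}$, $\Psi|_{\h{F_2}}$ are precisely the standard actions of $(\lambda,f)$ and $(\mu,g)$ on $\h{F_2}$. The key observation is that the conjugation relating $\Phi$ and $\Psi$ can be made to take place inside $\h{F_2}$: writing $h=h_0 z$ according to the decomposition $\h{K_3}\cong\h{F_2}\times\h{\mathbb{Z}}$, with $h_0\in\h{F_2}$ and $z$ in the central factor, the central element $z$ commutes with $\Phi(w)$ for every $w\in\h{F_2}$, so that $\Psi(w)=h\,\Phi(w)\,h^{-1}=h_0\,\Phi(w)\,h_0^{-1}$.

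Hence the restrictions $\Phi|_{\h{F_2}}$ and $\Psi|_{\h{F_2}}$ are conjugate by the single element $h_0\in\h{F_2}$, which means the images of $(\lambda,f)$ and $(\mu,g)$ under the composite $\puGT\to\on{End}_{u\pGrp}(\h{F_2})$ coincide. Applying Proposition~\ref{prop-injectivity of GT to End up to homotopy} then forces $(\lambda,f)=(\mu,g)$ (both the equality $\lambda=\mu$ and $f=g$ are already contained in that proposition), giving the desired injectivity.

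The only genuinely delicate point is the compatibility of the central splitting with the conjugation, namely that the conjugating element $h$, a priori only an element of $\h{K_3}$, may be replaced by its $\h{F_2}$-component $h_0$ without changing its effect on $\h{F_2}$. This rests entirely on $\h{\mathbb{Z}}$ being precisely the center of $\h{K_3}$, so that its component acts trivially by conjugation; everything else is a direct appeal to the already established injectivity over $\h{F_2}$. To be safe I would verify explicitly that $\h{F_2}$ is stable under the $\puGT$-action (this is the ``restricts further to the standard action on $\h{F_2}$'' clause established above) and that the isomorphism $\h{K_3}\cong\h{F_2}\times\h{\mathbb{Z}}$ identifies the center with the $\h{\mathbb{Z}}$-factor, so that the decomposition $h=h_0 z$ is legitimate.
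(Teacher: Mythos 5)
Your proof is correct and follows essentially the same route as the paper: both reduce to the injectivity of $\puGT\to\on{End}_{u\pGrp}(\h{F_2})$ from Proposition \ref{prop-injectivity of GT to End up to homotopy}, using that the $\puGT$-action preserves $\h{F_2}\subset\h{K_3}$ and that the splitting $\h{K_3}\cong\h{F_2}\times\h{\mathbb{Z}}$ has central $\h{\mathbb{Z}}$-factor, so a conjugator in $\h{K_3}$ can be replaced by its $\h{F_2}$-component. The paper packages this as a commutative diagram through the relative endomorphism set $\on{End}_{u\pGrp}(\h{K_3}|\h{F_2})$, whereas you unwind the same argument element-wise and, usefully, make explicit the centrality verification on which the well-definedness of the paper's restriction map silently rests.
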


\begin{proof}
Let us denote by $\on{End}_{\pGrp}(\h{K_3}|\h{F_2})$ (resp. $\on{End}_{u\pGrp}(\h{K_3}|\h{F_2})$) the set of endomorphisms of $\h{K_3}$ which preserve $\h{F_2}\subset\h{K_3}$ (resp. the quotient of this set by the action of $\h{K_3}$ by conjugation), we have a commutative diagram where the horizontal maps are given by restriction to $\h{F_2}$.

\[\xymatrix{
\puGT\ar[r]& \on{End}_{\pGrp}(\h{K_3}|\h{F_2})\ar[r]\ar[d]&\on{End}_{\pGrp}(\h{F_2})\ar[d]\\
            &\on{End}_{u\pGrp}(\h{K_3}|\h{F_2})\ar[r]&\on{End}_{u\pGrp}(\h{F_2})
}
\]

We have proved in proposition \ref{prop-injectivity of GT to End up to homotopy} that $\puGT\to\on{End}_{u\pGrp}(\h{F_2})$ is injective. Thus, the map $\puGT\to\on{End}_{u\pGrp}(\h{K_3}|\h{F_2})$ must be injective as well. On the other hand, the injective map
\[\on{End}_{\pGrp}(\h{K_3}|\h{F_2})\to \on{End}_{\pGrp}(\h{K_3})\]
induces an injective map
\[\on{End}_{u\pGrp}(\h{K_3}|\h{F_2})\to \on{End}_{u\pGrp}(\h{K_3})\]
which concludes the proof.
\end{proof}

\subsection*{A homotopical definition of $\puGT$}

We want to give an alternative definition of $\puGT$ with a more homotopical flavor. Recall that for two profinite groupoids $C$ and $D$, a homotopy between $f$ and $g$ two maps from $C$ to $D$ is a map $h:C\to D^{I[1]}$ whose evaluation at both objects of $I[1]$ are $f$ and $g$. We denote by $\pi\pG$ the category whose objects are profinite groupoids and whose morphisms are homotopy classes of maps. 

For an operad in profinite groupoid $\oper{O}$, we can define $\oper{O}^{I[1]}$ by applying the product preserving functor $(-)^{I[1]}$ levelwise.

\begin{theo}\label{theo-main theorem naive homotopy}
The composite
\[\puGT\to\on{End}_{\Op\pG}(\h{\pab})\to\on{End}_{\pi\Op\pG}(\h{\pab})\]
is an isomorphism.
\end{theo}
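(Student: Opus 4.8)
The plan is to treat injectivity and surjectivity of the composite separately. By Drinfel'd's theorem the first arrow identifies $\puGT$ with the submonoid of $\on{End}_{\Op\pG}(\h{\pab})$ consisting of the endomorphisms that are the identity on objects, so it is tautologically injective; the real content is the passage to homotopy classes. Throughout I will use that each $\h{\pab}(n)$ is a connected profinite groupoid of the form $\h{K_n}[\oper{UM}(n)]$, that $\h{\pab}(0)=\h{\pab}(1)=\ast$, and that $\on{Ob}(\h{\pab})=\oper{UM}$.

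For injectivity I would restrict everything to arity $3$. Suppose $\phi,\psi\in\puGT$ become homotopic via a levelwise path object, i.e.\ via $H\colon\h{\pab}\to\h{\pab}^{I[1]}$. Restricting $H$ to arity $3$ gives a homotopy, hence a natural isomorphism of endofunctors of $\h{\pab}(3)$, between $\phi_3$ and $\psi_3$. Evaluating at a chosen object $o$, which is fixed by both $\phi_3$ and $\psi_3$ because $\phi$ and $\psi$ are the identity on objects, produces an element $g\in\on{Aut}_{\h{\pab}(3)}(o)\cong\h{K_3}$ conjugating the restriction of $\psi$ to $\h{K_3}$ into that of $\phi$. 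Thus $\phi$ and $\psi$ have the same image in $\on{End}_{u\pGrp}(\h{K_3})$. Under the identification of $\on{Aut}_{\h{\pab}(3)}(o)$ with $\h{K_3}$ this image is exactly the action of $\puGT$ on $\h{K_3}$ constructed above, so proposition \ref{prop-GT acts faithful on K_3} forces $\phi=\psi$.

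For surjectivity, let $[\Phi]$ be a class with representative $\Phi$. Applying the product-preserving functor $\on{Ob}$ yields an endomorphism $\on{Ob}(\Phi)$ of the operad $\oper{UM}$ in finite sets. Since $\oper{UM}$ is generated by $\star\in\oper{UM}(0)$ and $\mu\in\oper{UM}(2)$, and $\oper{UM}(2)=\{(12),(21)\}$, the monoid $\on{End}_{\Op\Set}(\oper{UM})$ has exactly two elements: the identity and the order-reversing involution $\tau$ sending $(12)$ to $(21)$. If $\on{Ob}(\Phi)=\id$ then $\Phi$ is the identity on objects, so $\Phi\in\puGT$ by Drinfel'd's theorem and $[\Phi]$ lies in the image. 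The remaining case $\on{Ob}(\Phi)=\tau$ I would reduce to the previous one by exhibiting a single operad automorphism $T$ of $\h{\pab}$ with $\on{Ob}(T)=\tau$ that is homotopic to the identity: then $\on{Ob}(T\Phi)=\tau\circ\tau=\id$, so $T\Phi\in\puGT$, while $T\simeq\id$ gives $[T\Phi]=[\Phi]$.

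I would build $T$ from the half-twist (Garside) braids $\Delta_n\in B_n$: on $\pab$ set $\on{Ob}(T)=\tau$ and let $T$ act on a morphism $\beta\in\pab(n)$ by conjugation $\beta\mapsto\Delta_n\beta\Delta_n^{-1}$, the standard flip automorphism $\sigma_i\mapsto\sigma_{n-i}$, which is compatible with the map $\oper{UM}\to\oper{A}$ defining $\pab$ and becomes an automorphism of $\h{\pab}$ after profinite completion by proposition \ref{prop-profinite completion commutes with products}. The candidate homotopy $T\simeq\id$ is the natural isomorphism $\gamma_n:=\Delta_n\colon T(-)\Rightarrow\id$, whose naturality is immediate from the centrality of $\Delta_n^2$ in $B_n$. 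The one point that requires real work, and which I expect to be the main obstacle, is that the family $\{\Delta_n\}$ be compatible with the operadic composition and the symmetric group actions, i.e.\ that $\Delta_{m+n-1}=\Delta_m\circ_i\Delta_n$, so that $\gamma$ is an honest map of operads $\pab\to\pab^{I[1]}$ and $T$ an honest operad map. This is the algebraic shadow of the geometric fact that rigidly rotating a little-disks configuration by $\pi$—which simultaneously rotates the macroscopic positions and each cabled cluster internally—is an operadic self-equivalence of $E_2$ lying in the connected group $SO(2)$, and hence homotopic to the identity. Once this compatibility is checked, completing $\gamma$ to $\h{\gamma}\colon\h{\pab}\to\h{\pab}^{I[1]}$ (using that $(-)^{I[1]}$ commutes with completion for groupoids with finitely many objects) produces the required homotopy $T\simeq\id$ and finishes the proof.
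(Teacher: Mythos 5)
Your injectivity argument is the paper's argument in only slightly different clothing: the paper also restricts to arity $3$ and reduces to proposition \ref{prop-GT acts faithful on K_3}, phrasing the reduction through the identifications $\on{End}_{\pi\pG}(\h{\pab}(3))\cong\on{End}_{\pi\pG}(\ast\sslash\h{K_3})\cong\on{End}_{u\pGrp}(\h{K_3})$ (both groupoids being cofibrant-fibrant in $\pG$), which is exactly your explicit extraction of a conjugating element $g\in\h{K_3}$ from the homotopy.

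Your surjectivity argument is genuinely different, and the comparison is instructive. Both proofs begin by noting that $\on{Ob}(\Phi)$ is determined by the image of $(12)$, hence is $\id$ or the mirror involution $\tau$. You dispose of the case $\on{Ob}(\Phi)=\tau$ by building one distinguished automorphism $T$ (conjugation by the half twists $\Delta_n$) with $\on{Ob}(T)=\tau$ and $T\simeq\id$, and replacing $\Phi$ by $T\Phi$. The paper instead conjugates the given $\Phi$ itself: it picks an \emph{arbitrary} morphism $h(12)\colon(21)\to(12)$ in $\h{\pab}(2)$, extends it \emph{uniquely} to a family $\{h(x)\}$ over all objects using that $\on{Ob}\h{\pab}$ is generated by $(12)$ with relations only in arities $0$ and $1$ (where $\h{\pab}$ is a point), and defines $v(a):=h(y)\Phi(a)h(x)^{-1}$; the coherence $h(x\circ_i z)=h(x)\circ_i h(z)$ then holds by construction, so $v$ is an operad map, fixes objects, and is homotopic to $\Phi$. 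That free-propagation trick is engineered precisely to avoid the step you flag as the main obstacle: the cabling identity $\Delta_{m+n-1}=\Delta_m\circ_i\Delta_n$. This identity is in fact true (e.g. $\Delta_2\circ_1\Delta_2=\Delta_3$ via the braid relation, and in general it expresses that rotation by $\pi$ acts strictly operadically on little disks, mirroring the role of the full twists $\Delta_n^2$ in theorem \ref{theo-iso of E_2 topological}), so your route can be completed, and it buys a concrete geometric model of the nontrivial component of the automorphism space; note also that applying the paper's propagation argument to $u=T$ with $h(12)=\sigma_1$ recovers $T\simeq\id$ from the centrality of $\Delta_n^2$, the same fact you use for naturality. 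But as written your proof is incomplete at exactly that point — the compatibility of $\{\Delta_n\}$ with all $\circ_i$ and with the symmetric group actions must be proved, not announced — whereas the paper's argument leaves nothing of that kind to check. One further small repair: $(-)^{I[1]}$ does \emph{not} commute with profinite completion (already $(\ast\sslash G)^{I[1]}\cong G^c\sslash G$ has infinitely many objects for $G$ infinite, so proposition \ref{prop-profinite completion commutes with products} does not apply); what your last step actually needs, and what the paper uses in the analogous place, is only the canonical comparison map $\h{\pab^{I[1]}}\to\h{\pab}^{I[1]}$, through which the completed homotopy can be pushed.
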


\begin{proof}
Let us first prove the surjectivity\footnote{The argument for the surjectivity was explained to me by Benoit Fresse}. Since $\puGT$ is the monoid of endomorphisms of $\h{\pab}$ which induce the identity on objects, it suffices to prove that any endomorphism of $\h{\pab}$ is homotopic to one which induces the identity on objects. The operad $\on{Ob}(\h{\pab})$ is freely generated as an operad by the object $(12)$ in $\on{Ob}(\h{\pab}(2))$, thus the restriction of a morphism $u:\h{\pab}\to\h{\pab}$ on objects is entirely determined by where it sends the object $(12)$. The image of this object can be either $(12)$ in which case $u$ induces the identity on objects or $(21)$. Assume that $u(12)=(21)$. We want to construct a map $v:\h{\pab}\to\h{\pab}$ which is the identity on objects and a homotopy $h$ from $u$ to $v$. In this context, a homotopy is just a natural transformation $h$ from $u$ to $v$. Let us pick a morphism $(21)\to (12)$ in $\h{\pab}$. We define $h(12):u(12)=(21)\to v(12)=(12)$ to be this morphism. This induces in a unique way a map $h(x):u(x)\to v(x)$ for any object $x$ of $\h{\pab}(n)$ and any $n$. Now if $a:x\to y$ is a morphism in $\h{\pab}(n)$, we define $v(a)$ to be $h(y)u(a)h(x)^{-1}$. The map $v$ preserves the operad composition. Indeed, if $a:x\to y$ is a morphism in $\h{\pab}(n)$ and $b:z\to t$ is a morphism in $\h{\pab}(m)$ and $i\in\{1,\ldots,n\}$, using the fact that $u$ and $h$ preserve the operad structure, we have
\[v(a)\circ_i v(b)=h(x\circ_iz)u(a\circ_ib)h(x\circ_iz)^{-1}=v(a\circ_ib).\]

In order to prove the injectivity, it suffices to prove that the composite:
\[\puGT\to \on{End}_{\pi\Op\pG}(\h{\pab})\to \on{End}_{\pi\pG}(\h{\pab}(3))\]
is injective.

Since $\h{\pab}(3)$ is a fibrant (by proposition \ref{prop-profinite groups are fibrant}) and cofibrant object in $\pG$ which is weakly equivalent to the other cofibrant-fibrant object $\ast\sslash\h{K_3}$, it suffices to show that the composite
\[\puGT\to  \on{End}_{\pi\pG}(\h{\pab}(3))=\on{End}_{\pi\pG}(\ast\sslash \h{K_3})=\on{End}_{u\pGrp}(\h{K_3})\]
is injective but this follows from proposition \ref{prop-GT acts faithful on K_3}.
\end{proof}

\section{Proof of the main theorem}

\subsection*{Case of groupoids}

Note that if $\oper{P}$ is an operad in $\pG$, the symmetric sequence $\{\oper{P}(n)^{I[k]}\}$ has the structure of an operad in $\pG$ for any $k$. Thus, for $\oper{O}$ and $\oper{P}$ two operads in $\pG$, we can define a simplicial set $\Map_{\Op\pG}(\oper{O},\oper{P})$ whose $k$-simplices are the map of operads in profinite groupoids from $\oper{O}$ to $\oper{P}^{I[k]}$. 

\begin{prop}\label{prop-derived to underived}
There is a weak equivalence of monoids in $\S$
\[\Map_{\Op\pG}(\h{\pab},\h{\pab})\simeq \Map^h_{\WOp\pG}(\h{N^{\Psi}\pab},\h{N^{\Psi}\pab}).\]
\end{prop}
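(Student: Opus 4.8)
The plan is to show that each side is weakly equivalent, as a simplicial monoid, to the homotopy mapping space $\Map^h_{\Op\G}(\pab,|\h\pab|)$ of \emph{strict} operads in groupoids, where $|\h\pab|$ is the operad in $\G$ obtained by applying $|-|:\pG\to\G$ in each arity to $\h\pab$. Throughout I would use that each $\pab(n)$ has finitely many objects, so that $\h\pab$ is a genuine operad in $\pG$ and $\h{N^{\Psi}\pab}\cong N^{\Psi}\h\pab$ by proposition \ref{prop-profinite completion commutes with products}. Moreover $\h\pab$ is fibrant in $\Op\pG$ since its levels $\h\pab(n)\cong\h{K_n}[\Sigma_n]$ are fibrant by proposition \ref{prop-profinite groups are fibrant}; consequently $\h{N^{\Psi}\pab}$, being a weak operad that is levelwise fibrant, is fibrant in $\WOp\pG$. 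Finally, I will use repeatedly that $\pab(0)=\pab(1)=\ast$.

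For the left-hand side I would first rewrite the naive mapping space inside $\Op\G$. The levelwise adjunction $\h{(-)}\dashv|-|$, together with the monoidality of completion on finite-object groupoids, gives a natural bijection $\Op\pG(\h\pab,\oper{Q})\cong\Op\G(\pab,|\oper{Q}|)$ for any operad $\oper{Q}$ in $\pG$. Applying this to $\oper{Q}=\h\pab^{I[k]}$ and using $|\h\pab^{I[k]}|\cong|\h\pab|^{I[k]}$ (as $|-|$ preserves the cotensor) identifies the $k$-simplices of $\Map_{\Op\pG}(\h\pab,\h\pab)$ with $\Op\G(\pab,|\h\pab|^{I[k]})$. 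Since the target operad $|\h\pab|^{I[k]}$ is trivial in arities $0$ and $1$, lemma \ref{lemm-paub to P} applies, so precomposition with $v:\paub\to\pab$ is a bijection onto $\Op\G(\paub,|\h\pab|^{I[k]})$. Hence $\Map_{\Op\pG}(\h\pab,\h\pab)\cong\Map_{\Op\G}(\paub,|\h\pab|)$ as simplicial sets. Now $\paub$ is cofibrant in $\Op\G$ by corollary \ref{coro-paub cofibrant} and $|\h\pab|$ is fibrant there (every groupoid is fibrant), so this naive mapping space already computes the derived one, and the weak equivalence $v$ yields
\[\Map_{\Op\pG}(\h\pab,\h\pab)\simeq\Map^h_{\Op\G}(\pab,|\h\pab|).\]

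For the right-hand side I would pass through the completion of preoperads, which, unlike the completion of strict operads, is defined on all of $\WOp\G$ and is left Quillen with right adjoint $|-|$. As it is computed levelwise and completion preserves all weak equivalences of groupoids, it preserves the (levelwise) trivial fibrations of $\cat{POp}\G$, hence $\L\h{(-)}(N^{\Psi}\pab)\simeq\h{N^{\Psi}\pab}\cong N^{\Psi}\h\pab$. Applying theorem \ref{theo-Quillen adjunction and mapping spaces} to the fibrant object $\h{N^{\Psi}\pab}$, and using $\R|-|(N^{\Psi}\h\pab)=N^{\Psi}|\h\pab|$, gives $\Map^h_{\WOp\pG}(\h{N^{\Psi}\pab},\h{N^{\Psi}\pab})\simeq\Map^h_{\WOp\G}(N^{\Psi}\pab,N^{\Psi}|\h\pab|)$. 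Since $(S,N^{\Psi})$ is a Quillen equivalence $\WOp\G\leftrightarrows\Op\G$ by proposition \ref{prop-weak operads vs operads in groupoids}, and both $\pab$ and $|\h\pab|$ are fibrant in $\Op\G$, theorem \ref{theo-Quillen adjunction and mapping spaces} together with the derived counit equivalence $\L S N^{\Psi}\pab\simeq\pab$ identifies this with $\Map^h_{\Op\G}(\pab,|\h\pab|)$, the same value found above.

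It then remains to check that the comparison is multiplicative: via the fully faithful, path-object-preserving functor $N^{\Psi}$ the left-hand side is the naive endomorphism monoid of $\h{N^{\Psi}\pab}$ in $\cat{POp}\pG$, and the canonical map from it to the derived endomorphism monoid $\Map^h_{\WOp\pG}(\h{N^{\Psi}\pab},\h{N^{\Psi}\pab})$ is a map of simplicial monoids; the two paragraphs above show it is a weak equivalence on underlying spaces, and all the intermediate identifications (completion, $|-|$, $N^{\Psi}$ and restriction along $v$) are natural and compatible with composition, so it is a weak equivalence of monoids. The main obstacle is exactly that $\h\pab$, and hence $\h{N^{\Psi}\pab}$, fails to be cofibrant, so the naive mapping space is not a priori homotopically meaningful; the device that rescues the argument is lemma \ref{lemm-paub to P}, which exploits $\pab(0)=\pab(1)=\ast$ to trade the non-cofibrant source $\h\pab$ for the cofibrant $\paub$ at the level of strict operads, thereby exhibiting the naive mapping space as a genuine derived one.
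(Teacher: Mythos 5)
Your proposal is correct and follows essentially the same route as the paper: the adjunction $\h{(-)}\dashv|-|$ to rewrite the naive mapping space as $\Map_{\Op\G}(\pab,|\h{\pab}|)$, lemma \ref{lemm-paub to P} together with the cofibrancy of $\paub$ (corollary \ref{coro-paub cofibrant}) to show this naive space is already derived, and then the Quillen equivalence $(S,N^{\Psi})$ plus the completion adjunction on preoperads, with $\h{N^{\Psi}\pab}\cong N^{\Psi}\h{\pab}$, to pass to $\WOp\pG$. The only slip is cosmetic: the objects of $\pab(n)$ form the set $\oper{UM}(n)$ rather than $\Sigma_n$, so $\h{\pab}(n)\cong\h{K_n}[\oper{UM}(n)]$, but since only finiteness of the object set is used, this does not affect the argument.
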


\begin{proof}
We have an isomorphism
\[\Map_{\Op\pG}(\h{\pab},\h{\pab})\cong\Map_{\Op\G}(\pab,|\h{\pab}|).\]

For any $k$, there is an isomorphisms $|\h{\pab}|^{I[k]}(1)\cong|\h{\pab}|^{I[k]}(0)=\ast$. Thus, using lemma \ref{lemm-paub to P} we find an isomorphism
\[\Map_{\Op\G}(\paub,|\h{\pab}|)\cong\Map_{\Op\G}(\paub,|\h{\pab}|).\]

Since $\paub$ is cofibrant by corollary \ref{coro-paub cofibrant} in $\Op\G$ and $|\h{\pab}|$ is fibrant, we have
\[\Map_{\Op\G}(\paub,|\h{\pab}|)\simeq \Map^h_{\Op\G}(\pab,|\h{\pab}|).\]

The functor $N^{\Psi}:\Op\G\to \WOp\G$ is a weak equivalence preserving right Quillen equivalence by proposition \ref{prop-weak operads vs operads in groupoids}, thus we have
\[\Map_{\Op\G}(\pab,|\h{\pab}|)\simeq \Map^h_{\WOp\G}(N^{\Psi}\pab,N^{\Psi}|\h{\pab}|)\simeq\Map^h_{\WOp\G}(\h{N^{\Psi}\pab},N^{\Psi}\h{\pab}).\]
Finally since $N^{\Psi}\h{\pab}$ is isomorphic to $\h{N^{\Psi}\pab}$ by proposition \ref{prop-profinite completion commutes with products}, we have a weak equivalence
\[\Map_{\Op\G}(\pab,|\h{\pab}|)\simeq\Map^h_{\WOp\G}(\h{N^{\Psi}\pab},\h{N^{\Psi}\pab}).\]
\end{proof}

This implies immediately the groupoid versions of our main theorem

\begin{theo}\label{theo-main theorem for groupoids}
The map $\puGT\to\on{End}_{\Op\pG}(\h{\pab})$
induces an isomorphism of monoids
\[\puGT\to \on{End}_{\on{Ho}\WOp\pG}(\h{N^{\Psi}\pab}).\]
\end{theo}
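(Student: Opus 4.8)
The plan is to assemble the two results that immediately precede this statement — Theorem \ref{theo-main theorem naive homotopy}, which identifies $\puGT$ with $\on{End}_{\pi\Op\pG}(\h{\pab})$, and Proposition \ref{prop-derived to underived}, which produces a weak equivalence of monoids in $\S$
\[\Map_{\Op\pG}(\h{\pab},\h{\pab})\simeq \Map^h_{\WOp\pG}(\h{N^{\Psi}\pab},\h{N^{\Psi}\pab})\]
— by applying $\pi_0$ and matching up the resulting monoid structures. Since the map whose bijectivity we must establish is the one induced by $\puGT\to\on{End}_{\Op\pG}(\h{\pab})$, I would track this natural transformation through each identification.

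First I would invoke the general identity recorded in the notations, $\on{Ho}\cat{C}(x,y)\cong \pi_0\Map^h_\cat{C}(x,y)$, applied to $\cat{C}=\WOp\pG$ with $x=y=\h{N^{\Psi}\pab}$. This gives an isomorphism of monoids
\[\on{End}_{\on{Ho}\WOp\pG}(\h{N^{\Psi}\pab})\cong\pi_0\Map^h_{\WOp\pG}(\h{N^{\Psi}\pab},\h{N^{\Psi}\pab}),\]
under which composition in the homotopy category corresponds to the (homotopy-coherent) monoid structure on the derived mapping space. Applying $\pi_0$ to the equivalence of Proposition \ref{prop-derived to underived}, and using that $\pi_0$ carries a weak equivalence of monoids to an isomorphism of honest monoids, then identifies the right-hand side with $\pi_0\Map_{\Op\pG}(\h{\pab},\h{\pab})$.

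Second I would identify $\pi_0\Map_{\Op\pG}(\h{\pab},\h{\pab})$ with $\on{End}_{\pi\Op\pG}(\h{\pab})$. By construction the $k$-simplices of $\Map_{\Op\pG}(\h{\pab},\h{\pab})$ are the operad maps $\h{\pab}\to\h{\pab}^{I[k]}$, so $\pi_0$ of this simplicial set is the set of endomorphisms of $\h{\pab}$ modulo the relation generated by maps $\h{\pab}\to\h{\pab}^{I[1]}$, i.e.\ modulo the naive homotopy relation defining $\pi\Op\pG$. The one point needing justification is that this relation is \emph{already} an equivalence relation, so that $\pi_0$ introduces no extra transitive closure: reflexivity comes from the constant homotopy, while symmetry and transitivity follow from the fact that $I[1]$ is a \emph{groupoid} — the swap of its two objects yields symmetry, and homotopies compose because the interval is itself invertible. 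The two monoid structures agree on the nose, as composition of operad endomorphisms is compatible with the levelwise path objects.

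Finally, chaining these isomorphisms of monoids with Theorem \ref{theo-main theorem naive homotopy} produces
\[\puGT\cong\on{End}_{\pi\Op\pG}(\h{\pab})\cong\pi_0\Map_{\Op\pG}(\h{\pab},\h{\pab})\cong\on{End}_{\on{Ho}\WOp\pG}(\h{N^{\Psi}\pab}),\]
and checking that each step is compatible with the natural map out of $\puGT$ shows the composite is exactly the map induced by $\puGT\to\on{End}_{\Op\pG}(\h{\pab})$. The main obstacle is really only careful bookkeeping: ensuring that $\pi_0$ is treated throughout as a \emph{monoidal} functor so that every isomorphism respects the monoid structure, and that the path-object homotopy relation on operad maps coincides with $\pi_0$ of the strict mapping space without a spurious transitive closure — both of which rest on $I[1]$ being a groupoid.
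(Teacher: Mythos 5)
Your proposal is correct and takes essentially the same route as the paper: the paper's proof likewise consists of applying $\pi_0$ to the weak equivalence of Proposition \ref{prop-derived to underived} and invoking Theorem \ref{theo-main theorem naive homotopy}. The details you supply — the identification $\on{End}_{\on{Ho}\WOp\pG}(\h{N^{\Psi}\pab})\cong\pi_0\Map^h_{\WOp\pG}(\h{N^{\Psi}\pab},\h{N^{\Psi}\pab})$, and the observation that $\pi_0$ of the strict mapping space is exactly $\on{End}_{\pi\Op\pG}(\h{\pab})$ because the $I[1]$-homotopy relation is already an equivalence relation — are precisely the implicit content of the paper's two-line argument.
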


\begin{proof}
We can apply $\pi_0$ on both sides of the equivalence proved in the previous proposition. By theorem \ref{theo-main theorem naive homotopy}, we have an isomorphism
\[\pi_0\Map_{\Op\pG}(\h{\pab},\h{\pab})\cong\puGT.\]
\end{proof}

\subsection*{From groupoids to spaces}

We now prove our main theorem.

\begin{prop}\label{prop-from groupoids to spaces}
There is a weak equivalence of simplicial monoids
\[\Map^h_{\WOp\pS}(\h{E_2},\h{E_2})\simeq\Map^h_{\WOp\pG}(N^{\Psi}\h{\pab},N^{\Psi}\h{\pab}).\]
\end{prop}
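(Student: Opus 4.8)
The plan is to route the right-hand mapping space through the profinite classifying space functor $B\colon\WOp\pG\to\WOp\pS$ and to tame the failure of $\h{(-)}$ to preserve products by exploiting the goodness of the pure braid groups. First I would record that $N^{\Psi}\h{\pab}$ is a fibrant object of $\WOp\pG$: each $\pab(n)$ is a connected groupoid with automorphism group $K_n$, hence $\pab(n)\cong K_n[S_n]$ for a finite set $S_n$, so by the corollary to Proposition \ref{prop-profinite completion commutes with products} we get $\h{\pab(n)}\cong\h{K_n}[S_n]$, which is fibrant in $\pG$ by Proposition \ref{prop-profinite groups are fibrant}. Since the Segal maps of the operadic nerve of an honest operad are isomorphisms, $N^{\Psi}\h{\pab}$ is levelwise fibrant and Segal, hence a fibrant weak operad. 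As $B$ preserves products it commutes with $N^{\Psi}$, so $BN^{\Psi}\h{\pab}=N^{\Psi}B\h{\pab}$, again a fibrant weak operad in $\pS$. I would then establish the profinite, weak-operadic analogue of Proposition \ref{prop-fully faithfulness of B}: for fibrant weak operads $X,Y$ in $\pG$ one has $\Map^h_{\WOp\pG}(X,Y)\simeq\Map^h_{\WOp\pS}(BX,BY)$. Granting Proposition \ref{prop-B is fully faithful} (which gives $\pi BC\simeq C$ for fibrant $C\in\pG$), this is a formal consequence of the Quillen adjunction $\pi\colon\WOp\pS\leftrightarrows\WOp\pG\colon B$ of Proposition \ref{prop-preoperads and adjunctions} together with the Segal criterion below. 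Applying it to $X=Y=N^{\Psi}\h{\pab}$ rewrites the right-hand side as $\Map^h_{\WOp\pS}(N^{\Psi}B\h{\pab},N^{\Psi}B\h{\pab})$.

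The heart of the argument is to identify $\h{E_2}$ with $N^{\Psi}B\h{\pab}$ in $\on{Ho}\WOp\pS$. By Proposition \ref{prop-PaB is E2} we have $\oper{E}_2\simeq B\pab$, so $E_2=N^{\Psi}\oper{E}_2\simeq N^{\Psi}B\pab$ in $\WOp\S$ and therefore $\h{E_2}\simeq\L\h{(N^{\Psi}B\pab)}$. The natural transformation $\h{B(-)}\to B\h{(-)}$ from the ``good groupoids'' subsection, applied levelwise to $\pab$, yields a map of preoperads $\h{N^{\Psi}B\pab}\to N^{\Psi}B\h{\pab}$; on a single-integer level $T_a$ it is exactly the comparison $\h{B\pab(a)}\to B\h{\pab(a)}$, which is a weak equivalence in $\pS$ by Proposition \ref{prop-completion of good groupoids} and the goodness of $K_n$ (Corollary \ref{coro-pure braid groups are good}). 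The main obstacle is precisely that $\h{(-)}$ does not preserve products, so $\h{N^{\Psi}B\pab}$ is not itself a weak operad and this map is not a levelwise equivalence on the composite objects $T_{\bf a}$. I would circumvent this with the Segal criterion: a map between fibrant weak operads is a weak equivalence as soon as it is an equivalence on the generating levels $T_a$, because the Segal maps exhibit $X(T_{\bf a})$ as the product of the $X(a_i)$. Since the localization defining $\WOp\pS$ only relates $T_{\bf a}$ to $\prod_i T_{a_i}$ and leaves the generating levels untouched, the value of $\L\h{(N^{\Psi}B\pab)}$ at $T_a$ is the space-level completion $\h{B\pab(a)}$; goodness matches this with $B\h{\pab(a)}=(N^{\Psi}B\h{\pab})(T_a)$, and the Segal criterion upgrades the levelwise statement to the desired equivalence $\h{E_2}\simeq N^{\Psi}B\h{\pab}$ in $\on{Ho}\WOp\pS$.

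Finally, both identifications are multiplicative: the full faithfulness equivalence is induced by applying the functor $B$, hence is a map of simplicial monoids, and the identification $\h{E_2}\simeq N^{\Psi}B\h{\pab}$ is conjugation by an isomorphism in $\on{Ho}\WOp\pS$. Composing them produces the asserted weak equivalence of simplicial monoids $\Map^h_{\WOp\pS}(\h{E_2},\h{E_2})\simeq\Map^h_{\WOp\pG}(N^{\Psi}\h{\pab},N^{\Psi}\h{\pab})$, and passing to $\pi_0$ recovers Theorem \ref{theo-main theorem spaces}. I expect the Segal/goodness identification of $\h{E_2}$ to be the only genuinely delicate point; the fibrancy bookkeeping, the commutation of $B$ with $N^{\Psi}$, and the full faithfulness of $B$ are formal consequences of results already in hand.
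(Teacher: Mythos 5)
Your overall architecture is the same as the paper's: reduce the groupoid side to $\Map^h_{\WOp\pS}(N^{\Psi}B\h{\pab},N^{\Psi}B\h{\pab})$ via the $(\pi,B)$ adjunction and Proposition \ref{prop-B is fully faithful}, and identify $\h{E_2}\simeq\h{N^{\Psi}B\pab}$ with $N^{\Psi}B\h{\pab}$ through the comparison map coming from the unit $\pab\to|\h{\pab}|$. The gap is in how you treat the composite levels $T_{\bf a}$, and it is a real one. Your ``Segal criterion'' --- a map which is an equivalence on the generating levels is a weak equivalence in $\WOp\pS$ --- is only valid when \emph{both} source and target are weak operads; but you explicitly assert that $\h{N^{\Psi}B\pab}$ is \emph{not} a weak operad, so the criterion cannot be applied to the very map you apply it to. (Whether the source is a weak operad is precisely the question of whether completion commutes, up to equivalence, with the products $\prod_iB\pab(a_i)$, i.e.\ the question you are trying to bypass; the argument is circular.) The fallback you offer, that ``the localization defining $\WOp\pS$ only relates $T_{\bf a}$ to $\prod_i T_{a_i}$ and leaves the generating levels untouched'', is false as a general principle: the $S$-localization of the preoperad $\sqcup_i\Psi(-,T_{a_i})$ is the representable $\Psi(-,T_{\bf a})$ (the localizing map itself is an $S$-local equivalence with local target), and already for ${\bf a}=(1,1)$ this changes the value at the generating level $T_1$ from two disjoint copies of the free monoid on one generator to the free monoid on two generators. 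So these considerations give you no control over the value of $\L\h{(N^{\Psi}B\pab)}$ at $T_a$.

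The missing ingredient is Corollary \ref{coro-product of good groupoids}, and with it your detour becomes unnecessary: since the $K_n$ are good (Corollary \ref{coro-pure braid groups are good}) \emph{and finitely presented}, every finite product $C=\pab(a_1)\times\cdots\times\pab(a_n)$ is itself a good groupoid; combining this with Proposition \ref{prop-profinite completion commutes with products} (which gives $\h{C}\cong\prod_i\h{\pab(a_i)}$) and Proposition \ref{prop-completion of good groupoids}, the comparison map $\h{N^{\Psi}B\pab}\to N^{\Psi}B\h{\pab}$ is, at \emph{every} level $T_{\bf a}$, the map $\h{BC}\to B\h{C}$ for such a good $C$, hence a levelwise weak equivalence. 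In particular your premise that it fails on composite levels, and that the source is not a weak operad, is wrong --- for these particular groupoids the goodness argument shows completion does commute with the relevant products up to homotopy. A levelwise equivalence of preoperads is automatically a weak equivalence in the localization $\WOp\pS$, so no Segal-type criterion and no analysis of the localization functor are needed. This is exactly how the paper argues; with that step repaired, the rest of your outline (fibrancy of $N^{\Psi}\h{\pab}$, commutation of $B$ with $N^{\Psi}$, derived full faithfulness of $B$ via the counit $\pi B\to\id$, and multiplicativity of the identifications) goes through as in the paper.
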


\begin{proof}
The weak operad $N^{\Psi}B\pab$ is weakly equivalent to $E_2=N^{\Psi}\oper{E}_2$ by proposition \ref{prop-PaB is E2}, thus, we want to compute  the monoid $\Map^h_{\WOp\pS}(\h{N^{\Psi}B\pab},\h{N^{\Psi}B\pab})$.

The unit map $\pab\to |\h{\pab}|$ induces a map in $\WOp\S$:
\[N^{\Psi}B\pab\to N^{\Psi}B|\h{\pab}|\cong |N^{\Psi}B\h{\pab}| \]
where the last isomorphisms comes from the observation that $|-|$ commutes with $N^{\Psi}$ and $B$.

This map is adjoint to a map
\[\h{N^{\Psi}B\pab}\to N^{\Psi}B\h{\pab}.\]

We claim that this map is a weak equivalence in $\WOp\pS$. It suffices to check that it is a levelwise weak equivalence. For a given $T_{\bf{a}}\in\Psi$, this map is given by
\[\h{BC}\to B\h{C}\]
for some groupoid $C$ which is a finite product of groupoids of the form $\pab(n)$ for various $n$'s. Such a groupoid is good by corollary \ref{coro-product of good groupoids} and corollary \ref{coro-pure braid groups are good}. Thus, according to proposition \ref{prop-completion of good groupoids}, the map $\h{BC}\to B\h{C}$ is a weak equivalence.

Hence, we have a weak equivalence of monoids
\[\Map^h_{\WOp\pS}(\h{N^{\Psi}B\pab},\h{N^{\Psi}B\pab})\simeq \Map^h_{\WOp\pS}(N^{\Psi}B\h{\pab},N^{\Psi}B\h{\pab}).\]

There is a natural isomorphism $N^{\Psi}B\oper{O}\cong BN^{\Psi}\oper{O}$ for any operad $\oper{O}$ in profinite groupoids, therefore we have an equivalence of monoids
\[\Map^h_{\WOp\pS}(\h{N^{\Psi}B\pab},\h{N^{\Psi}B\pab})\simeq \Map^h_{\WOp\pS}(BN^{\Psi}\h{\pab},BN^{\Psi}\h{\pab}).\]

Since $BN^{\Psi}\h{\pab}$ is a weak operad, we have 
\[\Map^h_{\WOp\pS}(BN^{\Psi}\h{\pab},BN^{\Psi}\h{\pab})\simeq \Map^h_{\cat{POp}\pS}(BN^{\Psi}\h{\pab},BN^{\Psi}\h{\pab}).\]
Using the Quillen adjunction $(\pi,B)$, we have a weak equivalence
\[\Map^h_{\cat{POp}\pS}(BN^{\Psi}\h{\pab},BN^{\Psi}\h{\pab})\simeq \Map^h_{\cat{POp}\pG}(\pi BN^{\Psi}\h{\pab},N^{\Psi}\h{\pab})\]
which according to proposition \ref{prop-B is fully faithful} induces a weak equivalence
\[\Map^h_{\cat{POp}\pS}(BN^{\Psi}\h{\pab},BN^{\Psi}\h{\pab})\simeq\Map^h_{\cat{POp}\pG}(N^{\Psi}\h{\pab},N^{\Psi}\h{\pab}).\]

Finally, using the fact that $N^{\Psi}\h{\pab}$ is fibrant in $\WOp\pG$, we have proved that there is a weak equivalence
\[\Map^h_{\WOp\pS}(\h{N^{\Psi}B\pab},\h{N^{\Psi}B\pab})\simeq\Map^h_{\WOp\pG}(N^{\Psi}\h{\pab},N^{\Psi}\h{\pab}).\]
This concludes the proof.
\end{proof}

Our main theorem now follows trivially:

\begin{theo}\label{theo-main theorem spaces}
There is an isomorphism of monoids
\[\puGT\cong\on{End}_{\on{Ho}\WOp\pS}(\h{E_2}).\]
\end{theo}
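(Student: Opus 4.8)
The plan is to deduce the statement formally from the two preceding results by passing to connected components. The key point is that for a cofibrant--fibrant object $X$ of a simplicial model category, the derived endomorphism space $\Map^h(X,X)$ is a simplicial monoid under composition, and applying $\pi_0$ yields a genuine monoid which, by the identification $\on{Ho}\cat{C}(x,y)\cong\pi_0\Map^h_{\cat{C}}(x,y)$ recorded in the Notations, is precisely $\on{End}_{\on{Ho}\cat{C}}(X)$. Consequently a weak equivalence of simplicial monoids induces an isomorphism of monoids on $\pi_0$, and everything reduces to stringing together the equivalences already in hand.

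Concretely, I would first apply $\pi_0$ to the weak equivalence of simplicial monoids of Proposition \ref{prop-from groupoids to spaces}, which gives an isomorphism of monoids
\[\on{End}_{\on{Ho}\WOp\pS}(\h{E_2})\cong\on{End}_{\on{Ho}\WOp\pG}(N^{\Psi}\h{\pab}).\]
Next I would invoke Proposition \ref{prop-profinite completion commutes with products}, which shows that the levelwise profinite completion of groupoids with finitely many objects is symmetric monoidal, to identify $N^{\Psi}\h{\pab}$ with $\h{N^{\Psi}\pab}$, whence
\[\on{End}_{\on{Ho}\WOp\pG}(N^{\Psi}\h{\pab})\cong\on{End}_{\on{Ho}\WOp\pG}(\h{N^{\Psi}\pab}).\]
Finally, Theorem \ref{theo-main theorem for groupoids} identifies this last monoid with $\puGT$ through the canonical map $\puGT\to\on{End}_{\Op\pG}(\h{\pab})$. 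Composing the three isomorphisms produces the desired
\[\puGT\cong\on{End}_{\on{Ho}\WOp\pS}(\h{E_2}).\]

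The only genuine content beyond bookkeeping is to ensure that each identification respects the multiplicative structure, so that the composite is an isomorphism of monoids and not merely a bijection of underlying sets. This is exactly why Proposition \ref{prop-from groupoids to spaces} is phrased as an equivalence of \emph{simplicial monoids} and why the isomorphism of Theorem \ref{theo-main theorem for groupoids} is induced by an honest monoid homomorphism $\puGT\to\on{End}_{\Op\pG}(\h{\pab})$; granting these, the compatibility is automatic and no new calculation is needed. I therefore expect the proof to be a short formal argument, with the \emph{substantive} work already completed in the establishment of Proposition \ref{prop-from groupoids to spaces} (the goodness of the pure braid groups feeding into $\h{BC}\to B\h{C}$ being an equivalence) and Theorem \ref{theo-main theorem for groupoids}.
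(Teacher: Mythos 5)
Your proposal is correct and follows exactly the paper's own argument: the paper deduces the theorem by applying $\pi_0$ to the weak equivalence of simplicial monoids in Proposition \ref{prop-from groupoids to spaces} and then invoking Theorem \ref{theo-main theorem for groupoids}, with the identification $N^{\Psi}\h{\pab}\cong\h{N^{\Psi}\pab}$ via Proposition \ref{prop-profinite completion commutes with products} handled implicitly (it is spelled out in the proof of Proposition \ref{prop-derived to underived}). Your attention to the multiplicative structure being respected is likewise exactly the point the paper's phrasing of the two ingredients is designed to secure.
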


\begin{proof}
According to the previous proposition, we have an isomorphism
\[\on{End}_{\on{Ho}\WOp\pS}(\h{E_2})\cong \on{End}_{\on{Ho}\WOp\pG}(N^{\Psi}\h{\pab}).\]

Using theorem \ref{theo-main theorem for groupoids}, we deduce the result.
\end{proof}

\subsection*{Higher homotopy groups}

In this subsection, we compute the higher homotopy groups of the space of homotopy automorphisms of $\h{E_2}$. First, according to proposition \ref{prop-from groupoids to spaces}, we see that the mapping space $\Map_{\WOp\pS}(\h{E_2},\h{E_2})$ is $1$-truncated (i.e. does not have homotopy groups in degree higher than $1$).

We first make the computation of the homotopy groups of the space of homotopy automorphisms of the simplicial operad $\oper{E}_2$.

\begin{theo}\label{theo-iso of E_2 topological}
The simplicial monoid $\Map^h(\oper{E}_2,\oper{E}_2)$ is weakly equivalent to the (singular complex of the) topological group $\on{O}(2,\mathbb{R})$.
\end{theo}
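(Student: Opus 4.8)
The plan is to run the very same reduction to operads in groupoids as in the profinite case (propositions \ref{prop-derived to underived} and \ref{prop-from groupoids to spaces}), except that here no profinite completion occurs, so no goodness hypothesis is needed and every comparison step is an honest equivalence. First I would identify $\Map^h_{\Op\S}(\oper{E}_2,\oper{E}_2)$ with a mapping space of operads in groupoids through the chain
\begin{align*}
\Map^h_{\Op\S}(\oper{E}_2,\oper{E}_2)&\simeq\Map^h_{\WOp\S}(E_2,E_2)\simeq\Map^h_{\WOp\S}(BN^{\Psi}\pab,BN^{\Psi}\pab)\\
&\simeq\Map^h_{\WOp\G}(N^{\Psi}\pab,N^{\Psi}\pab)\simeq\Map^h_{\Op\G}(\pab,\pab),
\end{align*}
using in turn the Quillen equivalence of theorem \ref{theo-weak operads vs operads in spaces}, the weak equivalence $E_2\simeq N^{\Psi}B\pab$ of proposition \ref{prop-PaB is E2} together with $N^{\Psi}B\cong BN^{\Psi}$, the homotopical full faithfulness of $B$ (proposition \ref{prop-fully faithfulness of B}, applied to the fibrant object $N^{\Psi}\pab$), and the Quillen equivalence of proposition \ref{prop-weak operads vs operads in groupoids}. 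Since $\paub$ is cofibrant (corollary \ref{coro-paub cofibrant}) and $\pab$ is fibrant, the last space is $\Map_{\Op\G}(\paub,\pab)$, which by lemma \ref{lemm-paub to P} applied levelwise to the path objects $\pab^{I[k]}$ (all of which satisfy $\pab^{I[k]}(0)=\pab^{I[k]}(1)=\ast$) is isomorphic to $\Map_{\Op\G}(\pab,\pab)$.

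It then remains to compute $\Map_{\Op\G}(\pab,\pab)$. Since $\Op\G$ is enriched in groupoids, this simplicial set is the nerve of the groupoid $\mathcal{E}$ of operad endomorphisms of $\pab$ and operadic natural isomorphisms between them; in particular it is $1$-truncated, so $\pi_i=0$ for $i\geq 2$, matching the homotopy type of $\on{O}(2,\mathbb{R})$. There remain $\pi_0=\on{End}_{\pi\Op\G}(\pab)$, the monoid of homotopy classes of endomorphisms, and $\pi_1(\mathcal{E},\id)=\on{Aut}_{\mathcal{E}}(\id)$. The map to be analysed comes from the isometric action of $\on{O}(2,\mathbb{R})$ on $\mathbb{R}^2$, hence on little disks, giving $\on{O}(2,\mathbb{R})\to\Map^h_{\Op\S}(\oper{E}_2,\oper{E}_2)$; I would show it induces isomorphisms on $\pi_0$ and on $\pi_1$.

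For $\pi_0$ I would repeat the surjectivity argument of theorem \ref{theo-main theorem naive homotopy}: choosing a braid $(21)\to(12)$ in $\pab(2)$ shows that every endomorphism is naturally isomorphic to one that is the identity on objects, so $\pi_0$ is the \emph{discrete} Grothendieck--Teichm\"uller monoid $\underline{\mathrm{GT}}$ of pairs $(\lambda,f)\in\mathbb{Z}\times F_2$ satisfying the relations of definition \ref{defi-GT}; injectivity of this identification follows from the discrete analogue of proposition \ref{prop-GT acts faithful on K_3}, using $K_3\cong\mathbb{Z}\times F_2$. For $\pi_1$, an operadic natural automorphism of $\id_{\pab}$ is a family of central pure braids $\eta_n\in Z(K_n)$ (central because each $\pab(n)$ is connected with automorphism group $K_n$); the center $Z(K_n)$ is the infinite cyclic group on the full twist, and operadic compatibility forces the whole family to be determined by a single integer, so $\on{Aut}_{\mathcal{E}}(\id)\cong\mathbb{Z}$, generated by the full twist. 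Under the map above this generator is exactly the image of a full rotation in $\on{SO}(2,\mathbb{R})$, and the reflection realizes the nontrivial class in $\pi_0$; since $\pi_0(\on{O}(2,\mathbb{R}))=\mathbb{Z}/2$ and $\pi_1(\on{SO}(2,\mathbb{R}))=\mathbb{Z}$, this would exhibit the map as a weak equivalence.

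The main obstacle is the $\pi_0$ computation, namely proving that the discrete Grothendieck--Teichm\"uller monoid is exactly $\mathbb{Z}/2=\{(1,1),(-1,1)\}$: unlike the profinite $\puGT$, which is huge, in the \emph{discrete} free group $F_2$ the Grothendieck--Teichm\"uller relations must force $f=1$ and $\lambda=\pm1$. I would prove this by first showing, as in proposition \ref{prop-f is a commutator}, that $f$ lies in $[F_2,F_2]$, and then exploiting the residual nilpotence of $F_2$ to climb the lower central series and show that every graded piece of $f$ vanishes, leaving only $\lambda=\pm1$, $f=1$. A secondary technical point is to check that the operadic compatibility relations among the central elements $\eta_n$ really do cut the family down to a single copy of $\mathbb{Z}$, and that under the explicit $\on{O}(2,\mathbb{R})$-action the full twist and the reflection match the generators of $\pi_1(\on{SO}(2,\mathbb{R}))$ and $\pi_0(\on{O}(2,\mathbb{R}))$.
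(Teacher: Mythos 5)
Your reduction to $\Map_{\Op\G}(\pab,\pab)$ is essentially the paper's own argument (the paper routes the comparison between $\Op\S$ and $\Op\G$ through the argument of proposition \ref{prop-fully faithfulness of B} for strict operads rather than through the weak-operad Quillen equivalences, but the content is the same), and your treatment of $\pi_1$ also matches the paper: injectivity because a natural transformation of $\id_{\pab}$ is determined by its value at the object $(12)$, surjectivity by exhibiting the full-twist family. Note that the operadic coherence of that family, $h(x\circ_i y)=h(x)\circ_i h(y)$, is exactly your ``secondary technical point'' and is not automatic; the paper establishes it via the geometric rotation of configurations and a result from Wahl's thesis.

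The genuine gap is at your $\pi_0$ step. The statement you isolate --- that the \emph{discrete} Grothendieck--Teichm\"uller monoid, i.e.\ the monoid of endomorphisms of $\pab$ inducing the identity on objects, is $\mathbb{Z}/2$ --- is precisely Drinfel'd's Proposition 4.1 in \cite{drinfeldquasi}, which the paper cites rather than reproves, and your proposed proof of it cannot work as sketched. First, it is simply false that the relations force every graded piece of $f$ to vanish: writing $f\equiv [x,y]^{c}$ modulo $\gamma_3$ (which is legitimate since $f\in[F_2,F_2]$), the hexagon relation (2) gives $3c=m(m+1)/2$, i.e.\ $c=m(m+1)/6$; thus a hypothetical element with $\lambda=5$ (so $m=2$) is \emph{forced} to have $c=1\neq 0$. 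The relations pin the graded pieces of $f$ to specific, generally nonzero, values depending on $\lambda$, so a lower-central-series argument would have to produce a contradiction in some degree rather than a vanishing statement, and for $\lambda=\pm 1$ it would have to prove vanishing in \emph{every} degree. Second, and more fundamentally, any argument using only the nilpotent quotients $F_2/\gamma_k$ (equivalently, the associated graded Lie algebra) applies verbatim after extension of scalars to $\mathbb{Q}$, where the conclusion is false: the pro-unipotent group $\mathrm{GT}_1(\mathbb{Q})$ is nontrivial (this is the setting of Fresse's theorem mentioned in the paper's introduction), so for $\lambda=1$ there exist nontrivial $f$ in the Malcev completion of $F_2$ satisfying all the relations, with nonzero graded components. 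Hence no degree-by-degree computation can show $f=1$; one must use in an essential way that $f$ is an honest finite word in the discrete free group. You should either cite Drinfel'd's proposition, as the paper does, or supply a genuinely different rigidity argument for this step.
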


\begin{proof}
For $M$ a fibrant simplicial monoid, we denote by $M^{h\times}$ the inverse image of $\pi_0(M)^\times$ along the map $M\to\pi_0(M)$. This is a grouplike simplicial monoid. 

Since the category of groupoids is a simplicial model category in which all objects are cofibrant and fibrant, we have
\[\Map_{\G}(\ast\sslash\mathbb{Z},\ast\sslash\mathbb{Z})\simeq\Map_{\G}^h(\ast\sslash\mathbb{Z},\ast\sslash\mathbb{Z}).\]
Thus, using the fact that $B:\G\to\S$ is derived fully faithful, we find a weak equivalence
\[\Map_{\G}(\ast\sslash\mathbb{Z},\ast\sslash\mathbb{Z})\simeq \Map_{\S}^h(B\mathbb{Z},B\mathbb{Z}).\]
Since $B\mathbb{Z}$ is a cofibrant-fibrant model for $S^1$, this last monoid is weakly equivalent to $\Map^h_{\S}(S^1,S^1)$. It is well-known that $\Map_{\S}(S^1,S^1)^{h\times}$ is weakly equivalent to $\on{O}(2,\mathbb{R})$.

We have a weak equivalence $\pab(2)\to \ast\sslash K_2\cong \ast\sslash\mathbb{Z}$. Therefore, we also have a weak equivalence between $\on{O}(2,\mathbb{R})$ and $\Map(\pab(2),\pab(2))^{h\times}$.

We know that $B\pab\simeq \oper{E}_2$, thus we have
\[\Map^h_{\Op\S}(\oper{E}_2,\oper{E}_2)\simeq\Map^h_{\Op\S}(B\pab,B\pab).\]
We can prove exactly as in proposition \ref{prop-fully faithfulness of B} that
\[\Map^h_{\Op\S}(B\pab,B\pab)\simeq \Map^h_{\Op\G}(\pab,\pab).\]
Since $\Op\G$ is a simplicial model category and $\paub$ is cofibrant by corollary \ref{coro-paub cofibrant} and $\pab$ is fibrant, we have
\[\Map^h_{\Op\G}(\pab,\pab)\simeq\Map_{\Op\G}(\paub,\pab).\]

Since, for each $k$, $\pab^{I[k]}(0)=\pab^{I[k]}(1)=\ast$, using \ref{lemm-paub to P}, we find an isomorphism
\[\Map_{\Op\G}(\paub,\pab)\cong \Map_{\Op\G}(\pab,\pab).\]

By evaluating in degree $2$, we get a map
\[
\Map_{\Op\G}(\pab,\pab)\to\Map_{\G}(\pab(2),\pab(2)).
\]
We claim that this map induces a weak equivalence 
\begin{equation}\label{comparison}
\Map_{\Op\G}(\pab,\pab)\to\Map_{\G}(\pab(2),\pab(2))^{h\times}\simeq\on{O}(2,\mathbb{R}).
\end{equation}

Drinfel'd in \cite[Proposition 4.1.]{drinfeldquasi} proves that the monoid of endomorphism of $\pab$ which induces the identity on objects is isomorphic to $\mathbb{Z}/2$. Thus, we have a map
\[\mathbb{Z}/2\to\Map_{\Op\G}(\pab,\pab).\]
One can prove exactly as theorem \ref{theo-main theorem naive homotopy} that this map induces an isomorphism
\[\mathbb{Z}/2\cong\pi_0\Map_{\Op\G}(\pab,\pab).\]
Moreover, by definition, the non-trivial element of $\mathbb{Z}/2$ induces the unique non-trivial automorphism of $\pab(2)$ in the homotopy category of groupoids. This means that the map \ref{comparison} is an isomorphism on $\pi_0$.

Now, we want to compute the effect of \ref{comparison} on $\pi_1$. Note that according to the previous paragraph, $\Map_{\Op\G}(\pab,\pab)$ is a group-like monoid, thus it suffices to prove that the map \ref{comparison} induces an isomorphism on $\pi_1$ based at the unit. 

The group $G=\pi_1(\Map_{\Op\G}(\pab,\pab),\id)$ is the group of natural transformations of the identity map $\pab\to\pab$. More explicitly, such a natural transformation is the data of an element $h(x)\in\pab(n)(x,x)$ for each object $x$ of $\pab(n)$ and each $n$ which satisfy the relations
\begin{itemize}
\item The equation $h(x\circ_i y)=h(x)\circ_ih(y)$ holds whenever both sides are defined.
\item For all $u:x\to y$ in $\pab(n)$, we have $h(y)uh(x)^{-1}=u$
\end{itemize}

We have a map $\epsilon:G\to \mathbb{Z}=\on{Aut}_{\pab}((12))$ sending $\{h(x)\}_{x\in\on{Ob}(\h{\pab})}$ to $h((12))$. Since any object of $\h{\pab}(n)$ for any $n$ can be obtained as iterated composition of the object $(12)$, the map $G\to\mathbb{Z}$ is injective. Moreover, this map $\epsilon:G\to\mathbb{Z}$ is also the map obtained by applying $\pi_1$ to the equation \ref{comparison}. 

In order to prove that $\epsilon$ is surjective, it suffices to construct a section. We can see $K_n$ as the fundamental group of $\on{Conf}(n,\mathbb{C})$ based at $c_0=(-n+1,-n+3,\ldots,n-3,n-1)$. For each $\theta \in S^1=\mathbb{R}/2\pi\mathbb{Z}$, we can form $c_{\theta}\in\on{Conf}(n,\mathbb{C})$ to be $e^{i\theta}c_0$. This defines a map $S^1\to \on{Conf}(n,\mathbb{C})$ sending $0$ to $c_0$. Taking the fundamental group, we get a map $\mathbb{Z}\to K_n$. This maps factors through the center of $K_n$. Alternatively, the generator of $\mathbb{Z}$ gives us a natural transformation of the identity map $\ast\sslash K_n\to \ast\sslash K_n$ for each $n$. This obviously extends to a natural transformation of the identity map $\pab(n)\to\pab(n)$. All these natural transformations $\pab(n)\to\pab(n)$ are compatible with the operadic structure. A version of this statement can be found in Wahl's thesis (see \cite[Section 1.3.]{wahlthesis}) where the author proves  that a certain operad in groupoid that she denotes $\{\oper{C}_{P\beta_k}^{\beta_k}\}$ has an action of the group object in groupoids $\ast\sslash \mathbb{Z}$. The operad $\{\oper{C}_{P\beta_k}^{\beta_k}\}$ is a very close relative of the operad $\pab$, it encodes braided monoidal categories with strictly associative multiplication. It is easy to verify that the $\ast\sslash \mathbb{Z}$-action constructed by Wahl extends to a $\ast\sslash \mathbb{Z}$-action on $\pab$.

In other words, we have exhibited a map $\mathbb{Z}\to G$ and by examining what it does in degree $2$, we see that it is a section of $\epsilon$. Hence the map \ref{comparison} induces an isomorphism on $\pi_0$ and $\pi_1$. Since both sides are truncated spaces, this proves that \ref{comparison} is a weak equivalence.
\end{proof}

Now, we treat the profinite case.

\begin{theo}
The component of the identity in $\Map^h_{\WOp\S}(\h{E_2},\h{E_2})^{h\times}$ is weakly equivalent to $B|\h{\mathbb{Z}}|$.
\end{theo}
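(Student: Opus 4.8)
The plan is to reduce the computation to the strict simplicial mapping space of operads in profinite groupoids and then identify its identity component by hand, following the pattern of the proof of Theorem \ref{theo-iso of E_2 topological}. By Proposition \ref{prop-from groupoids to spaces} there is a weak equivalence of simplicial monoids $\Map^h_{\WOp\pS}(\h{E_2},\h{E_2})\simeq\Map^h_{\WOp\pG}(N^{\Psi}\h{\pab},N^{\Psi}\h{\pab})$, and since $N^{\Psi}\h{\pab}\cong\h{N^{\Psi}\pab}$ by Proposition \ref{prop-profinite completion commutes with products}, Proposition \ref{prop-derived to underived} identifies the latter with the strict mapping space $\Map_{\Op\pG}(\h{\pab},\h{\pab})$. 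This strict mapping space is the nerve of the groupoid $\mathcal{E}$ whose objects are the operad endomorphisms of $\h{\pab}$ and whose morphisms are the natural isomorphisms between them (a $k$-simplex being a map $\h{\pab}\to\h{\pab}^{I[k]}$, that is a functor from $I[k]$ to $\mathcal{E}$, because $I[k]$ is the groupoid completion of $[k]$). In particular it is $1$-truncated, in agreement with the remark opening this subsection, so the component of the identity in $(-)^{h\times}$ (which is unaffected by passing to units, the identity being invertible) is $B\,G$ with $G=\on{Aut}_{\mathcal{E}}(\id)$ the group of natural automorphisms of $\id_{\h{\pab}}$ compatible with operadic composition. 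It therefore suffices to prove $G\cong|\h{\mathbb{Z}}|$.

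Next I would analyse $G$ exactly as in Theorem \ref{theo-iso of E_2 topological}. An element of $G$ is a family $\{h(x)\}$, with $h(x)\in\on{Aut}_{\h{\pab}(n)}(x)$ for each object $x$ of each $\h{\pab}(n)$, satisfying $h(x\circ_i y)=h(x)\circ_i h(y)$ and $h(y)\,u\,h(x)^{-1}=u$ for every morphism $u\colon x\to y$. Evaluation at the generating object gives a homomorphism $\epsilon\colon G\to\on{Aut}_{\h{\pab}(2)}((12))=|\h{K_2}|=|\h{\mathbb{Z}}|$, $h\mapsto h((12))$. Since profinite completion leaves the objects of a finite-object groupoid unchanged, $\on{Ob}(\h{\pab})=\on{Ob}(\pab)$ and every object of $\h{\pab}(n)$ is an iterated operadic composite of $(12)$ together with the arity-$0$ unit $\star$; as $\h{\pab}(0)=\h{\pab}(1)=\ast$ forces $h(\star)$ and $h(1)$ to be identities, the operad-compatibility relation shows that $h$ is completely determined by $h((12))$. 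Hence $\epsilon$ is injective.

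For surjectivity I would produce a section of $\epsilon$ by completing the rotation action used in Theorem \ref{theo-iso of E_2 topological}. Wahl's construction (see \cite[Section 1.3.]{wahlthesis}), as recalled there, equips $\pab$ with an action of the group object $\ast\sslash\mathbb{Z}$ in $\G$, that is a map of operads $\ast\sslash\mathbb{Z}\times\pab\to\pab$ in which the generator acts on each $\pab(n)$ by the central full twist, compatibly with operadic composition. Because profinite completion is symmetric monoidal on groupoids with finitely many objects (Proposition \ref{prop-profinite completion commutes with products}) and $\h{\ast\sslash\mathbb{Z}}\cong\ast\sslash\h{\mathbb{Z}}$, applying $\h{(-)}$ levelwise yields an action $\ast\sslash\h{\mathbb{Z}}\times\h{\pab}\to\h{\pab}$, which is the same datum as a homomorphism $s\colon|\h{\mathbb{Z}}|\to G$. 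Evaluating in arity $2$, the generator acts by the full twist $\sigma_1^2$, a topological generator of $\h{K_2}=\h{\mathbb{Z}}$, so $\epsilon\circ s=\id_{|\h{\mathbb{Z}}|}$. Combined with injectivity this gives $G\cong|\h{\mathbb{Z}}|$, whence the identity component of $\Map^h_{\WOp\pS}(\h{E_2},\h{E_2})^{h\times}$ is $B\,G\simeq B|\h{\mathbb{Z}}|$.

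The main obstacle is the construction of the section: one must check that the full-twist elements, central in each $K_n$ and operadically compatible before completion, still define natural transformations after profinite completion and assemble into a genuine $\ast\sslash\h{\mathbb{Z}}$-action rather than merely a $\ast\sslash\mathbb{Z}$-action, and that the resulting identification is with the underlying discrete group $|\h{\mathbb{Z}}|$ (the reason the answer is $B|\h{\mathbb{Z}}|$ and not $B\h{\mathbb{Z}}$). By contrast, verifying that the mapping space is literally the nerve of the groupoid $\mathcal{E}$, so that $\pi_1$ is computed on the nose with no higher homotopy, is routine once one recalls that $I[k]$ is the groupoid completion of $[k]$.
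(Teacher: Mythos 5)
Your proposal is correct and follows essentially the same route as the paper: reduction via Propositions \ref{prop-from groupoids to spaces} and \ref{prop-derived to underived} to the strict mapping space $\Map_{\Op\pG}(\h{\pab},\h{\pab})$, injectivity of evaluation at $(12)$ from the fact that $\on{Ob}\pab$ is operadically generated by $(12)$ and $\star$, and a section built from Wahl's full-twist $\ast\sslash\mathbb{Z}$-action. The only difference is packaging: the paper completes the single natural transformation $\pab\to\pab^{I[1]}$ and then extends the resulting homomorphism from $\mathbb{Z}$ over $\h{\mathbb{Z}}$ using that the target is a profinite group, whereas you complete the whole action at once via monoidality of $\h{(-)}$ on finite-object groupoids to obtain the $\ast\sslash\h{\mathbb{Z}}$-action (and hence the map from $|\h{\mathbb{Z}}|$) directly, which slightly streamlines that step.
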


\begin{proof}
Using propositions \ref{prop-derived to underived} and \ref{prop-from groupoids to spaces}, we see that we have a weak equivalence
\[\Map^h_{\WOp\S}(\h{E_2},\h{E_2})^{h\times}\simeq\Map_{\Op\pG}(\h{\pab},\h{\pab})^{h\times}.\]
Thus, it suffices to prove that the fundamental group of $\Map^h_{\Op\pG}(\h{\pab},\h{\pab})^{k\times}$ based at the identity is $\h{\mathbb{Z}}$.

We proceed as in theorem \ref{theo-iso of E_2 topological}, there is an evaluation map
\[\Map_{\Op\pG}(\h{\pab},\h{\pab})^{h\times}\to\Map_{\pG}(\h{\pab}(2),\h{\pab}(2))^{h\times}.\]

Taking $\pi_1$, we get a map
\[\pi_1(\Map_{\Op\pG}(\h{\pab},\h{\pab}),\id)\to\pi_1( \Map_{\pG}(\h{\pab}(2),\h{\pab}(2)),\id).\]

Since $\h{\pab}(2)$ is cofibrant fibrant in $\pG$, we have
\[\Map_{\pG}(\h{\pab}(2),\h{\pab}(2))\simeq\Map^h_{\pG}(\ast\sslash\h{\mathbb{Z}},\ast\sslash\h{\mathbb{Z}}).\] 
Using the Quillen adjunction $\G\leftrightarrows\pG$, we have
\[\Map^h_{\pG}(\ast\sslash\h{\mathbb{Z}},\ast\sslash\h{\mathbb{Z}})\simeq\Map^h_{\G}(\ast\sslash\mathbb{Z},\ast\sslash|\h{\mathbb{Z}}|).\]
Since $|\h{\mathbb{Z}}|$ is commutative, the fundamental group of this last space based at the completion map $\ast\sslash\mathbb{Z}\to\ast\sslash|\h{\mathbb{Z}}|$ is isomorphic to $|\h{\mathbb{Z}}|$.

Hence, we have a map
\[\pi_1(\Map_{\Op\pG}(\h{\pab},\h{\pab}),\id)\to|\h{\mathbb{Z}}|.\]

Exactly as in theorem \ref{theo-iso of E_2 topological}, we prove that this map is injective.

In the proof of theorem \ref{theo-iso of E_2 topological}, we construct a section of this map by constructing a natural transformation of the identity map $\pab\to\pab$. In other words, we construct a map $\pab\to\pab^{I[1]}$ such that the composite of that map with the two evaluation maps $\pab^{I[1]}\to\pab$ is the identity. This natural transformation induces a natural transformation
\[\h{\pab}\to\h{\pab^{I[1}]}\]
which can be composed with the map $\h{\pab^{I[1}]}\to\h{\pab}^{I[1]}$ which is adjoint to the obvious map $\pab^{I[1]}\to |\h{\pab}|^{I[1]}\cong |\h{\pab}^{I[1]}|$. In the end, we get a map
\[\h{\pab}\to\h{\pab}^{I[1]}\]
in which both evaluation are the identity $\h{\pab}\to\h{\pab}$. Using the hom-cotensor adjunction in $\Op\G$, we have constructed a map from $\ast\sslash \mathbb{Z}$ to the groupoid of natural transformations of the identity map $\h{\pab}\to\h{\pab}$. Since the target is a profinite group, this map extends to a map
\[|\h{\mathbb{Z}}|\to\pi_1(\Map_{\Op\pG}(\h{\pab},\h{\pab}),\id).\]
It is also straightforward to check that this map is a section of the map
\[\pi_1(\Map_{\Op\pG}(\h{\pab},\h{\pab}),\id)\to|\h{\mathbb{Z}}|\]
constructed above by looking at its action in degree $2$.
\end{proof}

To summarize the previous two theorems, we have the following commutative diagram:

\[
\xymatrix{
1\ar[r]&S^1\ar[r]\ar[d]&\Map^h_{\WOp\S}(E_2,E_2)^{h\times}\ar[r]\ar[d]&\mathbb{Z}/2\ar[d]\ar[r]&1\\
1\ar[r]&\h{S^1}\ar[r]&\Map^h_{\WOp\pS}(\h{E_2},\h{E_2})^{h\times}\ar[r]&\pGT\ar[r]&1
}
\]

In this diagram, each row is a split exact sequence of grouplike simplicial monoids. The first map in each row is the inclusion of the component of the identity. The second map is the map from the group of homotopy automorphisms to its space of components. The map $\mathbb{Z}/2\to\pGT$ can be checked to be the complex conjugation map $\mathbb{Z}/2\to\on{Gal}(\bar{\mathbb{Q}}/\mathbb{Q})$ composed with the inclusion $\on{Gal}(\bar{\mathbb{Q}}/\mathbb{Q})\to\pGT$.

\subsection*{Alternative version of the main result}

Profinite completion is the left adjoint to the functor
\[|-|:\pS\to\S\]
which forgets the topology and that is the approach we chose. However, for some authors like in \cite{sullivangenetics}, profinite completion should really be the endo-functor $\S\to\S$ sending $X$ to $|R\h{X}|$ where $R$ is a fibrant replacement functor in $\pS$. It is not true that $|-|$ is fully faithful. Nevertheless, our main result remains true for this alternative definition of profinite completion.

\begin{defi}
A profinite group is \emph{strongly complete} if any normal subgroup of finite index is open. Equivalently a profinite group is strongly complete if it is isomorphic to the profinite completion of its underlying discrete group.
\end{defi}

\begin{prop}
For any $n$, the profinite group $\h{K_n}$ is strongly complete.
\end{prop}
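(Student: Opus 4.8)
The plan is to reduce the statement to the theorem of Nikolov and Segal that every finite-index subgroup of a topologically finitely generated profinite group is automatically open. First I would record that the pure braid group $K_n$ is finitely generated: it is generated by the finitely many elements $x_{ij}$ with $1\le i<j\le n$ introduced just before Definition \ref{defi-GT}. Consequently the profinite completion $\h{K_n}$ is a \emph{topologically} finitely generated profinite group, since the images of these finitely many generators generate every finite quotient $K_n/N$ and hence topologically generate their inverse limit.

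Next I would invoke the Nikolov--Segal theorem: in a topologically finitely generated profinite group, every subgroup of finite index (as an abstract group) is open, and therefore closed. Applying this to $\h{K_n}$ shows that every finite-index normal subgroup of $\h{K_n}$ is open, which is exactly the definition of strong completeness. Equivalently, since the open finite-index normal subgroups then form a cofinal system among \emph{all} finite-index normal subgroups, the canonical map from the profinite completion of the underlying discrete group of $\h{K_n}$ to $\h{K_n}$ is an isomorphism.

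The only genuine input here is the deep theorem of Nikolov and Segal; everything else is the elementary observation that $K_n$ is finitely generated. I do not expect to need the goodness of $K_n$ (Corollary \ref{coro-pure braid groups are good}) or the extension $1\to F_n\to K_{n+1}\to K_n\to 1$ for this particular statement. An alternative, more hands-on route would attempt to prove strong completeness by induction along that extension, starting from finitely generated free groups and the split exact sequence; but this route would still have to confront the genuinely hard point — that \emph{all} finite-index subgroups, not merely those visible from the presentation or the extension, are open. The honest main obstacle is therefore that strong completeness of finitely generated profinite groups is precisely Serre's problem, settled only by Nikolov and Segal, so I would not try to circumvent their result.
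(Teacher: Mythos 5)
Your proposal is correct and follows essentially the same route as the paper: reduce to topological finite generation of $\h{K_n}$ (via finite generation of $K_n$ and the fact that completion preserves surjections of dense-image generators) and then invoke the Nikolov--Segal theorem, which is exactly the citation the paper uses. Your additional remarks --- that goodness of $K_n$ is not needed and that Nikolov--Segal is the irreducible hard input --- are accurate.
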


\begin{proof}
According to the main theorem of \cite{nikolovfinitely}, it suffices to prove that $\h{K_n}$ is finitely generated as a topological group (i.e. there exists a map $F_s\to K_n$ with dense image). It is well-known that the pure braid groups $K_n$ are finitely generated. This means that for any $n$, there is an $s$ and a surjection $F_s\to K_n$. The profinite completion functor preserves surjective maps which implies that $\h{K_n}$ is finitely generated.
\end{proof}

\begin{defi}
We say that a profinite groupoid $C$ with finite set of objects is \emph{strongly complete} if for any object $x$ of $C$, the group $C(x,x)$ is a strongly complete profinite group.
\end{defi}

\begin{prop}
Let $C$ be a strongly complete profinite groupoid. Then the map $\h{|C|}\to C$ is an isomorphism.
\end{prop}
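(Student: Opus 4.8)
The plan is to reduce the statement to the corresponding fact about strongly complete profinite groups, for which it is essentially the definition. Since $C$ has a finite set of objects, $\pi_0(C)$ is finite and $C$ decomposes as a finite coproduct $C\cong\bigsqcup_{u\in\pi_0(C)}G_u[S_u]$, where $S_u=\on{Ob}(C_u)$ is finite and $G_u$ is the profinite automorphism group of a chosen object of the connected component $C_u$. First I would check that the counit $\h{|C|}\to C$ is compatible with this decomposition, i.e. that it is the coproduct of the counits $\h{|C_u|}\to C_u$; this requires knowing that both $|-|$ and $\h{(-)}$ preserve the relevant finite coproduct, after which naturality of the counit finishes this reduction.

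For the two compatibility computations: the functor $|-|$ is a right adjoint (to $\h{(-)}\colon\G\to\pG$) and hence preserves all limits, in particular products. Combined with the identification $\ast\sslash G\cong\on{lim}_{U}\ast\sslash(G/U)$ over the open normal subgroups $U$ (as in the proof of proposition \ref{prop-profinite groups are fibrant}), which gives $|\ast\sslash G|\cong\ast\sslash|G|$, and with the fact that $|\on{Codisc}(S)|\cong\on{Codisc}(S)$ for $S$ finite, this yields $|G[S]|\cong|G|[S]$. For coproducts I would argue directly: the functors $\on{Ob}$ and $\on{Mor}$ on $\G$ both preserve limits and finite coproducts, and the transition maps of the pro-systems respect the partition into components, so a thread in the limit of $C_{1,i}\sqcup C_{2,j}$ lies entirely in one summand; hence $|-|$ preserves finite coproducts of profinite groupoids with finite object sets. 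On the other side, $\h{(-)}$ preserves coproducts, and by proposition \ref{prop-profinite completion commutes with products} (and its corollary) one has $\h{|G_u|[S_u]}\cong\h{|G_u|}[S_u]$. Putting these together, the counit restricted to the $u$-th component is $\ast\sslash(\h{|G_u|}\to G_u)\times\id_{\on{Codisc}(S_u)}$, so the whole statement reduces to showing that the group-level counit $\h{|G_u|}\to G_u$ is an isomorphism.

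Finally, for a strongly complete profinite group $G$ this last map is an isomorphism almost by definition: $\h{|G|}$ is the inverse limit of $|G|/N$ over the finite-index normal subgroups $N$ of the underlying discrete group, while $G$ is the inverse limit of $G/U$ over its open normal subgroups $U$; every open normal subgroup is a finite-index normal subgroup, and strong completeness says conversely that every finite-index normal subgroup is open, so the two cofiltered systems agree and the counit is the induced isomorphism between equal limits. The main obstacle is the bookkeeping in the middle paragraph rather than any deep difficulty: one must verify that $|-|$ preserves this finite coproduct (which is not automatic for a right adjoint) and that the counit of the groupoid adjunction restricts on $\ast\sslash G$ to $\ast\sslash$ of the counit of the group adjunction, so that the reduction to groups is legitimate.
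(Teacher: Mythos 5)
Your proof is correct and follows essentially the same route as the paper's: reduce to the connected case using that $|-|$ and $\h{(-)}$ both preserve finite coproducts, identify a connected component with $G[S]$, and conclude via $\h{|G[S]|}\cong\h{|G|}[S]\cong G[S]$ using strong completeness. The only difference is that you spell out the bookkeeping (the coproduct-preservation of $|-|$, the identification $|G[S]|\cong|G|[S]$, and the group-level counit argument) that the paper leaves implicit.
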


\begin{proof}
The functor $|-|$ preserves finite coproducts. The completion functor preserves finite coproducts as well since it is a left adjoint. We are therefore reduced to proving the result for a connected profinite groupoid. A connected profinite groupoid is isomorphic to $G[S]$ for some profinite group $G$. Thus, it suffices to prove the result for $G[S]$ for $S$ finite and $G$ strongly complete. But in that case, we have an isomorphism
\[\h{|G[S]|}\cong \h{|G|}[S]\cong G[S].\]
\end{proof}

\begin{prop}
Let $X$ and $Y$ be two weak operads in $\pG$ such that for all $T_{\bf{a}}$ in $\Psi$, $X(T_{\bf{a}})$ and $Y(T_{\bf{a}})$ are strongly complete, then the map
\[\Map^h_{\WOp\pS}(BX,BY)\to\Map^h_{\WOp\S}(|BX|,|BY|)\]
is a weak equivalence.
\end{prop}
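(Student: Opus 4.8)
The plan is to reduce the statement, on both sides, from profinite/ordinary spaces to profinite/ordinary \emph{groupoids}, and then to run the comparison through the Quillen adjunction $\h{(-)}:\WOp\G\leftrightarrows\WOp\pG:|-|$. First I would record that $X$ and $Y$ are fibrant in $\WOp\pG$: they are weak operads by hypothesis, and each $X(T_{\bf{a}})$, being a strongly complete profinite groupoid with finitely many objects, is a finite disjoint union of groupoids of the form $G[S]$, hence levelwise fibrant by proposition \ref{prop-profinite groups are fibrant}. Since $|-|$ commutes with $B$ we have $|BX|\cong B|X|$ and $|BY|\cong B|Y|$, so the target of the map is $\Map^h_{\WOp\S}(B|X|,B|Y|)$.

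On the source side, arguing exactly as in proposition \ref{prop-from groupoids to spaces} — passing from $\WOp\pS$ to $\cat{POp}\pS$ because $BX$ is a weak operad, then using the Quillen adjunction $(\pi,B)$ together with proposition \ref{prop-B is fully faithful} that $\pi BC\to C$ is an equivalence — I would obtain $\Map^h_{\WOp\pS}(BX,BY)\simeq\Map^h_{\WOp\pG}(X,Y)$. On the target side, proposition \ref{prop-fully faithfulness of B} applies directly and gives $\Map^h_{\WOp\S}(B|X|,B|Y|)\simeq\Map^h_{\WOp\G}(|X|,|Y|)$. Under these identifications the map of the proposition becomes the map induced by the right Quillen functor $|-|:\WOp\pG\to\WOp\G$, namely $\Map^h_{\WOp\pG}(X,Y)\to\Map^h_{\WOp\G}(|X|,|Y|)$, and it remains to show this is a weak equivalence.

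For this I would invoke theorem \ref{theo-Quillen adjunction and mapping spaces} for $\h{(-)}:\WOp\G\leftrightarrows\WOp\pG:|-|$. Since $Y$ is fibrant we have $\R|-|(Y)=|Y|$, and the map above is then identified with precomposition along the derived counit $\L\h{(-)}(|X|)\to X$; equivalently, the assertion is that $\R|-|$ is homotopically fully faithful on the objects $X,Y$, which holds precisely when this derived counit is a weak equivalence. So the whole proposition reduces to proving that $\L\h{(-)}(|X|)\to X$ is a weak equivalence in $\WOp\pG$. Writing $Q|X|\xrightarrow{\sim}|X|$ for a cofibrant replacement in $\WOp\G$, this means identifying $\h{Q|X|}$ with $X$.

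The hard part, and the only place strong completeness is used, is this last identification. Because all groupoids are fibrant, $|X|$ is a fibrant weak operad in $\WOp\G$, and $Q|X|$, being weakly equivalent to it, is again a fibrant weak operad; a weak equivalence between fibrant objects of the left Bousfield localization $\WOp\G$ is a weak equivalence of $\cat{POp}\G$, hence \emph{levelwise}. As $\h{(-)}:\G\to\pG$ preserves weak equivalences, $\h{Q|X|}\to\h{|X|}$ is then a levelwise weak equivalence, and therefore a weak equivalence in $\WOp\pG$. Finally, strong completeness enters through the previous proposition, which gives that the counit $\h{|X|}\to X$ is a levelwise isomorphism, so that $\h{Q|X|}\xrightarrow{\sim}\h{|X|}\cong X$ as desired. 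I expect the genuine obstacle to be exactly this step: the bookkeeping needed to see that the derived counit is computed levelwise, and the realization that staying at the groupoid level replaces the map $\h{BC}\to B\h{C}$ — which is an equivalence only under the stronger \emph{goodness} hypothesis — by the honest isomorphism $\h{|C|}\cong C$, so that strong completeness alone suffices.
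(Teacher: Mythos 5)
Your proposal takes the same route as the paper's proof: identify $\Map^h_{\WOp\pS}(BX,BY)$ with $\Map^h_{\WOp\pG}(X,Y)$ and $\Map^h_{\WOp\S}(|BX|,|BY|)$ with $\Map^h_{\WOp\G}(|X|,|Y|)$ via the (derived) fully faithfulness of $B$ on both sides, reduce to the map induced by the right Quillen functor $|-|:\WOp\pG\to\WOp\G$, and conclude by theorem \ref{theo-Quillen adjunction and mapping spaces} applied to $\h{(-)}\dashv|-|$ together with the preceding proposition that the counit $\h{|X|}\to X$ is an isomorphism for strongly complete $X$. You are in fact more careful than the paper, which records neither the levelwise fibrancy of $X$ and $Y$ nor the cofibrant-replacement bookkeeping needed to compute the derived counit $\L\h{(-)}(|X|)\to X$.

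However, one justification is invalid as written: you assert that $Q|X|$, ``being weakly equivalent to'' the fibrant weak operad $|X|$, is again a fibrant weak operad. Local objects of a left Bousfield localization are \emph{not} closed under local equivalences --- otherwise every object of $\cat{POp}\G$ would be a weak operad, since its fibrant replacement in $\WOp\G$ is a $\WOp\G$-equivalence to a weak operad --- and the subsequent appeal to the fact that a $\WOp\G$-equivalence between fibrant objects is levelwise is then circular, since it presupposes exactly the fibrancy of $Q|X|$ you are trying to establish. The repair is short and runs in the opposite order: take $Q|X|\to|X|$ to be a trivial fibration in $\WOp\G$ (the standard factorization of $\varnothing\to|X|$). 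Since $\WOp\G$ and $\cat{POp}\G$ have the same cofibrations, they have the same trivial fibrations, so $Q|X|\to|X|$ is a trivial fibration in the projective structure, i.e.\ a levelwise weak equivalence; this gives directly that $Q|X|$ is a weak operad, and that $\h{Q|X|}\to\h{|X|}\cong X$ is a levelwise, hence $\WOp\pG$-, weak equivalence, because $\h{(-)}:\G\to\pG$ preserves all weak equivalences (all groupoids being cofibrant). A smaller gloss of the same kind: that $|X|$ is a weak operad does not follow just ``because all groupoids are fibrant'' (that only gives fibrancy in $\cat{POp}\G$); the Segal condition for $|X|$ uses that $|-|$ preserves products and preserves the weak equivalences between the levelwise-fibrant values of $X$, being right Quillen. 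With these corrections your argument is complete and coincides with the paper's.
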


\begin{proof}
This map fits in the following commutative diagram
\[
\xymatrix{
\Map^h_{\WOp\pG}(X,Y)\ar[r]\ar[d]& \Map^h_{\WOp\pS}(BX,BY)\ar[d]\\
\Map^h_{\WOp\G}(|X|,|Y|)\ar[r]&\Map^h_{\WOp\S}(|BX|,|BY|)
}
\]
The two horizontal maps are weak equivalences. Therefore, it suffices to prove that 
\[\Map^h_{\WOp\pG}(X,Y)\to\Map^h_{\WOp\G}(|X|,|Y|)\]
is a weak equivalence. Alternatively, by theorem \ref{theo-Quillen adjunction and mapping spaces}, it suffices to prove that $X\to \h{|X|}$ is a weak equivalence of weak operads in profinite groupoids. By definition of a strongly complete profinite groupoid, this is even an isomorphism.
\end{proof}

\begin{coro}\label{coro-alternative version of the main result}
Let $R\h{E_2}$ be a fibrant replacement of $\h{E_2}$ in $\WOp\pS$. Then there is a weak equivalence of monoids
\[\Map_{\WOp\pS}^h(\h{E_2},\h{E_2})\to\Map_{\WOp\S}^h(|R\h{E_2}|,|R\h{E_2}|).\]
\end{coro}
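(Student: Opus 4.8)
The plan is to deduce this from the preceding proposition applied to $X=Y=N^{\Psi}\h{\pab}$, so the first thing I would check is that this pair meets its strong-completeness hypothesis. For $\bf{a}=\{a_1,\ldots,a_n\}$ one has $N^{\Psi}\h{\pab}(T_{\bf{a}})=\prod_i\h{\pab}(a_i)$, and each factor is isomorphic to $\h{K_{a_i}}[\oper{UM}(a_i)]$ since $\pab(a_i)\cong K_{a_i}[\oper{UM}(a_i)]$ and completion commutes with the relevant products. The automorphism groups of such a finite product are the finite products $\prod_i\h{K_{a_i}}$; as the pure braid groups are finitely generated, each $\h{K_{a_i}}$, and hence each such finite product, is a finitely generated profinite group, so it is strongly complete by \cite{nikolovfinitely}. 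Thus every level of $N^{\Psi}\h{\pab}$ is strongly complete and the proposition applies, yielding a weak equivalence of simplicial monoids
\[\Map^h_{\WOp\pS}(BN^{\Psi}\h{\pab},BN^{\Psi}\h{\pab})\to\Map^h_{\WOp\S}(|BN^{\Psi}\h{\pab}|,|BN^{\Psi}\h{\pab}|).\]

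Next I would identify both ends with the monoids in the statement. For the source, recall from the proof of proposition \ref{prop-from groupoids to spaces} that the comparison map $\h{N^{\Psi}B\pab}\to N^{\Psi}B\h{\pab}\cong BN^{\Psi}\h{\pab}$ is a weak equivalence in $\WOp\pS$; combined with $\oper{E}_2\simeq B\pab$, which gives $\h{E_2}=\h{N^{\Psi}\oper{E}_2}\simeq\h{N^{\Psi}B\pab}$, this shows $\h{E_2}\simeq BN^{\Psi}\h{\pab}$. Moreover $BN^{\Psi}\h{\pab}$ is fibrant in $\WOp\pS$: each $\h{\pab}(n)$ is fibrant in $\pG$ by proposition \ref{prop-profinite groups are fibrant}, the functor $B$ is right Quillen, and the Segal maps of the strict nerve $BN^{\Psi}\h{\pab}$ are isomorphisms. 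Hence $BN^{\Psi}\h{\pab}$ is a fibrant model of $\h{E_2}$, so as simplicial monoids
\[\Map^h_{\WOp\pS}(\h{E_2},\h{E_2})\simeq\Map^h_{\WOp\pS}(BN^{\Psi}\h{\pab},BN^{\Psi}\h{\pab}).\]

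For the target I would use that $|-|:\WOp\pS\to\WOp\S$ is right Quillen (right adjoint to the left Quillen functor $\h{(-)}$), so its right derived functor is computed on any fibrant model. Since $BN^{\Psi}\h{\pab}$ and $R\h{E_2}$ are both fibrant models of $\h{E_2}$ and right Quillen functors preserve weak equivalences between fibrant objects, $|BN^{\Psi}\h{\pab}|$ is weakly equivalent in $\WOp\S$ to $|R\h{E_2}|$, whence
\[\Map^h_{\WOp\S}(|BN^{\Psi}\h{\pab}|,|BN^{\Psi}\h{\pab}|)\simeq\Map^h_{\WOp\S}(|R\h{E_2}|,|R\h{E_2}|).\]
Concatenating the three equivalences produces the asserted weak equivalence, and it is a map of monoids because each identification respects composition in the weak-operadic endomorphism monoids.

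The substantive input, and the step I expect to be the real obstacle, is the strong-completeness verification, since it is exactly what forces the unit $X\to\h{|X|}$ to be an isomorphism and thereby makes $|-|$ homotopically fully faithful on these particular weak operads; everything else is formal bookkeeping with the Quillen functoriality of $B$, $N^{\Psi}$ and $|-|$ already established, together with the good-groupoid comparison of proposition \ref{prop-from groupoids to spaces}.
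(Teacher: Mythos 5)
Your proposal is correct and takes essentially the same approach as the paper, whose entire proof is the one-line instruction to apply the preceding proposition to $X=Y=\h{N^{\Psi}\pab}$ (isomorphic to your $N^{\Psi}\h{\pab}$ by the compatibility of completion with finite products). You simply make explicit the details the paper leaves implicit---the strong completeness of the levels $\prod_i\h{\pab}(a_i)$ via finite generation and \cite{nikolovfinitely}, the fibrancy of $BN^{\Psi}\h{\pab}$ in $\WOp\pS$, and the identification of $|BN^{\Psi}\h{\pab}|$ with $|R\h{E_2}|$ through the right derived functor of $|-|$---and all of these verifications are sound.
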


\begin{proof}
It suffices to apply the previous proposition to $X=Y=\h{N^{\Psi}\pab}$
\end{proof}

\subsection*{Remark about the $\ell$-completion}

Our result also has a pro-$\ell$ version for any prime number $\ell$. There is a model structure $\pS_\ell$ on $\pS$ due to Morel (see \cite{morelensembles}) which encodes the $\infty$-category of pro-objects in the $\infty$-category of spaces which are truncated and have homotopy groups that are finite $\ell$-groups. There is a pro-$\ell$ completion functor $X\mapsto \h{X}_\ell$ from $\S$ to $\pS_\ell$. This induces a left Quillen functor
\[\WOp\S\to \WOp\pS_\ell.\]

One can form the pro-$\ell$ completion of a group and more generally a groupoid. First, we define the category $\ell\G$ of groupoids that are finite and in which each automorphism group is an $\ell$-group. Given a groupoid $C$, its pro-$\ell$ completion is the finite limit preserving functor $D\mapsto \G(C,D)$ seen as an object of $\Pro(\ell\G)$. One can form the operad in pro-$\ell$ groupoids $\h{\pab}_{\ell}$ by applying this functor levelwise to $\pab$. We define $\puGT_{\ell}$ to be the monoid of endomorphisms of $\h{\pab}_\ell$ that induces the identity on objects. Then we have the following result whose proof is exactly the same as the proof of theorem \ref{theo-main theorem spaces}.

\begin{theo}
There is an isomorphism of monoids
\[\on{End}_{\on{Ho}\WOp\pS_\ell}(\h{E_2}_\ell)\cong \puGT_{\ell}.\]
\end{theo}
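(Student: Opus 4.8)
The plan is to reproduce \emph{verbatim} the three-step chain culminating in Theorem \ref{theo-main theorem spaces}, replacing the category $f\G$ of finite groupoids by the category $\ell\G$ of finite groupoids with $\ell$-group automorphisms throughout, and Quick's model structure on $\pS$ by Morel's model structure $\pS_\ell$. First I would record the pro-$\ell$ analogues of the foundational results of Section \ref{section-model structure on profinite groupoids}: the category $\Pro(\ell\G)$ carries a cocombinatorial model structure built exactly as in Theorem \ref{theo-model structure on profinite groupoids}, with the generating sets $P$ and $Q$ now ranging over finite $\ell$-groups, and the pro-$\ell$ completion of a groupoid with a finite set of objects is symmetric monoidal, i.e. the analogue of Proposition \ref{prop-profinite completion commutes with products} holds since its proof only manipulates finite quotients and these may be taken to be $\ell$-groups. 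With these in hand the operad $\h{\pab}_\ell$ and the monoid $\puGT_\ell$ are defined just as in the profinite case.

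The first substantive step is the pro-$\ell$ version of Theorem \ref{theo-main theorem naive homotopy}, asserting that $\puGT_\ell \to \on{End}_{\pi\Op\Pro(\ell\G)}(\h{\pab}_\ell)$ is an isomorphism. Surjectivity is formal and identical: $\on{Ob}(\h{\pab}_\ell)$ is freely generated by the arity-two object, and the natural transformation built from a chosen arrow $(21)\to(12)$ exhibits any endomorphism as homotopic to one fixing objects. Injectivity reduces, exactly as before, to the faithfulness of the action of $\puGT_\ell$ on $\h{\pab}_\ell(3)\simeq \ast\sslash\h{K_3}_\ell$. This requires the pro-$\ell$ analogues of Lemma \ref{lemm-centralizer of x}, Proposition \ref{prop-f is a commutator}, Proposition \ref{prop-injectivity of GT to End up to homotopy} and Proposition \ref{prop-GT acts faithful on K_3}: one needs the centraliser of $x^\lambda$ in the free pro-$\ell$ group on $x,y$ to be $\langle x\rangle$, together with the decomposition $\h{K_3}_\ell\cong\h{F_2}_\ell\times\h{\mathbb{Z}}_\ell$ coming from the (group-level) pro-$\ell$ analogue of Proposition \ref{prop-profinite completion commutes with products}. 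Granting these, the argument of Proposition \ref{prop-derived to underived} lifts the result to the weak-operad statement $\puGT_\ell\to\on{End}_{\on{Ho}\WOp\Pro(\ell\G)}(N^\Psi\h{\pab}_\ell)$, since $\paub$ is still cofibrant by Corollary \ref{coro-paub cofibrant}, $|\h{\pab}_\ell|$ is fibrant, and Lemma \ref{lemm-paub to P} applies unchanged.

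The final step is the pro-$\ell$ form of Proposition \ref{prop-from groupoids to spaces}, comparing weak operads in pro-$\ell$ groupoids with weak operads in $\pS_\ell$. The only genuinely new input is that the comparison map $\h{BC}_\ell\to B\h{C}_\ell$ is a weak equivalence in $\pS_\ell$ for every finite product $C$ of groupoids $\pab(n)$; by the pro-$\ell$ analogue of Proposition \ref{prop-completion of good groupoids} this holds whenever the pure braid groups $K_n$ are pro-$\ell$ good. This follows from the same induction as Corollary \ref{coro-pure braid groups are good}, applying the extension argument of Proposition \ref{prop-good groups extension} to the sequence $1\to F_n\to K_{n+1}\to K_n\to 1$ in the pro-$\ell$ setting, using that free groups are pro-$\ell$ good. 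The remaining reductions (fully faithfulness of $B$ via Proposition \ref{prop-B is fully faithful}, the Quillen adjunction $(\pi,B)$, and fibrancy of $N^\Psi\h{\pab}_\ell$) are formal and carry over word for word; combining the three steps yields $\puGT_\ell\cong\on{End}_{\on{Ho}\WOp\pS_\ell}(\h{E_2}_\ell)$.

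I expect the main obstacle to be verifying the two arithmetic inputs in the pro-$\ell$ world: the centraliser computation in the free pro-$\ell$ group (the analogue of Lemma \ref{lemm-centralizer of x}) underlying faithfulness of the $\puGT_\ell$-action on $\h{K_3}_\ell$, and the pro-$\ell$ goodness of the pure braid groups needed for the groupoid-to-spaces comparison. Once these are in place, every other step is a mechanical transcription of the profinite argument.
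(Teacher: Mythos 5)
Your proposal is correct and takes exactly the route the paper does: the paper's entire proof of this theorem is the single remark that the argument is the same as for Theorem \ref{theo-main theorem spaces}, with finite groupoids replaced by the category $\ell\G$ and Quick's model structure by Morel's $\pS_\ell$, which is precisely the transcription you describe. You are in fact more explicit than the paper, since you correctly isolate the two arithmetic inputs that genuinely need re-verification in the pro-$\ell$ world (the centraliser computation in the free pro-$\ell$ group behind the faithfulness of the $\puGT_\ell$-action, and the $\ell$-goodness of the pure braid groups behind the groupoid-to-spaces comparison).
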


\bibliographystyle{alpha}
\bibliography{biblio}

\end{document}